\numberwithin{equation}{section}
\newtheorem{thm}{Theorem}[section]
\newtheorem{lem}[thm]{Lemma}
\newtheorem{cor}[thm]{Corollary}
\newtheorem{pro}[thm]{Proposition}
\newtheorem{ex}[thm]{Example}
\newtheorem{defi}[thm]{Definition}
\newcommand{\kl}{\mathfrak l}
\newcommand{\kr}{\mathfrak r}
\newcommand{\fl}{\mathbf l}
\newcommand{\fr}{\mathbf r}
\newcommand{\End}{\mathrm{End}}
\newcommand{\Img}{\mathrm{Im}}
\newcommand{\Hom}{\mathrm{Hom}}
\def\id{\mathop {\fam0 id}\nolimits}
\begin{document}

\title[Averaging antisymmetric infinitesimal bialgebra and perm bialgebras]
{Averaging antisymmetric infinitesimal bialgebra\\ and induced perm bialgebras}

\author{Bo Hou}
\address{School of Mathematics and Statistics, Henan University, Kaifeng 475004,
China}
\email{bohou1981@163.com}
\vspace{-5mm}

\author{Zhanpeng Cui}
\address{School of Mathematics and Statistics, Henan University, Kaifeng 475004,
China}
\email{czp15824833068@163.com}


\begin{abstract}
We establish a bialgebra theory for averaging algebras, called averaging antisymmetric
infinitesimal bialgebras by generalizing the study of antisymmetric infinitesimal bialgebras
to the context of averaging algebras. They are characterized by double constructions of
averaging Frobenius algebras as well as matched pairs of averaging algebras. Antisymmetric
solutions of the Yang-Baxter equation in averaging algebras provide averaging antisymmetric
infinitesimal bialgebras. The notions of an $\mathcal{O}$-operator of an averaging algebra
and an averaging dendriform algebra are introduced to construct antisymmetric solutions of
the Yang-Baxter equation in an averaging algebra and hence averaging antisymmetric
infinitesimal bialgebras. Moreover, we introduce the notion of factorizable
averaging antisymmetric infinitesimal bialgebras and show that a factorizable
averaging antisymmetric infinitesimal bialgebra leads to a factorization of the
underlying averaging algebra. We establish a one-to-one correspondence between
factorizable averaging antisymmetric infinitesimal bialgebras and symmetric averaging
Frobenius algebras with a Rota-Baxter operator of nonzero weight.
Finally, we apply the study of averaging antisymmetric infinitesimal bialgebras to
perm bialgebras, extending the construction of perm algebras from commutative
averaging algebras to the context of bialgebras, which is consistent with the well
constructed theory of perm bialgebras.
\end{abstract}

\keywords{averaging algebras, averaging antisymmetric infinitesimal bialgebra,
factorizable averaging antisymmetric infinitesimal bialgebra,
Yang-Baxter equation, perm bialgebras, $\mathcal{O}$-operator}
\subjclass[2010]{17A30, 17D25, 18G60, 17A36, 16E40.}

\maketitle

\tableofcontents 



\vspace{-4mm}

\section{Introduction}\label{sec:intr}

The notion of averaging operator was first implicitly studied by Reynolds
in the turbulence theory of fluid dynamics \cite{Rey}. Kamp\'{e} de F\'eriet
introduced explicitly the averaging operator in the context
of turbulence theory and functional analysis \cite{KdF,Mil}.
Moy investigated averaging operators from the viewpoint of conditional expectation
in probability theory \cite{Moy}. Kelley and Rota studied
the role of averaging operators in Banach algebras \cite{Kel,Rot}.
The algebraic study on averaging operators began in \cite{Cao}. Cao constructed explicitly
free unitary commutative averaging algebras and discovered the Lie algebra structures
induced naturally from averaging operators. In \cite{PG}, Guo and Pei studied averaging
operators from an algebraic and combinatorial point of view, and constructed free nonunital
averaging algebras in terms of a class of bracketed words called averaging words.
In \cite{GZ}, Gao and Zhang contain an explicit construction of free unital averaging
algebras in terms of bracketed polynomials and the main tools were rewriting systems
and Gr\"{o}bner-Shirshov bases. The averaging operators attract much attention also
because of their closely connected with Reynolds operators, symmetric operators and
Rota-Baxter operators \cite{Bon,GM,Tri}.

An averaging algebra is an algebra with an averaging operator.
In recent years, the properties of averaging algebra have been widely studied.
Sheng, Tang and Zhu studied embedding tensors (another name of averaging operators
in physics) for Lie algebras and they construct a cohomology theory for such operators
on Lie algebras using derived brackets as a main tool \cite{STZ}.
In \cite{WZ,Das,DS} the authors study the cohomological theory, homotopy theory and
non-abelian extensions for averaging associative algebras. The averaging operators
on various algebraic structures and the induced structures have been studied in \cite{Das1}.
The aim of this paper is to develop a bialgebra theory for averaging (associative)
algebras and get some applications. The notion of averaging antisymmetric infinitesimal
bialgebras is introduced. Some special averaging antisymmetric infinitesimal
bialgebras are studied. As an application, we generalize the typical construction
of perm algebras from averaging algebras to the context of bialgebras.

A bialgebra structure consists of an algebra structure and a coalgebra structure coupled by certain compatibility conditions. Such structures have connections with
other structures arising from mathematics and physics. Lie bialgebras are the algebra
structures of Poisson-Lie groups and play an important role in the study of
quantized universal enveloping algebras \cite{Dri,CP}. Antisymmetric infinitesimal
bialgebras for associative algebras were introduced by Bai in order to establish
the connection with the double constructions of Frobenius algebras and matched
pairs of associative algebras in \cite{Bai}. In this paper, we establish a bialgebra
theory for averaging associative algebras, called averaging antisymmetric infinitesimal
bialgebras, by extending the study of antisymmetric infinitesimal bialgebras in \cite{Bai}
to the context of averaging algebras. Explicitly, averaging antisymmetric
infinitesimal bialgebras are characterized equivalently by matched pairs of averaging
algebras and double constructions of averaging Frobenius algebras, as the
generalizations of matched pairs of algebras and double constructions of Frobenius
algebras respectively to the context of averaging algebras. The coboundary cases
lead to introduce the notion of $\beta$-Yang-Baxter equation in an averaging
algebra, whose antisymmetric solutions are used to construct averaging antisymmetric
infinitesimal bialgebras. The notions of $\mathcal{O}$-operators of averaging
algebras and averaging dendriform algebras are introduced to construct antisymmetric
solutions of the $\beta$-Yang-Baxter equation in averaging algebras and
hence give rise to averaging antisymmetric infinitesimal bialgebras.
We summarize these results in the following diagram:\\[-5mm]
$$
{\tiny \xymatrix@C=0.75cm{& & &
\txt{double constructions \\ of averaging \\ Frobenius algebras}\ar@{<->}[d]_-{}\\
\txt{averaging \\ dendriform algebras} \ar@{->}[r]<0.7ex>^-{}&
\txt{$\mathcal{O}$-operators of \\ averaging algebras}
\ar@{->}[r]<0.5ex>^-{}
\ar@{->}[l]<0.5ex>^-{} &
\txt{anytisymmetric \\ solutions of YBE}
\ar@{->}[l]<0.5ex>^-{}
\ar@{->}[r]^-{} &
\txt{averaging antisymmetric\\ infinitesimal bialgebras} &
\txt{matched pairs \\ of averaging \\ algebras} \ar@{<->}[l]_-{}}}
$$

Quasitriangular Lie bialgebras and triangular Lie bialgebras are important Lie
bialgebras classes. Another important Lie bialgebras are factorizable Lie bialgebras,
which is introduced in \cite{RS}. Factorizable Lie bialgebras are used to
establish the relation between classical $r$-matrices and certain factorization
problems in Lie algebras. Recently, factorizable Lie bialgebras and factorizable
antisymmetric infinitesimal bialgebras have been further studied in \cite{LS,SW}.
Here we study
factorizable averaging antisymmetric infinitesimal bialgebras. We show that the
factorizable averaging antisymmetric infinitesimal bialgebras give rise to a natural
factorization of the underlying averaging algebras. The importance
of factorizable averaging antisymmetric infinitesimal bialgebras in the study of
averaging antisymmetric infinitesimal bialgebras can also be observed from the fact
that the double space of an arbitrary averaging antisymmetric infinitesimal bialgebras
admits a factorizable averaging antisymmetric infinitesimal bialgebra structure.

Furthermore, as an application, we construct perm bialgebras from some special averaging
antisymmetric infinitesimal bialgebras. We have studied a bialgebra theory for perm
algebras in \cite{Hou}, also see \cite{LZB}. For any commutative averaging algebra,
the averaging operator induces a perm algebra structure on the original vector space.
It is natural to consider extending such a relationship to the context of bialgebras.
We establish the explicit relationships between averaging antisymmetric infinitesimal
bialgebras and the induced perm bialgebras, as well as the equivalent interpretation in
terms of the corresponding Manin triples and matched pairs. We show that a solution
$\beta$-YBE in a commutative averaging algebra is also a solution of YBE in the induced
perm algebra under certain conditions.

The paper is organized as follows. In Section \ref{sec:alg}, we recall some facts
on averaging algebras and bimodules over averaging algebras. In Section \ref{sec:bialg},
we give the general notions of matched pairs of averaging algebras, double construction
of averaging Frobenius algebras and averaging antisymmetric infinitesimal bialgebras.
Whenever the underlying linear spaces of the two averaging algebras are dual to each
other, their equivalence is interpreted. In Section \ref{sec:cob}, we consider a
special class of averaging antisymmetric infinitesimal bialgebras, the coboundary
averaging antisymmetric infinitesimal bialgebras. This study also leads to the
introduction of the Yang-Baxter equation in an averaging algebra, whose antisymmetric
solutions give averaging antisymmetric infinitesimal bialgebras.
We also introduce the notions of $\mathcal{O}$-operators of averaging algebras and
averaging dendriform algebras, and give constructions of antisymmetric solutions of
the Yang-Baxter equation in an averaging algebra from these structures.
In Section \ref{sec:fact}, we introduce the notion of factorizable
averaging antisymmetric infinitesimal bialgebras and show that a factorizable
averaging antisymmetric infinitesimal bialgebra leads to a factorization of the
underlying averaging algebra. We establish a one-to-one correspondence between
factorizable averaging antisymmetric infinitesimal bialgebras and symmetric averaging
Frobenius algebras with a Rota-Baxter operator of nonzero weight.
In Section \ref{sec:permbia}, proceeding from the typical construction of perm algebras
from commutative averaging algebras, we construct perm bialgebras from commutative and
cocommutative averaging antisymmetric infinitesimal bialgebras. The explicit
relationships between them, as well as the equivalent interpretation in terms
of the corresponding Manin triples and matched pairs, are established.

Throughout this paper, we fix $\Bbbk$ a field and characteristic zero.
All the vector spaces and algebras are of finite dimension over $\Bbbk$,
and all tensor products are also taking over $\Bbbk$.

\section{Averaging algebras and their bimodule} \label{sec:alg}

In this section, we recall the background on averaging algebras and bimodules over
averaging algebras, for that details, see \cite{Das,WZ}.

\begin{defi}\label{def:aver}
Let $(A, \cdot)$ be an associative algebra. An {\rm averaging operator} on $A$
is a linear map $\alpha: A\rightarrow A$ such that
$$
\alpha(a_{1})\alpha(a_{2})=\alpha(\alpha(a_{1})a_{2})=\alpha(a_{1}\alpha(a_{2})),
$$
for all $a_{1}, a_{2}\in A$. A triple $(A, \cdot, \alpha)$ consisting of an associative algebra
$A$ and an averaging operator $\alpha: A\rightarrow A$ is called an {\rm averaging
(associative) algebra}. We often denote this averaging algebra by $(A, \alpha)$ for simply.
\end{defi}

Given two averaging algebras $(A, \alpha)$ and $(A', \alpha')$, a {\it homomorphism of
averaging algebras} from $(A, \alpha)$ to $(A', \alpha')$ is a homomorphism of algebras
$f: A\rightarrow A'$ satisfying $f\alpha=\alpha'f$. A homomorphism
$f: (A, \alpha)\rightarrow(A', \alpha')$ is said to be an {\it isomorphism} if
$f$ is a bijection. A subalgebra $B$ of associative algebra $A$ with a linear map
$\beta: B\rightarrow B$ is called a subalgebra of averaging algebras $(A, \alpha)$
if $\beta$ is just the restriction of $\alpha$ on $B$.

\begin{ex}\label{ex:2dim}
$(i)$ Let $A=\Bbbk\{e\}$ be a 1-dimensional associative algebra. Note that
any scalar multiple transformation on $A$ is an averaging operator, we get that
every linear map $\alpha: A\rightarrow A$ is an averaging operator.

$(ii)$ Let $A=\Bbbk\{e_{1}, e_{2}\}$ be a 2-dimensional associative algebra
with non-zero product $e_{1}e_{2}=e_{1}$ and  $e_{2}e_{2}=e_{2}$.
Then, up to isomorphism, all the non-zero averaging operators are given by
\begin{enumerate}\itemsep=0pt
\item[$(\mathrm{a})$] $\alpha(e_{1})=0$,\; $\alpha(e_{2})=e_{1}$;
\item[$(\mathrm{b})$] $\alpha(e_{1})=0$,\; $\alpha(e_{2})=e_{2}$;
\item[$(\mathrm{c})$] $\alpha(e_{1})=0$,\; $\alpha(e_{2})=e_{2}+ae_{1}$,\, $0\neq a\in\Bbbk$;
\item[$(\mathrm{d})$] $\alpha(e_{1})=e_{1}$,\; $\alpha(e_{2})=e_{2}$.
\end{enumerate}
%
\end{ex}

\begin{ex}\label{ex:dered}
Let $(A, \alpha)$ be an averaging algebra. We define two new binary operations
$\bullet, \star: A\otimes A\rightarrow A$ by
$$
a_{1}\bullet a_{2}=\alpha(a_{1})a_{2},\qquad\quad\mbox{and}\qquad\quad
a_{1}\star a_{2}=a_{1}\alpha(a_{2}),
$$
for all $a_{1}, a_{2}\in A$, then $A_{1}=(A, \bullet)$ and $A_{2}=(A, \star)$
are associative algebras, $(A_{1}, \alpha)$ and $(A_{2}, \alpha)$ are averaging algebras,
and $(A, \bullet, \star)$ is a dialgebra \cite{Agu}.
\end{ex}

\begin{defi}\label{def:perm}
A (left) {\rm perm algebra} is a vector space $P$ with a bilinear operation $(p_{1}, p_{2})
\mapsto p_{1}p_{2}$, such that $p_{1}(p_{2}p_{3})=(p_{1}p_{2})p_{3}=(p_{2}p_{1})p_{3}$,
for any $p_{1}, p_{2}, p_{3}\in P$.
\end{defi}

Clearly, perm algebra is a special class of associative algebra.
An averaging algebra is called commutative if it as an associative algebra
is commutative. We can construct perm algebras from commutative averaging algebras.

\begin{pro} \label{pro:perm}
For any commutative averaging algebra $(A, \alpha)$, we have a perm algebra
$(A, \bullet)$, where the product $\bullet$ is defined in Example \ref{ex:dered}.
\end{pro}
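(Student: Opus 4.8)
The plan is to verify directly the two defining identities of a left perm algebra for the product $a_1\bullet a_2=\alpha(a_1)a_2$, reducing each to a combination of the averaging axiom, associativity of the underlying algebra, and commutativity. Throughout I would keep track of precisely which instance of the averaging identity $\alpha(\alpha(a_1)a_2)=\alpha(a_1)\alpha(a_2)=\alpha(a_1\alpha(a_2))$ from Definition \ref{def:aver} is being invoked.

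First I would establish the associativity-type identity $p_1\bullet(p_2\bullet p_3)=(p_1\bullet p_2)\bullet p_3$. Expanding the left-hand side gives $a_1\bullet(a_2\bullet a_3)=\alpha(a_1)\bigl(\alpha(a_2)a_3\bigr)$, which by associativity of $A$ equals $\alpha(a_1)\alpha(a_2)a_3$. For the right-hand side, $(a_1\bullet a_2)\bullet a_3=\alpha\bigl(\alpha(a_1)a_2\bigr)a_3$; here the averaging axiom in the form $\alpha(\alpha(a_1)a_2)=\alpha(a_1)\alpha(a_2)$ collapses this to the same expression $\alpha(a_1)\alpha(a_2)a_3$. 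Hence the two sides agree, and I would note that this step uses neither commutativity nor the full strength of the averaging axiom beyond that single identity.

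Next I would establish the left-symmetry identity $(p_1\bullet p_2)\bullet p_3=(p_2\bullet p_1)\bullet p_3$. By the computation just carried out, the left-hand side reduces to $\alpha(a_1)\alpha(a_2)a_3$, while the analogous computation for $(a_2\bullet a_1)\bullet a_3=\alpha\bigl(\alpha(a_2)a_1\bigr)a_3$ reduces, via the same averaging axiom, to $\alpha(a_2)\alpha(a_1)a_3$. Commutativity of $A$ then identifies the products $\alpha(a_1)\alpha(a_2)$ and $\alpha(a_2)\alpha(a_1)$, yielding equality and completing the verification that $(A,\bullet)$ is a left perm algebra.

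There is no genuine obstacle in this argument; it is a short direct calculation. The only points requiring care are the bookkeeping of which form of the averaging identity is applied at each step, and the observation that commutativity of $A$ is needed \emph{only} for the second identity—the first holds for an arbitrary, not necessarily commutative, averaging algebra. I would therefore present the proof as two brief displayed chains of equalities, annotating each equality by its justification (associativity, averaging axiom, or commutativity).
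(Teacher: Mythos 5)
Your proof is correct and complete: both perm identities are verified exactly as they should be, with the averaging axiom $\alpha(\alpha(a_1)a_2)=\alpha(a_1)\alpha(a_2)$ collapsing the left-nested products and commutativity supplying the left-symmetry identity. The paper states this proposition without proof, treating it as a routine verification, and your direct computation is precisely that intended argument; your remark that commutativity is needed only for the second identity (so that $(A,\bullet)$ is associative for any averaging algebra, cf.\ Example \ref{ex:dered}) is a correct and worthwhile refinement.
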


Let $A$ be an associative algebra, $M$ be a vector space. $M$ is called a
bimodule over $A$, if there are linear maps $\kl, \kr: A\rightarrow\End_{\Bbbk}(M)$,
such that $(a_{1}a_{2})m=a_{1}(a_{2}m)$, $a_{1}(ma_{2})=(a_{1}m)a_{2}$ and
$m(a_{1}a_{2})=(ma_{1})a_{2}$, for any $a_{1}, a_{2}\in A$
and $m\in M$, where $a_{1}m:=\kl(a_{1})(m)$ and $ma_{1}:=\kr(a_{1})(m)$.
We denote this bimodule by $(M, \kl, \kr)$. Clearly, for any
associative algebra $A$, $(A, \fl_{A}, \fr_{A})$ is a bimodule over itself,
where $\fl_{A}(a_{1})a_{2}=a_{1}a_{2}$ and $\fr_{A}(a_{1})a_{2}=a_{2}a_{1}$.

\begin{defi}\label{def:mod}
Let $(A, \alpha)$ be an averaging algebra. A {\rm bimodule} $(M, \kl, \kr, \beta)$
over the averaging algebra $(A, \alpha)$ is a bimodule $(M, \kl, \kr)$ over associative
algebra $A$ endowed with an operator $\beta: M\rightarrow M$, such that for any $a\in A$,
$m\in M$, the following equalities hold:
\begin{align}
\kl(\alpha(a))(\beta(m))&=\beta(\kl(\alpha(a))(m))=\beta(\kl(a)(\beta(m))), \label{adm1} \\
\kr(\alpha(a))(\beta(m))&=\beta(\kr(\alpha(a))(m))=\beta(\kr(a)(\beta(m))). \label{adm2}
\end{align}
\end{defi}

Given two bimodules $(M, \beta)$ and $(N, \beta')$ over averaging algebra $(A, \alpha)$,
a {\it homomorphism} from $(M, \beta)$ and $(N, \beta')$ is a bimodule homomorphism
$f: M\rightarrow N$ over associative $A$ such that $f\beta=\beta'f$.
If the homomorphism $f: (M, \beta)\rightarrow(N, \beta')$ is a bijection, we call
that $f$ is an isomorphism, and $(M, \beta)$ and $(N, \beta')$ are isomorphic.
The averaging algebra $(A, \alpha)$ itself is naturally a bimodule over
itself, called the {\it regular bimodule}. For general bimodule over averaging algebra,
we have the following proposition.

\begin{pro} \label{pro:mod}
Let $(A, \alpha)$ be an averaging algebra, $M$ be a vector space, $\kl,\, \kr:
A\rightarrow\End_{\Bbbk}(M)$ and $\beta: M\rightarrow M$ be linear maps.
Then $A\oplus M$ with the multiplication
$$
(a_{1}, m_{1})(a_{2}, m_{2})=\big(a_{1}a_{2},\ \ \kl(a_{1})(m_{2})+\kr(a_{2})(m_{1})\big),
$$
and linear map $\alpha\oplus\beta: A\oplus M\rightarrow A\oplus M$,
$(a, m)\mapsto(\alpha(a), \beta(m))$, for any $(a, m), (a_{1}, m_{1}), (a_{2}, m_{2})
\in A\oplus M$, is an averaging algebra if and only if $(M, \kl, \kr, \beta)$
is a bimodule over $(A, \alpha)$. This averaging algebra structure on $A\oplus M$ is
called the {\rm semidirect product} of $(A, \alpha)$ by bimodule $(M, \beta)$,
and denoted by $(A\ltimes M, \alpha\oplus\beta)$.	
\end{pro}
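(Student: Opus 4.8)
The plan is to separate the two halves of the "averaging algebra" condition on $(A\oplus M,\ \alpha\oplus\beta)$: first the associativity of the semidirect product multiplication, and then the requirement that $\alpha\oplus\beta$ be an averaging operator for it. For the associativity part I would invoke the standard fact recalled just before Definition \ref{def:mod}, namely that the multiplication $(a_1,m_1)(a_2,m_2)=(a_1a_2,\ \kl(a_1)(m_2)+\kr(a_2)(m_1))$ is associative if and only if $(M,\kl,\kr)$ is a bimodule over the associative algebra $A$; this is exactly one of the two requirements for $(M,\kl,\kr,\beta)$ to be a bimodule over $(A,\alpha)$, and it follows by matching the associativity relation componentwise to the three bimodule axioms. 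It then remains only to show that, assuming associativity, the map $\alpha\oplus\beta$ is an averaging operator precisely when the remaining compatibility conditions \eqref{adm1} and \eqref{adm2} hold.

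Next I would write out the averaging operator identity for $\alpha\oplus\beta$ applied to two general elements $x_1=(a_1,m_1)$ and $x_2=(a_2,m_2)$, that is, the requirement that $(\alpha\oplus\beta)(x_1)\,(\alpha\oplus\beta)(x_2)$, $(\alpha\oplus\beta)\big((\alpha\oplus\beta)(x_1)\,x_2\big)$ and $(\alpha\oplus\beta)\big(x_1\,(\alpha\oplus\beta)(x_2)\big)$ all coincide. Expanding each of the three via the definition of the product and of $\alpha\oplus\beta$, the $A$-components become $\alpha(a_1)\alpha(a_2)$, $\alpha(\alpha(a_1)a_2)$ and $\alpha(a_1\alpha(a_2))$ respectively; these already agree because $\alpha$ is an averaging operator on $A$ by hypothesis. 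Hence all of the content is carried by the $M$-components.

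For the $M$-components the three expressions are $\kl(\alpha(a_1))(\beta(m_2))+\kr(\alpha(a_2))(\beta(m_1))$, then $\beta(\kl(\alpha(a_1))(m_2))+\beta(\kr(a_2)(\beta(m_1)))$, and finally $\beta(\kl(a_1)(\beta(m_2)))+\beta(\kr(\alpha(a_2))(m_1))$. The key step is to decouple the $m_1$- and $m_2$-dependent parts: setting $m_1=0$ isolates the terms in $m_2$, whose mutual equality is exactly \eqref{adm1} with $a=a_1$, $m=m_2$, while setting $m_2=0$ isolates the terms in $m_1$, whose mutual equality is exactly \eqref{adm2} with $a=a_2$, $m=m_1$. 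Conversely, adding the two families of equalities recovers the equality of the full $M$-components, giving the averaging condition. This decoupling is the only point requiring an argument beyond routine expansion, and it is justified by linearity in $m_1$ and $m_2$ together with the structural observation that $m_2$ enters every expression only through $\kl$ and $m_1$ only through $\kr$; the remainder is bookkeeping.
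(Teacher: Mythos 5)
Your proof is correct. The paper actually states Proposition \ref{pro:mod} without proof, but your argument---first reducing associativity of the semidirect product to the associative-algebra bimodule axioms, then expanding the three terms of the averaging identity for $\alpha\oplus\beta$, noting the $A$-components agree automatically, and decoupling the $M$-components by setting $m_{1}=0$ (giving Eq. \eqref{adm1}) and $m_{2}=0$ (giving Eq. \eqref{adm2})---is exactly the technique the paper itself uses in the proofs of the analogous Propositions \ref{pro:matda} and \ref{pro:admsemi}, so it matches the intended argument.
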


Let $V$ be a vector space. Denote the standard pairing between the dual space
$V^{\ast}$ and $V$ by
\begin{align*}
\langle-,-\rangle:\quad V^{\ast}\otimes V\rightarrow \Bbbk, \qquad\quad
\langle \xi,\; v \rangle:=\xi(v),
\end{align*}
for any $\xi\in V^{\ast}$ and $v\in V$.
Let $V$, $W$ be two vector spaces. For a linear map $\varphi: V\rightarrow W$,
the transpose map $\varphi^{\ast}: W^{\ast}\rightarrow V^{\ast}$ is defined by
\begin{align*}
\langle \varphi^{\ast}(\xi),\; v \rangle:=\langle\xi,\; \varphi(v)\rangle,
\end{align*}
for any $v\in V$ and $\xi\in W^{\ast}$.
Let $A$ be an associative algebra and $V$ be a vector space.
For a linear map $\psi: A\rightarrow\End_{\Bbbk}(V)$, the linear map
$\psi^{\ast}: A\rightarrow\End_{\Bbbk}(V^{\ast})$ is defined by
\begin{align*}
\langle\psi^{\ast}(a)(\xi),\; v\rangle:=\langle\xi,\; \psi(a)(v)\rangle,
\end{align*}
for any $a\in A$, $v\in V$, $\xi\in V^{\ast}$. That is, $\psi^{\ast}(a)=\psi(a)^{\ast}$
for all $a\in A$. It is easy to see that, for each bimodule $(M, \kl, \kr)$ over
associative algebra $A$, the triple $(M^{\ast}, \kr^{\ast}, \kl^{\ast})$ is again
a bimodule over $A$.

\begin{pro}\label{pro:dual}
Let $(A, \alpha)$ be an averaging algebra, $(M, \kl, \kr)$ be a bimodule
over associative algebra $A$, and $\beta: M\rightarrow M$ be a linear maps.
Then the quadruple $(M^{\ast}, \kr^{\ast}, \kl^{\ast}, \beta^{\ast})$ is a bimodule
over the averaging algebra $(A, \alpha)$ if and only if $(M, \kl, \kr, \beta)$
is a bimodule over $(A, \alpha)$.
\end{pro}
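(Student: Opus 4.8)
The plan is to reduce the whole statement to the nondegeneracy of the duality pairing together with the compatibility conditions \eqref{adm1}--\eqref{adm2}. The sentence immediately preceding the proposition already records that $(M^{\ast}, \kr^{\ast}, \kl^{\ast})$ is a bimodule over the associative algebra $A$ exactly when $(M, \kl, \kr)$ is, so the only remaining content is the pair of averaging compatibility conditions involving the operator $\beta^{\ast}$. Note that for the dual quadruple the left action is $\kr^{\ast}$, the right action is $\kl^{\ast}$, and the operator is $\beta^{\ast}$; thus \eqref{adm1} and \eqref{adm2} applied to $(M^{\ast}, \kr^{\ast}, \kl^{\ast}, \beta^{\ast})$ become
\[
\kr^{\ast}(\alpha(a))(\beta^{\ast}(\xi))=\beta^{\ast}(\kr^{\ast}(\alpha(a))(\xi))=\beta^{\ast}(\kr^{\ast}(a)(\beta^{\ast}(\xi))),
\]
\[
\kl^{\ast}(\alpha(a))(\beta^{\ast}(\xi))=\beta^{\ast}(\kl^{\ast}(\alpha(a))(\xi))=\beta^{\ast}(\kl^{\ast}(a)(\beta^{\ast}(\xi))),
\]
for all $a\in A$ and $\xi\in M^{\ast}$.

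The key step is to pair each of these chains with an arbitrary $m\in M$ and to unwind them using the two defining relations of the transpose, $\langle \varphi^{\ast}(\xi), v\rangle=\langle \xi, \varphi(v)\rangle$ and $\langle \psi^{\ast}(a)(\xi), v\rangle=\langle \xi, \psi(a)(v)\rangle$. For the first chain, pairing with $m$ carries the three terms to $\langle \xi, \beta(\kr(\alpha(a))(m))\rangle$, $\langle \xi, \kr(\alpha(a))(\beta(m))\rangle$, and $\langle \xi, \beta(\kr(a)(\beta(m)))\rangle$ respectively, the last obtained by moving $\beta^{\ast}$, then $\kr^{\ast}(a)$, then $\beta^{\ast}$ across the pairing one factor at a time. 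Since $M$ is finite-dimensional the pairing is nondegenerate, so the equality of these pairings for all $\xi$ and $m$ is equivalent to the equality of the three vectors $\beta(\kr(\alpha(a))(m))$, $\kr(\alpha(a))(\beta(m))$, $\beta(\kr(a)(\beta(m)))$, which is precisely \eqref{adm2} for $(M, \kl, \kr, \beta)$. The same unwinding applied to the second chain, with $\kl$ in place of $\kr$, shows that dual \eqref{adm2} is equivalent to original \eqref{adm1}. Hence the two dual conditions together are equivalent to the two original conditions together.

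I do not expect a genuine obstacle here: the argument is purely formal. The only points demanding care are bookkeeping ones, namely keeping track of the role swap $\kl\leftrightarrow\kr$ that occurs on passing to the dual, correctly unfolding the nested transposes so that each starred map deposits the corresponding $\beta$, $\kr$, or $\kl$ on the $M$-side of the pairing, and explicitly invoking nondegeneracy (valid by finite-dimensionality) to pass from equality of scalars to equality of vectors. Because $(M^{\ast})^{\ast}$ is canonically identified with $M$ and double transposition returns the original maps, the ``if'' and ``only if'' directions are the two readings of the same chain of equivalences, so establishing the equivalences once completes the proof.
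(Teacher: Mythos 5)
Your proof is correct and is essentially the paper's own argument: the paper likewise reduces the claim to matching the two dual averaging conditions (with the $\kl\leftrightarrow\kr$ role swap) against Eqs.~\eqref{adm1}--\eqref{adm2}, only phrasing the duality step at the operator level, via $(\varphi\psi)^{\ast}=\psi^{\ast}\varphi^{\ast}$ and injectivity of transposition, where you phrase it at the element level by pairing with $m$ and invoking nondegeneracy. The two phrasings are the same formal dualization, so there is nothing further to add.
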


\begin{proof}
By the definition of bimodule over an averaging algebra, we get
$(M^{\ast}, \kr^{\ast}, \kl^{\ast}, \beta^{\ast})$ is a bimodule over $(A, \alpha)$
if and only if
\begin{enumerate}\itemsep=0pt
\item[$(i)$] $\kl^{\ast}(\alpha(a))(\beta^{\ast}(\xi))=\beta^{\ast}(\kl^{\ast}
(\alpha(a))(\xi))=\beta^{\ast}(\kl^{\ast}(a)(\beta^{\ast}(\xi)))$, and
\item[$(ii)$] $\kr^{\ast}(\alpha(a))(\beta^{\ast}(\xi))
=\beta^{\ast}(\kr^{\ast}(a)(\beta^{\ast}(\xi)))=\beta^{\ast}(\kr^{\ast}(\alpha(a))(\xi))$,
\end{enumerate}
for any $a\in A$ and $\xi\in M^{\ast}$. Note that
$\kl^{\ast}(\alpha(a))\beta^{\ast}-\beta^{\ast}\kl^{\ast}(\alpha(a))
=(\beta\kl(\alpha(a))-\kl(\alpha(a))\beta)^{\ast}$ and
$\kl^{\ast}(\alpha(a))\beta^{\ast}-\beta^{\ast}\kl^{\ast}(a)\beta^{\ast}
=(\beta\kl(\alpha(a))-\beta\kl(a)\beta)^{\ast}$, we get
$(i)$ holds if and only if Eq. \eqref{adm1} holds.
Similarly, we also have $(ii)$ holds if and only if Eq. \eqref{adm2} holds.
Thus the conclusion follows.
\end{proof}

Thus, for an averaging algebra $(A, \alpha)$, the quadruple $(A^{\ast}, \fr_{A}^{\ast},
\fl_{A}^{\ast}, \alpha^{\ast})$ is a bimodule over $(A, \alpha)$, which is called
the {\it coregular bimodule}.

\section{Averaging antisymmetric infinitesimal bialgebras}\label{sec:bialg}
In this section, we introduce the notions of a double construction of averaging
Frobenius algebra and an averaging antisymmetric infinitesimal bialgebra, and give
their equivalence in terms of matched pairs of averaging algebras.

\subsection{Matched pairs of averaging algebras}\label{subsec:mat-pair}
We first recall the concept of a matched pair of associative algebras.

\begin{defi} \label{def:mat-pa}
A {\rm matched pair of associative algebras} consists of two associative algebras
$A$, $B$, and linear maps $\kl_{A}, \kr_{A}: A\rightarrow\End_{\Bbbk}(B)$ and
$\kl_{B}, \kr_{B}: B\rightarrow\End_{\Bbbk}(A)$, such that $(A\oplus B, \ast)$ is
also an associative algebra, where $\ast$ is defined by
\begin{align*}
(a_{1}, b_{1})\ast(a_{2}, b_{2})=\Big(a_{1}a_{2}+\kl_{B}(b_{1})(a_{2})+\kr_{B}(b_{2})(a_{1}),
\quad  b_{1}b_{2}+\kl_{A}(a_{1})(b_{2})+\kr_{A}(a_{2})(b_{1})\Big),
\end{align*}
for all $a_{1}, a_{2}\in A$ and $b_{1}, b_{2}\in B$. The matched pair is denoted by
$(A, B, \kl_{A}, \kr_{A}, \kl_{B}, \kr_{B})$ and the resulting algebra is denoted
by $A\bowtie B$.
\end{defi}

For a matched pair of associative algebras $(A, B, \kl_{A}, \kr_{A}, \kl_{B}, \kr_{B})$,
it is easy to see that $(A, \kl_{B}, \kr_{B})$ is a bimodule over $B$ and
$(B, \kl_A, \kr_A)$ is a bimodule over $A$.

\begin{defi}
Let $(A, \alpha)$ and $(B, \beta)$ be two averaging algebras.
Suppose that $\kl_{A}, \kr_{A}: A\rightarrow\End_{\Bbbk}(B)$ and $\kl_{B},
\kr_{B}: B\rightarrow\End_{\Bbbk}(A)$ are linear maps.
If the following conditions are satisfied:
\begin{enumerate}\itemsep=0pt
\item[$(i)$] $(A, \kl_{B}, \kr_{B}, \alpha)$ is a bimodule over averaging algebra
     $(B, \beta)$;
\item[$(ii)$] $(B, \kl_{A}, \kr_{A}, \beta)$ is a bimodule over averaging algebra
     $(A, \alpha)$;
\item[$(iii)$] $(A, B, \kl_{A}, \kr_{A}, \kl_{B}, \kr_{B})$ is a matched pair of
     associative algebras,
\end{enumerate}
then $((A, \alpha), (B, \beta), \kl_{A}, \kr_{A}, \kl_{B}, \kr_{B})$ is called a
{\rm matched pair of averaging algebras}.
\end{defi}

\begin{pro}\label{pro:matda}
Let $(A, \alpha)$ and $(B, \beta)$ be two averaging algebras.
Suppose that $(A, B, \kl_{A}, \kr_{A}, \kl_{B},$ $ \kr_{B})$ is a matched pair
of associative algebras. Then $(A\bowtie B, \alpha\oplus\beta)$ is an averaging
algebra if and only if $((A, \alpha), (B, \beta), \kl_{A}, \kr_{A}, \kl_{B},
\kr_{B})$ is a matched pair of averaging algebras.
Further, any averaging algebra whose underlying vector space is the linear direct sum
of two averaging subalgebras is obtained from a matched pair of these
two averaging subalgebras.
\end{pro}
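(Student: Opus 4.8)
The plan is to use the hypothesis that $(A,B,\kl_{A},\kr_{A},\kl_{B},\kr_{B})$ is a matched pair of associative algebras, so that $(A\bowtie B,\ast)$ is associative by Definition~\ref{def:mat-pa} and only the averaging-operator property of $\gamma:=\alpha\oplus\beta$ remains to be examined. Recall that, since condition $(iii)$ in the definition of matched pair of averaging algebras is already assumed, the remaining content is exactly conditions $(i)$ and $(ii)$, namely that $(A,\kl_{B},\kr_{B},\alpha)$ is a bimodule over $(B,\beta)$ and $(B,\kl_{A},\kr_{A},\beta)$ is a bimodule over $(A,\alpha)$. Hence the whole proposition reduces to showing that $\gamma$ is an averaging operator on $A\bowtie B$ if and only if these two bimodule conditions hold.

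Writing $x=(a_{1},b_{1})$ and $y=(a_{2},b_{2})$, I would expand the two defining averaging identities
\[
\gamma(x)\ast\gamma(y)=\gamma(\gamma(x)\ast y),\qquad
\gamma(x)\ast\gamma(y)=\gamma(x\ast\gamma(y)),
\]
using $\gamma(a,b)=(\alpha(a),\beta(b))$ together with the formula for $\ast$, and then read off the $A$-component and the $B$-component separately. In the $A$-component the purely associative contributions are $\alpha(a_{1})\alpha(a_{2})$, $\alpha(\alpha(a_{1})a_{2})$ and $\alpha(a_{1}\alpha(a_{2}))$, which already coincide because $\alpha$ is averaging on $A$; after cancelling them, the surviving terms involve only $\kl_{B}$ and $\kr_{B}$. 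Symmetrically, in the $B$-component the products $\beta(b_{1})\beta(b_{2})=\beta(\beta(b_{1})b_{2})=\beta(b_{1}\beta(b_{2}))$ cancel and only $\kl_{A},\kr_{A}$ remain.

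The one point requiring care — and the main obstacle — is that in each component the leftover identity is a \emph{coupled} relation: a $\kl$-type summand built from the pair $(a_{2},b_{1})$ plus a $\kr$-type summand built from the independent pair $(a_{1},b_{2})$. Since the identities hold for all $a_{1},a_{2},b_{1},b_{2}$, I would decouple them by setting complementary variables to zero (take $b_{2}=0$ to isolate the $\kl_{B}$-relation and $b_{1}=0$ to isolate the $\kr_{B}$-relation). The first identity then yields $\kl_{B}(\beta(b))(\alpha(a))=\alpha(\kl_{B}(\beta(b))(a))$ and $\kr_{B}(\beta(b))(\alpha(a))=\alpha(\kr_{B}(b)(\alpha(a)))$, while the second yields $\kl_{B}(\beta(b))(\alpha(a))=\alpha(\kl_{B}(b)(\alpha(a)))$ and $\kr_{B}(\beta(b))(\alpha(a))=\alpha(\kr_{B}(\beta(b))(a))$; together these are precisely Eqs.~\eqref{adm1}--\eqref{adm2} for $(A,\kl_{B},\kr_{B},\alpha)$ over $(B,\beta)$, i.e. condition $(i)$. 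The symmetric computation in the $B$-component produces Eqs.~\eqref{adm1}--\eqref{adm2} for $(B,\kl_{A},\kr_{A},\beta)$ over $(A,\alpha)$, i.e. condition $(ii)$. This establishes the equivalence; note that both defining identities are genuinely needed, the left-middle equality producing the $\alpha(\kl_{B}(\beta(b))(\cdot))$ forms and the left-right equality the $\alpha(\kl_{B}(b)(\beta(\cdot)))$ forms.

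For the final assertion, suppose $(C,\gamma)$ is an averaging algebra whose underlying space splits as $C=A\oplus B$ with $(A,\gamma|_{A})$ and $(B,\gamma|_{B})$ averaging subalgebras. By the definition of averaging subalgebra, $\gamma$ restricts to $A$ and to $B$, so $\gamma(A)\subseteq A$, $\gamma(B)\subseteq B$ and $\gamma=\gamma|_{A}\oplus\gamma|_{B}$. Defining $\kl_{A},\kr_{A},\kl_{B},\kr_{B}$ from the off-diagonal parts of the multiplication on $C$ in the standard way — so that for $a\in A$, $b\in B$ the $B$-parts of $a\cdot b$ and $b\cdot a$ are $\kl_{A}(a)(b)$ and $\kr_{A}(a)(b)$, while their $A$-parts are $\kr_{B}(b)(a)$ and $\kl_{B}(b)(a)$ — exhibits $(A,B,\kl_{A},\kr_{A},\kl_{B},\kr_{B})$ as a matched pair of associative algebras with $C\cong A\bowtie B$. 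Since $\gamma$ is already an averaging operator, the equivalence just proved upgrades this to a matched pair of averaging algebras, as desired.
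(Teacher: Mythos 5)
Your proposal is correct and follows essentially the same route as the paper's proof: expand the two averaging identities for $\alpha\oplus\beta$ componentwise, cancel the purely associative terms using that $\alpha$ and $\beta$ are averaging, and decouple the surviving coupled relations by setting complementary variables to zero to recover exactly Eqs.~\eqref{adm1}--\eqref{adm2} for the two bimodule conditions (your choice $b_{2}=0$, $b_{1}=0$ versus the paper's $a_{1}=b_{2}=0$, $a_{2}=b_{1}=0$ is an immaterial variation). Your last paragraph merely fills in the detail the paper dismisses as ``straightforward,'' and it does so correctly.
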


\begin{proof}
Suppose that $((A, \alpha), (B, \beta), \kl_{A}, \kr_{A}, \kl_{B}, \kr_{B})$ is a
matched pair of averaging algebras. Then, for any $a_{1}, a_{2}\in A$,
$b_{1}, b_{2}\in B$, we have
\begin{align*}
(\alpha\oplus\beta)(a_{1}, b_{1})\ast(\alpha\oplus\beta)(a_{2}, b_{2})
&=\Big(\alpha(a_{1})\alpha(a_{2})+\kl_{B}(\beta(b_{1}))(\alpha(a_{2}))
+\kr_{B}(\beta(b_{2}))(\alpha(a_{1})),\\[-2mm] &\qquad\beta(b_{1})\beta(b_{2})
+\kl_{A}(\alpha(a_{1}))(\beta(b_{2}))+\kr_{A}(\alpha(a_{2}))(\beta(b_{1}))\Big),\\
(\alpha\oplus\beta)((\alpha\oplus\beta)(a_{1}, b_{1})\ast(a_{2}, b_{2}))
&=\Big(\alpha(\alpha(a_{1})a_{2})+\alpha(\kl_{B}(\beta(b_{1}))(a_{2}))
+\alpha(\kr_{B}(b_{2})(\alpha(a_{1}))),\\[-2mm]  &\qquad  \beta(\beta(b_{1})b_{2})
+\beta(\kl_{A}(\alpha(a_{1}))(b_{2}))+\beta(\kr_{A}(a_{2})(\beta(b_{1})))\Big),\\
(\alpha\oplus\beta)((a_{1}, b_{1})\ast(\alpha\oplus\beta)(a_{2}, b_{2}))
&=\Big(\alpha(a_{1}\alpha(a_{2}))+\alpha(\kl_{B}(b_{1})(\alpha(a_{2})))
+\alpha(\kr_{B}(\beta(b_{2}))(a_{1})),\\[-2mm]  &\qquad  \beta(b_{1}\beta(b_{2}))
+\beta(\kl_{A}(a_{1})(\beta(b_{2})))+\beta(\kr_{A}(\alpha(a_{2}))(b_{1}))\Big).
\end{align*}
Since $(A, \kl_{B}, \kr_{B}, \alpha)$ is a bimodule over $(B, \beta)$ and
$(B, \kl_{A}, \kr_{A}, \beta)$ is a bimodule over $(A, \alpha)$, we get
$(\alpha\oplus\beta)(a_{1}, b_{1})\ast(\alpha\oplus\beta)(a_{2}, b_{2})
=(\alpha\oplus\beta)((\alpha\oplus\beta)(a_{1}, b_{1})\ast(a_{2}, b_{2}))
=(\alpha\oplus\beta)((a_{1}, b_{1})\ast(\alpha\oplus\beta)(a_{2}, b_{2}))$. Thus,
$\alpha\oplus\beta$ is an averaging operator on $A\bowtie B$, and so that,
$(A\bowtie B, \alpha\oplus\beta)$ is an averaging algebra.

Conversely, if $\alpha\oplus\beta$ is an averaging operator on $A\bowtie B$,
i.e., $(\alpha\oplus\beta)(a_{1}, b_{1})\ast(\alpha\oplus\beta)(a_{2}, b_{2})
=(\alpha\oplus\beta)((\alpha\oplus\beta)(a_{1}, b_{1})\ast(a_{2}, b_{2}))
=(\alpha\oplus\beta)((a_{1}, b_{1})\ast(\alpha\oplus\beta)(a_{2}, b_{2}))$, for any
$a_{1}, a_{2}\in A$ and $b_{1}, b_{2}\in B$. From the above calculation,
taking $a_{1}=b_{2}=0$ and $a_{2}=b_{1}=0$ in the above equation respectively, we get
that $(A, \kl_{B}, \kr_{B}, \alpha)$ is a bimodule over $(B, \beta)$ and
$(B, \kl_{A}, \kr_{A}, \beta)$ is a bimodule over $(A, \alpha)$. Hence,
$((A, \alpha), (B, \beta), \kl_{A}, \kr_{A}, \kl_{B}, \kr_{B})$ is a matched pair
of averaging algebras. Finally, the second part follows straightforwardly.
\end{proof}

\subsection{Double constructions of averaging Frobenius algebras}\label{subsec:double}
We recall the concept of a double construction of Frobenius algebra \cite{Bai}.

\begin{defi}\label{def:form}
Let $\mathfrak{B}(-,-)$ be a bilinear form on an associative algebra $A$.
\begin{enumerate}\itemsep=0pt
\item[-] $\mathfrak{B}(-,-)$ is called {\rm nondegenerate} if $\mathfrak{B}(a_{1},\;
        a_{2})=0$ for any $a_{2}\in A$, then $a_{1}=0$;
\item[-] $\mathfrak{B}(-,-)$ is called {\rm invariant} if $\mathfrak{B}(a_{1}a_{2},\; a_{3})
        =\mathfrak{B}(a_{1},\; a_{2}a_{3})$, for any $a_{1}, a_{2}, a_{3}\in A$;
\item[-] $\mathfrak{B}(-,-)$ is called {\rm symmetric} if $\mathfrak{B}(a_{1},\; a_{2})
        =\mathfrak{B}(a_{2},\; a_{1})$, for any $a_{1}, a_{2}\in A$.
\end{enumerate}
A {\rm Frobenius algebra} $(A, \mathfrak{B})$ is an associative algebra $A$ with a
nondegenerate invariant bilinear form $\mathfrak{B}(-,-)$. A Frobenius algebra
$(A, \mathfrak{B})$ is called {\rm symmetric} if $\mathfrak{B}(-,-)$ is symmetric.
\end{defi}

Let $A$ be an associative algebra. Suppose that there is an associative algebra structure
$\cdot$ on its dual space $A^{\ast}$ and an associative algebra structure $\ast$ on the
direct sum $A\oplus A^{\ast}$ of the underlying vector spaces $A$ and $A^{\ast}$,
which contains both $A$ and $A^{\ast}$ as subalgebras. Then the associative algebra is
just the associative algebra $A\bowtie A^{\ast}$, corresponding to the matched pair
$(A, A^{\ast}, \fr_{A}^{\ast}, \fl_{A}^{\ast}, \fr_{A^{\ast}}^{\ast},
\fl_{A^{\ast}}^{\ast})$. Define a bilinear form on $A\oplus A^{\ast}$ by
\begin{align*}
\mathfrak{B}_{d}\Big((a_{1},\xi_{1}),\; (a_{2}, \xi_{2})\big)
:=\langle\xi_{2},\; a_{1}\rangle+\langle\xi_{1},\; a_{2}\rangle,
\end{align*}
for any $a_{1}, a_{2}\in A$ and $\xi_{1}, \xi_{2}\in A^{\ast}$.
If $(A \oplus A^{\ast}, \mathfrak{B}_{d})$ is a symmetric Frobenius algebra,
then it is called a {\rm double construction of Frobenius algebra associated to $A$ and
$A^{\ast}$}, and denoted by $(A\bowtie A^{\ast}, \mathfrak{B}_{d})$.

We extend these notions to the context of averaging algebras.

\begin{defi}\label{def:aver-Frob}
An {\rm averaging Frobenius algebra} is a triple $(A, \alpha, \mathfrak{B})$,
where $(A, \alpha)$ is an averaging algebra and $(A, \mathfrak{B})$ is a Frobenius algebra.
It is called {\rm symmetric} if the bilinear form $\mathfrak{B}(-,-)$ is symmetric.
A linear map $\hat{\alpha}: A\rightarrow A$ is called {\rm the adjoint linear operator
of $\alpha$ under the nondegenerate bilinear form $\mathfrak{B}(-,-)$}, if
$$
\mathfrak{B}(\alpha(a_{1}),\; a_{2})=\mathfrak{B}(a_{1},\; \hat{\alpha}(a_{2})), \qquad
$$
for any $a_{1}, a_{2}\in A$.
\end{defi}

\begin{pro}\label{pro:Fro-mod}
Let $(A, \alpha, \mathfrak{B})$ be a symmetric averaging Frobenius algebra,
and $\hat{\alpha}$ be the adjoint of $\alpha$ with respect to $\mathfrak{B}(-,-)$.
Then, $(A^{\ast}, \fr_{A}^{\ast}, \fl_{A}^{\ast}, \hat{\alpha}^{\ast})$ is
a bimodule over the averaging algebra $(A, \alpha)$, and as bimodules over
$(A, \alpha)$, $(A, \fl_{A}, \fr_{A}, \alpha)$ and $(A^{\ast}, \fr_{A}^{\ast},
\fl_{A}^{\ast}, \hat{\alpha}^{\ast})$ are isomorphic.
Moreover, let $(A, \alpha)$ be an averaging algebra, and $\beta: A\rightarrow A$ be
a linear map. If $(A^{\ast}, \fr_{A}^{\ast}, \fl_{A}^{\ast}, \beta^{\ast})$ is
a bimodule over $(A, \alpha)$, and it is isomorphic to the regular bimodule
$(A, \fl_{A}, \fr_{A}, \alpha)$, then there exists a nondegenerate invariant
bilinear form $\mathfrak{B}$ on $A$ such that $\beta=\hat{\alpha}$.
\end{pro}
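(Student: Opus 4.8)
The plan is to encode both directions through the single linear map $\varphi\colon A\rightarrow A^{\ast}$ determined by $\langle\varphi(a),\,b\rangle=\mathfrak{B}(a,\,b)$ for all $a,b\in A$. Since $A$ is finite dimensional and $\mathfrak{B}$ is nondegenerate, $\varphi$ is a linear isomorphism. I claim that proving the proposition amounts to checking that $\varphi$ intertwines the regular bimodule structure with the coregular one, that is, the three identities
\[
\varphi(\fl_A(a)(b))=\fr_A^{\ast}(a)(\varphi(b)),\qquad \varphi(\fr_A(a)(b))=\fl_A^{\ast}(a)(\varphi(b)),\qquad \varphi\circ\alpha=\hat{\alpha}^{\ast}\circ\varphi .
\]
Once these hold, $\varphi$ is a bijective intertwiner of all the operations, so the bimodule axioms—both the associative bimodule conditions and the averaging conditions \eqref{adm1}--\eqref{adm2}—transport from $(A,\fl_A,\fr_A,\alpha)$ to its image. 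This simultaneously shows that $(A^{\ast},\fr_A^{\ast},\fl_A^{\ast},\hat{\alpha}^{\ast})$ is a bimodule over $(A,\alpha)$ and that $\varphi$ is the asserted isomorphism, so no separate verification of \eqref{adm1}--\eqref{adm2} is needed.

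For the forward statement I would verify the three identities by pairing against an arbitrary $v\in A$. The right-action identity reduces to $\mathfrak{B}(ba,v)=\mathfrak{B}(b,av)$, which is exactly invariance. The left-action identity reduces to $\mathfrak{B}(ab,v)=\mathfrak{B}(b,va)$; this is where symmetry enters, as it follows from invariance together with two applications of $\mathfrak{B}(x,y)=\mathfrak{B}(y,x)$. The operator identity reduces to $\mathfrak{B}(\alpha(a),v)=\mathfrak{B}(a,\hat{\alpha}(v))$, which is precisely the defining relation of the adjoint $\hat{\alpha}$. This establishes the first two assertions.

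For the converse I would run the same correspondence backwards. Let $\varphi\colon(A,\fl_A,\fr_A,\alpha)\rightarrow(A^{\ast},\fr_A^{\ast},\fl_A^{\ast},\beta^{\ast})$ be the given isomorphism of bimodules over $(A,\alpha)$ and set $\mathfrak{B}(a,b):=\langle\varphi(a),\,b\rangle$. Bijectivity of $\varphi$ gives nondegeneracy of $\mathfrak{B}$. The right-action compatibility of $\varphi$ yields $\mathfrak{B}(ba,v)=\mathfrak{B}(b,av)$, i.e.\ invariance of $\mathfrak{B}$. Finally the operator compatibility $\varphi\circ\alpha=\beta^{\ast}\circ\varphi$ gives $\mathfrak{B}(\alpha(a),b)=\mathfrak{B}(a,\beta(b))$ for all $a,b$; since $A$ is finite dimensional, nondegeneracy is two-sided and the adjoint of $\alpha$ is uniquely determined, so this identity forces $\beta=\hat{\alpha}$.

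I expect the only real obstacle to be bookkeeping rather than depth: one must keep straight that dualization swaps the left and right actions (so the left action on $A^{\ast}$ is $\fr_A^{\ast}$ and the right action is $\fl_A^{\ast}$), and one must invoke symmetry at exactly one place—the left-action identity—while the invariance axiom alone handles the right-action identity. It is also worth noting the asymmetry between the two directions: the forward direction consumes symmetry of $\mathfrak{B}$, whereas the converse only produces invariance and nondegeneracy, which is all the statement asks for. The remaining content is the routine evaluation of the standard pairing $\langle-,-\rangle$, together with the finite-dimensionality remark that promotes the one-sided nondegeneracy of Definition \ref{def:form} to a two-sided one, so that $\hat{\alpha}$ is well defined and unique.
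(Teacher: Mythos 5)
Your proof is correct, but it reorganizes the argument in a way that genuinely differs from the paper. The paper proves the first assertion in two stages: it first verifies by hand that $(A, \fl_{A}, \fr_{A}, \hat{\alpha})$ is a bimodule over $(A, \alpha)$, pairing the relevant operators against a third element and using adjointness, symmetry and invariance to reduce everything to $\mathfrak{B}(\alpha(a_{3}\alpha(a_{1})),\, a_{2})$, $\mathfrak{B}(\alpha(a_{3})\alpha(a_{1}),\, a_{2})$ and $\mathfrak{B}(\alpha(\alpha(a_{3})a_{1}),\, a_{2})$, which agree by the averaging identity, and then concluding by nondegeneracy; it next dualizes through Proposition \ref{pro:dual} to obtain that $(A^{\ast}, \fr_{A}^{\ast}, \fl_{A}^{\ast}, \hat{\alpha}^{\ast})$ is a bimodule; and only then checks that $\varphi$ intertwines the structures. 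You skip the first two stages entirely: you check only the three intertwining identities (the same pairings the paper does in its last stage) and obtain the bimodule property for free by transport of structure along the bijection $\varphi$. That shortcut is sound, because the bimodule axioms, including \eqref{adm1}--\eqref{adm2}, are identities among compositions of the module-side operators with the algebra elements as fixed parameters, hence are preserved under conjugation by a fixed linear bijection, and because the regular bimodule $(A, \fl_{A}, \fr_{A}, \alpha)$ is already known to be a bimodule over $(A, \alpha)$ --- which is the only place the averaging property of $\alpha$ enters your argument, whereas the paper uses it explicitly in its computation. What each buys: your route is shorter and isolates where each hypothesis is consumed (symmetry only in the left-action identity, adjointness only in the operator identity, averaging only via the regular bimodule); the paper's route records the intermediate fact that $(A, \fl_{A}, \fr_{A}, \hat{\alpha})$ is itself an averaging bimodule and runs through the dualization lemma that the paper reuses elsewhere. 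Your converse coincides with the paper's, which is only sketched there (``by a similar argument''); your explicit remarks --- that only the right-action and operator compatibilities are needed, that symmetry is neither used nor claimed, and that finite dimensionality upgrades the one-sided nondegeneracy of Definition \ref{def:form} to two-sided so that the adjoint is unique and $\beta=\hat{\alpha}$ follows --- supply precisely the details the paper leaves implicit.
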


\begin{proof}
First, suppose that $(A, \alpha, \mathfrak{B})$ is a symmetric averaging Frobenius algebra.
For any $a_{1}, a_{2}$, $a_{3}\in A$, note that
\begin{align*}
\mathfrak{B}(\alpha(a_{1})\hat{\alpha}(a_{2}),\; a_{3})
&=\mathfrak{B}(\alpha(a_{3}\alpha(a_{1})),\; a_{2}),\\
\mathfrak{B}(\hat{\alpha}(\alpha(a_{1})a_{2}),\; a_{3})
&=\mathfrak{B}(\alpha(a_{3})\alpha(a_{1}),\; a_{2}),\\
\mathfrak{B}(\hat{\alpha}(a_{1}\hat{\alpha}(a_{2})),\; a_{3})
&=\mathfrak{B}(\alpha(\alpha(a_{3})a_{1}),\; a_{2}),
\end{align*}
we get $\fl(\alpha(a_{1}))(\hat{\alpha}(a_{2}))=\hat{\alpha}(\fl(\alpha(a_{1}))(a_{2}))
=\hat{\alpha}(\fl(a_{1})(\hat{\alpha}(a_{2})))$. Similarly, we have
$\fr(\alpha(a_{1}))(\hat{\alpha}(a_{2}))=$ $\hat{\alpha}(\fr(\alpha(a_{1}))(a_{2}))
=\hat{\alpha}(\fr(a_{1})(\hat{\alpha}(a_{2})))$. Thus, $(A, \fl_{A}, \fr_{A},
\hat{\alpha})$ is a bimodule over $(A, \alpha)$. By Proposition \ref{pro:dual},
we get $(A^{\ast}, \fr_{A}^{\ast}, \fl_{A}^{\ast}, \hat{\alpha}^{\ast})$ is
a bimodule over $(A, \alpha)$.

Define a linear map $\varphi: A\rightarrow A^{\ast}$ by
$$
\varphi(a_{1})(a_{2})=\mathfrak{B}(a_{1}, a_{2}),
$$
for any $a_{1}, a_{2}\in A$. Then, $\varphi$ is a linear isomorphism.
Moreover, for any $a_{1}, a_{2}, a_{3}\in A$, we have
\begin{align*}
\langle\varphi(\fl_{A}(a_{1})(a_{2})),\; a_{3}\rangle&=\mathfrak{B}(a_{1}a_{2},\; a_{3})
=\langle\varphi(a_{2}),\; a_{3}a_{1}\rangle
=\langle\fr_{A}^{\ast}(a_{1})(\varphi(a_{2})),\; a_{3}\rangle, \\
\langle\varphi(\fr_{A}(a_{1})(a_{2})),\; a_{3}\rangle&=\mathfrak{B}(a_{2}a_{1},\; a_{3})
=\langle\varphi(a_{2}),\; a_{1}a_{3}\rangle
=\langle\fl_{A}^{\ast}(a_{1})(\varphi(a_{2})),\; a_{3}\rangle, \\
\langle\varphi(\alpha(a_{1})),\; a_{2}\rangle&=\mathfrak{B}(\alpha(a_{1}),\; a_{2})
=\mathfrak{B}(a_{1},\; \hat{\alpha}(a_{2}))
=\langle\hat{\alpha}^{\ast}(\varphi(a_{1})),\; a_{2}\rangle.
\end{align*}
Hence, $\varphi$ is an isomorphism.

Second, suppose that $\varphi: A\rightarrow A^{\ast}$ is the isomorphism
from $(A, \fl_{A}, \fr_{A}, \alpha)$ to $(A^{\ast}, \fr_{A}^{\ast}, \fl_{A}^{\ast},
\beta^{\ast})$. Define a bilinear form $\mathfrak{B}(-,-)$ on $A$ by
$$
\mathfrak{B}(a_{1}, a_{2}):=\langle\varphi(a_{1}),\; a_{2}\rangle,
$$
for any $a_{1}, a_{2}\in A$.
Then by a similar argument as above, we show that $\mathfrak{B}(-,-)$ is a nondegenerate
invariant bilinear form on $A$ such that $\beta=\hat{\alpha}$.
\end{proof}

\begin{defi}\label{def:doucon}
Let $(A, \cdot, \alpha)$ be an averaging algebra. Suppose that there is a linear map
$\beta: A\rightarrow A$ and a bilinear map $\cdot': A^{\ast}\otimes A^{\ast}\rightarrow
A^{\ast}$ such that $(A^{\ast}, \cdot', \beta^{\ast})$ is an averaging algebra.
A {\rm double construction of averaging Frobenius algebra associated
to $(A, \cdot, \alpha)$ and $(A^{\ast}, \cdot', \beta^{\ast})$} is a double
construction of Frobenius algebra $(A \bowtie A^{\ast}, \mathfrak{B}_d)$
associated to $A$ and $A^{\ast}$ such that $(A\bowtie A^{\ast},
\alpha\oplus\beta^{\ast})$ is an averaging algebra, which is denoted by
$(A\bowtie A^{\ast}, \alpha\oplus\beta^{\ast}, \mathfrak{B}_{d})$.
\end{defi}

\begin{lem}\label{lem:douadm}
Let $(A\bowtie A^{\ast}, \alpha\oplus\beta^{\ast}, \mathfrak{B}_{d})$ be a double
construction of averaging Frobenius algebra associated to $(A, \cdot, \alpha)$
and $(A^{\ast}, \cdot' \beta^{\ast})$. Then,
\begin{enumerate}\itemsep=0pt
\item[$(i)$] The adjoint $\widehat{\alpha\oplus\beta^{\ast}}$ of $\alpha\oplus\beta^{\ast}$
     with respect to $\mathfrak{B}_{d}(-,-)$ is $\beta\oplus\alpha^{\ast}$, and $(A\oplus
     A^{\ast}, \fl_{A\bowtie A^{\ast}}, \fr_{A\bowtie A^{\ast}}, \beta\oplus\alpha^{\ast})$
     is a bimodule over $(A\bowtie A^{\ast}, \alpha\oplus\beta^{\ast})$;
\item[$(ii)$] $(A, \fl_{A}, \fr_{A}, \beta)$ is a bimodule over $(A, \alpha)$;
\item[$(iii)$] $(A^{\ast}, \fl_{A^{\ast}}, \fr_{A^{\ast}}, \alpha^{\ast})$ is a
     bimodule over $(A^{\ast}, \beta^{\ast})$.
\end{enumerate}
\end{lem}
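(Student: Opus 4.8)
The plan is to handle part $(i)$ first, by a direct adjoint computation together with Proposition \ref{pro:Fro-mod}, and then to deduce $(ii)$ and $(iii)$ by restricting the bimodule structure from $(i)$ to the two subalgebras $A$ and $A^{\ast}$.

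For $(i)$, I would begin by computing the adjoint of $\alpha\oplus\beta^{\ast}$ directly from the definition of $\mathfrak{B}_{d}$. Using the transpose relations $\langle\beta^{\ast}(\xi_{1}),\,a_{2}\rangle=\langle\xi_{1},\,\beta(a_{2})\rangle$ and $\langle\xi_{2},\,\alpha(a_{1})\rangle=\langle\alpha^{\ast}(\xi_{2}),\,a_{1}\rangle$, one finds for all $(a_{1},\xi_{1}),(a_{2},\xi_{2})\in A\oplus A^{\ast}$ that
\[
\mathfrak{B}_{d}\big((\alpha\oplus\beta^{\ast})(a_{1},\xi_{1}),\,(a_{2},\xi_{2})\big)
=\langle\alpha^{\ast}(\xi_{2}),\,a_{1}\rangle+\langle\xi_{1},\,\beta(a_{2})\rangle
=\mathfrak{B}_{d}\big((a_{1},\xi_{1}),\,(\beta\oplus\alpha^{\ast})(a_{2},\xi_{2})\big),
\]
which identifies $\widehat{\alpha\oplus\beta^{\ast}}=\beta\oplus\alpha^{\ast}$. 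Since $(A\bowtie A^{\ast},\alpha\oplus\beta^{\ast},\mathfrak{B}_{d})$ is a symmetric averaging Frobenius algebra by hypothesis, I would then invoke Proposition \ref{pro:Fro-mod} with $A$ replaced by $A\bowtie A^{\ast}$ and $\alpha$ by $\alpha\oplus\beta^{\ast}$: the argument there shows that the regular bimodule equipped with the adjoint operator is a bimodule over the averaging algebra, so $(A\oplus A^{\ast},\fl_{A\bowtie A^{\ast}},\fr_{A\bowtie A^{\ast}},\widehat{\alpha\oplus\beta^{\ast}})$ is a bimodule over $(A\bowtie A^{\ast},\alpha\oplus\beta^{\ast})$. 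Substituting the value of the adjoint completes $(i)$.

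For $(ii)$ and $(iii)$, I would restrict the bimodule of $(i)$ to each subalgebra. Because $A$ and $A^{\ast}$ are subalgebras of $A\bowtie A^{\ast}$, the cross-actions $\kl_{A},\kr_{A},\kl_{A^{\ast}},\kr_{A^{\ast}}$ drop out when both factors lie in the same subalgebra, so $(a_{0},0)\ast(a,0)=(a_{0}a,0)$ and $(0,\xi_{0})\ast(0,\xi)=(0,\xi_{0}\cdot'\xi)$. Hence $\fl_{A\bowtie A^{\ast}}$ and $\fr_{A\bowtie A^{\ast}}$ restrict on $A\oplus 0$ to $\fl_{A},\fr_{A}$, and $\beta\oplus\alpha^{\ast}$ preserves $A\oplus 0$ and restricts there to $\beta$; similarly on $0\oplus A^{\ast}$ they restrict to $\fl_{A^{\ast}},\fr_{A^{\ast}}$ and to $\alpha^{\ast}$. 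Specializing the averaging-bimodule identities \eqref{adm1} and \eqref{adm2} of $(i)$ to $a=(a_{0},0),\,m=(a,0)$ reproduces exactly the conditions for $(A,\fl_{A},\fr_{A},\beta)$ over $(A,\alpha)$, giving $(ii)$; specializing to $a=(0,\xi_{0}),\,m=(0,\xi)$ gives $(iii)$.

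The main obstacle is the conceptual point in $(i)$: one must recognize that the operator accompanying the regular bimodule is the \emph{adjoint} $\widehat{\alpha\oplus\beta^{\ast}}=\beta\oplus\alpha^{\ast}$ and not the averaging operator $\alpha\oplus\beta^{\ast}$ itself, and that this is precisely the content extracted from the proof of Proposition \ref{pro:Fro-mod}. Once $(i)$ is in place, parts $(ii)$ and $(iii)$ are essentially bookkeeping; the only thing to check carefully is the vanishing of the cross terms, which guarantees that $\ast$ genuinely restricts to the multiplications of $A$ and $A^{\ast}$ and that the operator $\beta\oplus\alpha^{\ast}$ preserves each subspace.
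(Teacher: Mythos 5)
Your proposal is correct and takes essentially the same route as the paper's own proof: part $(i)$ by the identical direct computation of the adjoint of $\alpha\oplus\beta^{\ast}$ under $\mathfrak{B}_{d}$ followed by an appeal to Proposition \ref{pro:Fro-mod} (whose argument yields that the regular actions together with the adjoint operator form a bimodule over the averaging algebra). Parts $(ii)$ and $(iii)$ are obtained exactly as in the paper, by specializing the bimodule identities from $(i)$ to elements of the subalgebras $A$ and $A^{\ast}$ (the paper sets $\xi_{1}=\xi_{2}=0$, respectively $a_{1}=a_{2}=0$, which is the same restriction you perform).
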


\begin{proof}
$(i)$ For any $a_{1}, a_{2}\in A$ and $\xi_{1}, \xi_{2}\in A^{\ast}$, we have
\begin{align*}
\mathfrak{B}_{d}((\alpha\oplus\beta^{\ast})(a_{1}, \xi_{1}),\; (a_{2}, \xi_{2}))
&=\langle\alpha(a_{1}),\; \xi_{2}\rangle+\langle a_{2},\; \beta^{\ast}(\xi_{1})\rangle\\
&=\langle a_{1},\; \alpha^{\ast}(\xi_{2})\rangle+\langle\beta(a_{2}),\; \xi_{1}\rangle
=\mathfrak{B}_{d}((a_{1}, \xi_{1}),\; (\beta\oplus\alpha^{\ast})(a_{2}, \xi_{2})).
\end{align*}
That is to say, the adjoint of $\alpha\oplus\beta^{\ast}$ with respect to
$\mathfrak{B}_{d}$ is $\beta\oplus\alpha^{\ast}$. Moreover, by Proposition \ref{pro:Fro-mod},
we get $(A\oplus A^{\ast}, \fl_{A\bowtie A^{\ast}}, \fr_{A\bowtie A^{\ast}},
\beta\oplus\alpha^{\ast})$ is a bimodule over $(A\bowtie A^{\ast}, \alpha\oplus\beta^{\ast})$.

$(ii)$ and $(iii)$ Since $(A\oplus A^{\ast}, \fl_{A\bowtie A^{\ast}}, \fr_{A\bowtie A^{\ast}},
\beta\oplus\alpha^{\ast})$ is a bimodule over $(A\bowtie A^{\ast}, \alpha\oplus\beta^{\ast})$,
for any $a_{1}, a_{2}\in A$ and $\xi_{1}, \xi_{2}\in A^{\ast}$, we have
\begin{align*}
(\alpha(a_{1}), \beta^{\ast}(\xi_{1}))\ast(\beta(a_{2}), \alpha^{\ast}(\xi_{2}))
&=(\beta\oplus\alpha^{\ast})\big((\alpha(a_{1}), \beta^{\ast}(\xi_{1}))
\ast(a_{2}, \xi_{2})\big)\\
&=(\beta\oplus\alpha^{\ast})\big((a_{1}, \xi_{1})
\ast(\beta(a_{2}), \alpha^{\ast}(\xi_{2})\big), \\
(\beta(a_{2}), \alpha^{\ast}(\xi_{2}))\ast(\alpha(a_{1}), \beta^{\ast}(\xi_{1}))
&=(\beta\oplus\alpha^{\ast})\big((a_{2}, \xi_{2})
\ast(\alpha(a_{1}), \beta^{\ast}(\xi_{1})\big) \\
&=(\beta\oplus\alpha^{\ast})\big((\beta(a_{2}), \alpha^{\ast}(\xi_{2}))
\ast(a_{1}, \xi_{1})\big).
\end{align*}
Taking $\xi_{1}=\xi_{2}=0$ in the above equations,
we get that $(A, \fl_{A}, \fr_{A}, \beta)$ is a bimodule over $(A, \alpha)$,
and taking $a_{1}=a_{2}=0$, we get that $(A^{\ast}, \fl_{A^{\ast}}, \fr_{A^{\ast}},
\alpha^{\ast})$ is a bimodule over $(A^{\ast}, \beta^{\ast})$.
\end{proof}

\begin{pro}\label{pro:dou-mat}
Let $(A, \cdot, \alpha)$ be an averaging algebra. Suppose that there is a linear map
$\beta: A\rightarrow A$ and a bilinear map $\cdot': A^{\ast}\otimes A^{\ast}\rightarrow
A^{\ast}$ such that $(A^{\ast}, \cdot', \beta^{\ast})$ is an averaging algebra.
Then there is a double construction of averaging Frobenius algebra
$(A\bowtie A^{\ast}, \alpha\oplus\beta^{\ast}, \mathfrak{B}_{d})$ associated to
$(A, \alpha)$ and $(A^{\ast}, \beta^{\ast})$ if and only if
$((A, \alpha), (A^{\ast}, \beta^{\ast}), \fr_{A}^{\ast}, \fl_{A}^{\ast},
\fr_{A^{\ast}}^{\ast}$, $\fl_{A^{\ast}}^{\ast})$ is a matched pair of averaging algebras.
\end{pro}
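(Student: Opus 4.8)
The plan is to prove the biconditional by decoupling the averaging data from the underlying associative (Frobenius) data, so that the statement becomes a formal chaining of two equivalences already at our disposal. The key observation is that Definition \ref{def:doucon} packages a double construction of averaging Frobenius algebra as exactly two independent requirements: (a) $(A\bowtie A^{\ast}, \mathfrak{B}_{d})$ is an ordinary double construction of Frobenius algebra associated to $A$ and $A^{\ast}$, and (b) $(A\bowtie A^{\ast}, \alpha\oplus\beta^{\ast})$ is an averaging algebra. Likewise, the defining clauses of a matched pair of averaging algebras split into the condition that $(A, A^{\ast}, \fr_{A}^{\ast}, \fl_{A}^{\ast}, \fr_{A^{\ast}}^{\ast}, \fl_{A^{\ast}}^{\ast})$ is a matched pair of associative algebras, plus the two averaging-bimodule conditions. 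I would match the two halves of these packagings against each other.

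First I would invoke the classical correspondence recalled in the discussion preceding Definition \ref{def:aver-Frob} (due to Bai \cite{Bai}): for the given associative structures on $A$ and $A^{\ast}$, the form $\mathfrak{B}_{d}$ is symmetric and nondegenerate by construction, and it is invariant on $A\oplus A^{\ast}$ precisely when the canonical maps $\fr_{A}^{\ast}, \fl_{A}^{\ast}, \fr_{A^{\ast}}^{\ast}, \fl_{A^{\ast}}^{\ast}$ assemble into a matched pair of associative algebras. Thus requirement (a) is equivalent to $(A, A^{\ast}, \fr_{A}^{\ast}, \fl_{A}^{\ast}, \fr_{A^{\ast}}^{\ast}, \fl_{A^{\ast}}^{\ast})$ being a matched pair of associative algebras, which disposes of the associative half of both packagings at once. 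Then I would apply Proposition \ref{pro:matda} with $B = A^{\ast}$ and the operator there taken to be $\beta^{\ast}$: once the underlying associative matched pair is in hand, requirement (b) — that $\alpha\oplus\beta^{\ast}$ is an averaging operator on $A\bowtie A^{\ast}$ — is equivalent to that matched pair being a matched pair of averaging algebras, i.e.\ to the two remaining averaging-bimodule conditions. For the forward implication I would start from a double construction of averaging Frobenius algebra, extract the associative matched pair from the first step, and upgrade it to the averaging level via Proposition \ref{pro:matda}; for the converse I would run both equivalences in reverse, using the associative matched pair contained in the hypothesis to produce $(A\bowtie A^{\ast}, \mathfrak{B}_{d})$ and Proposition \ref{pro:matda} to produce the averaging operator, so that Definition \ref{def:doucon} is met.

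The hard part will be essentially bookkeeping rather than computation: I must check that the operator supplied by Proposition \ref{pro:matda} on the $A^{\ast}$ factor is exactly the averaging operator $\beta^{\ast}$ of the given averaging algebra $(A^{\ast}, \cdot', \beta^{\ast})$, so that both packagings genuinely refer to the one operator $\alpha\oplus\beta^{\ast}$ on the double space, and that the bimodule conditions appearing in the definition of a matched pair of averaging algebras are literally the clauses (i) and (ii) that Proposition \ref{pro:matda} controls. Once this identification is confirmed, the proof reduces to a formal composition of the two equivalences, with all the genuine algebraic content already absorbed into the classical Frobenius correspondence and into Proposition \ref{pro:matda}.
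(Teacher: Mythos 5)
Your proposal is correct, and its overall skeleton — splitting both sides into an associative layer (handled by Bai's double-construction/matched-pair correspondence, \cite[Theorem 2.2.1]{Bai}) and an averaging layer (handled by Proposition \ref{pro:matda}) — is the same as the paper's. The one genuine difference is in the forward direction: the paper derives the two averaging-bimodule conditions from Lemma \ref{lem:douadm}, i.e.\ by computing the adjoint $\widehat{\alpha\oplus\beta^{\ast}}=\beta\oplus\alpha^{\ast}$ with respect to $\mathfrak{B}_{d}$ and then restricting the resulting bimodule structure to the factors $A$ and $A^{\ast}$ (together with the dualization of Proposition \ref{pro:dual}), whereas you exploit the fact that Proposition \ref{pro:matda} is an ``if and only if'' and read the matched-pair-of-averaging-algebras conclusion directly from the hypothesis that $\alpha\oplus\beta^{\ast}$ is an averaging operator on $A\bowtie A^{\ast}$. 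Your route is legitimate and slightly more economical: it makes both implications formal consequences of the same two equivalences and never invokes the Frobenius form beyond Bai's theorem, so Lemma \ref{lem:douadm} becomes unnecessary for this proposition. What the paper's detour buys is extra structural output — the explicit identification of the adjoint operator $\beta\oplus\alpha^{\ast}$ and the bimodule statements of Lemma \ref{lem:douadm} — which is reused later (e.g.\ in Proposition \ref{pro:doublebia}), but none of it is needed to close the equivalence claimed here. Your ``bookkeeping'' worry is also a non-issue: the operator on the $A^{\ast}$ factor is fixed as $\beta^{\ast}$ by the statement itself, and the bimodule clauses in the definition of a matched pair of averaging algebras are literally the ones Proposition \ref{pro:matda} produces.
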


\begin{proof}
If $(A\bowtie A^{\ast}, \mathfrak{B}_{d})$ is a double construction of averaging Frobenius
algebra associated to $(A, \alpha)$ and $(A^{\ast}, \beta^{\ast})$, by
\cite[Theorem 2.2.1]{Bai}, $(A, A^{\ast}, \fr_{A}^{\ast}, \fl_{A}^{\ast},
\fr_{A^{\ast}}^{\ast}, \fl_{A^{\ast}}^{\ast})$ is a matched pair of associative algebras.
And by Lemma \ref{lem:douadm}, $(A^{\ast}, \fr_{A}^{\ast}, \fl_{A}^{\ast}, \beta^{\ast})$
is a bimodule over $(A, \alpha)$ and $(A, \fr_{A^{\ast}}^{\ast}, \fl_{A^{\ast}}^{\ast},
\alpha)$ is a bimodule over $(A^{\ast}, \beta^{\ast})$, respectively.
Hence $((A, \alpha), (A^{\ast}, \beta^{\ast}), \fr_{A}^{\ast}, \fl_{A}^{\ast},
\fr_{A^{\ast}}^{\ast}, \fl_{A^{\ast}}^{\ast})$ is a matched pair of averaging algebras.

Conversely, if $((A, \alpha), (A^{\ast}, \beta^{\ast}), \fr_{A}^{\ast}, \fl_{A}^{\ast},
\fr_{A^{\ast}}^{\ast}, \fl_{A^{\ast}}^{\ast})$ is a matched pair of averaging algebras,
by \cite[Theorem 2.2.1]{Bai} again, $(A\bowtie A^{\ast}, \mathfrak{B}_{d})$ is a
Frobenius algebra. Moreover by Proposition \ref{pro:matda}, $(A\bowtie A^{\ast},
\alpha\oplus\beta^{\ast})$ is an averaging algebra.
Hence $(A\bowtie A^{\ast}, \alpha\oplus\beta^{\ast}, \mathfrak{B}_{d})$ is a
double construction of averaging Frobenius algebra associated to $(A, \alpha)$
and $(A^{\ast}, \beta^{\ast})$.
\end{proof}

\subsection{Averaging antisymmetric infinitesimal bialgebras}\label{subsec:bialg}
Recall that a {\it coassociative coalgebra} $(A, \Delta)$ is a vector space $A$
with a linear map $\Delta: A\rightarrow A\otimes A$ satisfying the coassociative law:
$$
(\Delta\otimes\id)\Delta=(\id\otimes\Delta)\Delta.
$$
A coassociative coalgebra $(A, \Delta)$ is called {\it cocommutative} if
$\Delta=\tau\Delta$, where $\tau: A\otimes A\rightarrow A\otimes A$ is the
flip operator defined by $\tau(a_{1}\otimes a_{2}):=a_{2}\otimes a_{1}$ for
all $a_{1}, a_{2}\in A$.

\begin{defi}[\cite{Bai}]\label{def:bialg}
An {\rm antisymmetric infinitesimal bialgebra} or simply an {\rm ASI bialgebra} is
a triple $(A, \cdot, \Delta)$ consisting of a vector space $A$ and linear maps
$\cdot: A\otimes A\rightarrow A$ and $\Delta: A\rightarrow A\otimes A$ such that
\begin{enumerate}\itemsep=0pt
\item[$(i)$] $(A, \cdot)$ is an associative algebra;
\item[$(ii)$] $(A, \Delta)$ is a coassociative coalgebra;
\item[$(iii)$] for any $a_{1}, a_{2}\in A$,
\begin{align}
&\qquad\qquad\Delta(a_{1}a_{2})=(\fr_{A}(a_{2})\otimes\id)\Delta(a_{1})
+(\id\otimes\,\fl_{A}(a_{1}))\Delta(a_{2}),                          \label{bialg1} \\
&\big(\fl_{A}(a_{1})\otimes\id-\id\otimes\,\fr_{A}(a_{1})\big)\Delta(a_{2})
=\tau\big(\big(\id\otimes\,\fr_{A}(a_{2})
-\fl_{A}(a_{2})\otimes\id\big)\Delta(a_{1})\big).                  \label{bialg2}
\end{align}
\end{enumerate}
\end{defi}

\begin{defi}\label{def:coderivation}
Let $(A, \Delta)$ be a coassociative coalgebra.
A linear map $\beta: A\rightarrow A$ is called an {\rm averaging operator} on
$(A, \Delta)$ if $(\beta\otimes\beta)\Delta=(\beta\otimes\id)\Delta\beta
=(\id\otimes\beta)\Delta\beta$.

An {\rm averaging coalgebra} is a triple $(A, \Delta, \beta)$,
consisting of a coassociative coalgebra $(A, \Delta)$ and an averaging operator
$\beta: A\rightarrow A$. An averaging coalgebra $(A, \Delta, \beta)$ is called
{\rm cocommutative} if $(A, \Delta)$ is cocommutative.
\end{defi}

The notion of an averaging coalgebra is the dualization of the notion of an
averaging algebra, that is, $(A, \Delta, \beta)$ is an (cocommutative) averaging
coalgebra if and only if $(A^{\ast}, \Delta^{\ast}, \beta^{\ast})$ is an
(commutative) averaging algebra, where $\langle\Delta^{\ast}(\xi_{1}, \xi_{2}),\; a\rangle
=\langle\xi_{1}\otimes\xi_{2},\; \Delta(a)\rangle=\sum_{(a)}\xi_{1}(a_{(1)})\xi_{1}(a_{(1)})$
for any $\xi_{1}, \xi_{2}\in A^{\ast}$ and $a\in A$, if $\Delta(a)=\sum_{(a)}
a_{(1)}\otimes a_{(2)}$.

\begin{defi}\label{def:avebialg}
An {\rm averaging antisymmetric infinitesimal bialgebra} or simply an {\rm averaging
ASI bialgebra} is a quadruple $(A, \Delta, \alpha, \beta)$ satisfying
\begin{enumerate}\itemsep=0pt
\item[-] $(A, \cdot, \alpha)$ is an averaging algebra;
\item[-] $(A, \Delta, \beta)$ is an averaging coalgebra;
\item[-] $(A, \cdot, \Delta)$ is an ASI bialgebra;
\item[-] $(A, \fl_{A}, \fr_{A}, \beta)$ is a bimodule over $(A, \cdot,$ $ \alpha)$, and
     $(A^{\ast}, \fl_{A^{\ast}}, \fr_{A^{\ast}}, \alpha^{\ast})$ is a bimodule over
     $(A^{\ast}, \Delta^{\ast}, $ $ \beta^{\ast})$.
\end{enumerate}
An averaging ASI bialgebra $(A, \Delta, \alpha, \beta)$ is called {\rm commutative
and cocommutative} if $A$ is a commutative associative algebra and $(A, \Delta)$ is
a cocommutative coassociative coalgebra.
\end{defi}

\begin{ex}\label{ex:bialg}
$(i)$ Let $(A, \alpha)$ be the 2-dimensional averaging algebra considered in
Example \ref{ex:2dim}, that is, $A=\Bbbk\{e_{1}, e_{2}\}$, that non-zero product is
given by $e_{1}e_{2}=e_{1}$, $e_{2}e_{2}=e_{2}$, and the averaging operator is given by
$\alpha(e_{1})=0$ and $\alpha(e_{2})=e_{2}$. Now we define a comultiplication on $A$
by $\Delta(e_{1})=e_{1}\otimes e_{1}$ and $\Delta(e_{2})=e_{2}\otimes e_{1}$, and define
linear map $\beta: A\rightarrow A$ by $\beta(e_{1})=e_{2}$ and $\beta(e_{2})=0$. Then
we get an averaging ASI bialgebra $(A, \Delta, \alpha, \beta)$.

$(ii)$ Consider the 2-dimensional associative algebra $(A=\Bbbk\{e_{1}, e_{2}\}, \cdot)$,
where the non-zero product is given by $e_{1}e_{1}=e_{1}$, $e_{1}e_{2}=e_{2}=e_{2}e_{1}$.
Define a linear map $\alpha: A\rightarrow A$ by $\alpha(e_{1})=e_{1}$, $\alpha(e_{2})=0$.
Then $(A, \alpha)$ is a commutative averaging algebra. Define a comultiplication on $A$
by $\Delta(e_{1})=0$ and $\Delta(e_{2})=e_{2}\otimes e_{2}$, and define
linear map $\beta: A\rightarrow A$ by $\beta(e_{1})=0$ and $\beta(e_{2})=e_{2}$. Then
we get a commutative and cocommutative averaging ASI bialgebra $(A, \Delta, \alpha, \beta)$.
\end{ex}

Since the multiplication $\Delta^{\ast}$ of averaging algebra $(A^{\ast}, \beta^{\ast})$ is
the dual of comultiplication $\Delta$, we have the following lemma.

\begin{lem}\label{lem:dual-mod}
Let $(A, \Delta, \beta)$ be an averaging coalgebra, and $\alpha: A\rightarrow A$
be a linear map. Then $(A^{\ast}, \fl_{A^{\ast}}, \fr_{A^{\ast}}, \alpha^{\ast})$
is a bimodule over $(A^{\ast}, \Delta^{\ast}, \beta^{\ast})$ if and only if for any
$$
(\beta\otimes\alpha)\Delta=(\beta\otimes\id)\Delta\alpha
=(\id\otimes\,\alpha)\Delta\alpha,\qquad\quad
(\alpha\otimes\beta)\Delta=(\id\otimes\,\beta)\Delta\alpha=(\alpha\otimes\id)\Delta\alpha.
$$
\end{lem}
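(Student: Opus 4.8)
The plan is to dualize the defining conditions of a bimodule over an averaging algebra (Definition~\ref{def:mod}) and translate them, via the standard pairing, into tensor identities on $\Delta$. Recall that $(A^{\ast}, \fl_{A^{\ast}}, \fr_{A^{\ast}}, \alpha^{\ast})$ is a bimodule over the averaging algebra $(A^{\ast}, \Delta^{\ast}, \beta^{\ast})$ precisely when Eqs.~\eqref{adm1} and \eqref{adm2} hold with the structure maps $\kl=\fl_{A^{\ast}}$, $\kr=\fr_{A^{\ast}}$, the averaging operator of the base algebra taken to be $\beta^{\ast}$, and the bimodule operator taken to be $\alpha^{\ast}$. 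Since $A$ is finite dimensional, $(A^{\ast})^{\ast}\cong A$ and the whole system of identities is equivalent, under the pairing $\langle-,-\rangle$, to a dual system phrased directly in terms of $A$, $\Delta$, $\alpha$ and $\beta$.

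First I would unwind Eq.~\eqref{adm1} for this bimodule: for all $\eta\in A^{\ast}$ and $\xi\in A^{\ast}$ it reads
\begin{align*}
\fl_{A^{\ast}}(\beta^{\ast}(\eta))(\alpha^{\ast}(\xi))
=\alpha^{\ast}\big(\fl_{A^{\ast}}(\beta^{\ast}(\eta))(\xi)\big)
=\alpha^{\ast}\big(\fl_{A^{\ast}}(\eta)(\alpha^{\ast}(\xi))\big),
\end{align*}
and similarly Eq.~\eqref{adm2} with $\fr_{A^{\ast}}$ in place of $\fl_{A^{\ast}}$. The left multiplication $\fl_{A^{\ast}}$ in the algebra $(A^{\ast},\Delta^{\ast})$ is, by the very definition of $\Delta^{\ast}$, the transpose of one leg of $\Delta$; concretely $\langle\fl_{A^{\ast}}(\eta)(\xi),a\rangle=\langle\eta\otimes\xi,\Delta(a)\rangle$, and the operators $\alpha^{\ast},\beta^{\ast}$ are transposes of $\alpha,\beta$. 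The key step is therefore to pair each of the three expressions above against an arbitrary $a\in A$ and push all transposes onto the $A$-side, converting each condition into a statement about $(\cdot\otimes\cdot)\Delta$ applied to $a$. Carrying this out for \eqref{adm1} yields the first displayed chain of the Lemma, namely $(\beta\otimes\alpha)\Delta=(\beta\otimes\id)\Delta\alpha=(\id\otimes\alpha)\Delta\alpha$, and the symmetric computation for \eqref{adm2} produces the second chain $(\alpha\otimes\beta)\Delta=(\id\otimes\beta)\Delta\alpha=(\alpha\otimes\id)\Delta\alpha$.

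The \emph{only} genuine obstacle is bookkeeping: one must be careful which tensor leg $\fl_{A^{\ast}}$ and $\fr_{A^{\ast}}$ act on and track how transposition interchanges the two legs, so that the $\beta\otimes\alpha$ versus $\alpha\otimes\beta$ split comes out correctly and matches the two displayed chains rather than getting scrambled. This is exactly the kind of transpose-and-pairing computation already performed in the proof of Proposition~\ref{pro:dual}, where the identities $\kl^{\ast}(\alpha(a))\beta^{\ast}-\beta^{\ast}\kl^{\ast}(\alpha(a))=(\beta\kl(\alpha(a))-\kl(\alpha(a))\beta)^{\ast}$ reduce the bimodule condition to its underlying algebra form. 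Since nondegeneracy of $\langle-,-\rangle$ makes each pairing identity equivalent to the corresponding operator identity, all the implications are reversible, and the stated equivalence follows once both chains have been verified. I would organize the write-up as two parallel computations, one for \eqref{adm1} and one for \eqref{adm2}, remarking that the second is obtained from the first by the evident left–right symmetry.
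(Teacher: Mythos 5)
Your proposal is correct and follows essentially the same route as the paper: the paper's proof likewise writes out Eqs.~\eqref{adm1}--\eqref{adm2} for $(A^{\ast}, \fl_{A^{\ast}}, \fr_{A^{\ast}}, \alpha^{\ast})$ over $(A^{\ast}, \Delta^{\ast}, \beta^{\ast})$ as products in $A^{\ast}$ and then "rewrites them in terms of the comultiplication," i.e.\ performs exactly the pairing-and-transpose dualization you describe. Your leg-by-leg bookkeeping does produce the two displayed chains correctly, so the proposal is a (slightly more detailed) version of the paper's argument.
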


\begin{proof}
First, by the definition, $(A^{\ast}, \fl_{A^{\ast}}, \fr_{A^{\ast}}, \alpha^{\ast})$
is a bimodule over $(A^{\ast}, \beta^{\ast})$ if and only if
\begin{align*}
\beta^{\ast}(\xi_{1})\cdot_{A^{\ast}}\alpha^{\ast}(\xi_{2})
&=\alpha^{\ast}(\beta^{\ast}(\xi_{1})\cdot_{A^{\ast}}\xi_{2})
=\alpha^{\ast}(\xi_{1}\cdot_{A^{\ast}}\alpha^{\ast}(\xi_{2})), \\
\alpha^{\ast}(\xi_{2})\cdot_{A^{\ast}}\beta^{\ast}(\xi_{1})
&=\alpha^{\ast}(\xi_{2}\cdot_{A^{\ast}}\beta^{\ast}(\xi_{1}))
=\alpha^{\ast}(\alpha^{\ast}(\xi_{2})\cdot_{A^{\ast}}\xi_{1}),
\end{align*}
for any $\xi_{1}, \xi_{2}\in A^{\ast}$. Rewriting the above equations in
terms of the comultiplication, we get this lemma.
\end{proof}

Let $(A, \cdot, \Delta)$ be an ASI bialgebra. A pair $(\alpha, \beta)$
in Definition \ref{def:avebialg} is called {\it a pair of averaging operators}
on $(A, \cdot, \Delta)$. A linear map $\alpha: A\rightarrow A$
is called an {\it averaging operator} on $(A, \cdot, \Delta)$, if $\alpha$ is both
an averaging operator on associative algebra $A$ and an averaging operator on
coassociative coalgebra $(A, \Delta)$. Clearly, $(\alpha, \alpha)$ is a pair of
averaging operators on ASI bialgebra $(A, \cdot, \Delta)$, and so that,
$(A, \Delta, \alpha, \alpha)$ is an averaging ASI bialgebra, if $\alpha$ is an
averaging operator on ASI bialgebra $(A, \cdot, \Delta)$.

\begin{pro}\label{pro:bi-mat}
Let $(A, \alpha)$ be an averaging algebra. Suppose that there are linear maps $\beta:
A\rightarrow A$ and $\Delta: A\rightarrow A\otimes A$ such that $(A, \Delta, \beta)$
is an averaging coalgebra. Then the quadruple $(A, \Delta, \alpha, \beta)$ is an averaging
ASI bialgebra if and only if $((A, \alpha), (A^{\ast}, \beta^{\ast}), \fr_{A}^{\ast},
\fl_{A}^{\ast}, \fr_{A^{\ast}}^{\ast}, \fl_{A^{\ast}}^{\ast})$ is a matched pair of
averaging algebras, where $(A^{\ast}, \beta^{\ast})$ is the dual algebra of
$(A, \Delta, \beta)$.
\end{pro}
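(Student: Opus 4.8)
The plan is to prove the equivalence by matching, one by one, the three defining clauses of a matched pair of averaging algebras against the clauses of Definition \ref{def:avebialg}, using only two external inputs: Bai's characterization of ASI bialgebras \cite[Theorem 2.2.1]{Bai} and the duality of bimodules over averaging algebras (Proposition \ref{pro:dual}). No direct manipulation of $\Delta$ or of the compatibility conditions \eqref{bialg1}, \eqref{bialg2} is needed, since all of that content is absorbed into these two results. I would first record that the ambient structures are already in place on both sides: $(A,\alpha)$ is an averaging algebra by hypothesis, while $(A^{\ast},\beta^{\ast})$ is an averaging algebra precisely because $(A,\Delta,\beta)$ is an averaging coalgebra, by the dualization observation following Definition \ref{def:coderivation}. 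Thus both the averaging ASI bialgebra and the matched pair take exactly these same two averaging algebras as input data, and only the compatibility clauses remain to be compared.

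Next I would handle the purely (co)associative layer. By \cite[Theorem 2.2.1]{Bai}, for the fixed multiplication on $A$ and the dual multiplication $\Delta^{\ast}$ on $A^{\ast}$, the triple $(A,\cdot,\Delta)$ is an ASI bialgebra if and only if $(A,A^{\ast},\fr_{A}^{\ast},\fl_{A}^{\ast},\fr_{A^{\ast}}^{\ast},\fl_{A^{\ast}}^{\ast})$ is a matched pair of associative algebras. This is exactly the equivalence between the third bullet of Definition \ref{def:avebialg} and clause $(iii)$ of the matched pair of averaging algebras.

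Then I would match the two bimodule-over-averaging-algebra clauses using Proposition \ref{pro:dual}. Applying that proposition to the averaging algebra $(A,\alpha)$, with input the regular bimodule $(A,\fl_{A},\fr_{A})$ of the associative algebra $A$ together with the linear map $\beta$, shows that $(A,\fl_{A},\fr_{A},\beta)$ is a bimodule over $(A,\alpha)$ if and only if its dual $(A^{\ast},\fr_{A}^{\ast},\fl_{A}^{\ast},\beta^{\ast})$ is; this identifies the first half of the fourth bullet of Definition \ref{def:avebialg} with clause $(ii)$. Applying Proposition \ref{pro:dual} instead to the averaging algebra $(A^{\ast},\beta^{\ast})$, with input the regular bimodule $(A^{\ast},\fl_{A^{\ast}},\fr_{A^{\ast}})$ of $A^{\ast}$ and the linear map $\alpha^{\ast}$, and using the finite-dimensional identifications $A^{\ast\ast}\cong A$ and $(\alpha^{\ast})^{\ast}=\alpha$, shows that $(A^{\ast},\fl_{A^{\ast}},\fr_{A^{\ast}},\alpha^{\ast})$ is a bimodule over $(A^{\ast},\beta^{\ast})$ if and only if $(A,\fr_{A^{\ast}}^{\ast},\fl_{A^{\ast}}^{\ast},\alpha)$ is; this identifies the second half of the fourth bullet with clause $(i)$. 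Assembling the three equivalences yields the claim in both directions simultaneously.

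The step I expect to require the most care is this second bimodule identification, where the roles of $A$ and $A^{\ast}$ are swapped: here one must invoke Proposition \ref{pro:dual} with the acting averaging algebra taken to be $(A^{\ast},\beta^{\ast})$ rather than $(A,\alpha)$, and then correctly fold the double dual back onto $A$ via the finite-dimensionality hypothesis, checking in particular that $\fr_{A^{\ast}}^{\ast}$ and $\fl_{A^{\ast}}^{\ast}$ indeed land in $\End_{\Bbbk}(A)$ with the transposes paired in the order dictated by Proposition \ref{pro:dual} (which interchanges left and right actions upon dualizing). Beyond this bookkeeping the argument is purely formal, since all computational content has been pushed into \cite[Theorem 2.2.1]{Bai} and Proposition \ref{pro:dual}.
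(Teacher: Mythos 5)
Your proposal is correct and follows essentially the same route as the paper's own proof: both reduce the statement to Bai's equivalence between ASI bialgebras and matched pairs of associative algebras for the associative layer, and then identify the two bimodule clauses of Definition \ref{def:avebialg} with clauses $(i)$ and $(ii)$ of the matched pair via the duality of bimodules over averaging algebras (Proposition \ref{pro:dual}, together with the double-dual identification). Your write-up is in fact slightly more explicit than the paper's, which performs the same dualization implicitly under the phrase ``by the definition of averaging ASI bialgebra again.''
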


\begin{proof}
If the quadruple $(A, \Delta, \alpha, \beta)$ is an averaging ASI bialgebra,
then $(A, \cdot, \Delta)$ is an ASI bialgebra, and so that $(A, A^{\ast},
\fr_{A}^{\ast}, \fl_{A}^{\ast}, \fr_{A^{\ast}}^{\ast}, \fl_{A^{\ast}}^{\ast})$ is a
matched pair of associative algebras. Moreover, by the definition of averaging ASI
bialgebra again, we get $(A^{\ast}, \fr_{A}^{\ast}, \fl_{A}^{\ast}, \beta^{\ast})$
is a bimodule over $(A, \alpha)$, and $(A, \fr_{A^{\ast}}^{\ast}, \fl_{A^{\ast}}^{\ast},
\alpha)$ is a bimodule over $(A^{\ast}, \beta^{\ast})$. Hence, $((A, \alpha),
(A^{\ast}, \beta^{\ast}), \fr_{A}^{\ast}, \fl_{A}^{\ast}, \fr_{A^{\ast}}^{\ast},
\fl_{A^{\ast}}^{\ast})$ is a matched pair of averaging algebras.

Conversely, if $((A, \alpha), (A^{\ast}, \beta^{\ast}), \fr_{A}^{\ast}, \fl_{A}^{\ast},
\fr_{A^{\ast}}^{\ast}, \fl_{A^{\ast}}^{\ast})$ is a matched pair of averaging algebras,
then $(A, A^{\ast}$, $\fr_{A}^{\ast}, \fl_{A}^{\ast}, \fr_{A^{\ast}}^{\ast},
\fl_{A^{\ast}}^{\ast})$ is a matched pair of associative algebras, and so that,
$(A, \cdot, \Delta)$ is an ASI bialgebra. Moreover, by the definition of matched pair of
averaging algebras again, we get $(A^{\ast}, \fr_{A}^{\ast}, \fl_{A}^{\ast}, \beta^{\ast})$
is a bimodule over $(A, \alpha)$, and $(A, \fr_{A^{\ast}}^{\ast}, \fl_{A^{\ast}}^{\ast},
\alpha)$ is a bimodule over $(A^{\ast}, \beta^{\ast})$. Thus,
$(A, \Delta, \alpha, \beta)$ is an averaging ASI bialgebra.
\end{proof}

Combining Propositions \ref{pro:dou-mat} and \ref{pro:bi-mat},
we have the following conclusion.

\begin{thm}\label{thm:equ}
Let $(A, \cdot, \alpha)$ be an averaging algebra. Suppose that there is a linear map
$\beta: A\rightarrow A$ and a bilinear map $\cdot': A^{\ast}\otimes A^{\ast}\rightarrow
A^{\ast}$ such that $(A^{\ast}, \cdot', \beta^{\ast})$ is an averaging algebra.
Let $\Delta: A\rightarrow A\otimes A$ denote the linear dual of the multiplication
on $A^{\ast}$. Then the following conditions are equivalent:
\begin{enumerate}\itemsep=0pt
\item[$(i)$] There is a double construction of averaging Frobenius algebra associated to
     $(A, \cdot, \alpha)$ and $(A^{\ast}, \cdot', \beta^{\ast})$;
\item[$(ii)$] $((A, \alpha), (A^{\ast}, \beta^{\ast}), \fr_{A}^{\ast}, \fl_{A}^{\ast},
     \fr_{A^{\ast}}^{\ast}, \fl_{A^{\ast}}^{\ast})$ is a matched pair of averaging algebras;
\item[$(iii)$] $(A, \Delta, \alpha, \beta)$ is an averaging ASI bialgebra.
\end{enumerate}
\end{thm}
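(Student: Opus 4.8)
The plan is to assemble the equivalence directly from the two structural characterizations already in hand, since the matched-pair condition $(ii)$ serves as the common hub through which both $(i)$ and $(iii)$ are routed. First I would check that the standing hypotheses of the two propositions align on the same data. By the dualization remark following Definition \ref{def:coderivation}, the assumption that $(A^{\ast}, \cdot', \beta^{\ast})$ is an averaging algebra is equivalent to $(A, \Delta, \beta)$ being an averaging coalgebra, where $\Delta$ is the declared linear dual of the multiplication $\cdot'$. This reconciliation is precisely what allows Propositions \ref{pro:dou-mat} and \ref{pro:bi-mat} to be invoked simultaneously for one and the same tuple $(A, \cdot, \alpha, \beta, \Delta)$.

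With that in place, the equivalence $(i) \Leftrightarrow (ii)$ is exactly the content of Proposition \ref{pro:dou-mat}: a double construction of averaging Frobenius algebra associated to $(A, \alpha)$ and $(A^{\ast}, \beta^{\ast})$ exists if and only if $((A, \alpha), (A^{\ast}, \beta^{\ast}), \fr_{A}^{\ast}, \fl_{A}^{\ast}, \fr_{A^{\ast}}^{\ast}, \fl_{A^{\ast}}^{\ast})$ is a matched pair of averaging algebras. Likewise, the equivalence $(ii) \Leftrightarrow (iii)$ is precisely Proposition \ref{pro:bi-mat}, which characterizes $(A, \Delta, \alpha, \beta)$ as an averaging ASI bialgebra in terms of the same matched pair. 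Chaining these two biconditionals through the common hub $(ii)$ then yields the full equivalence of $(i)$, $(ii)$, and $(iii)$.

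Since both propositions have already been established in full, I expect no substantive computational obstacle to remain; the only point that demands care is the bookkeeping that the comultiplication $\Delta$ appearing in $(iii)$ is genuinely the transpose of the product $\cdot'$ entering $(i)$, so that the associative (co)algebra data on $A$ and on $A^{\ast}$ are dual to one another throughout. This is guaranteed by the construction of $\Delta$ as the linear dual of $\cdot'$ together with the nondegeneracy of the canonical pairing, and it is what ensures that the matched pair produced in Proposition \ref{pro:dou-mat} is literally the matched pair consumed in Proposition \ref{pro:bi-mat}. Hence the conclusion follows by simply combining the two propositions.
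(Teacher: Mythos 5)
Your proposal is correct and is exactly the paper's own argument: the theorem is stated there as an immediate consequence of combining Proposition \ref{pro:dou-mat} (giving $(i)\Leftrightarrow(ii)$) with Proposition \ref{pro:bi-mat} (giving $(ii)\Leftrightarrow(iii)$), chained through the matched-pair condition. Your extra care in checking that the hypothesis ``$(A^{\ast},\cdot',\beta^{\ast})$ is an averaging algebra'' dualizes to ``$(A,\Delta,\beta)$ is an averaging coalgebra,'' so that both propositions apply to the same data, is a point the paper leaves implicit but is handled correctly in your write-up.
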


\section{Coboundary averaging antisymmetric infinitesimal bialgebras}\label{sec:cob}
In this section, we study the coboundary averaging ASI bialgebras, and
introduce the notion of Yang-Baxter equation in an averaging algebra. The notions
of $\mathcal{O}$-operators of averaging algebras and averaging dendriform algebras
are introduced to provide antisymmetric solutions of Yang-Baxter equation in
semidirect product averaging algebras and hence give rise to averaging ASI bialgebras.

\subsection{Coboundary averaging ASI bialgebras and Yang-Baxter equation}\label{subsec:cob}
An averaging ASI bialgebra is called coboundary if it as an ASI bialgebra is coboundary.

\begin{defi}\label{def:cob}
An averaging ASI bialgebra $(A, \Delta, \alpha, \beta)$ is called {\rm coboundary}
if there exists an element $r\in A\otimes A$, such that
\begin{align}
\Delta(a):=(\id\otimes\,\fl_{A}(a)-\fr_{A}(a)\otimes\id)(r),          \label{cobo}
\end{align}
for any $a\in A$. In this case, we also call $(A, \Delta, \alpha, \beta)$ is
{\rm an averaging ASI bialgebra induced by $r$}.
\end{defi}

\begin{pro}[{\cite[Theorem 2.3.5]{Bai}}]\label{pro:coba}
Let $A$ be an associative algebra and $r\in A\otimes A$.
Define a linear map $\Delta: A\rightarrow A\otimes A$ by Eq. \eqref{cobo}.
Then $(A, \cdot, \Delta)$ is an ASI bialgebra if and only if for any $a_{1}, a_{2}\in A$,
\begin{align}
&\big(\fl_{A}(a_{1})\otimes\id-\id\otimes\,\fr_{A}(a_{1})\big)
\big(\id\otimes\,\fl_{A}(a_{2})-\fr_{A}(a_{2})\otimes\id\big)
\big(r+\tau(r)\big)=0,                                        \label{coba1} \\
&\;\;\big(\id\otimes\id\otimes\,\fl_{A}(a_{1})-\fr_{A}(a_{1})\otimes\id\otimes\id\big)
(r_{12}r_{13}+r_{13}r_{23}-r_{23}r_{12})=0.                         \label{coba2}
\end{align}
Here for $r=\sum_{i}x_{i}\otimes y_{i}\in A\otimes A$, we denote
$r_{12}r_{13}=\sum_{i,j}x_{i}x_{j}\otimes y_{i}\otimes y_{j}$,
$r_{13}r_{23}=\sum_{i,j}x_{i}\otimes x_{j}\otimes y_{i}y_{j}$ and
$r_{23}r_{12}=\sum_{i,j}x_{j}\otimes x_{i}y_{j}\otimes y_{i}$.
\end{pro}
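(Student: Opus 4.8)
The plan is to unwind the three defining conditions of an ASI bialgebra in Definition~\ref{def:bialg} directly. Writing $r=\sum_i x_i\otimes y_i$, Eq.~\eqref{cobo} becomes the explicit formula
\[
\Delta(a)=\sum_i x_i\otimes a y_i-\sum_i x_i a\otimes y_i,
\]
for all $a\in A$. Condition $(i)$, that $(A,\cdot)$ is associative, is assumed, so the task reduces to translating condition $(ii)$ (coassociativity of $\Delta$) together with condition $(iii)$ (Eqs.~\eqref{bialg1} and \eqref{bialg2}) into conditions on $r$.

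First I would verify that Eq.~\eqref{bialg1} holds identically, for every $r$, so that it imposes no constraint. Substituting the explicit $\Delta$ into the right-hand side $(\fr_{A}(a_2)\otimes\id)\Delta(a_1)+(\id\otimes\fl_{A}(a_1))\Delta(a_2)$ produces four summands, of which the pair $\pm\sum_i x_i a_2\otimes a_1 y_i$ cancels; the two survivors coincide with $\Delta(a_1a_2)$ once products such as $x_i(a_1a_2)$ are regrouped by associativity. Thus \eqref{bialg1} is automatic and can be set aside.

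Next I would analyse coassociativity. Each of $(\Delta\otimes\id)\Delta(a)$ and $(\id\otimes\Delta)\Delta(a)$ expands, via two applications of the two-term $\Delta$, into four summands in $A\otimes A\otimes A$. The expectation is that after relabelling the dummy indices $i\leftrightarrow j$ the difference $(\Delta\otimes\id)\Delta(a)-(\id\otimes\Delta)\Delta(a)$ collapses, up to an overall sign, to exactly the expression $(\id\otimes\id\otimes\fl_{A}(a)-\fr_{A}(a)\otimes\id\otimes\id)(r_{12}r_{13}+r_{13}r_{23}-r_{23}r_{12})$ of \eqref{coba2}: six of the eight summands pair off with the six summands of that expression, and the two leftover summands, namely those carrying $a$ in the middle tensor slot, cancel against one another after a single index swap. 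Hence coassociativity is equivalent to \eqref{coba2}. This step is the main obstacle, not because it is conceptually deep but because the bookkeeping lives in $A^{\otimes 3}$: the risk is a misplaced tensor slot or a sign error, so the crux is to track carefully which factor sits in which slot and to perform the relabellings consistently.

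Finally I would treat Eq.~\eqref{bialg2}. Substituting $\Delta$, the left-hand side $(\fl_{A}(a_1)\otimes\id-\id\otimes\fr_{A}(a_1))\Delta(a_2)$ and the $\tau$-image of $(\id\otimes\fr_{A}(a_2)-\fl_{A}(a_2)\otimes\id)\Delta(a_1)$ each yield four terms; rewriting the latter with $\tau(r)=\sum_i y_i\otimes x_i$, I expect their difference to agree term-for-term with $(\fl_{A}(a_1)\otimes\id-\id\otimes\fr_{A}(a_1))(\id\otimes\fl_{A}(a_2)-\fr_{A}(a_2)\otimes\id)(r+\tau(r))$, the expression of \eqref{coba1}. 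So \eqref{bialg2} is equivalent to \eqref{coba1}. Combining the three steps, with \eqref{bialg1} free, coassociativity $\Leftrightarrow$ \eqref{coba2}, and \eqref{bialg2} $\Leftrightarrow$ \eqref{coba1}, yields the stated equivalence.
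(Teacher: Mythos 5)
Your proposal is correct, and all three of its claims check out under direct expansion: with $\Delta(a)=\sum_i x_i\otimes ay_i-\sum_i x_ia\otimes y_i$, Eq.~\eqref{bialg1} holds identically (the $\pm\sum_i x_ia_2\otimes a_1y_i$ terms cancel as you say), the difference $(\Delta\otimes\id)\Delta(a)-(\id\otimes\Delta)\Delta(a)$ equals $-\big(\id\otimes\id\otimes\fl_{A}(a)-\fr_{A}(a)\otimes\id\otimes\id\big)(r_{12}r_{13}+r_{13}r_{23}-r_{23}r_{12})$ after relabelling (the two summands with $a$ in the middle slot, $\mp\sum_{i,j}x_j\otimes x_iay_j\otimes y_i$, indeed cancel against each other), and Eq.~\eqref{bialg2} unwinds term-for-term to Eq.~\eqref{coba1}. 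Note that the paper itself offers no proof of this statement: it is imported verbatim as \cite[Theorem 2.3.5]{Bai}, and your direct verification is exactly the standard computation by which the result is established in that source, so there is no genuine divergence of method to report.
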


\begin{lem}\label{lem:coba}
Let $(A, \alpha)$ be an averaging algebra, $r\in A\otimes A$, and $\beta: A\rightarrow A$
be a linear map such that $(A, \fl_{A}, \fr_{A}, \beta)$ is a bimodule over $(A, \alpha)$.
If the linear map $\Delta: A\rightarrow A\otimes A$ by Eq. (\ref{cobo})
defines a coassociative coalgebra structure on $A$, then $\beta$ is an averaging
operator on $(A, \Delta)$ if and only if for any $a\in A$,
\begin{align}
(\id\otimes\,\fl_{A}(\beta(a)))(\beta\otimes\id-\id\otimes\,\alpha)(r)
-(\fr_{A}(\beta(a))\otimes\id)(\alpha\otimes\id-\id\otimes\,\beta)(r)&=0, \label{coba3}\\
(2\id\otimes\,\beta\fl_{A}(a)-\id\otimes\,\fl_{A}(\beta(a)))
(\beta\otimes\id-\id\otimes\,\alpha)(r)\qquad\qquad\qquad\qquad\;&       \label{coba4}\\[-1mm]
+(2\beta\fr_{A}(a)\otimes\id-\fr_{A}(\beta(a))\otimes\id)
(\alpha\otimes\id-\id\otimes\,\beta)(r)&=0.                           \nonumber
\end{align}
If $(A, \Delta, \beta)$ is an averaging coalgebra, $(A, \fr_{A^{\ast}}^{\ast},
\fl_{A^{\ast}}^{\ast}, \alpha)$ is a bimodule over $(A^{\ast}, \Delta^{\ast}, \beta^{\ast})$
if and only if for any $a\in A$,
\begin{align}
&\qquad\qquad\qquad\big(\id\otimes\,\fl_{A}(\alpha(a))-\fr_{A}(\alpha(a))\otimes\id\big)
(\beta\otimes\id-\id\otimes\,\alpha)(r)=0,                                    \label{coba5}\\
&\big(2\id\otimes\,\alpha\fl_{A}(a)+2\beta\fr_{A}(a)\otimes\id
-\id\otimes\,\fl_{A}(\alpha(a))-\fr_{A}(\alpha(a))\otimes\id\big)
(\beta\otimes\id-\id\otimes\,\alpha)(r)=0,                                    \label{coba6}\\
&\qquad\qquad\qquad\big(\id\otimes\,\fl_{A}(\alpha(a))-\fr_{A}(\alpha(a))\otimes\id\big)
(\alpha\otimes\id-\id\otimes\,\beta)(r)=0,                                    \label{coba7}\\
&\big(2\id\otimes\,\beta\fl_{A}(a)+2\alpha\fr_{A}(a)\otimes\id-\id\otimes\,\fl_{A}(\alpha(a))
-\fr_{A}(\alpha(a))\otimes\id\big)(\alpha\otimes\id-\id\otimes\,\beta)(r)=0.  \label{coba8}
\end{align}
\end{lem}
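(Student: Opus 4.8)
The plan is to reduce everything to the defining conditions of "averaging operator on a coalgebra" (Definition \ref{def:coderivation}) and "bimodule over an averaging coalgebra" (via Lemma \ref{lem:dual-mod}), and then to substitute the coboundary formula \eqref{cobo} for $\Delta$ and simplify. The whole lemma is really two independent computations glued together: the first half concerns when $\beta$ is an averaging operator on $(A,\Delta)$, and the second half concerns when $(A,\fr_{A^{\ast}}^{\ast},\fl_{A^{\ast}}^{\ast},\alpha)$ is a bimodule over $(A^{\ast},\Delta^{\ast},\beta^{\ast})$.

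\smallskip\noindent\textbf{First half: the averaging condition for $\beta$ on $(A,\Delta)$.}

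By Definition \ref{def:coderivation}, $\beta$ is an averaging operator on $(A,\Delta)$ precisely when
\[
(\beta\otimes\beta)\Delta=(\beta\otimes\id)\Delta\beta=(\id\otimes\beta)\Delta\beta.
\]
First I would write each of these three expressions as a function of $a$ by inserting \eqref{cobo}, i.e. $\Delta(a)=(\id\otimes\fl_{A}(a)-\fr_{A}(a)\otimes\id)(r)$. The key simplification is that the bimodule hypothesis $(A,\fl_{A},\fr_{A},\beta)$ over $(A,\alpha)$ lets me commute $\beta$ past $\fl_{A}(\alpha(a))$ and $\fr_{A}(\alpha(a))$ using \eqref{adm1}–\eqref{adm2}; this is exactly what converts the raw expressions into the combinations $(\beta\otimes\id-\id\otimes\alpha)(r)$ and $(\alpha\otimes\id-\id\otimes\beta)(r)$ appearing in \eqref{coba3}–\eqref{coba4}. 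Equating the three expressions pairwise then yields two independent equations: the difference $(\beta\otimes\id)\Delta\beta-(\id\otimes\beta)\Delta\beta=0$ should give \eqref{coba3}, and the remaining relation $(\beta\otimes\beta)\Delta-(\beta\otimes\id)\Delta\beta=0$ (after using \eqref{coba3}) should give \eqref{coba4}. The factors of $2$ in \eqref{coba4} signal that $\beta\fl_{A}(a)$ terms arise from the genuinely quadratic-in-$\beta$ term $(\beta\otimes\beta)\Delta$, so I must keep careful track of which $\beta$'s act on which tensor factor.

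\smallskip\noindent\textbf{Second half: the bimodule condition.}

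Here I would apply Lemma \ref{lem:dual-mod}, which restates the bimodule property of $(A^{\ast},\fl_{A^{\ast}},\fr_{A^{\ast}},\alpha^{\ast})$ over $(A^{\ast},\Delta^{\ast},\beta^{\ast})$ as the four coalgebra identities
\[
(\beta\otimes\alpha)\Delta=(\beta\otimes\id)\Delta\alpha=(\id\otimes\alpha)\Delta\alpha,\qquad
(\alpha\otimes\beta)\Delta=(\id\otimes\beta)\Delta\alpha=(\alpha\otimes\id)\Delta\alpha.
\]
Substituting \eqref{cobo} and again using the bimodule relations \eqref{adm1}–\eqref{adm2} to push $\alpha$ and $\beta$ through the $\fl_{A},\fr_{A}$ operators, each of these equalities collapses onto one of the operator combinations $(\beta\otimes\id-\id\otimes\alpha)(r)$ or $(\alpha\otimes\id-\id\otimes\beta)(r)$ premultiplied by $\id\otimes\fl_{A}(\alpha(a))-\fr_{A}(\alpha(a))\otimes\id$; matching the four dual identities against the four target equations \eqref{coba5}–\eqref{coba8} finishes this half.

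\smallskip\noindent\textbf{Main obstacle.}

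The genuinely delicate part is not the bialgebra axioms (those are not invoked directly here) but the tensor bookkeeping: keeping track of which copy of $\alpha$ or $\beta$ acts on the first versus the second leg of $r$, and correctly invoking \eqref{adm1}–\eqref{adm2} in the right order so that expressions such as $\beta\fl_{A}(a)$ versus $\fl_{A}(\beta(a))$ versus $\fl_{A}(\alpha(a))$ are not conflated. The coefficients $2$ in \eqref{coba4}, \eqref{coba6}, \eqref{coba8} are the symptom of this: they record the cancellation pattern between the "diagonal" term (with $\beta$ or $\alpha$ on both legs) and the two "mixed" terms, and getting these right is where the computation must be done carefully rather than schematically.
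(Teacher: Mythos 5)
Your proposal is correct and follows essentially the same route as the paper's proof: reduce to the averaging-coalgebra identity of Definition \ref{def:coderivation} and to Lemma \ref{lem:dual-mod}, substitute Eq. \eqref{cobo}, and use the bimodule relations \eqref{adm1}--\eqref{adm2} to reorganize everything into the combinations $(\beta\otimes\id-\id\otimes\,\alpha)(r)$ and $(\alpha\otimes\id-\id\otimes\,\beta)(r)$. The only cosmetic difference is that the paper works with the symmetric combination $2(\beta\otimes\beta)\Delta-(\beta\otimes\id)\Delta\beta-(\id\otimes\beta)\Delta\beta$, which produces \eqref{coba4} on the nose, whereas your pairwise equation $(\beta\otimes\beta)\Delta=(\beta\otimes\id)\Delta\beta$ recovers \eqref{coba4} only modulo \eqref{coba3}; since both equations are imposed simultaneously, the two systems are equivalent.
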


\begin{proof}
First, since $(A, \fl_{A}, \fr_{A}, \beta)$ is a bimodule over $(A, \alpha)$, for any $a\in A$,
we get $\fr_{A}(\beta(a))\alpha=\beta\fr_{A}(a)\alpha=\beta\fr_{A}(\beta(a))$ and
$\fl_{A}(\beta(a))\alpha=\beta\fl_{A}(a)\alpha=\beta\fl_{A}(\beta(a))$, and so that,
\begin{align*}
&\;(\beta\otimes\id)\Delta\beta(a)-(\id\otimes\beta)\Delta\beta(a)\\
=&\;(\beta\otimes\fl_{A}(\beta(a))-\beta\fr_{A}(\beta(a))\otimes\id)(r)
-(\id\otimes\beta\fl_{A}(\beta(a))-\fr_{A}(\beta(a))\otimes\beta)(r)\\
=&\;(\id\otimes\,\fl_{A}(\beta(a)))(\beta\otimes\id-\id\otimes\,\alpha)(r)
-(\fr_{A}(\beta(a))\otimes\id)(\alpha\otimes\id-\id\otimes\,\beta)(r),
\end{align*}
and
\begin{align*}
&\;2(\beta\otimes\beta)\Delta(a)-(\beta\otimes\id)\Delta\beta(a)
-(\id\otimes\beta)\Delta\beta(a)\\
=&\;2(\beta\otimes\beta\fl_{A}(a)-\beta\fr_{A}(a)\otimes\beta)(r)
-(\beta\otimes\fl_{A}(\beta(a))-\beta\fr_{A}(\beta(a))\otimes\id)(r)\\[-1mm]
&\qquad\qquad\qquad\qquad\qquad\qquad\quad
-(\id\otimes\beta\fl_{A}(\beta(a))-\fr_{A}(\beta(a))\otimes\beta)(r)\\
=&\;(2\id\otimes\,\beta\fl_{A}(a)-\id\otimes\,\fl_{A}(\beta(a)))
(\beta\otimes\id-\id\otimes\,\alpha)(r)\\[-1mm]
&\qquad\qquad\qquad+(2\beta\fr_{A}(a)\otimes\id-\fr_{A}(\beta(a))\otimes\id)
(\alpha\otimes\id-\id\otimes\,\beta)(r),
\end{align*}
Thus, we get $\beta$ is an averaging operator on $(A, \Delta)$ if and only if
Eqs. \eqref{coba3} and \eqref{coba4} hold.

Next, note that $(A, \fr_{A^{\ast}}^{\ast}, \fl_{A^{\ast}}^{\ast}, \alpha)$ is a
bimodule over $(A^{\ast}, \Delta^{\ast}, \beta^{\ast})$ if and only if
$(A^{\ast}, \fl_{A^{\ast}}, \fr_{A^{\ast}}, \alpha^{\ast})$ is a
bimodule over $(A^{\ast}, \Delta^{\ast}, \beta^{\ast})$, if and only if
$$
(\beta\otimes\alpha)\Delta=(\beta\otimes\id)\Delta\alpha
=(\id\otimes\alpha)\Delta\alpha,\qquad\quad
(\alpha\otimes\beta)\Delta=(\id\otimes\beta)\Delta\alpha
=(\alpha\otimes\id)\Delta\alpha,
$$
by Lemma \ref{lem:dual-mod}, similar to the calculation above, we can get that
$(\beta\otimes\alpha)\Delta=(\beta\otimes\id)\Delta\alpha=(\id\otimes\alpha)\Delta\alpha$
if and only if Eqs. \eqref{coba5}-\eqref{coba6} hold, and $(\alpha\otimes\beta)\Delta=
(\id\otimes\beta)\Delta\alpha=(\alpha\otimes\id)\Delta\alpha$ if and only if Eqs.
\eqref{coba7}-\eqref{coba8} hold. The proof is complete.
\end{proof}

Let $(A, \alpha)$ be an averaging algebra, $r\in A\otimes A$, and $\beta: A\rightarrow A$
be a linear map such that $(A, \fl_{A}, \fr_{A}, \beta)$ is a bimodule over $(A, \alpha)$.
Define a linear map $\Delta$ by Eq. \eqref{cobo}. Then $(A, \Delta, \alpha, \beta)$
is an averaging ASI bialgebra if and only if Eqs. \eqref{coba1}-\eqref{coba8} hold.
In particular, $(A, \Delta, \alpha, \alpha)$ is an averaging ASI bialgebra if and only if
Eqs. \eqref{coba1}-\eqref{coba4} for $\beta=\alpha$ hold.

\begin{pro}\label{pro:doublebia}
Let $(A, \Delta, \alpha, \beta)$ be an averaging ASI bialgebra,
$\tilde{\Delta}: A^{\ast}\rightarrow A^{\ast}\otimes A^{\ast}$ be the linear dual
of the multiplication of $A$ and $\Delta^{\ast}: A^{\ast}\otimes A^{\ast}\rightarrow
A^{\ast}$ be the linear dual of $\Delta$. Then $(A^{\ast}, -\tilde{\Delta},\, \beta^{\ast},
\alpha^{\ast})$ is an averaging ASI bialgebra. Further, there is an averaging
ASI bialgebra structure on the direct sum $A \oplus A^{\ast}$,
containing the two averaging ASI bialgebras as averaging ASI sub-bialgebras.
\end{pro}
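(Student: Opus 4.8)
The plan is to prove the two assertions separately, relying on the coboundary criterion recorded just before the statement — that for $r\in A\otimes A$ and a linear map $\beta$ with $(A,\fl_A,\fr_A,\beta)$ a bimodule over $(A,\alpha)$, the datum $(A,\Delta,\alpha,\beta)$ with $\Delta$ given by \eqref{cobo} is an averaging ASI bialgebra exactly when \eqref{coba1}-\eqref{coba8} hold — together with Theorem \ref{thm:equ} and the duality of ASI bialgebras \cite{Bai}. For the first claim I would check the four conditions of Definition \ref{def:avebialg} for $(A^\ast,-\tilde{\Delta},\beta^\ast,\alpha^\ast)$ by dualization. The averaging algebra $(A^\ast,\Delta^\ast,\beta^\ast)$ is dual to the averaging coalgebra $(A,\Delta,\beta)$, and $(A^\ast,-\tilde{\Delta},\alpha^\ast)$ is an averaging coalgebra because $(A,\cdot,\alpha)$ is an averaging algebra — the sign is harmless, since coassociativity is quadratic in the comultiplication while the identities of Definition \ref{def:coderivation} are linear in it. That $(A^\ast,\Delta^\ast,-\tilde{\Delta})$ is an ASI bialgebra follows by transposing \eqref{bialg1}-\eqref{bialg2}, which, being linear in the comultiplication, are insensitive to the sign (cf. \cite{Bai}). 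For the two bimodule conditions, one of them, $(A^\ast,\fl_{A^\ast},\fr_{A^\ast},\alpha^\ast)$ over $(A^\ast,\Delta^\ast,\beta^\ast)$, is already part of the hypothesis; the other, after the identifications $(A^\ast)^\ast=A$ and $(-\tilde{\Delta})^\ast=-\,\cdot$, becomes $(A,-\fl_A,-\fr_A,\beta)$ over $(A,-\,\cdot,\alpha)$, and since the bimodule axioms and \eqref{adm1}-\eqref{adm2} are homogeneous in the structure maps the signs cancel, reducing it to the hypothesis that $(A,\fl_A,\fr_A,\beta)$ is a bimodule over $(A,\alpha)$.

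For the second claim I would build the double. By Theorem \ref{thm:equ} the data yield a double construction of averaging Frobenius algebra; set $D=A\bowtie A^\ast$, $\alpha_D=\alpha\oplus\beta^\ast$, and let $\beta_D=\beta\oplus\alpha^\ast$ be the adjoint of $\alpha_D$ under $\mathfrak{B}_d$, so that $(D,\fl_D,\fr_D,\beta_D)$ is a bimodule over $(D,\alpha_D)$ by Lemma \ref{lem:douadm}(i). Taking $r=\sum_i (e_i,0)\otimes(0,e^i)$ for dual bases $\{e_i\}$, $\{e^i\}$ of $A$, $A^\ast$ and defining $\Delta_D$ by \eqref{cobo}, the coboundary criterion reduces the whole problem to \eqref{coba1}-\eqref{coba8} on $D$. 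Equations \eqref{coba1}-\eqref{coba2} assert exactly that $\Delta_D$ makes $D$ an ASI bialgebra, which is Bai's double of the ASI bialgebra $(A,\cdot,\Delta)$ \cite{Bai}, so this layer is inherited.

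The crux is that the averaging-specific equations \eqref{coba3}-\eqref{coba8} become automatic for this $r$: each of them carries a factor $(\beta_D\otimes\id-\id\otimes\alpha_D)(r)$ or $(\alpha_D\otimes\id-\id\otimes\beta_D)(r)$, and both vanish. Indeed, using $\alpha_D(e_i,0)=(\alpha(e_i),0)$, $\beta_D(0,e^i)=(0,\alpha^\ast(e^i))$ and the coevaluation identity $\sum_i\phi(e_i)\otimes e^i=\sum_i e_i\otimes\phi^\ast(e^i)$ (valid for every $\phi\in\End_\Bbbk(A)$), one finds $(\alpha_D\otimes\id)(r)=(\id\otimes\beta_D)(r)$ and $(\beta_D\otimes\id)(r)=(\id\otimes\alpha_D)(r)$; hence $(D,\Delta_D,\alpha_D,\beta_D)$ is an averaging ASI bialgebra. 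Finally I would exhibit $A$ and $A^\ast$ as the two averaging ASI sub-bialgebras by evaluating $\Delta_D$ on $(a,0)$ and $(0,\xi)$ through the matched-pair product of Definition \ref{def:mat-pa}: the same coevaluation identity forces the mixed $A\otimes A^\ast$ components to cancel, leaving $\Delta_D(a,0)=\Delta(a)$ and $\Delta_D(0,\xi)=-\tilde{\Delta}(\xi)$ — which is where the sign $-\tilde{\Delta}$ of the first claim reappears — while $\alpha_D$ and $\beta_D$ visibly restrict to $(\alpha,\beta)$ on $A$ and to $(\beta^\ast,\alpha^\ast)$ on $A^\ast$. The main obstacle is not the associative-coassociative compatibility, which comes for free from \cite{Bai}, but this last bookkeeping: tracking the four matched-pair actions carefully enough to see the mixed terms cancel and to pin down the correct sign on the $A^\ast$ factor.
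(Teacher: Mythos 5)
Your proposal is correct and follows essentially the same route as the paper: the first claim by dualization (citing Bai's duality for ASI bialgebras and observing that the averaging/bimodule conditions of Definition \ref{def:avebialg} dualize, with signs cancelling), and the second by taking $r=\sum_i e_i\otimes e_i^\ast$ in the matched-pair algebra $A\bowtie A^\ast$, invoking Lemma \ref{lem:douadm} for the bimodule condition, using the coevaluation identity to show $(\alpha_D\otimes\id-\id\otimes\beta_D)(r)=(\beta_D\otimes\id-\id\otimes\alpha_D)(r)=0$ so that \eqref{coba3}--\eqref{coba8} hold, and citing Bai for \eqref{coba1}--\eqref{coba2}. The only difference is that you spell out the restriction of $\Delta_D$ to $A$ and $A^\ast$ to identify the sub-bialgebras, a verification the paper declares obvious.
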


\begin{proof}
By \cite[Remark 2.2.4]{Bai}, $(A^{\ast}, \Delta^{\ast}, -\tilde{\Delta})$ is an ASI bialgebra.
Note that $(A^{\ast}, \Delta^{\ast}, \beta^{\ast})$ is an averaging algebra since
$(A, \Delta, \beta)$ is an averaging coalgebra, $(A^{\ast}, -\tilde{\Delta},\,
\alpha^{\ast})$ is an averaging coalgebra since $(A, \alpha)$ is an averaging algebra,
and $(A, \fl_{A}, \fr_{A}, \beta)$ is a bimodule over $(A, \alpha)$,
$(A^{\ast}, \fl_{A^{\ast}}, \fr_{A^{\ast}}, \alpha^{\ast})$ is a bimodule over
$(A^{\ast}, \beta^{\ast})$, we get $(A^{\ast}, -\tilde{\Delta},\, \beta^{\ast},
\alpha^{\ast})$ is also an averaging ASI bialgebra.

Let $\{e_{1}, e_{2}, \cdots, e_{n}\}$ be a basis of $A$, $\{e_{1}^{\ast}, e_{2}^{\ast},
\cdots, e_{n}^{\ast}\}$ be the dual basis, and $r=\sum_{i=1}^{n}e_{i}\otimes e_{i}^{\ast}
\in A\otimes A^{\ast}\subset(A\oplus A^{\ast})\otimes(A\oplus A^{\ast})$.
Since $(A, \Delta, \alpha, \beta)$ is an averaging ASI bialgebra, there is a
corresponding matched pair $((A, \alpha), (A^{\ast}, \beta^{\ast}), \fr_{A}^{\ast},
\fl_{A}^{\ast}, \fr_{A^{\ast}}^{\ast}, \fl_{A^{\ast}}^{\ast})$.
Let $(A\bowtie A^{\ast}, \alpha\oplus\beta^{\ast})$ be the averaging algebra
structure on $A\oplus A^{\ast}$ obtained from this matched pair. By Lemma \ref{lem:douadm},
we get $(A\oplus A^{\ast}, \fl_{A\bowtie A^{\ast}}, \fr_{A\bowtie A^{\ast}},
\beta\oplus\alpha^{\ast})$ is a bimodule over $(A\bowtie A^{\ast}, \alpha\oplus\beta^{\ast})$.
Define
$$
\Delta_{A\bowtie A^{\ast}}(x)=(\id\otimes\,\fl_{A\bowtie A^{\ast}}(x)
-\fr_{A\bowtie A^{\ast}}(x)\otimes\id)(r),
$$
for any $x\in A\bowtie A^{\ast}$. Then
\begin{align*}
&\;\big((\alpha\oplus\beta^{\ast})\otimes\id-\id\otimes(\beta\oplus\alpha^{\ast})\big)(r)\\
=&\;\sum_{i=1}^{n}\big(\alpha(e_{i})\otimes e_{i}^{\ast}
-e_{i}\otimes\alpha^{\ast}(e_{i}^{\ast})\big)
=\sum_{i=1}^{n}\alpha(e_{i})\otimes e_{i}^{\ast}-\sum_{i=1}^{n}\sum_{j=1}^{n}
e_{i}\otimes\langle\alpha^{\ast}(e_{i}^{\ast}),\, e_{j}\rangle e_{j}^{\ast}\\[-2mm]
=&\;\sum_{i=1}^{n}\alpha(e_{i})\otimes e_{i}^{\ast}-\sum_{i=1}^{n}\sum_{j=1}^{n}
\langle e_{i}^{\ast},\, \alpha(e_{j})\rangle e_{i}\otimes e_{j}^{\ast}
=\sum_{i=1}^{n}\alpha(e_{i})\otimes e_{i}^{\ast}-
\sum_{i=j}^{n}\alpha(e_{j})\otimes e_{j}^{\ast}=0.
\end{align*}
Similarly $\big((\beta\oplus\alpha^{\ast})\otimes\id-\id\otimes(\alpha\oplus\beta^{\ast})
\big)(r)=0$. Hence, Eqs. \eqref{coba3}-\eqref{coba8} hold. By \cite[Theorem 2.3.6]{Bai},
Eqs. \eqref{coba1}-\eqref{coba2} hold. Therefore, $(A\bowtie A^{\ast},
\Delta_{A\bowtie A^{\ast}}, \alpha\oplus\beta^{\ast}, \beta\oplus\alpha^{\ast})$ is an
averaging ASI bialgebra. Obviously it contains $(A, \Delta, \alpha, \beta)$ and
$(A^{\ast}, -\tilde{\Delta},\, \beta^{\ast}, \alpha^{\ast})$ as averaging
ASI sub-bialgebras.
\end{proof}

Proposition \ref{pro:doublebia} provides a method for constructing averaging ASI bialgebra.
More exactly, for any averaging ASI bialgebra $(A, \Delta, \alpha, \beta)$,
we get a new averaging ASI bialgebra $(A\bowtie A^{\ast}, \Delta_{A\bowtie A^{\ast}},
\alpha\oplus\beta^{\ast}, \beta\oplus\alpha^{\ast})$, which is called the {\it
double averaging ASI bialgebra} of $(A, \Delta, \alpha, \beta)$.
Moreover, as a direct conclusion, we have

\begin{cor}\label{cor:cobia}
Let $(A, \alpha)$ be an averaging algebra, $r\in A\otimes A$, and $\beta: A\rightarrow A$
be a linear map such that $(A, \fl_{A}, \fr_{A}, \beta)$ is a bimodule over $(A, \alpha)$.
Then the linear map $\Delta: A\rightarrow A\otimes A$ by Eq. \eqref{cobo}
makes $(A, \Delta, \alpha, \beta)$ is an averaging ASI bialgebra if Eq. \eqref{coba1}
and the following equations hold:
\begin{align}
r_{12}r_{13}+r_{13}r_{23}-r_{23}r_{12}&=0,                    \label{ybe1} \\
(\alpha\otimes\id-\id\otimes\beta)(r)&=0,                     \label{ybe2} \\
(\beta\otimes\id-\id\otimes\alpha)(r)&=0.                     \label{ybe3}
\end{align}
\end{cor}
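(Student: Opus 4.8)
The plan is to derive Corollary \ref{cor:cobia} as a direct consequence of Proposition \ref{pro:coba} and Lemma \ref{lem:coba}. By the discussion immediately preceding the corollary, once $(A, \fl_{A}, \fr_{A}, \beta)$ is a bimodule over $(A, \alpha)$ and $\Delta$ is defined by Eq. \eqref{cobo}, proving that $(A, \Delta, \alpha, \beta)$ is an averaging ASI bialgebra reduces to verifying Eqs. \eqref{coba1}--\eqref{coba8}. So the strategy is to show that the three hypotheses \eqref{ybe1}--\eqref{ybe3} of the corollary, together with the assumed Eq. \eqref{coba1}, imply all eight of those equations.

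First I would observe that Eqs. \eqref{ybe2} and \eqref{ybe3} assert precisely that the two tensors $(\alpha\otimes\id-\id\otimes\,\beta)(r)$ and $(\beta\otimes\id-\id\otimes\,\alpha)(r)$ vanish. Now Eqs. \eqref{coba3}--\eqref{coba8} are each obtained by applying some linear operator on $A\otimes A$ to one of these two tensors; for instance \eqref{coba3} applies $(\id\otimes\,\fl_{A}(\beta(a)))$ to $(\beta\otimes\id-\id\otimes\,\alpha)(r)$ and subtracts $(\fr_{A}(\beta(a))\otimes\id)$ applied to $(\alpha\otimes\id-\id\otimes\,\beta)(r)$, and similarly for \eqref{coba4}--\eqref{coba8}. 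Hence each of Eqs. \eqref{coba3}--\eqref{coba8} is a linear image of a vanishing tensor and therefore holds automatically. This is the heart of the argument and requires only inspection of the explicit forms recorded in Lemma \ref{lem:coba}, with no fresh computation.

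It remains to handle Eqs. \eqref{coba1} and \eqref{coba2}. Equation \eqref{coba1} is assumed outright as a hypothesis. For Eq. \eqref{coba2}, I would invoke Eq. \eqref{ybe1}: since $r_{12}r_{13}+r_{13}r_{23}-r_{23}r_{12}=0$, applying the operator $(\id\otimes\id\otimes\,\fl_{A}(a_{1})-\fr_{A}(a_{1})\otimes\id\otimes\id)$ to the zero element of $A^{\otimes 3}$ yields zero, so \eqref{coba2} holds. Together with the preceding paragraph, this shows all of Eqs. \eqref{coba1}--\eqml{coba8} are satisfied, and the discussion before the corollary then certifies that $(A, \Delta, \alpha, \beta)$ is an averaging ASI bialgebra.

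I do not expect a serious obstacle here, since the corollary is engineered precisely so that the stronger pointwise conditions \eqref{ybe1}--\eqref{ybe3} collapse the eight bialgebra equations. The only point demanding care is bookkeeping: one must confirm that in Lemma \ref{lem:coba} every one of Eqs. \eqref{coba3}--\eqref{coba8} is genuinely expressed as an operator applied to one (or a combination) of the two tensors $(\alpha\otimes\id-\id\otimes\,\beta)(r)$ and $(\beta\otimes\id-\id\otimes\,\alpha)(r)$, so that the vanishing \eqref{ybe2}--\eqref{ybe3} really does kill all six. Inspecting \eqref{coba5}--\eqref{coba8}, each is literally such an operator applied to one of these tensors, and \eqref{coba3}--\eqref{coba4} are sums of two such terms; thus the reduction is immediate and the proof is short.
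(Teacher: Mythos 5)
Your proposal is correct and follows essentially the same route as the paper: the paper states the corollary as a direct consequence of the reduction (recorded after Lemma \ref{lem:coba}) of the averaging ASI bialgebra conditions to Eqs. \eqref{coba1}--\eqref{coba8}, and then observes—exactly as you do, and as is done explicitly in the proof of Proposition \ref{pro:doublebia}—that Eqs. \eqref{ybe2}--\eqref{ybe3} force the tensors $(\alpha\otimes\id-\id\otimes\beta)(r)$ and $(\beta\otimes\id-\id\otimes\alpha)(r)$ to vanish, killing Eqs. \eqref{coba3}--\eqref{coba8} by linearity, while Eq. \eqref{ybe1} kills Eq. \eqref{coba2} and Eq. \eqref{coba1} is assumed. (Minor note: your text contains a typo \verb|\eqml{coba8}| where \verb|\eqref{coba8}| is meant.)
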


Recall that an element $r\in A\otimes A$ is called {\it antisymmetric} if $\tau(r)=-r$,
and is called {\it symmetric} if $\tau(r)=r$. If $r$ is symmetric or antisymmetric,
Eq. \eqref{ybe2} is equivalent to Eq. \eqref{ybe3}.

\begin{defi}\label{def:YBE}
Let $(A, \alpha)$ be an averaging algebra, $r\in A\otimes A$, and $\beta: A\rightarrow A$
be a linear map. Then Eqs. \eqref{ybe1}-\eqref{ybe3} is called the {\rm Yang-Baxter equation
in $(A, \alpha)$ with respect to $\beta$}, or simply {\rm $\beta$-YBE in $(A, \alpha)$}.
If $\beta=\alpha$, these equations are called the {\rm YBE in averaging algebra $(A, \alpha)$}.
\end{defi}

Let $(A, \alpha)$ be an averaging algebra, $r\in A\otimes A$, $\Delta: A\rightarrow
A\otimes A$ given by Eq. \eqref{cobo}, and $\beta: A\rightarrow A$ be a linear map
such that $(A, \fl_{A}, \fr_{A}, \beta)$ is a bimodule over $(A, \alpha)$.
By the definition above, we get the following corollary.

\begin{cor}\label{cor:ybe-bi}
Let $(A, \alpha)$ be an averaging algebra, $r\in A\otimes A$, and $\beta: A\rightarrow A$
be a linear map. If $r$ is an antisymmetric solution of the $\beta$-YBE in $(A, \alpha)$,
then $(A, \Delta, \alpha, \beta)$ is an averaging ASI bialgebra, where $\Delta$ is given by
Eq. \eqref{cobo}.
\end{cor}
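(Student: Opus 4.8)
The plan is to deduce the statement directly from Corollary~\ref{cor:cobia}. That corollary asserts that, under the standing assumption that $(A, \fl_{A}, \fr_{A}, \beta)$ is a bimodule over $(A, \alpha)$ (carried over from the paragraph immediately preceding the corollary), the comultiplication $\Delta$ defined by Eq.~\eqref{cobo} makes $(A, \Delta, \alpha, \beta)$ an averaging ASI bialgebra as soon as Eq.~\eqref{coba1} together with Eqs.~\eqref{ybe1}--\eqref{ybe3} all hold. Since by hypothesis $r$ is an antisymmetric solution of the $\beta$-YBE, Eqs.~\eqref{ybe1}--\eqref{ybe3} hold by the very definition of the $\beta$-YBE (Definition~\ref{def:YBE}). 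Thus the only remaining task is to produce Eq.~\eqref{coba1}.

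First I would use antisymmetry to dispose of Eq.~\eqref{coba1}. By definition $\tau(r)=-r$, so $r+\tau(r)=0$. The left-hand side of Eq.~\eqref{coba1} is the image of $r+\tau(r)$ under the linear operator $(\fl_{A}(a_{1})\otimes\id-\id\otimes\fr_{A}(a_{1}))(\id\otimes\fl_{A}(a_{2})-\fr_{A}(a_{2})\otimes\id)$ acting on $A\otimes A$; since its argument vanishes, the whole expression is zero for every $a_{1}, a_{2}\in A$. Hence Eq.~\eqref{coba1} holds automatically, and no separate computation is needed.

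With Eq.~\eqref{coba1} and Eqs.~\eqref{ybe1}--\eqref{ybe3} established and the bimodule hypothesis in force, Corollary~\ref{cor:cobia} applies verbatim and delivers the conclusion that $(A, \Delta, \alpha, \beta)$ is an averaging ASI bialgebra. I anticipate no genuine obstacle: this is a clean specialization of Corollary~\ref{cor:cobia}, and the antisymmetry assumption is exactly what is needed to trivialize Eq.~\eqref{coba1}, which in general only constrains the symmetric part $r+\tau(r)$. The one point to be careful about is to invoke only the sufficient (\emph{if}) direction of Corollary~\ref{cor:cobia}: Eqs.~\eqref{ybe1}--\eqref{ybe3} are strictly stronger than the full family of characterizing conditions \eqref{coba1}--\eqref{coba8}, so I am asserting sufficiency, not an equivalence.
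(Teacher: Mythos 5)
Your proposal is correct and is exactly the paper's own reasoning: the paper states this corollary as an immediate consequence of Corollary~\ref{cor:cobia}, with the bimodule hypothesis on $(A, \fl_{A}, \fr_{A}, \beta)$ carried as a standing assumption from the preceding paragraph, Eqs.~\eqref{ybe1}--\eqref{ybe3} holding by Definition~\ref{def:YBE}, and Eq.~\eqref{coba1} holding trivially because $r+\tau(r)=0$. Your added care in invoking only the sufficiency direction of Corollary~\ref{cor:cobia} is appropriate and consistent with the paper.
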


\begin{ex}\label{ex:YBE-bialg}
Let $(A, \alpha)$ be the 3-dimensional averaging algebra, which is given by $A=\Bbbk\{e_{1},
e_{2}, e_{3}\}$ with non-zero product $e_{1}e_{1}=e_{1}$, $e_{1}e_{2}=e_{2}=e_{2}e_{1}$ and
$\alpha(e_{1})=\alpha(e_{2})=e_{3}$, $\alpha(e_{3})=0$. Define a linear map $\beta: A
\rightarrow A$ by $\beta(e_{1})=e_{3}$, $\beta(e_{2})=-e_{3}$ and $\beta(e_{3})=0$.
Then $(A^{\ast}, \fr_{A}^{\ast}, \fl_{A}^{\ast}, \beta^{\ast})$ is a bimodule over
$(A, \alpha)$.

$(i)$ Let $r=e_{2}\otimes e_{3}-e_{3}\otimes e_{2}$. Then one can check that
$r$ is an antisymmetric solution of the $\beta$-YBE in $(A, \alpha)$. Thus $r$ induces
a comultiplication $\Delta: A\rightarrow A\otimes A$ by Eq. \eqref{cobo}, which is given by
$\Delta(e_{1})=-e_{2}\otimes e_{3}-e_{3}\otimes e_{2}$, $\Delta(e_{2})=\Delta(e_{3})=0$,
such that $(A, \Delta, \alpha, \beta)$ is an averaging ASI bialgebra.

$(ii)$ Let $r=e_{3}\otimes e_{3}$. Then one can check that $r$ is a symmetric solution
of the $\beta$-YBE in $(A, \alpha)$ and satisfies Eq. \eqref{coba1}. Thus $r$ also induces
a trivial comultiplication $\Delta: A\rightarrow A\otimes A$ by Eq. \eqref{cobo} such that
$(A, \Delta, \alpha, \beta)$ is an averaging ASI bialgebra.
\end{ex}

\begin{ex}\label{ex:YBE-bialg1}
Let $(A, \alpha)$ be the 3-dimensional commutative averaging algebra, which is
given by $A=\Bbbk\{e_{1}, e_{2}, e_{3}\}$ with non-zero product $e_{1}e_{1}=e_{1}$,
$e_{1}e_{2}=e_{2}=e_{2}e_{1}$ and $\alpha(e_{1})=e_{3}$, $\alpha(e_{3})=\alpha(e_{2})=0$.
Then $(A^{\ast}, \fr_{A}^{\ast}, \fl_{A}^{\ast}, \beta^{\ast})$ is a bimodule over
$(A, \alpha)$, where $\beta: A\rightarrow A$ is the zero map.

$(i)$ Let $r=e_{2}\otimes e_{3}-e_{3}\otimes e_{2}$. Then one can check that
$r$ is an antisymmetric solution of the $\beta$-YBE in $(A, \alpha)$. Thus $r$ induces
a comultiplication $\Delta: A\rightarrow A\otimes A$ by Eq. \eqref{cobo}, which is given by
$\Delta(e_{1})=-e_{2}\otimes e_{3}-e_{3}\otimes e_{2}$, $\Delta(e_{2})=\Delta(e_{3})=0$,
such that $(A, \Delta, \alpha, \beta)$ is an averaging ASI bialgebra.

$(ii)$ Let $r=e_{3}\otimes e_{3}$. Then one can check that $r$ is a symmetric solution
of the $\beta$-YBE in $(A, \alpha)$ and satisfies Eq. \eqref{coba1}. Thus $r$ also induces
a trivial comultiplication $\Delta: A\rightarrow A\otimes A$ by Eq. \eqref{cobo} such that
$(A, \Delta, \alpha, \beta)$ is an averaging ASI bialgebra.
\end{ex}

Let $V$ be a vector space. Through the isomorphism $V\otimes V\cong
\Hom(V^{\ast}, V)$, any $r=\sum_{i}x_{i}\otimes y_{i}\in V\otimes V$ can be view
as a map $r^{\sharp}: V^{\ast}\rightarrow V$, explicitly, $r^{\sharp}(\xi)=\sum_{i}
\langle\xi, x_{i}\rangle y_{i}$, for any $\xi\in V^{\ast}$.

\begin{pro}\label{pro:ybeo}
Let $(A, \alpha)$ be an averaging algebra, $r\in A\otimes A$ be antisymmetric,
$\beta: A\rightarrow A$ be a linear map. Then $r$ is a solution of $\beta$-YBE in
$(A, \alpha)$ if and only if $r^{\sharp}$ satisfies the following equations:
\begin{align}
\alpha r^{\sharp}&=r^{\sharp}\beta^{\ast},                          \label{ybeo1}\\
r^{\sharp}(\xi_{1})r^{\sharp}(\xi_{2})
&=r^{\sharp}\Big(\fr^{\ast}_{A}(r^{\sharp}(\xi_{1}))(\xi_{2})
+\fl^{\ast}_{A}(r^{\sharp}(\xi_{2}))(\xi_{1})\Big),                  \label{ybeo2}
\end{align}
for any $\xi_{1}, \xi_{2}\in A^{\ast}$.
\end{pro}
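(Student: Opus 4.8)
The plan is to split the $\beta$-YBE into its two qualitatively different parts and match each with one of \eqref{ybeo1}, \eqref{ybeo2}. The equations \eqref{ybe2}--\eqref{ybe3} encode the interaction of $r$ with the averaging operators and will correspond to \eqref{ybeo1}, while the associative Yang-Baxter equation \eqref{ybe1}, in which $\alpha$ and $\beta$ do not appear, will correspond to the $\mathcal{O}$-operator identity \eqref{ybeo2}. Throughout I would use the identification $V\otimes V\cong\Hom(V^{\ast}, V)$, $r\mapsto r^{\sharp}$, which is a linear isomorphism in finite dimension; hence an element of a tensor power of $A$ vanishes if and only if all of its contractions against dual vectors vanish, and this is what lets me pass back and forth between tensor identities and identities for $r^{\sharp}$.

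For the averaging part, I would first record the two elementary identities $\big((f\otimes\id)(r)\big)^{\sharp}=r^{\sharp}f^{\ast}$ and $\big((\id\otimes g)(r)\big)^{\sharp}=g\,r^{\sharp}$, valid for any linear maps $f,g:A\to A$; both follow immediately from the definitions of $r^{\sharp}$ and of the transpose. Applying these with $f=\beta$ and $g=\alpha$ shows that $(\beta\otimes\id-\id\otimes\alpha)(r)$ has sharp equal to $r^{\sharp}\beta^{\ast}-\alpha r^{\sharp}$, so that \eqref{ybe3} is equivalent to \eqref{ybeo1} by injectivity of $\sharp$. Finally, since $r$ is antisymmetric, applying $\tau$ to $(\alpha\otimes\id-\id\otimes\beta)(r)$ turns it into $(\beta\otimes\id-\id\otimes\alpha)(r)$, so \eqref{ybe2} and \eqref{ybe3} are equivalent; thus the averaging part of the $\beta$-YBE is equivalent to \eqref{ybeo1} alone.

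For the associative Yang-Baxter part, I would contract the tensor \eqref{ybe1}, which lives in $A\otimes A\otimes A$, against $\xi_{1}$ and $\xi_{2}$ in the first two slots while keeping the third, i.e.\ apply $\langle\xi_{1},-\rangle\otimes\langle\xi_{2},-\rangle\otimes\id$. Writing $r=\sum_i x_i\otimes y_i$, the term $r_{13}r_{23}$ contracts directly to $r^{\sharp}(\xi_{1})r^{\sharp}(\xi_{2})$, giving the left-hand side of \eqref{ybeo2}. For $r_{12}r_{13}$ I would sum the inner index first, use the antisymmetry relation $\sum_i\langle\xi_{2}, y_i\rangle x_i=-r^{\sharp}(\xi_{2})$ to pull out $r^{\sharp}(\xi_{2})$, and then convert the resulting product into the transposed action $\fl_{A}^{\ast}$; this yields $-\,r^{\sharp}\big(\fl_{A}^{\ast}(r^{\sharp}(\xi_{2}))(\xi_{1})\big)$. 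The term $r_{23}r_{12}$ is handled analogously, producing $r^{\sharp}\big(\fr_{A}^{\ast}(r^{\sharp}(\xi_{1}))(\xi_{2})\big)$. Collecting the three contributions reproduces \eqref{ybeo2} exactly, signs included, and since the contraction $A\otimes A\otimes A\to\Hom(A^{\ast}\otimes A^{\ast}, A)$ is an isomorphism in finite dimension, the vanishing of \eqref{ybe1} is equivalent to \eqref{ybeo2} holding for all $\xi_{1},\xi_{2}$.

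I expect the main obstacle to be purely the bookkeeping in this last step: keeping the roles of the two tensor factors of $r$ straight, invoking antisymmetry exactly where it is needed (it enters only in the $r_{12}r_{13}$ term), and matching each product with the correct one of $\fl_{A}^{\ast}$, $\fr_{A}^{\ast}$ via the definition of the transposed action, so that the final signs line up with \eqref{ybeo2}. Nothing conceptually new is required; the averaging operators play no role in \eqref{ybe1}, so this portion is essentially the classical correspondence between antisymmetric solutions of the associative Yang-Baxter equation and $\mathcal{O}$-operators associated to the bimodule $(A^{\ast},\fr_{A}^{\ast},\fl_{A}^{\ast})$.
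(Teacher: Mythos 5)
Your proposal is correct, and its skeleton is the same as the paper's: both split the $\beta$-YBE into the associative equation \eqref{ybe1}, matched with \eqref{ybeo2}, and the averaging equations \eqref{ybe2}--\eqref{ybe3}, matched with \eqref{ybeo1}, with \eqref{ybe2} reduced to \eqref{ybe3} by antisymmetry. The genuine difference lies in how \eqref{ybe1} $\Leftrightarrow$ \eqref{ybeo2} is established: the paper simply quotes \cite[Proposition 2.4.7]{Bai} for this classical correspondence between antisymmetric solutions of the associative Yang-Baxter equation and $\mathcal{O}$-operators associated to $(A^{\ast}, \fr_{A}^{\ast}, \fl_{A}^{\ast})$, whereas you re-derive it by contracting the tensor in \eqref{ybe1} against $\langle\xi_{1},-\rangle\otimes\langle\xi_{2},-\rangle\otimes\id$. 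Your bookkeeping checks out, signs included: $r_{13}r_{23}$ contracts to $r^{\sharp}(\xi_{1})r^{\sharp}(\xi_{2})$, $r_{12}r_{13}$ contracts to $-r^{\sharp}\big(\fl_{A}^{\ast}(r^{\sharp}(\xi_{2}))(\xi_{1})\big)$ (the one place antisymmetry is needed), $r_{23}r_{12}$ contracts to $r^{\sharp}\big(\fr_{A}^{\ast}(r^{\sharp}(\xi_{1}))(\xi_{2})\big)$, and injectivity of the contraction map $A\otimes A\otimes A\rightarrow\Hom(A^{\ast}\otimes A^{\ast}, A)$ in finite dimension yields the converse implication. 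What your route buys is self-containedness, since the proposition no longer rests on an external computation; what the paper's route buys is brevity and an explicit link to the established associative theory. For the averaging part, your identities $\big((f\otimes\id)(r)\big)^{\sharp}=r^{\sharp}f^{\ast}$ and $\big((\id\otimes g)(r)\big)^{\sharp}=g\,r^{\sharp}$ are a basis-free rewriting of the paper's direct computation with $r=\sum_{i}x_{i}\otimes y_{i}$, so that part is essentially identical.
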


\begin{proof}
First, by \cite[Proposition 2.4.7]{Bai}, Eq. \eqref{ybe1} holds if and only if Eq.
(\ref{ybeo2}) holds. Denote $r=\sum_{i}x_{i}\otimes y_{i}$.
For any $\xi\in A^{\ast}$, note that
$$
r^{\sharp}(\beta^{\ast}(\xi))=\sum_{i}\langle\xi,\; \beta(x_{i})\rangle y_{i}
\qquad\mbox{ and }\qquad
\alpha(r^{\sharp}(\xi))=\sum_{i}\langle\xi,\; x_{i}\rangle\alpha(y_{i}),
$$
we get Eq. \eqref{ybe3} holds if and only if Eq. \eqref{ybeo1} holds.
Eq. \eqref{ybe3} holds if and only if Eq. \eqref{ybe2} holds, since $r$
is antisymmetric. The proof is complete.
\end{proof}

Now, let $(A, \alpha, \mathfrak{B})$ be a symmetric averaging Frobenius algebra.
Then under the natural bijection $\Hom(A\otimes A, \Bbbk)\cong\Hom(A, A^{\ast})$, the
bilinear form $\mathfrak{B}(-,-)$ corresponds to a linear map $\varphi:A\rightarrow A^{\ast}$,
which is given by $\langle\varphi(a_{1}),\; a_{2}\rangle=\mathfrak{B}(a_{1}, a_{2})$,
for any $a_{1}, a_{2}\in A$. For any $r\in A\otimes A$, define a linear map
$R_{r}: A\rightarrow A$, $a\mapsto r^{\sharp}(\varphi(a))$,
then we have the following proposition.

\begin{pro}\label{pro:yberb}
Let $(A, \alpha, \mathfrak{B})$ be a symmetric averaging Frobenius algebra and
$r\in A\otimes A$ be antisymmetric. Suppose that $\hat{\alpha}$ is the adjoint
of $\alpha$ with respect to $\mathfrak{B}(-,-)$. Then, $r$ is a solution of
$\hat{\alpha}$-YBE in $(A, \alpha)$ if and only if $R_{r}$ satisfies the following equations:
\begin{align}
\alpha R_{r}&=R_{r}\alpha,                                            \label{yberb1}\\
R_{r}(a_{1})R_{r}(a_{2})&=R_{r}(a_{1}R_{r}(a_{2})+R_{r}(a_{1})a_{2}), \label{yberb2}
\end{align}
for any $a_{1}, a_{2}\in A$.
Moreover, in this case, $(A, \Delta, \alpha, \hat{\alpha})$ is
an averaging ASI bialgebra, where $\Delta$ is defined by Eq. \eqref{cobo}.
\end{pro}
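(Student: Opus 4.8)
The plan is to read off the characterization from Proposition \ref{pro:ybeo}, which already expresses "$r$ antisymmetric solves the $\hat\alpha$-YBE in $(A,\alpha)$" as the two conditions \eqref{ybeo1}--\eqref{ybeo2} on the map $r^{\sharp}$, and then to transport those conditions across the Frobenius isomorphism $\varphi$ to the stated conditions on $R_{r}=r^{\sharp}\varphi$. First I would record the three intertwining identities for $\varphi$. From the symmetry of $\mathfrak{B}$ together with $\mathfrak{B}(\alpha(a_{1}),a_{2})=\mathfrak{B}(a_{1},\hat\alpha(a_{2}))$ one obtains $\varphi\alpha=\hat\alpha^{\ast}\varphi$, and from invariance of $\mathfrak{B}$ one obtains $\fr_{A}^{\ast}(a)(\varphi(b))=\varphi(ab)$ and $\fl_{A}^{\ast}(a)(\varphi(b))=\varphi(ba)$; these are precisely the computations already performed in the proof of Proposition \ref{pro:Fro-mod}. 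The structural fact I would rely on repeatedly is that $\varphi\colon A\to A^{\ast}$ is a linear isomorphism, so an equality of maps that agree after precomposition with $\varphi$ may be cancelled, and a family of identities indexed by $\xi\in A^{\ast}$ is the same as the family indexed by $a\in A$ via $\xi=\varphi(a)$.

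For the equivalence \eqref{yberb1} $\Leftrightarrow$ \eqref{ybeo1}, I would simply compute $\alpha R_{r}=\alpha r^{\sharp}\varphi$ and, using $\varphi\alpha=\hat\alpha^{\ast}\varphi$, also $R_{r}\alpha=r^{\sharp}\varphi\alpha=r^{\sharp}\hat\alpha^{\ast}\varphi$. Since $\varphi$ is bijective, $\alpha R_{r}=R_{r}\alpha$ holds if and only if $\alpha r^{\sharp}=r^{\sharp}\hat\alpha^{\ast}$, which is exactly \eqref{ybeo1}.

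For the equivalence \eqref{yberb2} $\Leftrightarrow$ \eqref{ybeo2}, I would specialize \eqref{ybeo2} at $\xi_{1}=\varphi(a_{1})$ and $\xi_{2}=\varphi(a_{2})$, so that the left-hand side becomes $R_{r}(a_{1})R_{r}(a_{2})$. In the argument of $r^{\sharp}$ on the right-hand side, the two invariance identities rewrite $\fr_{A}^{\ast}(R_{r}(a_{1}))(\varphi(a_{2}))=\varphi(R_{r}(a_{1})a_{2})$ and $\fl_{A}^{\ast}(R_{r}(a_{2}))(\varphi(a_{1}))=\varphi(a_{1}R_{r}(a_{2}))$, collapsing the right-hand side to $r^{\sharp}\varphi\big(R_{r}(a_{1})a_{2}+a_{1}R_{r}(a_{2})\big)=R_{r}\big(a_{1}R_{r}(a_{2})+R_{r}(a_{1})a_{2}\big)$. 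As $\varphi$ is surjective onto $A^{\ast}$, letting $\xi_{1},\xi_{2}$ range over $A^{\ast}$ is the same as letting $a_{1},a_{2}$ range over $A$, so \eqref{ybeo2} is equivalent to \eqref{yberb2}. Combining both equivalences with Proposition \ref{pro:ybeo} yields the claimed characterization.

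For the final assertion, I would invoke that Proposition \ref{pro:Fro-mod} already establishes $(A,\fl_{A},\fr_{A},\hat\alpha)$ as a bimodule over $(A,\alpha)$; hence when $r$ is an antisymmetric solution of the $\hat\alpha$-YBE in $(A,\alpha)$, the hypotheses of Corollary \ref{cor:ybe-bi} are satisfied with $\beta=\hat\alpha$, and $(A,\Delta,\alpha,\hat\alpha)$ is an averaging ASI bialgebra with $\Delta$ given by \eqref{cobo}. I do not expect any serious obstacle here: the entire argument is a transport of structure along $\varphi$, and the only point requiring care is bookkeeping on which side $\varphi$ appears and verifying that its bijectivity legitimately cancels it; once the three intertwining identities are in hand, the rest is direct substitution.
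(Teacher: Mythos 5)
Your proof is correct, and its skeleton coincides with the paper's: both arguments reduce the $\hat\alpha$-YBE to the conditions \eqref{ybeo1}--\eqref{ybeo2} on $r^{\sharp}$ via Proposition \ref{pro:ybeo} and then transport them along the Frobenius isomorphism $\varphi$; in particular your treatment of \eqref{yberb1}, computing $R_{r}\alpha=r^{\sharp}\hat\alpha^{\ast}\varphi$ and cancelling the bijection $\varphi$, is exactly the computation in the paper. The one place you genuinely diverge is the multiplicative identity: the paper does not derive \eqref{yberb2} from \eqref{ybeo2} at all, but instead quotes \cite[Corollary 3.17]{Bai1} for the equivalence of Eq. \eqref{ybe1} with \eqref{yberb2}, whereas you obtain it internally by specializing \eqref{ybeo2} at $\xi_{i}=\varphi(a_{i})$ and collapsing $\fr_{A}^{\ast}(R_{r}(a_{1}))(\varphi(a_{2}))=\varphi(R_{r}(a_{1})a_{2})$ and $\fl_{A}^{\ast}(R_{r}(a_{2}))(\varphi(a_{1}))=\varphi(a_{1}R_{r}(a_{2}))$. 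This is a correct and more self-contained route; the three intertwining identities you rely on are indeed the ones verified (using both symmetry and invariance of $\mathfrak{B}$) in the proof of Proposition \ref{pro:Fro-mod}. You are also more careful about the final assertion: the paper leaves the ``moreover'' clause essentially implicit, while you justify it by noting that Proposition \ref{pro:Fro-mod} supplies the bimodule $(A,\fl_{A},\fr_{A},\hat\alpha)$ over $(A,\alpha)$ required by the standing hypotheses of Corollary \ref{cor:ybe-bi}, which is the right way to close that gap. Net effect: same strategy, but your version trades one external citation for a short transport computation and makes the bimodule hypothesis explicit.
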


\begin{proof}
By \cite[Corollary 3.17]{Bai1}, Eq. \eqref{ybe1} holds if and only if $R_{r}$ satisfies
Eq. \eqref{yberb2}. Set $r=\sum_{i}x_{i}\otimes y_{i}$. For any $a\in A$, we have
$\alpha R_{r}(a)=\alpha r^{\sharp}(\varphi(a))$ and
$$
R_{r}\alpha(a)=\sum_{i}\mathfrak{B}(\alpha(a),\; x_{i})y_{i}
=\sum_{i}\mathfrak{B}(a,\; \hat{\alpha}(x_{i}))y_{i}
=r^{\sharp}\hat{\alpha}^{\ast}(\varphi(a)).
$$
Since $\varphi$ is a linear isomorphism, we get $\alpha r^{\sharp}
=r^{\sharp}\hat{\alpha}^{\ast}$ if and only if $\alpha R_{r}=R_{r}\alpha$.
Thus, the conclusion follows from Proposition \ref{pro:ybeo}.
\end{proof}

From Eqs. \eqref{yberb2} and \eqref{ybeo2}, we seem to see the shadows of the
Rota-Baxter operator and $\mathcal{O}$-operators. Next, we study the
$\mathcal{O}$-operators of averaging algebras.

\subsection{$\mathcal{O}$-operators of averaging algebras}\label{subsec:o-oper}

\begin{defi}\label{def:ooper}
Let $(A, \alpha)$ be an averaging algebra and $(M, \kl, \kr, \beta)$ be a
bimodule over $(A, \alpha)$. A linear map $P: M\rightarrow A$ is called an {\rm
$\mathcal{O}$-operator of $(A, \alpha)$ associated to $(M, \kl, \kr, \beta)$} if
$P$ satisfies
\begin{align}
\alpha P&=P\beta,                                           \label{oop1}\\
P(m_{1})P(m_{2})&=P\Big(\kl(P(m_{1}))(m_{2})+\kr(P(m_{2}))m_{1}\Big), \label{oop2}
\end{align}
for any $m_{1}, m_{2}\in M$.
\end{defi}

In the definition above, Eq. \eqref{oop2} means that $P$ is an $\mathcal{O}$-operator
of associative algebra $A$ associated to $(M, \kl, \kr)$. Let $(A, \alpha)$ be
an averaging algebra. Then the identity map $\id: A\rightarrow A$ is an
$\mathcal{O}$-operator of $(A, \alpha)$ associated to $(A, \fl_{A}, 0, \alpha)$
or $(A, 0, \fr_{A}, \alpha)$. Eqs. \eqref{ybeo1} and \eqref{ybeo2} mean that
$r^{\sharp}$ is an $\mathcal{O}$-operator of $(A, \alpha)$ associated to
$(A^{\ast}, \fl_{A}^{\ast}, \fr_{A}^{\ast}, \beta^{\ast})$ if $(A^{\ast},
\fl_{A}^{\ast}, \fr_{A}^{\ast}, \beta^{\ast})$ is a bimodule over $(A, \alpha)$.
In particular, in Definition \ref{def:ooper}, if the bimodule $(M, \kl, \kr, \beta)$
is just the regular bimodule $(A, \fl_{A}, \fr_{A}, \alpha)$, the operator
$P$ is called a {\it Rota-Baxter operator of weight $0$} on averaging algebra $(A, \alpha)$.
Then by Proposition \ref{pro:yberb}, we have

\begin{cor}\label{cor:RB}
Let $(A, \alpha, \mathfrak{B})$ be a symmetric averaging Frobenius algebra and
$r\in A\otimes A$ be antisymmetric. Suppose that $\hat{\alpha}$ is the adjoint
of $\alpha$ with respect to $\mathfrak{B}(-,-)$. Then, $r$ is a solution of
$\hat{\alpha}$-YBE in $(A, \alpha)$ if and only if $R_{r}$ is a Rota-Baxter operator
(of weight $0$) on the averaging algebra $(A, \alpha)$.
\end{cor}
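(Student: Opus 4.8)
The plan is to treat this as an immediate consequence of Proposition~\ref{pro:yberb}, in which the $\hat{\alpha}$-YBE condition on $r$ has already been translated into the pair of equations \eqref{yberb1} and \eqref{yberb2} for the operator $R_r$. All that remains is to recognize that \eqref{yberb1}--\eqref{yberb2} are verbatim the defining conditions of a Rota-Baxter operator of weight $0$ on $(A,\alpha)$, so no further computation is needed.

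First I would unwind the relevant definition. By the remark following Definition~\ref{def:ooper}, a Rota-Baxter operator of weight $0$ on $(A,\alpha)$ is an $\mathcal{O}$-operator $P\colon A\rightarrow A$ associated to the regular bimodule $(A,\fl_A,\fr_A,\alpha)$; that is, $P$ is required to satisfy \eqref{oop1} and \eqref{oop2} with $\beta=\alpha$, $\kl=\fl_A$, and $\kr=\fr_A$. Then I would specialize those two equations to the regular bimodule. Since $\beta=\alpha$, Eq.~\eqref{oop1} becomes $\alpha P=P\alpha$, which is exactly \eqref{yberb1}. Using $\fl_A(P(m_1))(m_2)=P(m_1)m_2$ and $\fr_A(P(m_2))(m_1)=m_1P(m_2)$, Eq.~\eqref{oop2} becomes
$$
P(m_1)P(m_2)=P\bigl(P(m_1)m_2+m_1P(m_2)\bigr),
$$
which is exactly \eqref{yberb2}.

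Consequently, $R_r$ being a Rota-Baxter operator of weight $0$ on $(A,\alpha)$ is by definition equivalent to $R_r$ satisfying the two equations \eqref{yberb1} and \eqref{yberb2}. Combining this observation with Proposition~\ref{pro:yberb}, which asserts that $R_r$ satisfies \eqref{yberb1}--\eqref{yberb2} if and only if $r$ is a solution of the $\hat{\alpha}$-YBE in $(A,\alpha)$, yields the claimed equivalence and completes the argument.

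There is no genuine obstacle here: the substantive content lies entirely in Proposition~\ref{pro:yberb} (which in turn rests on \cite[Corollary~3.17]{Bai1} and the adjointness relation $\mathfrak{B}(\alpha(a_1),a_2)=\mathfrak{B}(a_1,\hat{\alpha}(a_2))$), while the corollary itself is just the identification of \eqref{yberb1}--\eqref{yberb2} with the specialization of \eqref{oop1}--\eqref{oop2} to the regular bimodule. The only point meriting a word of care is confirming that the relevant bimodule datum is indeed the regular one $(A,\fl_A,\fr_A,\alpha)$, so that $\beta=\alpha$ matches the operator $R_r$ produced in Proposition~\ref{pro:yberb}.
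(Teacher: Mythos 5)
Your proposal is correct and follows exactly the paper's route: the paper states the corollary as an immediate consequence of Proposition \ref{pro:yberb}, having just defined a Rota-Baxter operator of weight $0$ on $(A,\alpha)$ as an $\mathcal{O}$-operator associated to the regular bimodule $(A,\fl_A,\fr_A,\alpha)$, so that Eqs. \eqref{oop1}--\eqref{oop2} specialize verbatim to \eqref{yberb1}--\eqref{yberb2}. Your only addition is spelling out that unwinding explicitly, which the paper leaves implicit.
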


For the $\mathcal{O}$-operator of associative algebras, we have


\begin{pro}[{\cite[Corollary 3.10]{BGN1}}]\label{pro:O-cons}
Let $A$ be an associative algebra and $(M, \kl, \kr)$ be a bimodule over $A$.
Let $P: M\rightarrow A$ be a linear map which is identified as an element in
$(A\ltimes M^{\ast})\otimes(A \ltimes M^{\ast})$ through $\Hom(M, A)\cong A\otimes M^{\ast}
\subset(A\ltimes M^{\ast})\otimes(A \ltimes M^{\ast})$. Then $r:=P-\tau(P)$ is an
antisymmetric solution of YBE in $A \ltimes M^{\ast}$ if and only if $P$ is an
$\mathcal{O}$-operator of $A$ associated to $M$.
\end{pro}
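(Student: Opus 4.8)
The plan is to reduce the Yang--Baxter equation in $A\ltimes M^{\ast}$ to the operator form already developed in the paper and then compare components. First I would fix a presentation $P=\sum_{i}a_{i}\otimes\xi_{i}\in A\otimes M^{\ast}$, so that $P(m)=\sum_{i}\langle\xi_{i},m\rangle a_{i}$, and view it inside $(A\ltimes M^{\ast})\otimes(A\ltimes M^{\ast})$ as $P=\sum_{i}(a_{i},0)\otimes(0,\xi_{i})$. Then $r=P-\tau(P)$ satisfies $\tau(r)=-r$, so $r$ is antisymmetric by construction and the only issue is whether it solves Eq. \eqref{ybe1}. Recalling that $A\ltimes M^{\ast}$ carries the semidirect product multiplication of Proposition \ref{pro:mod} built from the dual bimodule $(M^{\ast},\kr^{\ast},\kl^{\ast})$, namely $(a_{1},\xi_{1})(a_{2},\xi_{2})=(a_{1}a_{2},\,\kr^{\ast}(a_{1})(\xi_{2})+\kl^{\ast}(a_{2})(\xi_{1}))$, and that $(A\ltimes M^{\ast})^{\ast}$ is identified with $A^{\ast}\oplus M$ via $\langle(\zeta,m),(a,\xi)\rangle=\langle\zeta,a\rangle+\langle\xi,m\rangle$, I would compute the induced map $r^{\sharp}$ and obtain
\[
r^{\sharp}(\zeta,m)=\big(-P(m),\,P^{\ast}(\zeta)\big),
\]
where $P^{\ast}\colon A^{\ast}\to M^{\ast}$ is the transpose of $P$, determined by $\langle P^{\ast}(\zeta),m\rangle=\langle\zeta,P(m)\rangle$.

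Next I would invoke \cite[Proposition 2.4.7]{Bai}, the purely associative characterization underlying Proposition \ref{pro:ybeo}: for the antisymmetric element $r$, Eq. \eqref{ybe1} in $A\ltimes M^{\ast}$ holds if and only if $r^{\sharp}$ satisfies the operator identity \eqref{ybeo2} for the big algebra,
\[
r^{\sharp}(\eta_{1})r^{\sharp}(\eta_{2})
=r^{\sharp}\big(\fr^{\ast}_{A\ltimes M^{\ast}}(r^{\sharp}(\eta_{1}))(\eta_{2})
+\fl^{\ast}_{A\ltimes M^{\ast}}(r^{\sharp}(\eta_{2}))(\eta_{1})\big),
\]
for all $\eta_{1},\eta_{2}\in(A\ltimes M^{\ast})^{\ast}$. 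This turns the problem into a componentwise computation: substitute $\eta_{j}=(\zeta_{j},m_{j})$ together with $r^{\sharp}(\zeta,m)=(-P(m),P^{\ast}(\zeta))$, expand the two coadjoint actions through the semidirect multiplication and the dual bimodule relations $\langle\kr^{\ast}(a)(\psi),m\rangle=\langle\psi,\kr(a)(m)\rangle$ and $\langle\kl^{\ast}(a)(\psi),m\rangle=\langle\psi,\kl(a)(m)\rangle$, and match both sides in $A\oplus M^{\ast}$.

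The $A$-component is decisive. Its left-hand side is the product of the $A$-parts $-P(m_{1})$ and $-P(m_{2})$, i.e. $P(m_{1})P(m_{2})$, while the $M$-part of the coadjoint combination fed into $r^{\sharp}$ computes to $-\kl(P(m_{1}))(m_{2})-\kr(P(m_{2}))(m_{1})$; applying $-P$ yields $P\big(\kl(P(m_{1}))(m_{2})+\kr(P(m_{2}))(m_{1})\big)$. Thus the $A$-component of the operator identity is exactly Eq. \eqref{oop2}, the defining relation of an $\mathcal{O}$-operator. In particular the implication ``$r$ solves the Yang--Baxter equation $\Rightarrow$ $P$ is an $\mathcal{O}$-operator'' follows from this component alone.

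The hard part will be the $M^{\ast}$-component, where I must verify that it imposes nothing beyond Eq. \eqref{oop2}. Its left-hand side is $-\kr^{\ast}(P(m_{1}))(P^{\ast}(\zeta_{2}))-\kl^{\ast}(P(m_{2}))(P^{\ast}(\zeta_{1}))$, while the right-hand side is $P^{\ast}(\zeta')$ with $\zeta'$ the $A^{\ast}$-part of the coadjoint combination. I would evaluate $\langle\zeta',P(m)\rangle$ directly; this produces the brackets $P(\kr(P(m))(m_{2}))-P(m_{2})P(m)$ and $-P(m)P(m_{1})+P(\kl(P(m))(m_{1}))$ paired against $\zeta_{1}$ and $\zeta_{2}$, and substituting \eqref{oop2} collapses them to $-P(\kl(P(m_{2}))(m))$ and $-P(\kr(P(m_{1}))(m))$, which transpose back through $P^{\ast}$ to reproduce the left-hand side exactly. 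Hence, once \eqref{oop2} holds the $M^{\ast}$-component is automatic, so the full identity is equivalent to \eqref{oop2}; combined with the reduction above this gives the stated equivalence. The only genuine obstacle is this duality bookkeeping for the $M^{\ast}$-component, since everything else is either formal (antisymmetry of $r$ and the shape of $r^{\sharp}$) or a direct appeal to \cite[Proposition 2.4.7]{Bai}.
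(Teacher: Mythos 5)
Your proof is correct; I checked the key computations. The identification $r^{\sharp}(\zeta,m)=(-P(m),P^{\ast}(\zeta))$ under the pairing of $(A\ltimes M^{\ast})^{\ast}\cong A^{\ast}\oplus M$ is right, the $A$-component of the operator identity is exactly Eq.~\eqref{oop2}, and the $M^{\ast}$-component, once expanded against a test element $m\in M$, splits into the two brackets you name, each of which is Eq.~\eqref{oop2} applied to the pairs $(m_{2},m)$ and $(m,m_{1})$; so it is indeed implied by (and in fact equivalent to) \eqref{oop2}, giving the stated equivalence. Note, however, that the paper itself offers no proof of this proposition: it is imported verbatim as \cite[Corollary 3.10]{BGN1}, where the argument is a direct expansion of the three tensors $r_{12}r_{13}$, $r_{13}r_{23}$, $r_{23}r_{12}$ in a dual basis of $M$ and $M^{\ast}$. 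Your route is genuinely different: you instead apply the operator-form characterization of antisymmetric YBE solutions (\cite[Proposition 2.4.7]{Bai}, the same result the paper quotes in Proposition \ref{pro:ybeo}) to the big algebra $A\ltimes M^{\ast}$, reducing everything to a two-variable componentwise identity in $A^{\ast}\oplus M$. What this buys is economy and reuse: no triple-tensor bookkeeping, and the only external input is a lemma already invoked elsewhere in the paper, so your argument would make the paper self-contained at this point. What the cited computation buys instead is independence from the $r^{\sharp}$ formalism, since it needs no dualization of the semidirect product at all. One small presentational caveat: you should say explicitly that ``YBE in $A\ltimes M^{\ast}$'' here means only Eq.~\eqref{ybe1} (the associative Yang--Baxter equation), since no averaging operator is present in this proposition; you clearly use this, but it deserves a sentence.
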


We will generalize the above construction to the context of averaging algebras,
showing that $\mathcal{O}$-operators of averaging algebras give antisymmetric solutions
of YBE in semidirect product averaging algebras and hence give rise to
averaging ASI bialgebras.

\begin{pro}\label{pro:admsemi}
Let $(A, \alpha)$ be an averaging algebra, $(M, \kl, \kr)$ be a bimodule over $A$,
and $\beta: A\rightarrow A$, $\gamma_{1}, \gamma_{2}: M\rightarrow M$ be linear maps.
Then the following conditions are equivalent.
\begin{enumerate}\itemsep=0pt
\item[$(i)$] There is an averaging algebra $(A\ltimes M, \alpha\oplus\gamma_{1})$ such that
     $(A\ltimes M, \fl_{A\ltimes M}, \fr_{A\ltimes M},\beta\oplus\gamma_{2})$
     is a bimodule over $(A\ltimes M,\alpha\oplus\gamma_{1})$;
\item[$(ii)$] There is an averaging algebra $(A\ltimes M^{\ast}, \alpha\oplus
     \gamma_{2}^{\ast})$ such that $(A\ltimes M^{\ast}, \fl_{A\ltimes M^{\ast}},
     \fr_{A\ltimes M^{\ast}}, \beta\oplus\gamma_{1}^{\ast})$
     is a bimodule over $(A\ltimes M^{\ast}, \alpha\oplus\gamma_{2}^{\ast})$;
\item[$(iii)$] The following conditions are satisfied:
\begin{enumerate}\itemsep=0pt
\item[(a)] $(M, \kl, \kr, \gamma_{1})$ is a bimodule over $(A, \alpha)$,
\item[(b)] $(A, \fl_{A}, \fr_{A}, \beta)$ is a bimodule over $(A, \alpha)$,
\item[(c)] $(M, \kl, \kr, \gamma_{2})$ is a bimodule over $(A, \alpha)$,
\item[(d)] for any $a\in A$ and $m\in M$,
\begin{align}
\kl(\beta(a))(\gamma_{1}(m))=\gamma_{2}(\kl(a)(\gamma_{1}(m)))
=\gamma_{2}(\kl(\beta(a))(m)),                       \label{admsemi1} \\
\kr(\beta(a))(\gamma_{1}(m))=\gamma_{2}(\kr(a)(\gamma_{1}(m)))
=\gamma_{2}(\kr(\beta(a))(m)).                        \label{admsemi2}
\end{align}
\end{enumerate}
\end{enumerate}
\end{pro}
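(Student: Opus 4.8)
The plan is to establish $(i)\Leftrightarrow(iii)$ by directly unpacking the averaging-bimodule axioms on the semidirect product, and then to obtain $(ii)\Leftrightarrow(iii)$ from the first equivalence by dualizing the bimodule $(M,\kl,\kr)$, so that the second half requires essentially no new computation.

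For $(i)\Leftrightarrow(iii)$, I would first observe that the mere existence of the averaging algebra $(A\ltimes M,\alpha\oplus\gamma_{1})$ is, by Proposition \ref{pro:mod}, precisely condition (a). Granting this, it remains to decide when the regular bimodule $(A\ltimes M,\fl_{A\ltimes M},\fr_{A\ltimes M},\beta\oplus\gamma_{2})$ satisfies the averaging-bimodule axioms \eqref{adm1}--\eqref{adm2} over $(A\ltimes M,\alpha\oplus\gamma_{1})$. Taking generic elements $(a_{1},m_{1})$ and $(a_{2},m_{2})$ and expanding the three members of each axiom with the product of Proposition \ref{pro:mod}, every identity decomposes along $A\oplus M$. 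The $A$-components of \eqref{adm1} and \eqref{adm2} are exactly the two axioms asserting that $(A,\fl_{A},\fr_{A},\beta)$ is a bimodule over $(A,\alpha)$, namely condition (b). The $M$-component is in each case a sum of a $\kl$-term and a $\kr$-term, each involving only one of the module variables; setting $m_{1}$ and $m_{2}$ equal to zero in turn isolates these contributions, and reading them off yields precisely condition (c) together with Eqs.\ \eqref{admsemi1} and \eqref{admsemi2}. Since the identities are additive in $m_{1}$ and $m_{2}$, the mixed terms impose nothing new, giving $(i)\Leftrightarrow(iii)$.

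For $(ii)\Leftrightarrow(iii)$, instead of repeating this work I would apply the equivalence $(i)\Leftrightarrow(iii)$ just proved to the dual bimodule $(M^{\ast},\kr^{\ast},\kl^{\ast})$ over $A$, with $\gamma_{1}$ and $\gamma_{2}$ replaced by $\gamma_{2}^{\ast}$ and $\gamma_{1}^{\ast}$ respectively. Under this substitution statement $(i)$ for the dual data is exactly statement $(ii)$, while condition $(iii)$ becomes a dual analogue $(iii')$. Proposition \ref{pro:dual} then converts the two bimodule conditions back to the original ones: $(M^{\ast},\kr^{\ast},\kl^{\ast},\gamma_{2}^{\ast})$ is a bimodule over $(A,\alpha)$ if and only if $(M,\kl,\kr,\gamma_{2})$ is, i.e.\ condition (c), and likewise the $\gamma_{1}^{\ast}$-condition is equivalent to (a), while (b) is untouched. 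The one point needing care is that the dualized forms of \eqref{admsemi1}--\eqref{admsemi2} agree with the originals: rewriting these as operator equations on $M$ and transposing, using $(\varphi\psi)^{\ast}=\psi^{\ast}\varphi^{\ast}$, the transpose of \eqref{admsemi2} is the dual version of \eqref{admsemi1} and conversely. Hence $(iii')\Leftrightarrow(iii)$, and therefore $(ii)\Leftrightarrow(iii)$.

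The main obstacle I expect is organizational rather than conceptual: keeping the two operators $\gamma_{1},\gamma_{2}$ (and the two actions $\kl,\kr$) from getting interchanged during the semidirect-product expansion, and verifying that the separation of the $M$-component into its $\kl$- and $\kr$-parts really recovers condition (c) and Eqs.\ \eqref{admsemi1}--\eqref{admsemi2} with no overlap. Once that bookkeeping is done carefully in the $(i)\Leftrightarrow(iii)$ step, the duality argument makes $(ii)\Leftrightarrow(iii)$ almost immediate.
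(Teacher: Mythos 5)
Your proposal is correct and follows essentially the same route as the paper: both establish $(i)\Leftrightarrow(iii)$ by invoking Proposition \ref{pro:mod} for the averaging-algebra part and then expanding the bimodule axioms on the semidirect product, splitting each identity into its $A$-component (condition (b)) and its $M$-component, whose $\kl$- and $\kr$-parts (isolated by setting $m_{1}$, $m_{2}$ to zero) give condition (c) and Eqs.\ \eqref{admsemi1}--\eqref{admsemi2}; and both obtain $(ii)\Leftrightarrow(iii)$ by running the same equivalence on the dual bimodule $(M^{\ast},\kr^{\ast},\kl^{\ast})$ with $\gamma_{1},\gamma_{2}$ replaced by $\gamma_{2}^{\ast},\gamma_{1}^{\ast}$ and converting back via Proposition \ref{pro:dual} and transposition, under which \eqref{admsemi1} and \eqref{admsemi2} swap. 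Your observation that the two invariance equations interchange under dualization matches exactly the paper's identifications $(a)\Leftrightarrow(c')$, $(c)\Leftrightarrow(a')$, $(d)\Leftrightarrow(d')$.
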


\begin{proof}
$(i)\Leftrightarrow(iii)$. By Proposition \ref{pro:mod}, $(A\ltimes M,
\alpha\oplus\gamma_{1})$ is an averaging algebra if and only if
$(M, \kl, \kr, \gamma_{1})$ is a bimodule over $(A, \alpha)$. Moreover,
$(A\ltimes M, \fl_{A\ltimes M}, \fr_{A\ltimes M}, \beta\oplus\gamma_{2})$
is a bimodule over $(A\ltimes M, \alpha\oplus\gamma_{1})$ if and only if, for any
$a_{1}, a_{2}\in A$ and $m_{1}, m_{2}\in M$,
\begin{align}
\fl_{A\ltimes M}(\alpha(a_{1}), \gamma_{1}(m_{1}))(\beta(a_{2}), \gamma_{2}(m_{2}))
&=(\beta\oplus\gamma_{2})\big(\fl_{A\ltimes M}(\alpha(a_{1}),
\gamma_{1}(m_{1}))(a_{2}, m_{2})\big)\label{equm1}\\[-1mm]
&=(\beta\oplus\gamma_{2})\big(\fl_{A\ltimes M}(a_{1}, m_{1})(\beta(a_{2}),
\gamma_{2}(m_{2}))\big), \nonumber\\
\fr_{A\ltimes M}(\alpha(a_{1}), \gamma_{1}(m_{1}))(\beta(a_{2}), \gamma_{2}(m_{2}))
&=(\beta\oplus\gamma_{2})\big(\fr_{A\ltimes M}(\alpha(a_{1}),
\gamma_{1}(m_{1}))(a_{2}, m_{2})\big)\label{equm2}\\[-1mm]
&=(\beta\oplus\gamma_{2})\big(\fr_{A\ltimes M}(a_{1}, m_{1})(\beta(a_{2}),
\gamma_{2}(m_{2}))\big). \nonumber
\end{align}
Note that
\begin{align*}
\fl_{A\ltimes M}(\alpha(a_{1}), \gamma_{1}(m_{1}))(\beta(a_{2}), \gamma_{2}(m_{2}))
&=\big(\alpha(a_{1})\beta(a_{2}),\ \ \kl(\alpha(a_{1}))(\gamma_{2}(m_{2}))
+\kr(\beta(a_{2}))(\gamma_{1}(m_{1}))\big),\\
(\beta\oplus\gamma_{2})\big(\fl_{A\ltimes M}(\alpha(a_{1}),
\gamma_{1}(m_{1}))(a_{2}, m_{2})\big)&=\big(\beta(\alpha(a_{1})a_{2}),\ \
\gamma_{2}(\kl(\alpha(a_{1}))(m_{2}))+\gamma_{2}(\kr(a_{2})(\gamma_{1}(m_{1})))\big),\\
(\beta\oplus\gamma_{2})\big(\fl_{A\ltimes M}(a_{1}, m_{1})(\beta(a_{2}),
\gamma_{2}(m_{2}))\big)&=\big(\beta(a_{1}\beta(a_{2})),\ \ \gamma_{2}(\kl(a_{1})
(\gamma_{2}(m_{2})))+\gamma_{2}(\kr(\beta(a_{2}))(m_{1}))\big),
\end{align*}
we get Eq. \eqref{equm1} holds if and only if Eq. \eqref{adm1} hold for $(b)$ and $(c)$,
and \eqref{admsemi2} hold.
Similarly, we get Eq. \eqref{equm2} holds if and only if Eq. \eqref{adm2} holds for $(b)$
and $(c)$, and \eqref{admsemi1} holds.

$(ii)\Leftrightarrow(iii)$. By the proof of $(i)\Leftrightarrow(iii)$, we get that
$(A\ltimes M^{\ast}, \alpha\oplus\gamma_{2}^{\ast})$ is an averaging algebra and
$(A\ltimes M^{\ast}, \fl_{A\ltimes M^{\ast}}, \fr_{A\ltimes M^{\ast}},
\beta\oplus\gamma_{1}^{\ast})$ is a bimodule over $(A\ltimes M^{\ast},
\alpha\oplus\gamma_{2}^{\ast})$, if and only if
\begin{enumerate}\itemsep=0pt
\item[($a'$)] $(M^{\ast}, \kr^{\ast}, \kl^{\ast}, \gamma_{2}^{\ast})$ is a bimodule over
$(A, \alpha)$,
\item[($b'$)] $(A, \fl_{A}, \fr_{A}, \beta)$ is a bimodule over $(A, \alpha)$,
\item[($c'$)] $(M^{\ast}, \kr^{\ast}, \kl^{\ast}, \gamma_{1}^{\ast})$ is a bimodule over
$(A, \alpha)$,
\item[($d'$)] for any $a\in A$ and $\xi\in M^{\ast}$, we have
$\kl^{\ast}(\beta(a))(\gamma_{2}^{\ast}(\xi))
=\gamma_{1}^{\ast}(\kl^{\ast}(a)(\gamma_{2}^{\ast}(\xi)))
=\gamma_{1}^{\ast}(\kl^{\ast}(\beta(a))(\xi))$ and
$\kr^{\ast}(\beta(a))(\gamma_{2}^{\ast}(\xi))
=\gamma_{1}^{\ast}(\kr^{\ast}(a)(\gamma_{2}^{\ast}(\xi)))
=\gamma_{1}^{\ast}(\kr^{\ast}(\beta(a))(\xi))$.
\end{enumerate}
Note that $(a)\Leftrightarrow(c')$, $(b)=(b')$, $(c)\Leftrightarrow(a')$ and
$(d)\Leftrightarrow(d')$ by dual, we get the proof.
\end{proof}

\begin{thm}\label{thm:aybesemi}
Let $(A, \alpha)$ be an averaging algebra, $(M, \kl, \kr, \gamma_{1})$ be a bimodule
over $(A, \alpha)$, $\beta: A\rightarrow A$, $\gamma_{2}: M\rightarrow M$ and
$P: M\rightarrow A$ be linear maps. Then we have
\begin{enumerate}\itemsep=0pt
\item[$(i)$] The element $r:=P-\tau(P)$ is an antisymmetric solution of $(\beta\oplus
    \gamma_{2}^{\ast})$-YBE in the averaging algebra $(A\ltimes M^{\ast}, \alpha\oplus
    \gamma_{1}^{\ast})$ if and only if $P$ is an $\mathcal{O}$-operator of associative
    algebra $A$ associated to $(M, \kl, \kr)$ and $\alpha P=P\gamma_{2}$,
    $\beta P=P\gamma_{1}$.

\item[$(ii)$] Assume that $(M, \kl, \kr, \gamma_{2})$ is also a bimodule over $(A, \alpha)$.
    If $P$ is an $\mathcal{O}$-operator of $(A, \alpha)$ associated to $(M, \kl, \kr,
    \gamma_{2})$ and $P\gamma_{1}=\beta P$, then $r:=P-\tau(P)$ is an antisymmetric
    solution of $(\beta\oplus\gamma_{2}^{\ast})$-YBE in the averaging algebra $(A\ltimes
    M^{\ast}, \alpha\oplus\gamma_{1}^{\ast})$. If in addition, $(A, \fl, \fr, \beta)$ is a
    bimodule over $(A, \alpha)$ and Eqs. \eqref{admsemi1} and \eqref{admsemi2} are satisfied,
    then $(A\ltimes M^{\ast}, \fl_{A\ltimes M^{\ast}}, \fr_{A\ltimes M^{\ast}},
    \beta\oplus\gamma_{2}^{\ast})$ is a bimodule over the averaging algebra $(A\ltimes
    M^{\ast}, \alpha\oplus\gamma_{1}^{\ast})$. Therefore in this case, there is an averaging
    ASI bialgebra $(A\ltimes M^{\ast}, \Delta, \alpha\oplus\gamma_{1}^{\ast},
    \beta\oplus\gamma_{2}^{\ast})$, where the linear map $\Delta$ is defined by
    Eq. \eqref{cobo} for $r=P-\tau(P)$.
\end{enumerate}
\end{thm}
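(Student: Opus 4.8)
The plan is to reduce everything to the characterization of solutions of the $\beta$-YBE in terms of $r^{\sharp}$ given in Proposition \ref{pro:ybeo}, together with the associative construction in Proposition \ref{pro:O-cons}. The crux is an explicit description of $r^{\sharp}$ for $r=P-\tau(P)$. First I would write $P=\sum_i a_i\otimes\mu_i\in A\otimes M^{\ast}$ under the identification $\Hom(M,A)\cong A\otimes M^{\ast}\subset(A\ltimes M^{\ast})\otimes(A\ltimes M^{\ast})$, so that $P(m)=\sum_i\langle\mu_i,m\rangle a_i$ and $r=\sum_i(a_i\otimes\mu_i-\mu_i\otimes a_i)$. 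Since all spaces are finite dimensional, $(A\ltimes M^{\ast})^{\ast}=A^{\ast}\oplus M$, and a direct pairing computation gives $r^{\sharp}(\xi,m)=\bigl(-P(m),\,P^{\ast}(\xi)\bigr)$ for $(\xi,m)\in A^{\ast}\oplus M$, where $P^{\ast}:A^{\ast}\to M^{\ast}$ is the transpose of $P$.

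For part $(i)$, with this formula in hand I would apply Proposition \ref{pro:ybeo} to the averaging algebra $(A\ltimes M^{\ast},\alpha\oplus\gamma_1^{\ast})$ and the operator $\beta\oplus\gamma_2^{\ast}$. Condition \eqref{ybeo2} is exactly the associative Yang-Baxter equation \eqref{ybe1} for $r$ in $A\ltimes M^{\ast}$, which by Proposition \ref{pro:O-cons} holds if and only if $P$ is an $\mathcal{O}$-operator of the associative algebra $A$ associated to $(M,\kl,\kr)$, i.e.\ \eqref{oop2} holds. For condition \eqref{ybeo1} I would use $(\beta\oplus\gamma_2^{\ast})^{\ast}=\beta^{\ast}\oplus\gamma_2$ and evaluate both sides on $(\xi,m)$: the left-hand side is $(-\alpha P(m),\,\gamma_1^{\ast}P^{\ast}(\xi))$ and the right-hand side is $(-P\gamma_2(m),\,P^{\ast}\beta^{\ast}(\xi))$. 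Comparing the $A$- and $M^{\ast}$-components shows \eqref{ybeo1} is equivalent to $\alpha P=P\gamma_2$ together with $\gamma_1^{\ast}P^{\ast}=P^{\ast}\beta^{\ast}$, and the latter is just the transpose of $\beta P=P\gamma_1$. Since $r$ is antisymmetric, \eqref{ybe3} is equivalent to \eqref{ybe2}, so no further condition appears; this establishes the equivalence in $(i)$.

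For part $(ii)$, the first assertion is immediate from $(i)$: being an $\mathcal{O}$-operator of $(A,\alpha)$ associated to $(M,\kl,\kr,\gamma_2)$ means precisely \eqref{oop1} (that is, $\alpha P=P\gamma_2$) and \eqref{oop2}, and together with the hypothesis $P\gamma_1=\beta P$ these are exactly the three conditions characterizing antisymmetric solutions in $(i)$. For the bimodule claim I would invoke Proposition \ref{pro:admsemi}: the hypotheses supply conditions (a)--(d) of its part $(iii)$, namely that $(M,\kl,\kr,\gamma_1)$ and $(M,\kl,\kr,\gamma_2)$ are bimodules over $(A,\alpha)$, that $(A,\fl_A,\fr_A,\beta)$ is a bimodule over $(A,\alpha)$, and that \eqref{admsemi1} and \eqref{admsemi2} hold, so Proposition \ref{pro:admsemi} yields that $(A\ltimes M^{\ast},\fl_{A\ltimes M^{\ast}},\fr_{A\ltimes M^{\ast}},\beta\oplus\gamma_2^{\ast})$ is a bimodule over $(A\ltimes M^{\ast},\alpha\oplus\gamma_1^{\ast})$, with the roles of $\gamma_1$ and $\gamma_2$ matched carefully in the application. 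Finally, having an antisymmetric solution of the $(\beta\oplus\gamma_2^{\ast})$-YBE in this averaging algebra together with the required bimodule structure, Corollary \ref{cor:ybe-bi} produces the averaging ASI bialgebra $(A\ltimes M^{\ast},\Delta,\alpha\oplus\gamma_1^{\ast},\beta\oplus\gamma_2^{\ast})$ with $\Delta$ defined by \eqref{cobo}. The main obstacle is the opening $r^{\sharp}$ computation and, with it, checking that the single averaging-compatibility condition \eqref{ybeo1} decouples cleanly into the two intertwining relations $\alpha P=P\gamma_2$ and $\beta P=P\gamma_1$; once this is done, the associative Yang-Baxter part is handed to Proposition \ref{pro:O-cons} and the bialgebra assembly is formal.
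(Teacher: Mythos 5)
Your proposal is correct and follows essentially the same route as the paper: the associative Yang--Baxter part is delegated to Proposition \ref{pro:O-cons}, the averaging compatibility is shown to decouple into the two intertwining relations $\alpha P=P\gamma_{2}$ and $\beta P=P\gamma_{1}$, and part $(ii)$ is assembled from part $(i)$, Proposition \ref{pro:admsemi} (with the $\gamma_{1},\gamma_{2}$ roles interchanged under dualization, as you note) and Corollary \ref{cor:ybe-bi}. The only cosmetic difference is that the paper verifies Eq. \eqref{ybe2} by expanding $r=\sum_{i}\bigl(P(e_{i})\otimes e_{i}^{\ast}-e_{i}^{\ast}\otimes P(e_{i})\bigr)$ in a basis of $M$, whereas you verify the equivalent condition \eqref{ybeo1} via Proposition \ref{pro:ybeo} and the basis-free formula $r^{\sharp}(\xi,m)=\bigl(-P(m),\,P^{\ast}(\xi)\bigr)$ --- the same computation in dual form.
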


\begin{proof}
$(i)$ First, following from Proposition \ref{pro:O-cons}, we get $r:=P-\tau(P)$ is an
antisymmetric solution of YBE in associative algebra $A \ltimes M^{\ast}$ if and
only if $P$ is an $\mathcal{O}$-operator of associative algebra $A$ associated to
$(M, \kl, \kr)$. We need to show that $\big((\alpha\oplus\gamma_{1}^{\ast})\otimes\id-
\id\otimes(\beta\oplus\gamma_{2}^{\ast})\big)(r)=0$ if and only if $\alpha P=P\gamma_{2}$
and $\beta P=P\gamma_{1}$. Let $\{e_{1}, e_{2}, \cdots, e_{n}\}$ be a basis of $M$,
$\{e_{1}^{\ast}, e_{2}^{\ast}, \cdots, e_{n}^{\ast}\}$ be the dual basis.
Then $P=\sum_{i=1}^{n}P(e_{i})\otimes e_{i}^{\ast}\in(A\ltimes M^{\ast})\otimes(A\ltimes
M^{\ast})$, $r=P-\tau(P)=\sum_{i=1}^{n}(P(e_{i})\otimes e_{i}^{\ast}-e_{i}^{\ast}
\otimes P(e_{i}))$, and $\sum_{i=1}^{n}\gamma_{1}^{\ast}(e_{i}^{\ast})\otimes P(e_{i})
=\sum_{i=1}^{n}\sum_{j=1}^{n}\langle\gamma_{1}^{\ast}(e_{i}^{\ast}),\;
e_{j}\rangle e_{j}^{\ast}\otimes P(e_{i})=\sum_{j=1}^{n}e_{j}^{\ast}\otimes\sum_{i=1}^{n}
\langle e_{i}^{\ast}, \gamma_{1}(e_{j})\rangle P(e_{i})=\sum_{i=1}^{n}e_{i}^{\ast}\otimes
P\big(\sum_{j=1}^{n}\langle\gamma_{1}(e_{i}),\; e_{j}^{\ast}\rangle e_{j}\big)
=\sum_{i=1}^{n} e_{i}^{\ast}\otimes P(\gamma_{1}(e_{i}))$.
Similarly, we also have $\sum_{i=1}^{n}P(e_{i})\otimes\gamma_{2}^{\ast}(e_{i}^{\ast})
=\sum_{i=1}^{n}P(\gamma_{2}(e_{i}))\otimes e_{i}^{\ast}$. Then, we have
\begin{align*}
\big((\alpha\oplus\gamma_{1}^{\ast})\otimes\id\big)(r)
&=\sum_{i=1}^{n}\big(\alpha(P(e_{i}))\otimes e_{i}^{\ast}-e_{i}^{\ast}
\otimes P(\gamma_{1}(e_{i}))\big),\\[-1mm]
\big(\id\otimes(\beta\oplus\gamma_{2}^{\ast})\big)(r)
&=\sum_{i=1}^{n}\big((P(\gamma_{2}(e_{i}))\otimes e_{i}^{\ast}-e_{i}^{\ast}
\otimes\beta(P(e_{i})))\big).
\end{align*}
Thus, $\big((\alpha\oplus\gamma_{1}^{\ast})\otimes\id-\id\otimes(\beta\oplus\gamma_{2}^{\ast})
\big)(r)=0$ if and only if $\alpha P=P\gamma_{2}$ and $\beta P=P\gamma_{1}$.

$(ii)$ It follows from item $(i)$, Proposition \ref{pro:admsemi} and
Corollary \ref{cor:ybe-bi}.
\end{proof}

In particular, in the theorem above, if $\beta=\alpha$ and $\gamma_{1}=\gamma_{2}$,
we have the following corollary.

\begin{cor}\label{cor:qminp}
Let $(A, \alpha)$ be an averaging algebra, $(M, \kl, \kr, \gamma_{1})$ be a bimodule
over $(A, \alpha)$, $P: M\rightarrow A$ be an $\mathcal{O}$-operator of $(A, \alpha)$
associated to $(M, \kl, \kr, \gamma_{1})$. Then $r:=P-\tau(P)$ is an antisymmetric
solution of YBE in the averaging algebra $(A\ltimes M^{\ast}, \alpha\oplus
\gamma_{1}^{\ast})$, and so that, $(A\ltimes M^{\ast}, \Delta, \alpha\oplus\gamma_{1}^{\ast},
\alpha\oplus\gamma_{1}^{\ast})$ is an averaging ASI bialgebra, where the linear map
$\Delta$ is defined by Eq. \eqref{cobo} for $r=P-\tau(P)$.
\end{cor}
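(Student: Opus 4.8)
The plan is to derive Corollary \ref{cor:qminp} as the specialization of Theorem \ref{thm:aybesemi}(ii) obtained by setting $\beta=\alpha$ and $\gamma_{1}=\gamma_{2}$, and then checking that all the extra hypotheses in part (ii) become either trivially satisfied or already available. First I would observe that under these identifications the target statement is exactly that $r:=P-\tau(P)$ is an antisymmetric solution of the $(\alpha\oplus\gamma_{1}^{\ast})$-YBE, which is the YBE in the averaging algebra $(A\ltimes M^{\ast}, \alpha\oplus\gamma_{1}^{\ast})$ in the sense of Definition \ref{def:YBE}, since the superscripts now coincide.

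The first block of Theorem \ref{thm:aybesemi}(ii) requires that $P$ be an $\mathcal{O}$-operator of $(A, \alpha)$ associated to $(M, \kl, \kr, \gamma_{2})$ and that $P\gamma_{1}=\beta P$. With $\gamma_{2}=\gamma_{1}$ the $\mathcal{O}$-operator hypothesis is precisely what Corollary \ref{cor:qminp} assumes, and with $\beta=\alpha$ the compatibility $P\gamma_{1}=\beta P$ reads $P\gamma_{1}=\alpha P$, which is Eq. \eqref{oop1} in Definition \ref{def:ooper} and hence is part of $P$ being an $\mathcal{O}$-operator. Thus the conclusion that $r$ is an antisymmetric solution of the relevant YBE follows immediately from the first assertion of Theorem \ref{thm:aybesemi}(ii) with no further computation.

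For the bimodule conclusion I would verify the remaining conditions in the second assertion of part (ii). The hypothesis that $(A, \fl_{A}, \fr_{A}, \beta)$ be a bimodule over $(A, \alpha)$ becomes, for $\beta=\alpha$, the statement that $(A, \fl_{A}, \fr_{A}, \alpha)$ is a bimodule over $(A, \alpha)$; this is the regular bimodule, which is always a bimodule over $(A, \alpha)$. The conditions Eqs. \eqref{admsemi1} and \eqref{admsemi2}, with $\beta=\alpha$ and $\gamma_{1}=\gamma_{2}$, reduce to $\kl(\alpha(a))(\gamma_{1}(m))=\gamma_{1}(\kl(a)(\gamma_{1}(m)))=\gamma_{1}(\kl(\alpha(a))(m))$ and the analogous identity for $\kr$, which are exactly Eqs. \eqref{adm1} and \eqref{adm2} expressing that $(M, \kl, \kr, \gamma_{1})$ is a bimodule over $(A, \alpha)$; this is assumed. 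Hence all hypotheses of Theorem \ref{thm:aybesemi}(ii) hold and we conclude that $(A\ltimes M^{\ast}, \fl_{A\ltimes M^{\ast}}, \fr_{A\ltimes M^{\ast}}, \alpha\oplus\gamma_{1}^{\ast})$ is a bimodule over $(A\ltimes M^{\ast}, \alpha\oplus\gamma_{1}^{\ast})$, and that $(A\ltimes M^{\ast}, \Delta, \alpha\oplus\gamma_{1}^{\ast}, \alpha\oplus\gamma_{1}^{\ast})$ is an averaging ASI bialgebra, with $\Delta$ given by Eq. \eqref{cobo} for $r=P-\tau(P)$.

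Since this is a pure specialization, there is no genuine obstacle; the only point requiring care is bookkeeping — confirming that under $\beta=\alpha$, $\gamma_{1}=\gamma_{2}$ the auxiliary hypotheses of Theorem \ref{thm:aybesemi}(ii) collapse to conditions already built into the single remaining assumption that $P$ is an $\mathcal{O}$-operator of $(A, \alpha)$ associated to $(M, \kl, \kr, \gamma_{1})$, together with the automatic regular-bimodule fact. I would therefore present the proof simply as an application of Theorem \ref{thm:aybesemi} with these substitutions, noting explicitly that Eq. \eqref{oop1} supplies $P\gamma_{1}=\alpha P$ and that Eqs. \eqref{admsemi1}--\eqref{admsemi2} coincide with the defining bimodule relations \eqref{adm1}--\eqref{adm2} in this case.
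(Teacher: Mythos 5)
Your proposal is correct and is essentially the paper's own proof: the paper obtains this corollary precisely as the specialization of Theorem \ref{thm:aybesemi} with $\beta=\alpha$ and $\gamma_{1}=\gamma_{2}$. Your additional bookkeeping (that $P\gamma_{1}=\alpha P$ is Eq. \eqref{oop1}, that the regular bimodule handles the $(A,\fl_{A},\fr_{A},\beta)$ hypothesis, and that Eqs. \eqref{admsemi1}--\eqref{admsemi2} collapse to Eqs. \eqref{adm1}--\eqref{adm2}) simply makes explicit what the paper leaves implicit.
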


\subsection{Averaging dendriform algebras}\label{subsec:dend}

First, we recall the notion of dendriform algebras.

\begin{defi}\label{def:dend}
Let $A$ be a vector space and $\succ, \prec: A\otimes A\rightarrow A$ be two bilinear
operations. The triple $(A, \succ, \prec)$ is called a {\rm dendriform algebra} if
\begin{align*}
(a_{1}\prec a_{2})\prec a_{3}&=a_{1}\prec(a_{2}\prec a_{3}+a_{2}\succ a_{3}), \\
(a_{1}\succ a_{2}) \prec a_{3}&=a_{1}\succ(a_{2}\prec a_{3}), \\
(a_{1}\prec a_{2}+a_{1}\succ a_{2})\succ a_{3}&=a_{1}\succ(a_{2}\succ a_{3}),
\end{align*}
for any $a_{1}, a_{2}, a_{3}\in A$.
\end{defi}

For a dendriform algebra $(A, \succ, \prec)$, define two linear maps $\fl_{\succ},
\fr_{\prec}: A\rightarrow\End_{\Bbbk}(A)$ by
$$
\fl_{\succ}(a_{1})(a_{2})=a_{1}\succ a_{2}, \qquad\qquad
\fr_{\prec}(a_{1})(a_{2})=a_{2}\prec a_{1},
$$
for any $a_{1}, a_{2}\in A$. Then we have the following proposition.

\begin{pro}[\cite{Bai}]\label{pro:dend}
Let $(A, \succ, \prec)$ be a dendriform algebra. Then the bilinear operation
\begin{align}
a_{1}\cdot a_{2}:=a_{1}\succ a_{2}+a_{1}\prec a_{2},           \label{ass}
\end{align}
for any $a_{1}, a_{2}\in A$, defines an associative algebra $(A, \cdot)$, called the
{\rm associated associative algebra} of $(A, \succ, \prec)$. Moreover,
$(A, \fl_{\succ}, \fr_{\prec})$ is a bimodule over $(A, \cdot)$, and the identity
map $\id_{A}: A\rightarrow A$ is an $\mathcal{O}$-operator of $A$ associated
to bimodule $(A, \fl_{\succ}, \fr_{\prec})$.
\end{pro}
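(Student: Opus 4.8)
The plan is to verify the three assertions in turn, each of which reduces to a direct comparison between the axioms of Definition~\ref{def:dend} and the defining identities of the target structure, with essentially no computation beyond substitution.

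First I would check that $\cdot$ is associative. Expanding $(a_{1}\cdot a_{2})\cdot a_{3}$ and $a_{1}\cdot(a_{2}\cdot a_{3})$ by means of \eqref{ass} produces on each side a sum of four terms built from $\succ$ and $\prec$. Adding the three dendriform axioms of Definition~\ref{def:dend} termwise yields exactly $(a_{1}\cdot a_{2})\cdot a_{3}=a_{1}\cdot(a_{2}\cdot a_{3})$, so associativity follows from this single summation without any case analysis.

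Next I would establish that $(A, \fl_{\succ}, \fr_{\prec})$ is a bimodule over $(A, \cdot)$. Unwinding the bimodule axioms recalled before Definition~\ref{def:mod} with $\kl=\fl_{\succ}$ and $\kr=\fr_{\prec}$, so that $a_{1}m=a_{1}\succ m$ and $ma_{1}=m\prec a_{1}$, the three conditions $(a_{1}a_{2})m=a_{1}(a_{2}m)$, $a_{1}(ma_{2})=(a_{1}m)a_{2}$ and $m(a_{1}a_{2})=(ma_{1})a_{2}$ translate respectively into $(a_{1}\cdot a_{2})\succ m=a_{1}\succ(a_{2}\succ m)$, $a_{1}\succ(m\prec a_{2})=(a_{1}\succ m)\prec a_{2}$, and $m\prec(a_{1}\cdot a_{2})=(m\prec a_{1})\prec a_{2}$. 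After relabelling the middle variable, these are precisely the third, second, and first dendriform axioms, so each bimodule axiom holds on the nose.

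Finally, the $\mathcal{O}$-operator condition for $\id_{A}$ is immediate: Equation \eqref{oop2} with $P=\id_{A}$ reads $m_{1}\cdot m_{2}=\fl_{\succ}(m_{1})(m_{2})+\fr_{\prec}(m_{2})(m_{1})=m_{1}\succ m_{2}+m_{1}\prec m_{2}$, which is exactly the definition \eqref{ass} of $\cdot$. The only point demanding care is the bookkeeping of which variable plays the role of the module element when matching the bimodule axioms to the dendriform axioms; once the dictionary $\kl=\fl_{\succ}$, $\kr=\fr_{\prec}$ is fixed the correspondence is a direct substitution, and I expect no genuine obstacle.
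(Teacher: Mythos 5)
Your proof is correct. The paper itself gives no proof of this proposition --- it is quoted from \cite{Bai} --- so there is nothing to compare against line by line; your verification is exactly the standard one: summing the three dendriform axioms gives associativity of $\cdot$, the three bimodule axioms for $(A,\fl_{\succ},\fr_{\prec})$ match the third, second, and first dendriform axioms respectively (as you correctly ordered them), and the $\mathcal{O}$-operator condition \eqref{oop2} for $P=\id_{A}$ reduces to the defining equation \eqref{ass}. One small point worth making explicit: since the proposition concerns an $\mathcal{O}$-operator of the \emph{associative} algebra $A$ (not of an averaging algebra), only Eq.~\eqref{oop2} is to be checked and Eq.~\eqref{oop1} plays no role here, which is precisely what you did.
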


Now, we consider the notion of averaging dendriform algebras.

\begin{defi}\label{def:avdend}
An {\rm averaging operator} on a dendriform algebra $(A, \succ, \prec)$ is a
linear map $\alpha: A\rightarrow A$ satisfying
\begin{align*}
\alpha(a_{1})\succ\alpha(a_{2})&=\alpha(\alpha(a_{1})\succ a_{2})
=\alpha(a_{1}\succ\alpha(a_{2})),\\
\alpha(a_{1})\prec\alpha(a_{2})&=\alpha(\alpha(a_{1})\prec a_{2})
=\alpha(a_{1}\prec\alpha(a_{2})),
\end{align*}
for any $a_{1}, a_{2}\in A$.
A quadruple $(A, \succ, \prec, \alpha)$ is called an {\rm averaging dendriform algebra}
if $(A, \succ, \prec)$ is a dendriform algebra and $\alpha$ is an averaging operator on
$(A, \succ, \prec)$.
\end{defi}

We will generalize some results of dendriform algebras to the context of
averaging dendriform algebras.

\begin{pro}\label{pro:assda}
Let $(A, \succ, \prec, \alpha)$ be an averaging dendriform algebra.
Then $(A, \cdot, \alpha)$ is an averaging associative algebra, where the multiplication
is defined by Eq. \eqref{ass}, which is called the {\rm associated averaging algebra
of $(A, \succ, \prec, \alpha)$}. Moreover, $(A, \fl_{\succ}, \fr_{\prec}, \alpha)$
is a bimodule over $(A, \alpha)$, and the identity map $\id_{A}: A\rightarrow A$ is an
$\mathcal{O}$-operator of $(A, \alpha)$ associated to $(A, \fl_{\succ}, \fr_{\prec}, \alpha)$.
\end{pro}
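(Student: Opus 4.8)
The plan is to obtain all of the purely associative content for free from Proposition \ref{pro:dend} and then to check only the three compatibility conditions that involve $\alpha$. Concretely, Proposition \ref{pro:dend} already tells us that $(A,\cdot)$ with $a_{1}\cdot a_{2}=a_{1}\succ a_{2}+a_{1}\prec a_{2}$ is associative, that $(A,\fl_{\succ},\fr_{\prec})$ is a bimodule over $(A,\cdot)$, and that $\id_{A}$ is an $\mathcal{O}$-operator of the associative algebra $A$ for this bimodule, i.e.\ Eq.\ \eqref{oop2} holds with $P=\id_{A}$, $\kl=\fl_{\succ}$, $\kr=\fr_{\prec}$. Thus it remains to verify: (a) $\alpha$ is an averaging operator on $(A,\cdot)$; (b) the triple $(\fl_{\succ},\fr_{\prec},\alpha)$ satisfies the averaging-bimodule equations \eqref{adm1}--\eqref{adm2} (taken with $\beta=\alpha$); and (c) $\id_{A}$ satisfies the remaining $\mathcal{O}$-operator condition \eqref{oop1}.

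For (a), I would expand along $\cdot=\succ+\prec$ and use the two defining identities of Definition \ref{def:avdend}. Writing $\alpha(a_{1})\cdot\alpha(a_{2})=\alpha(a_{1})\succ\alpha(a_{2})+\alpha(a_{1})\prec\alpha(a_{2})$ and substituting $\alpha(a_{1})\succ\alpha(a_{2})=\alpha(\alpha(a_{1})\succ a_{2})$ together with $\alpha(a_{1})\prec\alpha(a_{2})=\alpha(\alpha(a_{1})\prec a_{2})$, linearity of $\alpha$ recombines the two terms into $\alpha(\alpha(a_{1})\cdot a_{2})$; the parallel substitution with $\alpha$ on the right-hand factors yields $\alpha(a_{1}\cdot\alpha(a_{2}))$. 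This is precisely the averaging-operator identity of Definition \ref{def:aver}, so $(A,\cdot,\alpha)$ is an averaging associative algebra.

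For (b), the key observation is that, after unfolding $\fl_{\succ}(a)(m)=a\succ m$ and $\fr_{\prec}(a)(m)=m\prec a$, the averaging-bimodule equations \eqref{adm1}--\eqref{adm2} become \emph{verbatim} the two defining identities of Definition \ref{def:avdend}. Indeed \eqref{adm1} reads $\alpha(a)\succ\alpha(m)=\alpha(\alpha(a)\succ m)=\alpha(a\succ\alpha(m))$, which is exactly the first line of Definition \ref{def:avdend}; and \eqref{adm2} reads $\alpha(m)\prec\alpha(a)=\alpha(m\prec\alpha(a))=\alpha(\alpha(m)\prec a)$, which is the second line of Definition \ref{def:avdend} read off in the argument order dictated by $\fr_{\prec}(a)(m)=m\prec a$. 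Hence $(A,\fl_{\succ},\fr_{\prec},\alpha)$ is a bimodule over $(A,\alpha)$. Finally, for (c), Eq.\ \eqref{oop1} with $P=\id_{A}$ and $\beta=\alpha$ is just $\alpha\,\id_{A}=\id_{A}\,\alpha$, which holds trivially, while Eq.\ \eqref{oop2} is the associative $\mathcal{O}$-operator property already supplied by Proposition \ref{pro:dend}.

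There is essentially no serious obstacle here; the whole argument is a short verification resting on Proposition \ref{pro:dend} and the definitions. The only point requiring care is the bookkeeping in step (b): one must respect the reversed argument order in $\fr_{\prec}(a)(m)=m\prec a$ so as to match the $\prec$-identity of Definition \ref{def:avdend} to \eqref{adm2}, rather than conflating it with the $\succ$-identity matched to \eqref{adm1}.
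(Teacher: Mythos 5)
Your proposal is correct and follows essentially the same route as the paper: invoke Proposition \ref{pro:dend} for all the purely associative content, observe that expanding $\cdot=\succ+\prec$ makes the averaging identity for $\cdot$ follow from the two dendriform averaging identities, and note that Eqs. \eqref{adm1}--\eqref{adm2} with $\kl=\fl_{\succ}$, $\kr=\fr_{\prec}$, $\beta=\alpha$ unfold exactly to the defining identities of Definition \ref{def:avdend}, with \eqref{oop1} trivial for $P=\id_{A}$. The paper's proof is just a terser version of the same verification, so there is nothing to add.
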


\begin{proof}
Let $(A, \succ, \prec, \alpha)$ be an averaging dendriform algebra.
It is easy to see that $\alpha$ is also an averaging operator for the
multiplication is defined by Eq. \eqref{ass}. Moreover, it is straightforward
to show that Eqs. \eqref{adm1} and \eqref{adm2} hold for $\kl=\fl_{\succ}$
and $\kr=\fr_{\prec}$ if and only if $\alpha$ is an averaging operator on
$(A, \succ, \prec)$.
Thus, $(A, \fl_{\succ}, \fr_{\prec}, \alpha)$ is a bimodule over $(A, \alpha)$.
The last conclusion follows immediately.
\end{proof}

Recall that a Rota-Baxter operator $R$ on an associative algebra $(A, \cdot)$ gives a
dendriform algebra $(A, \succ, \prec)$, where
\begin{align}
a_{1}\succ a_{2}=R(a_{1})\cdot a_{2}, \qquad\qquad
a_{1}\prec a_{2}=a_{1}\cdot R(a_{2}),                   \label{rbd}
\end{align}
for any $a_{1}, a_{2}\in A$ \cite{Agu}. Let $(A, \alpha)$ be an averaging
algebra and $R$ be a Rota-Baxter operator on the associative algebra $A$.
If $R\alpha=\alpha R$, then one can check that $(A, \succ, \prec, \alpha)$
is an averaging dendriform algebra. More generally, for $\mathcal{O}$-operators of
averaging algebras, we have

\begin{pro}\label{pro:opd}
Let $P: M\rightarrow A$ be an $\mathcal{O}$-operator of an averaging algebra $(A, \alpha)$
associated to a bimodule $(M, \kl, \kr, \beta)$. Then there exists an averaging
dendriform algebra structure $(M, \succ, \prec, \beta)$ on $M$,
where $\succ$ and $\prec$ are defined by
$$
m_{1}\succ m_{2}:=\kl(P(m_{1}))(m_{2}) \qquad \text{and} \qquad
m_{1}\prec m_{2}:=\kr(p(m_{2}))(m_{1}),
$$
for any $m_{1}, m_{2}\in M$.
\end{pro}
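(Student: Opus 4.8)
The plan is to check the two ingredients required by Definition~\ref{def:avdend}: first that $(M, \succ, \prec)$ is a dendriform algebra in the sense of Definition~\ref{def:dend}, and second that $\beta$ is an averaging operator on it. The first part is the purely associative content and uses only Eq.~\eqref{oop2}; the second part carries the new averaging content and is where Eqs.~\eqref{oop1}, \eqref{adm1} and \eqref{adm2} enter.

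For the dendriform axioms I would note that Eq.~\eqref{oop2} says exactly that $P$ is an $\mathcal{O}$-operator of the underlying associative algebra $A$ associated to the bimodule $(M, \kl, \kr)$, so $(M, \succ, \prec)$ is the classical dendriform algebra behind Proposition~\ref{pro:dend}. Concretely, to verify the first identity I would expand $(m_1 \prec m_2)\prec m_3 = \kr(P(m_3))\big(\kr(P(m_2))(m_1)\big)$, rewrite this as $\kr\big(P(m_2)P(m_3)\big)(m_1)$ by the bimodule rule $\kr(a_1a_2)=\kr(a_2)\kr(a_1)$, and then substitute $P(m_2)P(m_3)=P(m_2\succ m_3 + m_2\prec m_3)$ from Eq.~\eqref{oop2}; this lands precisely on $m_1\prec(m_2\succ m_3 + m_2\prec m_3)$. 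The other two identities follow by the same device from the left-module and middle-module axioms together with Eq.~\eqref{oop2}, so I would record only one case in detail.

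For the averaging condition the crucial move is to use Eq.~\eqref{oop1}, namely $\alpha P = P\beta$, to trade $P(\beta(m))$ for $\alpha(P(m))$, which is what brings the averaging-bimodule identities into range. For $\succ$ I would compute $\beta(m_1)\succ\beta(m_2)=\kl\big(P(\beta(m_1))\big)(\beta(m_2))=\kl\big(\alpha(P(m_1))\big)(\beta(m_2))$ and apply Eq.~\eqref{adm1} with $a=P(m_1)$ and $m=m_2$ to get $\kl\big(\alpha(P(m_1))\big)(\beta(m_2))=\beta\big(\kl(\alpha(P(m_1)))(m_2)\big)=\beta\big(\kl(P(m_1))(\beta(m_2))\big)$; reading the two right-hand terms as $\beta(\beta(m_1)\succ m_2)$ (again via Eq.~\eqref{oop1}) and $\beta(m_1\succ\beta(m_2))$ yields the two $\succ$-equalities of Definition~\ref{def:avdend}. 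The computation for $\prec$ is entirely parallel, with Eq.~\eqref{adm2} in place of Eq.~\eqref{adm1}.

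I expect no real obstacle: once one sees that Eq.~\eqref{oop1} is the bridge turning $P\beta$ into $\alpha P$, each equality reduces to a single substitution. The only point needing care is the bookkeeping of matching the three equal expressions in Eqs.~\eqref{adm1}--\eqref{adm2} against the three expressions in the definition of an averaging operator on a dendriform algebra, and keeping the $\succ$/$\prec$ sides of $P$ straight.
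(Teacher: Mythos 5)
Your proposal is correct, but it takes a genuinely different route from the paper. You verify everything directly on $M$: the three dendriform axioms via the bimodule identities $\kr(a_1a_2)=\kr(a_2)\kr(a_1)$, $\kl(a_1)\kr(a_2)=\kr(a_2)\kl(a_1)$, $\kl(a_1a_2)=\kl(a_1)\kl(a_2)$ combined with Eq.~\eqref{oop2}, and the averaging identities by using Eq.~\eqref{oop1} to trade $P\beta$ for $\alpha P$ and then invoking Eqs.~\eqref{adm1}--\eqref{adm2}; all of these substitutions check out (one small bookkeeping remark: the middle dendriform axiom needs only the bimodule compatibility $\kl(a_1)\kr(a_2)=\kr(a_2)\kl(a_1)$ and not Eq.~\eqref{oop2}). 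The paper instead lifts the whole problem to the semidirect product: it identifies $P$ with the operator $\hat{P}=\begin{pmatrix}0 & P\\ 0 & 0\end{pmatrix}$ on $A\ltimes M$, notes that $\hat{P}$ is a Rota--Baxter operator of weight $0$ there, uses Eq.~\eqref{oop1} only to conclude $\hat{P}(\alpha\oplus\beta)=(\alpha\oplus\beta)\hat{P}$, and then cites the remark (via Eq.~\eqref{rbd}) that a Rota--Baxter operator commuting with the averaging operator induces an averaging dendriform structure; finally it restricts to the subspace $M$, on which the induced products are exactly $\succ$ and $\prec$ of the statement. The paper's argument is shorter because it reuses established machinery and produces the larger averaging dendriform algebra on $A\oplus M$ as a byproduct, whereas your argument is self-contained and makes transparent precisely which axiom is responsible for each identity, at the cost of more hands-on computation.
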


\begin{proof}
First, since $P: M\rightarrow A$ is an $\mathcal{O}$-operator,
$\hat{P}:={\footnotesize\begin{pmatrix}0 & P \\ 0 & 0\end{pmatrix}}: A\oplus M
\rightarrow A\oplus M$ is a Rota-Baxter operator on $A\ltimes M$.
Thus, there is a dendriform algebra structure on the vector space $A\oplus V$,
which is defined by
\begin{align*}
(a_{1}, m_{1})\succ(a_{2}, m_{2}):=&(P(m_{1}),\; 0)(a_{2}, m_{2})
=(P(m_{1})a_{2},\; \kl(P(m_{1}))(m_{2})), \\
(a_{1}, m_{1})\prec(a_{2}, m_{2}):=&(a_{1}, m_{1})(P(m_{2}),\; 0)
=(a_{1}P(m_{2}),\; \kr(P(m_{2}))(m_{1})),
\end{align*}
for all $a_{1}, a_{2}\in A$ and $m_{1}, m_{2}\in M$. By the definition of
$\mathcal{O}$-operator, i.e., $\alpha P=P\beta$, we get $\hat{P}(\alpha\oplus\beta)
=(\alpha\oplus\beta)\hat{P}$. Thus, $(A\oplus V, \succ, \prec, \alpha\oplus\beta)$
is an averaging dendriform algebra. In particular, on the vector space $M$,
there is an averaging dendriform subalgebra $(V, \succ, \prec, \beta)$,
in which $\succ$ and $\prec$ are exactly defined in this proposition.
\end{proof}

At the end of this section, we illustrate a construction of
averaging ASI bialgebras from averaging dendriform algebras.

\begin{pro}\label{pro:avb-den}
Let $(A, \succ, \prec, \alpha)$ be an averaging dendriform algebra and
$(A, \alpha)$ be the associated averaging algebra. Let $\{e_{1}, e_{2}, \cdots, e_{n}\}$
be a basis of $A$ and $\{e_{1}^{\ast}, e_{2}^{\ast}, \cdots, e_{n}^{\ast}\}$ be
the dual basis. Then $r=\sum_{i=1}^{n}(e_{i}\otimes e_{i}^{\ast}-e_{i}^{\ast}
\otimes e_{i})$ is an antisymmetric solution of YBE in averaging algebra
$(A\ltimes A^{\ast},\; \alpha\oplus\alpha^{\ast})$. Therefore, there is an averaging
ASI bialgebra $(A\ltimes A^{\ast}, \Delta, \alpha\oplus\alpha^{\ast},
\alpha\oplus\alpha^{\ast})$, where the linear map $\Delta$ is defined by Eq. \eqref{cobo}
for $r=\sum_{i=1}^{n}(e_{i}\otimes e_{i}^{\ast}-e_{i}^{\ast}\otimes e_{i})$.
\end{pro}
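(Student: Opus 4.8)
The plan is to recognize Proposition~\ref{pro:avb-den} as a direct specialization of Corollary~\ref{cor:qminp}, obtained by taking the $\mathcal{O}$-operator to be the identity map. First I would invoke Proposition~\ref{pro:assda}: since $(A, \succ, \prec, \alpha)$ is an averaging dendriform algebra with associated averaging algebra $(A, \cdot, \alpha)$, the quadruple $(A, \fl_{\succ}, \fr_{\prec}, \alpha)$ is a bimodule over $(A, \alpha)$, and the identity map $\id_{A}: A\rightarrow A$ is an $\mathcal{O}$-operator of $(A, \alpha)$ associated to this bimodule. This verifies precisely the hypotheses of Corollary~\ref{cor:qminp} with $M=A$, $\kl=\fl_{\succ}$, $\kr=\fr_{\prec}$, $\gamma_{1}=\alpha$ and $P=\id_{A}$.

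Next I would identify the element $r=P-\tau(P)$ under the correspondence used in Corollary~\ref{cor:qminp} (inherited from Proposition~\ref{pro:O-cons}), namely $\Hom(A, A)\cong A\otimes A^{\ast}\subset(A\ltimes A^{\ast})\otimes(A\ltimes A^{\ast})$. Writing a linear map as $P=\sum_{i=1}^{n}P(e_{i})\otimes e_{i}^{\ast}$, the identity map corresponds to $\sum_{i=1}^{n}e_{i}\otimes e_{i}^{\ast}$, so that $r=\sum_{i=1}^{n}(e_{i}\otimes e_{i}^{\ast}-e_{i}^{\ast}\otimes e_{i})$, which is exactly the element in the statement. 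Since here $\gamma_{1}^{\ast}=\alpha^{\ast}$, Corollary~\ref{cor:qminp} then immediately gives that $r$ is an antisymmetric solution of the YBE in the averaging algebra $(A\ltimes A^{\ast}, \alpha\oplus\alpha^{\ast})$, and that $(A\ltimes A^{\ast}, \Delta, \alpha\oplus\alpha^{\ast}, \alpha\oplus\alpha^{\ast})$ is an averaging ASI bialgebra with $\Delta$ defined by Eq.~\eqref{cobo} for this $r$.

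The only point requiring care is the bookkeeping of the identification $\id_{A}\leftrightarrow\sum_{i}e_{i}\otimes e_{i}^{\ast}$ together with the confirmation that the bimodule and $\mathcal{O}$-operator hypotheses of Corollary~\ref{cor:qminp} are genuinely met; both are furnished by Proposition~\ref{pro:assda}. I do not expect any genuine computational obstacle here, since all the substantial content (the YBE identities \eqref{ybe1}--\eqref{ybe3} and the bimodule and averaging-coalgebra compatibilities) has already been absorbed into Theorem~\ref{thm:aybesemi} and its Corollary~\ref{cor:qminp}. The present statement is essentially the translation, into the averaging setting, of the classical dendriform-to-bialgebra construction recorded in Proposition~\ref{pro:dend}, and so its proof is a one-line citation once the specialization $P=\id_{A}$ is made explicit.
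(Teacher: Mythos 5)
Your proposal is correct and matches the paper's own proof exactly: the paper also invokes Proposition \ref{pro:assda} to realize $\id_{A}$ as an $\mathcal{O}$-operator of $(A,\alpha)$ associated to $(A, \fl_{\succ}, \fr_{\prec}, \alpha)$, identifies $\id_{A}$ with $\sum_{i=1}^{n}e_{i}\otimes e_{i}^{\ast}$ so that $r=\id_{A}-\tau(\id_{A})$, and concludes by Corollary \ref{cor:qminp}. Nothing is missing.
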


\begin{proof}
By Proposition \ref{pro:assda}, we get that the identity map $\id_{A}$ is an
$\mathcal{O}$-operator of averaging algebra $(A, \alpha)$ associated to
$(A, \fl_{\succ}, \fr_{\prec}, \alpha)$. Note that $\id_{A}: A\rightarrow A$
is just $\sum^{n}_{i=1}e_{i}\otimes e_{i}^{\ast}$ by the isomorphism
$\Hom(A, A)\cong A\otimes A^{\ast}\subset(A\ltimes A^{\ast})\otimes(A \ltimes A^{\ast})$,
we get $r=\id_{A}-\tau(\id_{A})$. Thus, we obtain the conclusion by Corollary
\ref{cor:qminp}.
\end{proof}

\section{Factorizable averaging antisymmetric infinitesimal bialgebras}\label{sec:fact}
In this section, we establish the factorizable theories for averaging antisymmetric
infinitesimal bialgebras. First, we introduce some notations.
Let $A$ be vector space. For any $r\in A\otimes A$,
it can be written as the sum of symmetric $\mathfrak{s}(r)$ and skew-symmetric parts
$\mathfrak{a}(r)$, i.e., $\mathfrak{s}(r), \mathfrak{a}(r)\in A\otimes A$ satisfying
$\tau(\mathfrak{s}(r))=\mathfrak{s}(r)$, $\tau(\mathfrak{a}(r))=-\mathfrak{a}(r)$ and
$r=\mathfrak{s}(r)+\mathfrak{a}(r)$. For any $r\in A\otimes A$, we have defined a linear
map $r^{\sharp}: A^{\ast}\rightarrow A$ by
$$
\langle r^{\sharp}(\xi),\; \eta\rangle=\langle\xi\otimes\eta,\; r\rangle.
$$
Now we define another linear map $r^{\natural}: A^{\ast}\rightarrow A$ by
$$
\langle\xi,\; r^{\natural}(\eta)\rangle=-\langle \xi\otimes\eta,\; r\rangle,
$$
for any $\xi, \eta\in A^{\ast}$. If $A$ is an associative algebra,
then the associative algebra structure $\cdot_{r}$ on $A^{\ast}$ dual to the
comultiplication $\Delta$ defined by Eq. \eqref{cobo} is given by
$\xi\cdot_{r}\eta=\fr_{A}^{\ast}(r^{\sharp}(\xi))(\eta)
+\fl_{A}^{\ast}(r^{\natural}(\eta))(\xi)$, for any $\xi, \eta\in A^{\ast}$.

\begin{defi}\label{def:lr-inv}
Let $(A, \alpha)$ be an averaging algebra and $r\in A\otimes A$. Then $r$ is called
{\rm $(\fl, \fr)$-invariant} if
$$
(\id\otimes\fl_{A}(a)-\fr_{A}(a)\otimes\id)(r)=0,
$$
for any $a\in A$.
\end{defi}

That is, if $r\in A\otimes A$ $(\fl, \fr)$-invariant, then the comultiplication
$\Delta$ defined by Eq. (\ref{cobo}) is zero. The $(\fl, \fr)$-invariant condition
in an averaging algebra is the same as the $(\fl, \fr)$-invariant condition in
an associative algebra. We review some conclusions about $(\fl, \fr)$-invariant
condition in associative algebra.

\begin{pro}[\cite{SW}]\label{pro:lr-inv}
Let $A$ be an associative algebra and $r\in A\otimes A$.
\begin{enumerate}\itemsep=0pt
\item[$(i)$] $r$ is $(\fl, \fr)$-invariant if and only if $r^{\sharp}(\fr_{A}^{\ast}(a)(\xi))
     =ar^{\sharp}(\xi)$, for any $a\in A$ and $\xi\in A^{\ast}$.
\item[$(ii)$] Denote by $\mathcal{I}=r^{\sharp}-r^{\natural}:\; A^{\ast}\rightarrow A$. Then,
     $\mathfrak{s}(r)$ is $(\fl, \fr)$-invariant if and only if
     $\mathcal{I}\fr_{A}^{\ast}(a)=\fl_{A}(a)\mathcal{I}$, or
     $\mathcal{I}\fl_{A}^{\ast}(a)=\fr_{A}(a)\mathcal{I}$, for any $a\in A$.
\item[$(iii)$] If $\mathfrak{s}(r)$ is $(\fl, \fr)$-invariant, then, for any $a\in A$
     and $\xi, \eta\in A^{\ast}$, $\langle\xi,\; \mathfrak{s}(r)^{\sharp}(\eta)a\rangle
     =\langle\eta,\; a\mathfrak{s}(r)^{\sharp}(\xi)\rangle$. Therefore, the associative
     algebra multiplication $\cdot_{r}$ on $A^\ast$ reduces to
     $$
     \xi\cdot_{r}\eta=\fr_{A}^{\ast}(\mathfrak{a}(r)^{\sharp}(\xi))(\eta)
     +\fl_{A}^{\ast}(\mathfrak{a}(r)^{\sharp}(\eta))(\xi),
     $$
     for any $\xi, \eta\in A^{\ast}$.
\end{enumerate}
\end{pro}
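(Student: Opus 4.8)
The plan is to prove the three parts in order, reducing everything to the coordinate description of $r^{\sharp}$ (writing $r=\sum_{i}x_{i}\otimes y_{i}$, so $r^{\sharp}(\xi)=\sum_{i}\langle\xi,x_{i}\rangle y_{i}$) together with two identities that are immediate from the definitions: $r^{\natural}=-(\tau(r))^{\sharp}$, and, since $\sharp$ is linear in $r$, $\mathcal{I}=r^{\sharp}-r^{\natural}=(r+\tau(r))^{\sharp}=2\,\mathfrak{s}(r)^{\sharp}$.

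For $(i)$, I would pair the claimed identity with an arbitrary $\eta\in A^{\ast}$. Expanding in coordinates and using the definitions of $\fr_{A}^{\ast}$ and $\fl_{A}$, the two sides become $\langle\xi\otimes\eta,\,(\fr_{A}(a)\otimes\id)(r)\rangle$ and $\langle\xi\otimes\eta,\,(\id\otimes\,\fl_{A}(a))(r)\rangle$ respectively. As $\xi,\eta,a$ range over all choices and the pairing on $A\otimes A$ is nondegenerate, equality for all of them is exactly $(\id\otimes\,\fl_{A}(a)-\fr_{A}(a)\otimes\id)(r)=0$, i.e. $(\fl,\fr)$-invariance.

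For $(ii)$, I would apply $(i)$ to $s:=\mathfrak{s}(r)$; since $\mathcal{I}=2s^{\sharp}$, the criterion $s^{\sharp}\fr_{A}^{\ast}(a)=\fl_{A}(a)s^{\sharp}$ is precisely $\mathcal{I}\fr_{A}^{\ast}(a)=\fl_{A}(a)\mathcal{I}$. To obtain the second equivalent form I would exploit the symmetry of $s$: applying $\tau$ to $(\id\otimes\,\fl_{A}(a)-\fr_{A}(a)\otimes\id)(s)=0$ and using $\tau(s)=s$ converts it into the dual invariance $(\fl_{A}(a)\otimes\id-\id\otimes\,\fr_{A}(a))(s)=0$, and a coordinate computation in the spirit of $(i)$ identifies this with $s^{\sharp}\fl_{A}^{\ast}(a)=\fr_{A}(a)s^{\sharp}$, i.e. $\mathcal{I}\fl_{A}^{\ast}(a)=\fr_{A}(a)\mathcal{I}$. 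Hence both displayed conditions are equivalent to $(\fl,\fr)$-invariance of $s$.

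For $(iii)$, I would first record the self-adjointness $\langle s^{\sharp}(\xi),\eta\rangle=\langle s^{\sharp}(\eta),\xi\rangle$, which follows from $\tau(s)=s$. Using the dual invariance from $(ii)$ in the form $s^{\sharp}(\eta)a=s^{\sharp}(\fl_{A}^{\ast}(a)(\eta))$, then self-adjointness, then the definition of $\fl_{A}^{\ast}$, the chain
$$
\langle\xi,\,s^{\sharp}(\eta)a\rangle=\langle s^{\sharp}(\fl_{A}^{\ast}(a)(\eta)),\,\xi\rangle
=\langle s^{\sharp}(\xi),\,\fl_{A}^{\ast}(a)(\eta)\rangle=\langle\eta,\,a\,s^{\sharp}(\xi)\rangle
$$
yields the asserted identity. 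For the reduction of $\cdot_{r}$, I would decompose $r^{\sharp}=s^{\sharp}+\mathfrak{a}(r)^{\sharp}$ and $r^{\natural}=-s^{\sharp}+\mathfrak{a}(r)^{\sharp}$ (the latter from $r^{\natural}=-(\tau(r))^{\sharp}$), substitute into $\xi\cdot_{r}\eta=\fr_{A}^{\ast}(r^{\sharp}(\xi))(\eta)+\fl_{A}^{\ast}(r^{\natural}(\eta))(\xi)$, and check that the symmetric contribution $\fr_{A}^{\ast}(s^{\sharp}(\xi))(\eta)-\fl_{A}^{\ast}(s^{\sharp}(\eta))(\xi)$ vanishes: pairing it with an arbitrary $b\in A$ reproduces exactly the identity just established. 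The surviving antisymmetric terms are the claimed formula. The main obstacle here is purely bookkeeping—keeping $r^{\sharp}$, $r^{\natural}$ and their symmetric/antisymmetric splittings straight and staying consistent about the two notational orders of $\langle-,-\rangle$—since the cancellation in $(iii)$ hinges on correctly matching $\fr_{A}^{\ast}$ and $\fl_{A}^{\ast}$ with the right and left regular actions.
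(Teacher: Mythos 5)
Your proof is correct. Note that the paper itself gives no argument for this proposition---it is quoted from \cite{SW}---so there is no internal proof to compare against; your duality/coordinate argument is the standard one and fills that gap cleanly. The two identities you isolate at the start, $r^{\natural}=-(\tau(r))^{\sharp}$ and hence $\mathcal{I}=(r+\tau(r))^{\sharp}=2\,\mathfrak{s}(r)^{\sharp}$, are exactly the right linchpin: they reduce $(ii)$ to two applications of $(i)$ (one for $r$-invariance, one for the $\tau$-flipped form), and they make the cancellation of the symmetric part in $(iii)$ a one-line consequence of the self-adjointness of $\mathfrak{s}(r)^{\sharp}$ together with the identity $\langle\xi,\;\mathfrak{s}(r)^{\sharp}(\eta)a\rangle=\langle\eta,\;a\,\mathfrak{s}(r)^{\sharp}(\xi)\rangle$, matched against the paper's formula $\xi\cdot_{r}\eta=\fr_{A}^{\ast}(r^{\sharp}(\xi))(\eta)+\fl_{A}^{\ast}(r^{\natural}(\eta))(\xi)$.
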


\begin{pro}\label{pro:lr-inv-equ}
Let $(A, \alpha)$ be an averaging algebra and $r\in A\otimes A$.
If $\mathfrak{s}(r)$ is $(\fl, \fr)$-invariant, then $r$ is a solution of the
$\beta$-YBE in $(A, \alpha)$ if and only if $(A^{\ast}, \cdot_{r}, \beta^{\ast})$
is an averaging algebra and the linear maps $r^{\sharp}, r^{\natural}:\, (A^{\ast},
\cdot_{r}, \beta^{\ast})\rightarrow (A, \alpha)$ are averaging algebra homomorphisms.
\end{pro}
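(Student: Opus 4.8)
The plan is to decompose the $\beta$-YBE into its three constituent equations \eqref{ybe1}--\eqref{ybe3} and match each to a clause on the right-hand side, exploiting the reduction afforded by the $(\fl,\fr)$-invariance of $\mathfrak{s}(r)$. First I would record the bookkeeping identities $r^{\sharp}=\mathfrak{s}(r)^{\sharp}+\mathfrak{a}(r)^{\sharp}$ and $r^{\natural}=\mathfrak{a}(r)^{\sharp}-\mathfrak{s}(r)^{\sharp}$ (so that $r^{\sharp}+r^{\natural}=2\mathfrak{a}(r)^{\sharp}$ and $r^{\sharp}-r^{\natural}=\mathcal{I}$), and invoke Proposition \ref{pro:lr-inv}(iii) to rewrite the dual multiplication through the skew part alone, $\xi\cdot_{r}\eta=\fr_{A}^{\ast}(\mathfrak{a}(r)^{\sharp}(\xi))(\eta)+\fl_{A}^{\ast}(\mathfrak{a}(r)^{\sharp}(\eta))(\xi)$. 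Throughout I take as in force the standing hypothesis accompanying the $\beta$-YBE that $(A,\fl_{A},\fr_{A},\beta)$ is a bimodule over $(A,\alpha)$, so that by Proposition \ref{pro:dual} the quadruple $(A^{\ast},\fr_{A}^{\ast},\fl_{A}^{\ast},\beta^{\ast})$ is a bimodule over $(A,\alpha)$.

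Next I would dispose of the two ``averaging-operator'' equations. A direct pairing computation, exactly as in the proof of Proposition \ref{pro:ybeo}, shows that \eqref{ybe3}, i.e. $(\beta\otimes\id-\id\otimes\alpha)(r)=0$, is equivalent to $\alpha r^{\sharp}=r^{\sharp}\beta^{\ast}$, and symmetrically that \eqref{ybe2}, i.e. $(\alpha\otimes\id-\id\otimes\beta)(r)=0$, is equivalent to $\alpha r^{\natural}=r^{\natural}\beta^{\ast}$. These are precisely the conditions that $r^{\sharp}$ and $r^{\natural}$ intertwine the averaging operators $\beta^{\ast}$ and $\alpha$, i.e. the second axiom required for each of them to be a homomorphism of averaging algebras. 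For the ``associative'' equation \eqref{ybe1} I would invoke the purely associative analogue: under the hypothesis that $\mathfrak{s}(r)$ is $(\fl,\fr)$-invariant, \eqref{ybe1} holds if and only if $(A^{\ast},\cdot_{r})$ is an associative algebra and both $r^{\sharp},r^{\natural}\colon(A^{\ast},\cdot_{r})\to A$ are homomorphisms of associative algebras. This is the content underlying Proposition \ref{pro:lr-inv} and can be taken from \cite{SW}, or derived by expanding the multiplicativity of $r^{\sharp}$ and $r^{\natural}$ against $\cdot_{r}$ and using Proposition \ref{pro:lr-inv}(i),(ii) to absorb the symmetric part.

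The one remaining item is that $\beta^{\ast}$ be an averaging operator on $(A^{\ast},\cdot_{r})$. In the backward direction this is simply given and is not otherwise needed, since \eqref{ybe2} and \eqref{ybe3} are read off from the intertwining conditions and \eqref{ybe1} from associativity of $\cdot_{r}$ together with the multiplicativity of $r^{\sharp}$ and $r^{\natural}$. In the forward direction I would derive it: adding $\alpha r^{\sharp}=r^{\sharp}\beta^{\ast}$ and $\alpha r^{\natural}=r^{\natural}\beta^{\ast}$ gives $\alpha\,\mathfrak{a}(r)^{\sharp}=\mathfrak{a}(r)^{\sharp}\beta^{\ast}$; substituting this into the reduced formula for $\cdot_{r}$ and using the bimodule relations \eqref{adm1}--\eqref{adm2} for $(A^{\ast},\fr_{A}^{\ast},\fl_{A}^{\ast},\beta^{\ast})$, the three expressions $\beta^{\ast}(\xi)\cdot_{r}\beta^{\ast}(\eta)$, $\beta^{\ast}(\beta^{\ast}(\xi)\cdot_{r}\eta)$ and $\beta^{\ast}(\xi\cdot_{r}\beta^{\ast}(\eta))$ collapse to a common value. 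Assembling these matchings then yields both implications.

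I expect the principal obstacle to be the associative equivalence for \eqref{ybe1}: showing that the multiplicativity of \emph{both} $r^{\sharp}$ and $r^{\natural}$ captures the full cubic identity $r_{12}r_{13}+r_{13}r_{23}-r_{23}r_{12}=0$ requires carefully tracking how the $(\fl,\fr)$-invariance of $\mathfrak{s}(r)$ eliminates the cross terms between $\mathfrak{s}(r)$ and $\mathfrak{a}(r)$ in the cubic expression, and this is exactly where the invariance hypothesis is indispensable. A secondary bookkeeping point is to confirm that the averaging property of $\beta^{\ast}$, though listed among the right-hand conditions, is automatically implied by the intertwining conditions in the presence of the bimodule hypothesis, so that the stated equivalence is genuinely symmetric.
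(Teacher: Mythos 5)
Your proposal is correct and follows essentially the same route as the paper's proof: the cubic equation \eqref{ybe1} is handled by the associative result of \cite{SW} under the $(\fl,\fr)$-invariance of $\mathfrak{s}(r)$, the linear equations \eqref{ybe2}--\eqref{ybe3} are converted by pairing computations into the intertwining conditions $\alpha r^{\sharp}=r^{\sharp}\beta^{\ast}$ and $\alpha r^{\natural}=r^{\natural}\beta^{\ast}$, and the averaging property of $\beta^{\ast}$ on $(A^{\ast},\cdot_{r})$ is verified directly in the forward direction and taken as given in the backward one. Your explicit invocation of the hypothesis that $(A,\fl_{A},\fr_{A},\beta)$ is a bimodule over $(A,\alpha)$ is precisely what the paper's unexpanded ``direct calculation'' for the averaging property of $\beta^{\ast}$ tacitly relies on, so your write-up is, if anything, the more careful rendering of the same argument.
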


\begin{proof}
Assume $r$ is a solution of the $\beta$-YBE in $(A, \alpha)$.
First, by a direct calculation, one can show that $\beta^{\ast}$ is an averaging
operator on $(A^{\ast}, \cdot_{r})$, and so that $(A^{\ast},
\cdot_{r}, \beta^{\ast})$ is an averaging algebra.
Second, by \cite{SW}, we get that if $\mathfrak{s}(r)$ is $(\fl, \fr)$-invariant, then
$r$ satisfies the Yang-Baxter equation in associative algebra i.e., Eq. \eqref{ybe1},
if and only if $(A^{\ast}, \cdot_{r})$ is an associative algebra and the linear maps
$r^{\sharp}, r^{\natural}:\, (A^{\ast}, \cdot_{r})\rightarrow(A, \cdot)$ are
associative algebra homomorphisms. Finally, by Eq. \eqref{ybe2}, we get
\begin{align*}
\langle(\alpha\otimes\id-\id\otimes\beta)(r),\; \xi\otimes\eta\rangle
&=\langle r^{\sharp}(\alpha^{\ast}(\xi)),\; \eta\rangle
-\langle r^{\sharp}(\xi),\; \beta^\ast(\eta)\rangle\\
&=\langle r^{\sharp}(\alpha^{\ast}(\xi))-\beta(r^{\sharp}(\xi)),\ \ \eta\rangle\\
&=0,
\end{align*}
for any $\xi, \eta\in A^{\ast}$. That is, $r^{\sharp}\alpha^{\ast}=\beta r^{\sharp}$.
By the duality between $r^{\sharp}$ and $r^{\natural}$, we get $r^{\natural}\beta^{\ast}
=\alpha r^{\natural}$. Similarly, by Eq. \eqref{ybe3}, we get $r^{\sharp}\beta^{\ast}
=\alpha r^{\sharp}$. Thus, $r^{\sharp}$ and $r^{\natural}$ are averaging algebra
homomorphisms. Conversely, it is directly available from the above calculation.
\end{proof}

Now, we give the definition of factorizable averaging ASI bialgebra.

\begin{defi}\label{def:q-tri}
Let $(A, \alpha)$ be an averaging algebra. If $r\in A\otimes A$ is a solution of
the $\beta$-YBE in $(A, \alpha)$ and $\mathfrak{s}(r)$ is $(\fl, \fr)$-invariant,
then the averaging ASI bialgebra $(A, \Delta, \alpha, \beta)$
induced by $r$ is called a {\rm quasi-triangular averaging ASI bialgebra}.

The averaging ASI bialgebra $(A, \Delta, \alpha, \beta)$ induced by
$r$ is called a {\rm factorizable} if it is quasi-triangular and the linear map
$\mathcal{I}=r^{\sharp}-r^{\natural}:\; A^{\ast}\rightarrow A$ is a linear isomorphism
and $\mathcal{I}\beta^{\ast}=\alpha\mathcal{I}$.
\end{defi}

For convenience, we can consider the linear map $\mathcal{I}=r^{\sharp}-r^{\natural}:\;
A^{\ast}\rightarrow A$ as a composition of maps as follows:
$$
A^{\ast}\xrightarrow{\quad r^{\sharp}\oplus r^{\natural}\quad} A\oplus A
\xrightarrow{\quad(a_{1}, a_{2})\mapsto a_{1}-a_{2}\quad} A.
$$
The following result justifies the terminology of a factorizable averaging ASI bialgebra.

\begin{pro}\label{pro:fact}
Let $(A, \alpha)$ be an averaging algebra and $r\in A\otimes A$. Assume the averaging
ASI bialgebra $(A, \Delta, \alpha, \beta)$ induced by $r$ is factorizable. Then
$\Img(r^{\sharp}\oplus r^{\natural})$ is an averaging subalgebra of the
direct sum averaging algebra $A\oplus A$, which is isomorphic to the averaging
algebra $(A^{\ast}, \cdot_{r}, \beta^{\ast})$. Moreover, any $a\in A$ has an unique
decomposition $a=a_{+}+a_{-}$, where $a_{+}\in\Img(r^{\sharp})$ and $a_{-}\in\Img(r^{\sharp})$.
\end{pro}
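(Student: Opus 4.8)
The plan is to read everything off from Proposition \ref{pro:lr-inv-equ} together with the factorizability hypotheses. Since $(A,\Delta,\alpha,\beta)$ is factorizable, $r$ is a solution of the $\beta$-YBE in $(A,\alpha)$ and $\mathfrak{s}(r)$ is $(\fl,\fr)$-invariant, so Proposition \ref{pro:lr-inv-equ} applies and tells us that $(A^{\ast},\cdot_{r},\beta^{\ast})$ is an averaging algebra and that both $r^{\sharp}$ and $r^{\natural}$ are homomorphisms of averaging algebras from $(A^{\ast},\cdot_{r},\beta^{\ast})$ to $(A,\alpha)$. I regard $A\oplus A$ as the direct sum averaging algebra, with componentwise multiplication and averaging operator $\alpha\oplus\alpha$; this is an averaging algebra since each summand is.

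First I would show that the packaged map $r^{\sharp}\oplus r^{\natural}\colon A^{\ast}\to A\oplus A$, $\xi\mapsto(r^{\sharp}(\xi),r^{\natural}(\xi))$, is a homomorphism of averaging algebras. Because the product on $A\oplus A$ is taken componentwise, multiplicativity of $r^{\sharp}\oplus r^{\natural}$ amounts exactly to the pair of identities $r^{\sharp}(\xi\cdot_{r}\eta)=r^{\sharp}(\xi)r^{\sharp}(\eta)$ and $r^{\natural}(\xi\cdot_{r}\eta)=r^{\natural}(\xi)r^{\natural}(\eta)$, both of which hold since $r^{\sharp},r^{\natural}$ are algebra homomorphisms; likewise $(r^{\sharp}\oplus r^{\natural})\beta^{\ast}=(\alpha\oplus\alpha)(r^{\sharp}\oplus r^{\natural})$ follows componentwise from $r^{\sharp}\beta^{\ast}=\alpha r^{\sharp}$ and $r^{\natural}\beta^{\ast}=\alpha r^{\natural}$. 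Consequently $\Img(r^{\sharp}\oplus r^{\natural})$ is an averaging subalgebra of $A\oplus A$. To upgrade this to an isomorphism with $(A^{\ast},\cdot_{r},\beta^{\ast})$, I would verify injectivity: if $(r^{\sharp}(\xi),r^{\natural}(\xi))=0$, then $\mathcal{I}(\xi)=r^{\sharp}(\xi)-r^{\natural}(\xi)=0$, and since $\mathcal{I}$ is a linear isomorphism this forces $\xi=0$. Thus $r^{\sharp}\oplus r^{\natural}$ is an injective averaging algebra homomorphism, hence an isomorphism onto its image.

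For the decomposition I would exploit that $\mathcal{I}=r^{\sharp}-r^{\natural}$ is a bijection. Given $a\in A$, set $\xi=\mathcal{I}^{-1}(a)\in A^{\ast}$, so that $a=\mathcal{I}(\xi)=r^{\sharp}(\xi)-r^{\natural}(\xi)$; then $a_{+}:=r^{\sharp}(\xi)\in\Img(r^{\sharp})$ and $a_{-}:=-r^{\natural}(\xi)\in\Img(r^{\natural})$ yield $a=a_{+}+a_{-}$. Uniqueness is inherited from the uniqueness of the preimage $\xi$ under the isomorphism $\mathcal{I}$, so the pair $(a_{+},a_{-})$ produced this way is determined by $a$. (Here I would read the second summand as lying in $\Img(r^{\natural})$, the natural target absorbing the minus sign of $\mathcal{I}$.)

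All of the computations are routine once Proposition \ref{pro:lr-inv-equ} is in hand; the only genuinely delicate point is conceptual rather than technical, namely to notice that the two homomorphisms $r^{\sharp}$ and $r^{\natural}$ assemble into a single homomorphism into the direct sum, and that the single isomorphism $\mathcal{I}$ simultaneously supplies the injectivity of this packaged map (hence the algebra isomorphism onto the image) and the bijectivity underlying the factorization $a=a_{+}+a_{-}$.
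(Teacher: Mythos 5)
Your proposal is correct and follows essentially the same route as the paper's proof: quasi-triangularity (via Proposition \ref{pro:lr-inv-equ}) makes $r^{\sharp}$ and $r^{\natural}$ averaging algebra homomorphisms, injectivity of $r^{\sharp}\oplus r^{\natural}$ comes from the bijectivity of $\mathcal{I}$, and the factorization $a=r^{\sharp}(\mathcal{I}^{-1}(a))-r^{\natural}(\mathcal{I}^{-1}(a))$ with uniqueness again from $\mathcal{I}$ being an isomorphism. You merely spell out the componentwise verifications that the paper leaves implicit, and your reading of the second summand as lying in $\Img(r^{\natural})$ (correcting the statement's typo) matches the paper's $a_{-}=-r^{\natural}(\mathcal{I}^{-1}(a))$.
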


\begin{proof}
Since $(A, \Delta, \alpha, \beta)$ is quasi-triangular, both $r^{\sharp}$ and $r^{\natural}$
are averaging algebra homomorphisms. Therefore, $\Img(r^{\sharp}\oplus r^{\natural})$
is an averaging subalgebra of the direct sum averaging algebra $(A\oplus A,
\alpha\oplus\alpha)$. Since $\mathcal{I}: A^\ast\rightarrow A$ is
a linear isomorphism, it follows that $r^{\sharp}\oplus r^{\natural}$ is injective,
and so that the averaging algebra $\Img(r^{\sharp}\oplus r^{\natural})$
is isomorphic to the averaging algebra $(A^{\ast}, \cdot_{r}, \beta^{\ast})$.
Moreover, since $\mathcal{I}$ is an isomorphism again, for any $a\in A$, we have
$$
a=(r^{\sharp}-r^{\natural})(\mathcal{I}^{-1}(a))=r^{\sharp}(\mathcal{I}^{-1}(a))
-r^{\natural}(\mathcal{I}^{-1}(a)),
$$
which implies that $a=a_{+}+a_{-}$, where $a_{+}=r^{\sharp}(\mathcal{I}^{-1}(a))$ and
$a_{-}=-r^{\natural}(\mathcal{I}^{-1}(a))$. The uniqueness also follows from the fact
that $\mathcal{I}$ is an isomorphism.
\end{proof}

Let $(A, \Delta, \alpha, \beta)$ be a factorizable averaging ASI bialgebra.
By Proposition \ref{pro:fact}, we get $\alpha(a)=\alpha(a_{+})+
\alpha(a_{-})$ for any $a\in A$. That is, each element in $\alpha(A)$
is factorizable in the set $\alpha(A)$.
Following, we will give a class of factorizable averaging ASI bialgebras by
the double of an averaging ASI bialgebra.
Let $(A, \Delta, \alpha, \beta)$ be an arbitrary averaging ASI bialgebra.
By Theorem \ref{thm:equ}, there exists a matched pair of averaging algebras
$((A, \alpha), (A^{\ast}, \beta^{\ast}), \fr_{A}^{\ast}, \fl_{A}^{\ast},
\fr_{A^{\ast}}^{\ast}, \fl_{A^{\ast}}^{\ast})$ and a double construction
of averaging Frobenius algebra $(A\bowtie A^{\ast}, \alpha\oplus\beta^{\ast},
\mathfrak{B}_{d})$ corresponding to it. Let $\{e_{1}, e_{2},\cdots, e_{n}\}$ be
a basis of $A$, $\{e_{1}^{\ast}, e_{2}^{\ast},\cdots, e_{n}^{\ast}\}$ be the
dual basis, and $r=\sum_{i=1}^{n}e_{i}\otimes e_{i}^{\ast}\in A\otimes A^{\ast}
\subset(A\oplus A^{\ast})\otimes(A\oplus A^{\ast})$. By Proposition \ref{pro:doublebia},
$(A\bowtie A^{\ast}, \Delta_{A\bowtie A^{\ast}}, \alpha\oplus\beta^{\ast},
\beta\oplus\alpha^{\ast})$ is an averaging ASI bialgebra induced by $r$.

\begin{pro}\label{pro:doub-fact}
With the above notations, the averaging ASI bialgebra $(A\bowtie A^{\ast},
\Delta_{A\bowtie A^{\ast}}, \alpha\oplus\beta^{\ast}, \beta\oplus\alpha^{\ast})$
induced by $r$ is factorizable.
\end{pro}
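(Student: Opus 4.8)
The plan is to verify directly the defining conditions of a factorizable averaging ASI bialgebra from Definition \ref{def:q-tri}, applied to the quadruple $(A\bowtie A^{\ast}, \Delta_{A\bowtie A^{\ast}}, \alpha\oplus\beta^{\ast}, \beta\oplus\alpha^{\ast})$, where the role of the algebra averaging operator is played by $\alpha\oplus\beta^{\ast}$ and that of the coalgebra averaging operator by $\beta\oplus\alpha^{\ast}$. By Proposition \ref{pro:doublebia} this quadruple is already an averaging ASI bialgebra induced by $r=\sum_{i=1}^{n}e_{i}\otimes e_{i}^{\ast}$, so it remains to establish two things: quasi-triangularity, i.e.\ that $r$ solves the $(\beta\oplus\alpha^{\ast})$-YBE in $(A\bowtie A^{\ast}, \alpha\oplus\beta^{\ast})$ and that $\mathfrak{s}(r)$ is $(\fl,\fr)$-invariant; and factorizability, i.e.\ that $\mathcal{I}=r^{\sharp}-r^{\natural}$ is a linear isomorphism with $\mathcal{I}(\beta\oplus\alpha^{\ast})^{\ast}=(\alpha\oplus\beta^{\ast})\mathcal{I}$.

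First I would pin down $r^{\sharp}$, $r^{\natural}$ and $\mathcal{I}$ explicitly. Under the identification $(A\oplus A^{\ast})^{\ast}\cong A^{\ast}\oplus A$ with pairing $\langle(\phi,a),(b,\psi)\rangle=\langle\phi,b\rangle+\langle\psi,a\rangle$, a direct computation from the definitions gives $r^{\sharp}(\phi,a)=(0,\phi)$ and $r^{\natural}(\phi,a)=(-a,0)$, whence $\mathcal{I}(\phi,a)=(a,\phi)$. Thus $\mathcal{I}$ is visibly the invertible swap $A^{\ast}\oplus A\to A\oplus A^{\ast}$. For the compatibility with the averaging operators I would compute $(\beta\oplus\alpha^{\ast})^{\ast}(\phi,a)=(\beta^{\ast}(\phi),\alpha(a))$ and then check that both $\mathcal{I}(\beta\oplus\alpha^{\ast})^{\ast}(\phi,a)$ and $(\alpha\oplus\beta^{\ast})\mathcal{I}(\phi,a)$ equal $(\alpha(a),\beta^{\ast}(\phi))$, which settles the factorizability clause apart from quasi-triangularity.

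For the $(\fl,\fr)$-invariance of $\mathfrak{s}(r)$ I would argue that $\mathfrak{s}(r)$ is exactly the symmetric Casimir element attached to the nondegenerate symmetric invariant form $\mathfrak{B}_{d}$ on $A\bowtie A^{\ast}$: the isomorphism $\varphi_{\mathfrak{B}_{d}}\colon A\bowtie A^{\ast}\to(A\bowtie A^{\ast})^{\ast}$ induced by $\mathfrak{B}_{d}$ is $(a,\psi)\mapsto(\psi,a)$, so $\mathcal{I}=\varphi_{\mathfrak{B}_{d}}^{-1}$. Invariance and symmetry of $\mathfrak{B}_{d}$ then give $\mathfrak{B}_{d}(dx,z)=\mathfrak{B}_{d}(x,zd)$ for all $d,x,z\in A\bowtie A^{\ast}$, which is precisely the operator identity $\mathcal{I}\fr_{A\bowtie A^{\ast}}^{\ast}(d)=\fl_{A\bowtie A^{\ast}}(d)\mathcal{I}$; by Proposition \ref{pro:lr-inv}$(ii)$ this means $\mathfrak{s}(r)$ is $(\fl,\fr)$-invariant. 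The remaining YBE conditions \eqref{ybe2} and \eqref{ybe3} for $r$ were already verified inside the proof of Proposition \ref{pro:doublebia} as the two displayed vanishing identities $\big((\alpha\oplus\beta^{\ast})\otimes\id-\id\otimes(\beta\oplus\alpha^{\ast})\big)(r)=0$ and $\big((\beta\oplus\alpha^{\ast})\otimes\id-\id\otimes(\alpha\oplus\beta^{\ast})\big)(r)=0$.

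The one genuinely delicate point is the associative Yang-Baxter equation \eqref{ybe1}, namely $r_{12}r_{13}+r_{13}r_{23}-r_{23}r_{12}=0$ in $A\bowtie A^{\ast}$. The proof of Proposition \ref{pro:doublebia} only invoked the weaker coboundary conditions \eqref{coba1}-\eqref{coba2} (via \cite[Theorem 2.3.6]{Bai}), and these do not by themselves yield \eqref{ybe1}. Here I would invoke the standard fact, established for antisymmetric infinitesimal bialgebras in \cite{SW} and underlying the whole factorizable picture, that the canonical element $r=\sum_{i}e_{i}\otimes e_{i}^{\ast}$ of a double construction solves the associative Yang-Baxter equation; alternatively one verifies \eqref{ybe1} by a direct expansion using the matched-pair actions $\fr_{A}^{\ast},\fl_{A}^{\ast},\fr_{A^{\ast}}^{\ast},\fl_{A^{\ast}}^{\ast}$ of $A\bowtie A^{\ast}$. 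Combining \eqref{ybe1}, \eqref{ybe2} and \eqref{ybe3} shows $r$ solves the $(\beta\oplus\alpha^{\ast})$-YBE, so the bialgebra is quasi-triangular, and together with the isomorphism and averaging-operator compatibility of $\mathcal{I}$ obtained above we conclude that it is factorizable.
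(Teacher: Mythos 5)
Your proposal is correct, and its skeleton necessarily coincides with the paper's: compute $r^{\sharp}$, $r^{\natural}$ and $\mathcal{I}$ explicitly on $A^{\ast}\oplus A$, establish quasi-triangularity, and conclude from the invertibility of $\mathcal{I}$. The differences lie in how two sub-steps are handled. For the $(\fl, \fr)$-invariance of $\mathfrak{s}(r)$, the paper computes directly that $(a_{1},\xi_{1})\ast\mathfrak{s}(r)^{\sharp}(\xi_{2},a_{2})=\mathfrak{s}(r)^{\sharp}\big(\fr_{A\bowtie A^{\ast}}^{\ast}(a_{1},\xi_{1})(\xi_{2},a_{2})\big)$ and applies Proposition \ref{pro:lr-inv}~$(i)$, whereas you identify $\mathcal{I}$ with $\varphi_{\mathfrak{B}_{d}}^{-1}$ and deduce the operator identity $\mathcal{I}\fr_{A\bowtie A^{\ast}}^{\ast}(d)=\fl_{A\bowtie A^{\ast}}(d)\mathcal{I}$ from symmetry and invariance of $\mathfrak{B}_{d}$, then apply Proposition \ref{pro:lr-inv}~$(ii)$; your route is more conceptual (the invariance is exhibited as a consequence of the Frobenius structure of the double), the paper's is a self-contained matched-pair computation, and both are valid. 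Second, you correctly observe that the proof of Proposition \ref{pro:doublebia} as written only establishes Eqs. \eqref{coba1}--\eqref{coba2} together with the vanishing identities giving \eqref{ybe2}--\eqref{ybe3}, which is strictly weaker than \eqref{ybe1}; the paper's opening sentence (``by the proof of Proposition \ref{pro:doublebia}, $r$ is a solution of the $\beta$-YBE'') implicitly relies on the stronger content of \cite[Theorem 2.3.6]{Bai}, namely that the canonical element of a double solves the associative Yang-Baxter equation, and your explicit appeal to that standard fact (or to a direct expansion in the matched-pair product) closes this gap more carefully than the paper does. Finally, you also verify the compatibility $\mathcal{I}(\beta\oplus\alpha^{\ast})^{\ast}=(\alpha\oplus\beta^{\ast})\mathcal{I}$ demanded by Definition \ref{def:q-tri}, a condition the paper's proof leaves unchecked (it is immediate from $\mathcal{I}(\xi,a)=(a,\xi)$, but it is part of the definition of factorizability), so your write-up is, if anything, more complete.
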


\begin{proof}
First, by the proof of Proposition \ref{pro:doublebia}, we get that $r:=\sum_{i=1}^{n}e_{i}
\otimes e_{i}^{\ast}$ is a solution of the $\beta$-YBE in $(A, \alpha)$. Second, since
$\mathfrak{s}(r)=\frac{1}{2}\sum_{i=1}^{n}(e_{i}\otimes e_{i}^{\ast}
+e_{i}^{\ast}\otimes e_{i})$, for any $(\xi, a)\in A^{\ast}\oplus A$
we have $\mathfrak{s}(r)^{\sharp}(\xi, a)=\frac{1}{2}(a, \xi)\in A\oplus A^{\ast}$.
Thus, for any $(a_{1}, \xi_{1}), (a_{2}, \xi_{2})\in A\oplus A^{\ast}$,
\begin{align*}
&\; (a_{1}, \xi_{1})\ast\mathfrak{s}(r)_{+}(\xi_{2}, a_{2})\\
=&\; \mbox{$\frac{1}{2}$}\Big(a_{1}a_{2}+\fr_{A^{\ast}}^{\ast}(\xi_{1})(a_{2})
+\fl_{A^{\ast}}^{\ast}(\xi_{2})(a_{1}),\ \ \xi_{1}\cdot_{A^{\ast}}\xi_{2}
+\fr_{A}^{\ast}(a_{1})(\xi_{2})+\fl_{A}^{\ast}(a_{2})(\xi_{1})\Big)\\
=&\; \mathfrak{s}(r)^{\sharp}\Big(\xi_{1}\cdot_{A^{\ast}}\xi_{2}
+\fr_{A}^{\ast}(a_{1})(\xi_{2})+\fl_{A}^{\ast}(a_{2})(\xi_{1}),\ \
a_{1}a_{2}+\fr_{A^{\ast}}^{\ast}(\xi_{1})(a_{2})+\fl_{A^{\ast}}^{\ast}(\xi_{2})(a_{1})\Big)\\
=&\; \mathfrak{s}(r)^{\sharp}\big(\fr_{A\bowtie A^{\ast}}^{\ast}(a_{1}, \xi_{1})
(\xi_{2}, a_{2})\big).
\end{align*}
By Proposition \ref{pro:lr-inv}, we get $\mathfrak{s}(r)$ is $(\fl_{A\bowtie A^{\ast}},
\fr_{A\bowtie A^{\ast}})$-invariant. Thus, $(A\bowtie A^{\ast}, \Delta,
\alpha\oplus\beta^{\ast}, \beta\oplus\alpha^{\ast})$ is a quasi-triangular
averaging ASI bialgebra. Finally, note that $r^{\sharp}, r^{\natural}:\,
A^{\ast}\oplus A\rightarrow A\oplus A^{\ast}$ are given by
$$
r^{\sharp}(\xi, a)=(0, \xi),\qquad \mbox{and} \qquad r^{\natural}(\xi, a)=(-a, 0),
$$
for any $a\in A$ and $\xi\in A^{\ast}$.
This implies that $\mathcal{I}(\xi, a)=(a, \xi)$. Thus, $\mathcal{I}$ is a linear
isomorphism, and so that, $(A\bowtie A^{\ast}, \Delta, \alpha\oplus\beta^{\ast},
\beta\oplus\alpha^{\ast})$ is a factorizable averaging ASI bialgebra.
\end{proof}

\begin{ex}\label{ex:double}
Consider the averaging algebra $(A, \alpha)$ defined with respect to a basis
$\Bbbk\{e_{1}, e_{2}, e_{3}\}$ given by Example \ref{ex:YBE-bialg}, i.e. $e_{1}e_{1}=e_1$,
$e_{1}e_{2}=e_{2}=e_{2}e_{1}$ and $\alpha(e_{1})=\alpha(e_{2})=e_{3}$, $\alpha(e_{3})=0$.
We have a 3-dimensional averaging ASI bialgebra $(A, \Delta, \alpha, \beta)$, where
linear map $\beta: A\rightarrow A$ is given by $\beta(e_{1})=e_{3}$,
$\beta(e_{2})=-e_{3}$, $\beta(e_{3})=0$ and the nonzero comultiplication $\Delta:
A\rightarrow A\otimes A$ is given by $\Delta(e_{1})=-e_{2}\otimes e_{3}-e_{3}\otimes e_{2}$.
Now, denote $\{e^{\ast}_{1}, e^{\ast}_{2}, e^{\ast}_{3}\}$ the dual basis of
$\{e_{1}, e_{2}, e_{3}\}$ and $r=e_{1}\otimes e^{\ast}_{1}+e_{2}\otimes e^{\ast}_{2}
+e_{3}\otimes e^{\ast}_{3}\in A\otimes A^{\ast}\subset(A\oplus A^{\ast})
\otimes(A\oplus A^{\ast})$. Considering the double averaging ASI bialgebra constructed
in Proposition \ref{pro:doublebia}, we get a 6-dimensional averaging ASI bialgebra
$(A\bowtie A^{\ast}, \Delta_{A\bowtie A^{\ast}}, \alpha\oplus\beta^{\ast},
\beta\oplus\alpha^{\ast})$, where $\alpha^\ast(e^{\ast}_{1})=\alpha^\ast(e^{\ast}_{2})=0$,
$\alpha^\ast(e^{\ast}_{3})=e^{\ast}_{1}+e^{\ast}_{2}$, $\beta^\ast(e^{\ast}_{1})=
\beta^\ast(e^{\ast}_{2})=0$, $\beta^\ast(e^{\ast}_{3})=e^{\ast}_{1}-e^{\ast}_{2}$, and
the nonzero multiplication and comultiplication are given by
\begin{align*}
&\qquad\qquad e_{1}\ast e_{1}=e_{1},
\qquad\qquad\qquad\qquad\qquad\qquad\; e_{1}\ast e_{2}=e_{2}\ast e_{1}=e_{2}, \\
& \qquad\qquad e_{2}^{\ast}\ast e_{3}^{\ast}=e_{3}^{\ast}\ast e_{2}^{\ast}=-e^{\ast}_{1},
\qquad\qquad\qquad\qquad e_{1}\ast e^{\ast}_{1}=e^{\ast}_{1}\ast e_{1}=e^{\ast}_{1},\\
& e_{1}\ast e_{2}^{\ast}=e_{2}^{\ast}\ast e_{1}=e_{2}^{\ast}-e_{3}, \qquad\qquad
e_{1}\ast e_{3}^{\ast}=e^{\ast}_{3}\ast e_{1}=-e_{2}, \qquad\qquad\quad
e_{2}\ast e_{2}^{\ast}=e_{2}^{\ast}\ast e_{2}=e^{\ast}_{1},\\
&\Delta_{A\bowtie A^{\ast}}(e_{1})=-e_{2}\otimes e_{3}-e_{3}\otimes e_{2}, \qquad
\Delta_{A\bowtie A^{\ast}}(e_{2}^{\ast})=-e^{\ast}_{1}\otimes e_{2}^{\ast}
-e_{2}^{\ast}\otimes e^{\ast}_{1},\qquad
\Delta_{A\bowtie A^{\ast}}(e^{\ast}_{1})=-e^{\ast}_{1}\otimes e^{\ast}_{1}.
\end{align*}
Then, one can check $(A\bowtie A^{\ast}, \Delta_{A\bowtie A^{\ast}}, \alpha\oplus\beta^{\ast},
\beta\oplus\alpha^{\ast})$ is a quasi-triangular averaging ASI bialgebra.
Moreover, note that $r+\tau(r)=e_{1}\otimes e^{\ast}_{1}+e_{2}\otimes e^{\ast}_{2}
+e_{3}\otimes e^{\ast}_{3}+e^{\ast}_{1}\otimes e_{1}+e^{\ast}_{2}\otimes e_{2}
+e^{\ast}_{3}\otimes e_{3}$, we get that the linear map $\mathcal{I}:
(A\bowtie A^\ast)^\ast\rightarrow A\bowtie A^\ast$ is given by
$$
\mathcal{I}(e_{1}^{\star})=e^{\ast}_{1},\quad\mathcal{I}(e_{2}^{\star})=e^{\ast}_{2},
\quad\mathcal{I}(e_{3}^{\star})=e^{\ast}_{3},\quad\mathcal{I}((e^{\ast}_{1})^\star)=e_{1},
\quad\mathcal{I}((e^{\ast}_{2})^{\star})=e_{2},\quad\mathcal{I}((e^{\ast}_{3})^{\star})=e_{3},
$$
where $\{e_{1}^{\star}, e_{2}^{\star}, e_{3}^{\star}, (e^{\ast}_{1})^{\star},
(e^{\ast}_{2})^{\star}, (e^{\ast}_{3})^{\star}\}$ in $(A\bowtie A^\ast)^\ast$ is a
dual basis of $\{e_{1}, e_{2}, e_{3}, e^{\ast}_{1}, e^{\ast}_{2}, e^{\ast}_{3}\}$ in
$A\bowtie A^\ast$. Hence, we get $\mathcal{I}$ is a linear isomorphism and
$\mathcal{I}(\beta^{\ast}\oplus\alpha)=(\alpha\oplus\beta^{\ast})\mathcal{I}$. Thus,
$(A\bowtie A^{\ast}, \Delta_{A\bowtie A^{\ast}}, \alpha\oplus\beta^{\ast},
\beta\oplus\alpha^{\ast})$ is a factorizable averaging ASI bialgebra.
\end{ex}

Next, we will give a characterization of factorizable averaging ASI bialgebras
by the Rota-Baxter operator on symmetric averaging Frobenius algebras.
Let $(A, \alpha)$ be an averaging algebra and $\lambda\in\Bbbk$. A linear map
$R: A\rightarrow A$ is called a {\it Rota-Baxter operator of weight $\lambda$}
on $(A, \alpha)$ if
\begin{align*}
R\alpha&=\alpha R,\\
R(a_{1})R(a_{2})&=R\big(R(a_{1})a_{2}+a_{1}R(a_{2})+\lambda a_{1}a_{2}\big),
\end{align*}
for any $a_{1}, a_{2}\in A$. Clearly, a Rota-Baxter operator of weight $0$ on $(A, \alpha)$
is an $\mathcal{O}$-operator of $(A, \alpha)$ associated to the regular bimodule.
Let $(A, \alpha, P)$ be an averaging algebra $(A, \alpha)$ with a Rota-Baxter
operator $R$ of weight $\lambda$. Then there is a new multiplication $\cdot_{R}$
on $A$ defined by
$$
a_{1}\cdot_{R}a_{2}=R(a_{1})a_{2}+a_{1}R(a_{2})+\lambda a_{1}a_{2},
$$
for any $a_{1}, a_{2}\in A$. Then, one can check that $(A, \cdot_{R}, \alpha)$
is also an averaging algebra, and $R$ is an averaging algebra homomorphism
from $(A, \cdot_{R}, \alpha)$ to $(A, \cdot, \alpha)$.

\begin{defi}\label{def:RB-aymm}
A linear map $R: A\rightarrow A$ is called a {\rm Rota-Baxter operator of weight
$\lambda$ on a symmetric averaging Frobenius algebra} $(A, \alpha, \mathfrak{B})$,
if $R$ is a Rota-Baxter operator of weight $\lambda$ on averaging algebra $(A, \alpha)$
and for any $a_{1}, a_{2}\in A$,
$$
\mathfrak{B}(R(a_{1}),\, a_{2})+\mathfrak{B}(a_{1},\, R(a_{2}))
+\lambda\mathfrak{B}(a_{1},\, a_{2})=0.
$$
\end{defi}

Let $R$ be a Rota-Baxter operator of weight $\lambda$ on a symmetric averaging
Frobenius algebra $(A, \alpha, \mathfrak{B})$. If we define a linear map
$\hat{R}: A\rightarrow A$ by $\mathfrak{B}(\hat{R}(a_{1}),\, a_{2})=
\mathfrak{B}(a_{1},\, R(a_{2}))$ for all $a_{1}, a_{2}\in A$.
Then, the equation in Definition \ref{def:RB-aymm} is equivalent to $R+\hat{R}+\lambda\id=0$.
The following theorem shows that there is a one-to-one correspondence between
factorizable averaging ASI bialgebras and symmetric averaging Frobenius algebras
with a Rota-Baxter operator of weight $\lambda$.

\begin{thm}\label{thm:RB-aymm}
Let $(A, \alpha)$ be an averaging algebra and $r\in A\otimes A$.
Suppose the averaging ASI bialgebra $(A, \Delta, \alpha, \beta)$ induced by $r$ is
factorizable and $\mathcal{I}=r^{\sharp}-r^{\natural}$. We define a bilinear form
$\mathfrak{B}_{\mathcal{I}}$ by $\mathfrak{B}_{\mathcal{I}}(a_{1}, a_{2})
=\langle\mathcal{I}^{-1}(a_{1}),\; a_{2}\rangle$, for any $a_{1}, a_{2}\in A$.
Then $(A, \alpha, \mathfrak{B}_{\mathcal{I}})$ is a symmetric averaging Frobenius algebra.
Moreover, the linear map $R=\lambda r^{\natural}\mathcal{I}^{-1}:\, A\rightarrow A$
is a Rota-Baxter operator of weight $\lambda$ on $(A, \alpha, \mathfrak{B}_{\mathcal{I}})$.

Conversely, for any symmetric averaging Frobenius algebra $(A, \alpha,
\mathfrak{B})$ with a Rota-Baxter operator $R$ of weight $\lambda$, we have
a linear isomorphism $\mathcal{I}_{\mathfrak{B}}: A^{\ast}\rightarrow A$ by
$\langle\mathcal{I}^{-1}_{\mathfrak{B}}(a_{1}),\; a_{2}\rangle=\mathfrak{B}
(a_{1}, a_{2})$, for any $a_{1}, a_{2}\in A$. If $\lambda\neq 0$, we define
$$
r^{\sharp}:=\mbox{$\frac{1}{\lambda}$}(R+\lambda\id)\mathcal{I}_{\mathfrak{B}}:\quad
A^{\ast}\longrightarrow A,
$$
and define $r\in A\otimes A$ by $\langle \xi\otimes\eta,\; r\rangle
=\langle\eta,\, r^{\sharp}(\xi)\rangle$, for any $\xi, \eta\in A^{\ast}$.
Then, $r$ satisfies the $\hat{\alpha}$-YBE in $(A, \alpha)$, and gives rise to
a factorizable averaging ASI bialgebra $(A, \Delta, \alpha, \hat{\alpha})$,
where $\Delta$ is given by Eq. (\ref{cobo}) and $\hat{\alpha}$ is the adjoint linear
operator of $\alpha$ under the nondegenerate bilinear form $\mathfrak{B}$.
\end{thm}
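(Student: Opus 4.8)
The plan is to prove the two directions separately, reducing each to the behaviour of $r^{\sharp},r^{\natural}$ recorded in Propositions \ref{pro:lr-inv} and \ref{pro:lr-inv-equ}: under $(\fl,\fr)$-invariance of $\mathfrak{s}(r)$ these are averaging algebra homomorphisms $(A^{\ast},\cdot_r,\beta^{\ast})\to(A,\alpha)$, and throughout $\mathcal{I}=r^{\sharp}-r^{\natural}=2\mathfrak{s}(r)^{\sharp}$. For the forward direction I would first check that $\mathfrak{B}_{\mathcal{I}}$ is symmetric, nondegenerate and invariant: symmetry from $\mathcal{I}=2\mathfrak{s}(r)^{\sharp}$ being a symmetric operator, nondegeneracy from $\mathcal{I}$ being an isomorphism, and invariance by converting, via Proposition \ref{pro:lr-inv}(ii), the $(\fl,\fr)$-invariance of $\mathfrak{s}(r)$ into $\mathcal{I}\fl_A^{\ast}(a)=\fr_A(a)\mathcal{I}$, hence $\mathcal{I}^{-1}(a_1a_2)=\fl_A^{\ast}(a_2)(\mathcal{I}^{-1}(a_1))$ and, pairing against $a_3$, $\mathfrak{B}_{\mathcal{I}}(a_1a_2,a_3)=\mathfrak{B}_{\mathcal{I}}(a_1,a_2a_3)$. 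The relation $\mathcal{I}\beta^{\ast}=\alpha\mathcal{I}$ then shows $\hat\alpha=\beta$, so $(A,\alpha,\mathfrak{B}_{\mathcal{I}})$ is a symmetric averaging Frobenius algebra.

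Next I would verify that $R=\lambda r^{\natural}\mathcal{I}^{-1}$ is a weight-$\lambda$ Rota-Baxter operator on it. Commutativity $R\alpha=\alpha R$ follows from $\mathcal{I}^{-1}\alpha=\beta^{\ast}\mathcal{I}^{-1}$ and $r^{\natural}\beta^{\ast}=\alpha r^{\natural}$ (Proposition \ref{pro:lr-inv-equ}). For the weight-$\lambda$ identity, writing $\xi_i=\mathcal{I}^{-1}(a_i)$ gives $R(a_i)=\lambda r^{\natural}(\xi_i)$ and $R(a_i)+\lambda a_i=\lambda r^{\sharp}(\xi_i)$, so $R(a_1)R(a_2)=\lambda^2 r^{\natural}(\xi_1\cdot_r\xi_2)$ by the homomorphism property; I would then expand $\mathcal{I}^{-1}\big(R(a_1)a_2+a_1R(a_2)+\lambda a_1a_2\big)$ using $\mathcal{I}^{-1}(ab)=\fr_A^{\ast}(a)(\mathcal{I}^{-1}(b))$ and watch the $r^{\natural}$-contribution of $\lambda a_1a_2$ cancel, leaving exactly $\lambda(\xi_1\cdot_r\xi_2)$; applying $\lambda r^{\natural}$ recovers $R(a_1)R(a_2)$. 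The Frobenius compatibility is then a one-line pairing computation, since $\mathfrak{B}_{\mathcal{I}}(R(a_1),a_2)$ and $\mathfrak{B}_{\mathcal{I}}(a_1,R(a_2))$ evaluate to $-\lambda\langle\xi_2\otimes\xi_1,r\rangle$ and $-\lambda\langle\xi_1\otimes\xi_2,r\rangle$, which cancel $\lambda\mathfrak{B}_{\mathcal{I}}(a_1,a_2)=\lambda(\langle\xi_1\otimes\xi_2,r\rangle+\langle\xi_2\otimes\xi_1,r\rangle)$.

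For the converse, set $\varphi=\mathcal{I}_{\mathfrak{B}}^{-1}$, so $\langle\varphi(a),b\rangle=\mathfrak{B}(a,b)$. A short computation with the symmetry of $\mathfrak{B}$ gives $\mathfrak{s}(r)^{\sharp}=\tfrac12\mathcal{I}_{\mathfrak{B}}$, whence $\mathcal{I}=r^{\sharp}-r^{\natural}=\mathcal{I}_{\mathfrak{B}}=\varphi^{-1}$ and $r^{\natural}=\tfrac1\lambda R\varphi^{-1}$; invariance of $\mathfrak{B}$ together with Proposition \ref{pro:Fro-mod} gives $\varphi\fl_A(a)=\fr_A^{\ast}(a)\varphi$, so $\mathfrak{s}(r)$ is $(\fl,\fr)$-invariant by Proposition \ref{pro:lr-inv}(ii) and Eq. \eqref{coba1} holds automatically since $r+\tau(r)=2\mathfrak{s}(r)$. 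Using the reformulations inside the proof of Proposition \ref{pro:lr-inv-equ}, Eqs. \eqref{ybe2} and \eqref{ybe3} become $r^{\sharp}\alpha^{\ast}=\hat\alpha r^{\sharp}$ and $r^{\sharp}\hat\alpha^{\ast}=\alpha r^{\sharp}$; via $\varphi^{-1}\alpha^{\ast}=\hat\alpha\varphi^{-1}$ and $\varphi^{-1}\hat\alpha^{\ast}=\alpha\varphi^{-1}$ (both from $\varphi\alpha=\hat\alpha^{\ast}\varphi$) these reduce to $R\hat\alpha=\hat\alpha R$ and $R\alpha=\alpha R$, the former obtained by taking $\mathfrak{B}$-adjoints of $R\alpha=\alpha R$ and using $\hat R=-R-\lambda\id$.

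The crux of the converse is Eq. \eqref{ybe1}. The plan is to transport $\cdot_r$ through $\varphi^{-1}$: a direct pairing computation using invariance and symmetry of $\mathfrak{B}$ should give $\varphi^{-1}(\xi\cdot_r\eta)=\tfrac1\lambda\,\varphi^{-1}(\xi)\cdot_R\varphi^{-1}(\eta)$. Since $(A,\cdot_R)$ is associative and both $R$ and $R+\lambda\id$ are homomorphisms $(A,\cdot_R)\to(A,\cdot)$, the maps $r^{\natural}=\tfrac1\lambda R\varphi^{-1}$ and $r^{\sharp}=\tfrac1\lambda(R+\lambda\id)\varphi^{-1}$ become homomorphisms $(A^{\ast},\cdot_r)\to(A,\cdot)$ with $(A^{\ast},\cdot_r)$ associative, so the associative characterization quoted inside Proposition \ref{pro:lr-inv-equ} yields Eq. \eqref{ybe1}. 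With Eqs. \eqref{coba1} and \eqref{ybe1}--\eqref{ybe3} in hand, Corollary \ref{cor:cobia} produces the averaging ASI bialgebra $(A,\Delta,\alpha,\hat\alpha)$ — its bimodule hypothesis $(A,\fl_A,\fr_A,\hat\alpha)$ coming from Proposition \ref{pro:Fro-mod} — and factorizability follows from $\mathcal{I}=\varphi^{-1}$ being an isomorphism with $\mathcal{I}\hat\alpha^{\ast}=\alpha\mathcal{I}$. I expect the two transport computations, the weight-$\lambda$ identity in the forward direction and the formula $\varphi^{-1}(\xi\cdot_r\eta)=\tfrac1\lambda\varphi^{-1}(\xi)\cdot_R\varphi^{-1}(\eta)$ in the converse, to be the main obstacles, since both hinge on careful bookkeeping of the $r^{\sharp}/r^{\natural}$ and $(\fl,\fr)$-invariance identities.
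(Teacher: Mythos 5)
Your proposal is correct; I checked the key identities and they all hold (in particular the weight-$\lambda$ cancellation, where $\fr_{A}^{\ast}(r^{\natural}(\xi_{1}))(\xi_{2})+\fr_{A}^{\ast}(a_{1})(\xi_{2})=\fr_{A}^{\ast}(r^{\sharp}(\xi_{1}))(\xi_{2})$ since $a_{1}=\mathcal{I}(\xi_{1})$, and the transport formula $\varphi^{-1}(\xi\cdot_{r}\eta)=\frac{1}{\lambda}\varphi^{-1}(\xi)\cdot_{R}\varphi^{-1}(\eta)$, which follows from $\mathcal{I}_{\mathfrak{B}}\fr_{A}^{\ast}(a)=\fl_{A}(a)\mathcal{I}_{\mathfrak{B}}$, $\mathcal{I}_{\mathfrak{B}}\fl_{A}^{\ast}(a)=\fr_{A}(a)\mathcal{I}_{\mathfrak{B}}$ together with $r^{\sharp}=\frac{1}{\lambda}(R+\lambda\id)\mathcal{I}_{\mathfrak{B}}$, $r^{\natural}=\frac{1}{\lambda}R\,\mathcal{I}_{\mathfrak{B}}$). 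Your skeleton matches the paper's -- the same pivots $\mathcal{I}=\mathcal{I}_{\mathfrak{B}}$, $r^{\natural}=\frac{1}{\lambda}R\,\mathcal{I}_{\mathfrak{B}}$, Propositions \ref{pro:lr-inv}, \ref{pro:lr-inv-equ}, \ref{pro:Fro-mod}, and the same averaging compatibilities (your $R\hat{\alpha}=\hat{\alpha}R$ route to Eq. \eqref{ybe2} is the transpose, via $r^{\natural}=-(r^{\sharp})^{\ast}$, of the paper's check that $r^{\natural}$ is an averaging homomorphism) -- but you diverge genuinely in how the associative-level content is established. The paper invokes \cite[Theorem 4.6]{SW} twice: once for the symmetry, nondegeneracy, invariance of $\mathfrak{B}_{\mathcal{I}}$ and the weight-$\lambda$ plus Frobenius identities of $R$, and once in the converse for the $(\fl,\fr)$-invariance of $\mathfrak{s}(r)$ and Eq. \eqref{ybe1}. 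You instead reprove all of this directly: invariance of $\mathfrak{B}_{\mathcal{I}}$ from $\mathcal{I}=2\mathfrak{s}(r)^{\sharp}$ and Proposition \ref{pro:lr-inv}(ii), the Rota-Baxter identity by pushing everything through $\mathcal{I}^{-1}$, and Eq. \eqref{ybe1} by exhibiting $\frac{1}{\lambda}\mathcal{I}$ as an algebra isomorphism $(A^{\ast},\cdot_{r})\to(A,\cdot_{R})$ and then quoting only the associative characterization already isolated inside the proof of Proposition \ref{pro:lr-inv-equ}. What each buys: the paper's route is shorter; yours is essentially self-contained, it anticipates explicitly the isomorphism that the paper only records afterwards in Corollary \ref{cor:oper1}, and by verifying Eqs. \eqref{ybe1}--\eqref{ybe3} directly and concluding with Corollary \ref{cor:cobia} (with the bimodule hypothesis from Proposition \ref{pro:Fro-mod}) rather than through the converse of Proposition \ref{pro:lr-inv-equ}, you sidestep the need to check separately that $(A^{\ast},\cdot_{r},\hat{\alpha}^{\ast})$ is an averaging algebra -- a hypothesis of that proposition which the paper's appeal to it leaves implicit.
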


\begin{proof}
If the averaging ASI bialgebra $(A, \Delta, \alpha, \beta)$ induced by $r$ is
factorizable, then ASI bialgebra $(A, \Delta)$ is factorizable. By \cite[Theorem 4.6]{SW},
we get $\mathfrak{B}_{\mathcal{I}}(-,-)$ is a nondegenerate symmetric invariant
bilinear form on $A$, and $P$ is a Rota-Baxter operator of weight $\lambda$ on
symmetric averaging Frobenius algebra $(A, \mathfrak{B}_{\mathcal{I}})$.
Moreover, since $r^{\sharp}, r^{\natural}:\, (A^{\ast}, \cdot_r, \beta^{\ast})
\rightarrow(A, \cdot, \alpha)$ are averaging algebra homomorphisms, we get
$$
R\alpha=\lambda r^{\natural}\mathcal{I}^{-1}\alpha
=\lambda r^{\natural}\beta^{\ast}\mathcal{I}^{-1}
=\alpha\lambda r^{\natural}\mathcal{I}^{-1}=\alpha P.
$$
Thus, $R$ is a Rota-Baxter operator of weight $\lambda$ on $(A, \alpha,
\mathfrak{B}_{\mathcal{I}})$.

Conversely, since $\mathfrak{B}$ is symmetric, we have $\mathcal{I}_{\mathfrak{B}}^{\ast}
=\mathcal{I}_{\mathfrak{B}}$. Note that $\mathfrak{B}(a_{1},\, R(a_{2}))
+\mathfrak{B}(R(a_{1}),\, a_{2})+\lambda\mathfrak{B}(a_{1}, a_{2})=0$, that is,
$\langle\mathcal{I}_{\mathfrak{B}}^{-1}(a_{1}),\, R(a_{2})\rangle
+\langle\mathcal{I}_{\mathfrak{B}}^{-1}R(a_{1}),\, a_{2}\rangle
+\lambda\langle\mathcal{I}_{\mathfrak{B}}^{-1}(a_{1}),\, a_{2}\rangle=0$, we get
$R^{\ast}\mathcal{I}_{\mathfrak{B}}^{-1}+\mathcal{I}_{\mathfrak{B}}^{-1}R
+\lambda\mathcal{I}_{\mathfrak{B}}^{-1}=0$, and so that,
$\mathcal{I}_\mathfrak{B}R^{\ast}+R\mathcal{I}_\mathfrak{B}
+\lambda\mathcal{I}_{\mathfrak{B}}=0$. Thus, we have
$r^{\natural}=-(r^{\sharp})^{\ast}=-\frac{1}{\lambda}\big(\mathcal{I}_{\mathfrak{B}}R^{\ast}
+\lambda\mathcal{I}_{\mathfrak{B}}\big)=\frac{1}{\lambda}R\mathcal{I}_{\mathfrak{B}}$
and $\mathcal{I}_{\mathfrak{B}}=r^{\sharp}-r^{\natural}$, if $r\in A\otimes A$ is
defined as above. Since $\mathcal{I}_{\mathfrak{B}}$ is a linear isomorphism, we only
need to show that $(A, \Delta, \alpha, \hat{\alpha})$ is a quasi-triangular averaging ASI
bialgebra. By \cite[Theorem 4.6]{SW} again, we get $\mathfrak{s}(r)$ is $(\fl, \fr)$-invariant
and $r$ is a solution of Eq. \eqref{ybe1}. Thus, linear maps $r^{\sharp}, r^{\natural}:\,
(A^{\ast}, \cdot_{r})\rightarrow(A, \cdot)$ are associative algebra homomorphisms.
For any $a_{1}, a_{2}\in A$, since
$$
\langle\mathcal{I}_{\mathfrak{B}}^{-1}(\alpha(a_{1})),\; a_{2}\rangle
=\mathfrak{B}(\alpha(a_{1}),\; a_{2})=\mathfrak{B}(a_{1},\; \hat{\alpha}(a_{2})
=\langle\mathcal{I}_{\mathfrak{B}}^{-1}(a_{1}),\; \hat{\alpha}(a_{2})\rangle
=\langle\hat{\alpha}^{\ast}(\mathcal{I}_{\mathfrak{B}}^{-1}(a_{1})),\; a_{2}\rangle,
$$
we get $\mathcal{I}_{\mathfrak{B}}^{-1}\alpha=\hat{\alpha}^{\ast}
\mathcal{I}_{\mathfrak{B}}^{-1}$. Note that $r^{\sharp}=\frac{1}{\lambda}(R+\lambda\id)
\mathcal{I}_{\mathfrak{B}}$ and $R\alpha=\alpha R$, we have
$$
r^{\sharp}\hat{\alpha}^{\ast}=\mbox{$\frac{1}{\lambda}$}(R+\lambda\id)
\mathcal{I}_{\mathfrak{B}}\hat{\alpha}^{\ast}=\mbox{$\frac{1}{\lambda}$}(R+\lambda\id)
\alpha\mathcal{I}_{\mathfrak{B}}=\alpha\big(\mbox{$\frac{1}{\lambda}$}(R+\lambda\id)
\mathcal{I}_{\mathfrak{B}}\big)=\alpha r^{\sharp}.
$$
That is to say, $r^{\sharp}: (A^{\ast}, \cdot_{r}, \hat{\alpha}^{\ast})\rightarrow
(A, \cdot, \alpha)$ is an averaging algebra homomorphism. Similarly, one can check
that $r^{\natural}: (A^{\ast}, \cdot_{r}, \hat{\alpha}^{\ast})\rightarrow
(A, \cdot, \alpha)$ is also an averaging algebra homomorphism.
Thus, by Proposition \ref{pro:lr-inv-equ}, $r$ is a solution of the
$\hat{\alpha}$-YBE in averaging algebra $(A, \alpha)$, and so that,
$(A, \Delta, \alpha, \hat{\alpha})$ is a factorizable averaging ASI bialgebra.
\end{proof}

\begin{cor}\label{cor:oper}
Let $(A, \alpha)$ be an averaging algebra and $r\in A\otimes A$.
Suppose the averaging ASI bialgebra $(A, \Delta, \alpha, \beta)$ induced by $r$ is
factorizable and $\mathcal{I}=r^{\sharp}-r^{\natural}$. Then $-\lambda\id-R$ is also a
Rota-Baxter operator of weight $\lambda$ on symmetric averaging Frobenius
algebra $(A, \alpha, \mathfrak{B}_{\mathcal{I}})$, where $\mathfrak{B}_{\mathcal{I}}$
and $R$ are defined in Theorem \ref{thm:RB-aymm}.
\end{cor}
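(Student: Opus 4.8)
The plan is to verify directly that $R':=-\lambda\id-R$ satisfies the three defining conditions of a Rota-Baxter operator of weight $\lambda$ on the symmetric averaging Frobenius algebra $(A, \alpha, \mathfrak{B}_{\mathcal{I}})$, using that $R$ itself is such an operator by Theorem \ref{thm:RB-aymm}. I would observe first that $R'$ is nothing but the adjoint $\hat R$ of $R$ with respect to $\mathfrak{B}_{\mathcal{I}}$, since the compatibility $R+\hat R+\lambda\id=0$ recorded before Theorem \ref{thm:RB-aymm} gives $\hat R=-\lambda\id-R$; the corollary is thus morally the statement that the adjoint of a Rota-Baxter operator is again a Rota-Baxter operator of the same weight.

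First I would establish the purely associative-algebraic fact that $R'$ is a Rota-Baxter operator of weight $\lambda$ on $(A, \cdot)$. Expanding $R'(a_1)R'(a_2)=(-\lambda a_1-R(a_1))(-\lambda a_2-R(a_2))$ and substituting the Rota-Baxter identity $R(a_1)R(a_2)=R(R(a_1)a_2+a_1R(a_2)+\lambda a_1a_2)$, one matches it against the right-hand side after noting the elementary identity $R'(a_1)a_2+a_1R'(a_2)+\lambda a_1a_2=-(R(a_1)a_2+a_1R(a_2)+\lambda a_1a_2)$. This is a standard computation and presents no difficulty. The commutation with $\alpha$ is immediate: since $R\alpha=\alpha R$, we get $R'\alpha=-\lambda\alpha-\alpha R=\alpha R'$. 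Hence $R'$ is a Rota-Baxter operator of weight $\lambda$ on the averaging algebra $(A, \alpha)$.

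It remains to check the Frobenius compatibility of Definition \ref{def:RB-aymm}. Using bilinearity of $\mathfrak{B}_{\mathcal{I}}$ and the definition of $R'$, I would compute
\begin{align*}
&\mathfrak{B}_{\mathcal{I}}(R'(a_1),\, a_2)+\mathfrak{B}_{\mathcal{I}}(a_1,\, R'(a_2))+\lambda\mathfrak{B}_{\mathcal{I}}(a_1,\, a_2)\\
={}&-\lambda\mathfrak{B}_{\mathcal{I}}(a_1,\, a_2)-\big(\mathfrak{B}_{\mathcal{I}}(R(a_1),\, a_2)+\mathfrak{B}_{\mathcal{I}}(a_1,\, R(a_2))\big),
\end{align*}
and then substitute the compatibility satisfied by $R$, namely $\mathfrak{B}_{\mathcal{I}}(R(a_1),\, a_2)+\mathfrak{B}_{\mathcal{I}}(a_1,\, R(a_2))=-\lambda\mathfrak{B}_{\mathcal{I}}(a_1,\, a_2)$. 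The two contributions cancel and the expression vanishes, which is exactly the required compatibility for $R'$.

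In short, the content of the corollary is entirely formal, and the only genuine bookkeeping lies in the associative Rota-Baxter expansion of the first step; there is no real obstacle. Alternatively, one could bypass the expansion by invoking the well-known fact that $-\lambda\id-R$ is a Rota-Baxter operator of weight $\lambda$ whenever $R$ is, and then verify only the Frobenius compatibility as above.
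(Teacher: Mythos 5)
Your proof is correct and is exactly the ``direct calculation'' that the paper's one-line proof alludes to: you verify the Rota--Baxter identity for $-\lambda\id-R$, the commutation with $\alpha$, and the Frobenius compatibility, all of which check out, and your observation that $-\lambda\id-R$ is precisely the adjoint $\hat{R}$ of $R$ with respect to $\mathfrak{B}_{\mathcal{I}}$ is a nice conceptual framing consistent with the remark preceding Theorem \ref{thm:RB-aymm}. Since the paper omits all details, your write-up supplies them faithfully and no gap remains.
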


\begin{proof}
It can be obtained by direct calculations.
\end{proof}

Let $(A, \Delta, \alpha, \beta)$ and $(A', \Delta', \alpha', \beta')$ be two
averaging ASI bialgebras. A linear map $f: A\rightarrow A'$ is called a
{\it homomorphism of averaging ASI bialgebras} if $f: (A, \alpha)\rightarrow
(A', \alpha')$ is a homomorphism of averaging algebras and satisfies
$$
(f\otimes f)\Delta=\Delta'f,\qquad\mbox{ and }\qquad (f\otimes f)\beta=\beta'f.
$$
Moreover, if $f$ is a bijection, we call $f: (A, \Delta, \alpha, \beta)\rightarrow
(A', \Delta', \alpha', \beta')$ is an {\it isomorphism of averaging ASI bialgebras}.

\begin{cor}\label{cor:oper1}
Let $(A, \alpha)$ be an averaging algebra and $r\in A\otimes A$.
Suppose the averaging ASI bialgebra $(A, \Delta, \alpha, \beta)$ induced by $r$ is
factorizable, $\mathcal{I}=r^{\sharp}-r^{\natural}$ and $R=\lambda r^{\natural}
\mathcal{I}^{-1}:\, A\rightarrow A$, where $0\neq\lambda\in\Bbbk$. Then $(A, \cdot_{R},
\Delta_{\mathcal{I}}, \alpha, \beta)$ is an averaging ASI bialgebra, where
$$
\Delta_{\mathcal{I}}^{\ast}(\xi, \eta)=\mbox{$\frac{1}{\lambda}$}
\mathcal{I}^{-1}\big(\mathcal{I}(\xi)\mathcal{I}(\eta)\big),
$$
for any $\xi, \eta\in A^{\ast}$.	
Moreover, $\frac{1}{\lambda}\mathcal{I}: A^{\ast}\rightarrow A$ gives an averaging
ASI bialgebra isomorphism from $(A^{\ast}, \cdot_{r}, \Delta_{A^{\ast}}, \beta^{\ast},
\alpha^{\ast})$ to $(A, \cdot_{R}, \Delta_{\mathcal{I}}, \alpha, \beta)$, where
$\Delta_{A^{\ast}}^{\ast}(a_{1}, a_{2})=a_{1}a_{2}$ for any $a_{1}, a_{2}\in A$.
\end{cor}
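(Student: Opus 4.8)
The plan is to prove both assertions at once by showing that $\phi:=\frac{1}{\lambda}\mathcal{I}\colon A^{\ast}\to A$ is an isomorphism of averaging ASI bialgebras from the source $(A^{\ast}, \cdot_r, \Delta_{A^{\ast}}, \beta^{\ast}, \alpha^{\ast})$ onto the target $(A, \cdot_R, \Delta_{\mathcal{I}}, \alpha, \beta)$; since $\mathcal{I}$ is a linear isomorphism by factorizability, $\phi$ is too, and transport of structure along $\phi$ then produces the target bialgebra from the source. First I would collect the structural data supplied by Theorem \ref{thm:RB-aymm}: $R=\lambda r^{\natural}\mathcal{I}^{-1}$ is a Rota-Baxter operator of weight $\lambda$ on $(A,\alpha)$, so $(A,\cdot_R,\alpha)$ is an averaging algebra, and by the factorizable (hence quasi-triangular) hypothesis, combined with Proposition \ref{pro:lr-inv-equ}, the maps $r^{\sharp},r^{\natural}\colon(A^{\ast},\cdot_r,\beta^{\ast})\to(A,\cdot,\alpha)$ are averaging algebra homomorphisms. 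From $R=\lambda r^{\natural}\mathcal{I}^{-1}$ and $\mathcal{I}=r^{\sharp}-r^{\natural}$ I extract the two bridging identities $r^{\natural}=\frac{1}{\lambda}R\mathcal{I}$ and $r^{\sharp}=\frac{1}{\lambda}(R+\lambda\,\id)\mathcal{I}$.

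The algebraic core is to verify that $\phi$ is an algebra homomorphism. For $\xi,\eta\in A^{\ast}$ put $a=\mathcal{I}(\xi)$ and $b=\mathcal{I}(\eta)$, so that $r^{\natural}(\xi)=\frac{1}{\lambda}R(a)$, $r^{\sharp}(\xi)=\frac{1}{\lambda}R(a)+a$, and likewise for $\eta$. Applying $\mathcal{I}=r^{\sharp}-r^{\natural}$ together with the homomorphism properties $r^{\sharp}(\xi\cdot_r\eta)=r^{\sharp}(\xi)r^{\sharp}(\eta)$ and $r^{\natural}(\xi\cdot_r\eta)=r^{\natural}(\xi)r^{\natural}(\eta)$, the quadratic terms $\frac{1}{\lambda^{2}}R(a)R(b)$ cancel and one is left with
\[
\mathcal{I}(\xi\cdot_r\eta)=\tfrac{1}{\lambda}\bigl(R(a)b+aR(b)+\lambda ab\bigr)=\tfrac{1}{\lambda}\,(a\cdot_R b),
\]
which rearranges to $\phi(\xi\cdot_r\eta)=\phi(\xi)\cdot_R\phi(\eta)$. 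This shows $\phi$ is an algebra isomorphism onto $(A,\cdot_R)$ and simultaneously identifies $\cdot_R$ as the pushforward of $\cdot_r$.

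For the coalgebra part I would argue by duality. Since $r^{\natural}=-(r^{\sharp})^{\ast}$, the map $\mathcal{I}=r^{\sharp}+(r^{\sharp})^{\ast}$ is self-adjoint, so $\phi^{\ast}=\phi$. Taking transposes, the desired relation $(\phi\otimes\phi)\Delta_{A^{\ast}}=\Delta_{\mathcal{I}}\phi$ is equivalent, using $\Delta_{A^{\ast}}^{\ast}(a_1,a_2)=a_1a_2$, to the identity $\phi(\xi)\phi(\eta)=\phi\bigl(\Delta_{\mathcal{I}}^{\ast}(\xi,\eta)\bigr)$ in $A$; solving for $\Delta_{\mathcal{I}}^{\ast}$ yields exactly $\Delta_{\mathcal{I}}^{\ast}(\xi,\eta)=\frac{1}{\lambda}\mathcal{I}^{-1}(\mathcal{I}(\xi)\mathcal{I}(\eta))$, the stated formula. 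Thus $\Delta_{\mathcal{I}}$ is by construction the transport of $\Delta_{A^{\ast}}$ and $\phi$ is a coalgebra isomorphism. It remains to intertwine the averaging operators: from $r^{\sharp}\beta^{\ast}=\alpha r^{\sharp}$, $r^{\natural}\beta^{\ast}=\alpha r^{\natural}$ and $r^{\sharp}\alpha^{\ast}=\beta r^{\sharp}$, $r^{\natural}\alpha^{\ast}=\beta r^{\natural}$ (the last pair obtained from the first by transposition via $r^{\natural}=-(r^{\sharp})^{\ast}$) I deduce $\mathcal{I}\beta^{\ast}=\alpha\mathcal{I}$ and $\mathcal{I}\alpha^{\ast}=\beta\mathcal{I}$, hence $\phi\beta^{\ast}=\alpha\phi$ and $\phi\alpha^{\ast}=\beta\phi$, matching the algebra- and coalgebra-averaging operators of the source and target respectively.

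Finally I must know the source $(A^{\ast},\cdot_r,\Delta_{A^{\ast}},\beta^{\ast},\alpha^{\ast})$ is itself an averaging ASI bialgebra. Its multiplication $\cdot_r$ and comultiplication $\Delta_{A^{\ast}}$ are, up to the sign of the comultiplication, precisely the dual averaging ASI bialgebra of $(A,\Delta,\alpha,\beta)$ built in Proposition \ref{pro:doublebia} (which carries comultiplication $-\tilde{\Delta}$). I would dispatch this sign discrepancy by observing that every defining condition in Definition \ref{def:avebialg} is invariant under negating the comultiplication: Eqs.~\eqref{bialg1} and \eqref{bialg2} and the averaging-coalgebra relations are linear in $\Delta$, coassociativity is quadratic so the two sign changes cancel, and the remaining bimodule conditions depend only on the fixed multiplication $\cdot_r$. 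Granting this, transport of structure along $\phi$ makes $(A,\cdot_R,\Delta_{\mathcal{I}},\alpha,\beta)$ an averaging ASI bialgebra and $\phi$ an isomorphism, proving both claims. The main obstacle is the bookkeeping in the algebra-homomorphism computation—keeping the weight-$\lambda$ factors aligned so the quadratic $R$-terms cancel and $\cdot_R$ emerges—together with the care needed to route the four intertwining relations to the correct averaging-operator slots of source and target.
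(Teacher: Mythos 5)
Your proof is correct, and its skeleton---establish that $\phi=\frac{1}{\lambda}\mathcal{I}$ is an isomorphism of averaging ASI bialgebras and then transport the structure---is the same as the paper's; the differences lie in how the individual steps are discharged. For the algebra part the paper simply cites \cite[Corollary 4.9]{SW} to get that $\frac{1}{\lambda}\mathcal{I}:(A^{\ast},\cdot_{r})\rightarrow(A,\cdot_{R})$ is an isomorphism of associative algebras, whereas you reprove this from scratch via the bridging identities $r^{\natural}=\frac{1}{\lambda}R\mathcal{I}$, $r^{\sharp}=\frac{1}{\lambda}(R+\lambda\id)\mathcal{I}$ and the cancellation of the $\frac{1}{\lambda^{2}}R(a)R(b)$ terms; your computation is correct and makes the step self-contained, at the cost of redoing what the citation buys for free. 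You are also more explicit than the paper on a point it passes over silently: that the source $(A^{\ast},\cdot_{r},\Delta_{A^{\ast}},\beta^{\ast},\alpha^{\ast})$ is an averaging ASI bialgebra at all, which you derive from Proposition \ref{pro:doublebia} together with invariance of the axioms under $\Delta\mapsto-\Delta$. That sign-invariance claim is true, but your justification contains one slip: the second bimodule condition in Definition \ref{def:avebialg} does \emph{not} ``depend only on the fixed multiplication $\cdot_{r}$''---for the source bialgebra it asserts that $(A,\fl_{A},\fr_{A},\beta)$ is a bimodule over $(A,\Delta_{A^{\ast}}^{\ast},\alpha)$, and the product $\Delta_{A^{\ast}}^{\ast}$ flips sign along with the comultiplication. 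The condition is nevertheless sign-invariant, because the bimodule compatibilities are quadratic in the pair consisting of the product and the action, while Eqs. \eqref{adm1}--\eqref{adm2} are linear in the action; so your conclusion stands after this one-line repair. The coalgebra step (self-adjointness $\phi^{\ast}=\phi$ forcing $\Delta_{\mathcal{I}}$ to be exactly the stated pushforward of $\Delta_{A^{\ast}}$) and the intertwining of the averaging operators via Proposition \ref{pro:lr-inv-equ} coincide with the paper's argument.
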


\begin{proof}
By \cite[Corollary 4.9]{SW}, $\frac{1}{\lambda}\mathcal{I}: (A^{\ast}, \cdot_{r})
\rightarrow(A, \cdot_{R})$ is an isomorphism of associative algebras.
By Proposition \ref{pro:lr-inv-equ}, we get $(A^{\ast}, \cdot_{r}, \beta^{\ast})$
is an averaging algebra and $\mathcal{I}\beta^{\ast}=(r^{\sharp}-r^{\natural})\beta^{\ast}
=\alpha(r^{\sharp}-r^{\natural})=\alpha\mathcal{I}$. That is, $\frac{1}{\lambda}\mathcal{I}:
(A^{\ast}, \cdot_{r}, \beta^{\ast})\rightarrow(A, \cdot_{R}, \alpha)$ is an
isomorphism of averaging associative algebras. Moreover, since
$(\frac{1}{\lambda}\mathcal{I})^{\ast}=\frac{1}{\lambda}\mathcal{I}$,
we get $(\frac{1}{\lambda}\mathcal{I})^{\ast}(\Delta_{\mathcal{I}}^{\ast}(\xi, \eta))
=\big(\frac{1}{\lambda}\mathcal{I}(\xi)\big)\big(\frac{1}{\lambda}\mathcal{I}(\eta)\big)
=\big((\frac{1}{\lambda}\mathcal{I})^{\ast}(\xi)\big)
\big((\frac{1}{\lambda}\mathcal{I})^{\ast}(\eta)\big)$,
which means the map $(\frac{1}{\lambda}\mathcal{I}):\, (A, \Delta_{A^{\ast}},
\alpha^{\ast})\rightarrow(A^{\ast}, \Delta_{\mathcal{I}}, \beta)$ is an averaging
coalgebra isomorphism. Therefore, $(A, \cdot_{R}, \Delta_{\mathcal{I}}, \alpha, \beta)$
is also an averaging ASI bialgebra and $\frac{1}{\lambda}\mathcal{I}$ is
an isomorphism of averaging ASI bialgebra.
\end{proof}

\section{Perm bialgebras via commutative and cocommutative averaging
ASI bialgebras}\label{sec:permbia}

A perm algebra is a vector space $P$ with a bilinear operation such that
$p_{1}(p_{2}p_{3})=(p_{1}p_{2})p_{3}=(p_{2}p_{1})p_{3}$, for any $p_{1}, p_{2}, p_{3}\in P$.
Let $A$ be a commutative associative algebra and $\alpha: A\rightarrow A$ be an
averaging operator on $A$. Define a new binary operations
$\bullet: A\otimes A\rightarrow A$ by
$$
a_{1}\bullet a_{2}=\alpha(a_{1})a_{2},
$$
for all $a_{1}, a_{2}\in A$. Then $(A, \bullet)$ is a perm algebra, which is called
the {\it perm algebra induced by commutative averaging algebra} $(A, \alpha)$.
We generalize this construction to the context of bialgebras, that is,
we construct perm bialgebras from commutative and cocommutative averaging ASI bialgebras.
We establish the explicit relationships between them, as well as the equivalent
interpretation in terms of the corresponding double constructions and matched pairs.

In the following two sections, we always assume that $A$ is a commutative
associative algebra. In this case, we use $(M, \mu)$ to denote a bimodule
$(M, \kl, \kr)$ over $A$ since $\mu:=\kl=\kr$, and call $(M, \mu)$ a module
over $A$. Then $(M^{\ast}, \mu^{\ast})$ is again a module over $A$.
A bimodule over commutative averaging algebra is also called module over
commutative averaging algebra.

\subsection{Induced matched pairs and induced Manin triples of perm algebras}
\label{subsec:perm-mat}
We introduce the bimodule over a perm algebra.

\begin{defi}\label{def:perm-mod}
A {\rm bimodule of a perm algebra} $(P, \bullet)$ is a triple $(M, \kl, \kr)$, where $M$ is a
vector space and $\kl, \kr: P\rightarrow\End_{\Bbbk}(M)$ are linear maps satisfying
\begin{align*}
&\qquad\quad\kl(p_{1}\bullet p_{2})=\kl(p_{1})\kl(p_{2})=\kl(p_{2})\kl(p_{1}), \\
&\kr(p_{1}\bullet p_{2})=\kr(p_{2})\kr(p_{1})=\kr(p_{2})\kl(p_{1})=\kl(p_{1})\kr(p_{2}),
\end{align*}
for any $p_{1}, p_{2}\in P$.
\end{defi}

Let $(P, \bullet)$ be a perm algebra. Define $\tilde{\fl}_{P}, \tilde{\fr}_{P}:
P\rightarrow\End_{\Bbbk}(P)$ by $\tilde{\fl}_{P}(p_{1})(p_{2})=p_{1}\bullet p_{2}$
and $\tilde{\fr}_{P}(p_{1})(p_{2})=p_{2}\bullet p_{1}$, for any $p_{1}, p_{2}\in P$.
Obviously, $(P, \tilde{\fl}_{P}, \tilde{\fr}_{P})$ is a bimodule of perm algebra
$(P, \bullet)$, which is called the {\it regular bimodule} over $P$.
More generally, for a bimodule over a perm algebra, we have

\begin{pro}\label{pro:assda1}
Let $(P, \bullet)$ be a perm algebra, $M$ be a vector space and $\kl, \kr: P\rightarrow
\End_{\Bbbk}(M)$ be linear maps. Define a binary operation on $P\oplus M$ by
$$
(p_{1}, m_{1})(p_{2}, m_{2})=\Big(p_{1}\bullet p_{2},\;
\kl(p_{1})(m_{2})+\kr(p_{2})(m_{1})\Big),
$$
for any $p_{1}, p_{2}\in P$ and $m_{1}, m_{2}\in M$.
Then, $(M, \kl, \kr)$ is a bimodule over $P$ if and only if $P\oplus M$ with the product
as above is a perm algebra. In such a case, we call this perm algebra
the {\rm semidirect product} perm algebra of $(P, \bullet)$ by bimodule $(M, \kl, \kr)$,
and denote it by $P\ltimes M$.
\end{pro}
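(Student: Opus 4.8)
The plan is to prove both implications simultaneously by expanding the two defining identities of a perm algebra,
$$
x_{1}(x_{2}x_{3})=(x_{1}x_{2})x_{3}=(x_{2}x_{1})x_{3},
$$
for arbitrary $x_{i}=(p_{i}, m_{i})\in P\oplus M$, and then reading off the resulting constraints componentwise. The key observation that organizes everything is that the product on $P\oplus M$ is block-triangular: its $P$-component is just the perm product $\bullet$, while the module maps $\kl,\kr$ enter only in the $M$-component. Concretely, the $P$-components of the three triple products above are $p_{1}\bullet(p_{2}\bullet p_{3})$, $(p_{1}\bullet p_{2})\bullet p_{3}$ and $(p_{2}\bullet p_{1})\bullet p_{3}$, which already coincide because $(P,\bullet)$ is assumed to be a perm algebra. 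Hence the $P$-component imposes no condition, and the entire content of the statement lives in the $M$-component.

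First I would compute the $M$-components of the three triple products, obtaining
\begin{align*}
x_{1}(x_{2}x_{3})&:\quad \kl(p_{1})\kl(p_{2})(m_{3})+\kl(p_{1})\kr(p_{3})(m_{2})+\kr(p_{2}\bullet p_{3})(m_{1}),\\
(x_{1}x_{2})x_{3}&:\quad \kl(p_{1}\bullet p_{2})(m_{3})+\kr(p_{3})\kl(p_{1})(m_{2})+\kr(p_{3})\kr(p_{2})(m_{1}),\\
(x_{2}x_{1})x_{3}&:\quad \kl(p_{2}\bullet p_{1})(m_{3})+\kr(p_{3})\kr(p_{1})(m_{2})+\kr(p_{3})\kl(p_{2})(m_{1}).
\end{align*}
Since $m_{1},m_{2},m_{3}$ range independently over $M$, setting two of them to zero isolates each coefficient as an operator identity in $\End_{\Bbbk}(M)$ holding for all $p_{i}\in P$; conversely, summing these operator identities reconstructs the full $M$-component equalities. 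This is exactly the mechanism that makes the equivalence an ``if and only if'', so the converse direction needs no separate argument beyond reading the same comparison backwards.

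Comparing the first two lines coefficientwise then yields, from $m_{3}$, $m_{2}$, $m_{1}$ respectively, the three relations $\kl(p_{1})\kl(p_{2})=\kl(p_{1}\bullet p_{2})$, $\kl(p_{1})\kr(p_{3})=\kr(p_{3})\kl(p_{1})$ and $\kr(p_{2}\bullet p_{3})=\kr(p_{3})\kr(p_{2})$, which are precisely the equalities $\kl(p_{1}\bullet p_{2})=\kl(p_{1})\kl(p_{2})$, $\kr(p_{2})\kl(p_{1})=\kl(p_{1})\kr(p_{2})$ and $\kr(p_{1}\bullet p_{2})=\kr(p_{2})\kr(p_{1})$ of Definition \ref{def:perm-mod}. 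Comparing the last two lines gives, from $m_{3}$, the relation $\kl(p_{1}\bullet p_{2})=\kl(p_{2}\bullet p_{1})$ and, from $m_{2}$ (equivalently $m_{1}$), the relation $\kr(p_{3})\kl(p_{1})=\kr(p_{3})\kr(p_{1})$.

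I expect the only real subtlety—rather than a genuine obstacle—to be this last comparison. In a perm algebra one has $p_{1}\bullet p_{2}\neq p_{2}\bullet p_{1}$ in general, so the two relations just obtained are honest constraints rather than tautologies, and they do not individually match the axioms of Definition \ref{def:perm-mod} in clean form. The resolution is to feed back the relations already extracted from the first identity: combining $\kl(p_{1}\bullet p_{2})=\kl(p_{2}\bullet p_{1})$ with $\kl(p_{1}\bullet p_{2})=\kl(p_{1})\kl(p_{2})$ produces the commutativity $\kl(p_{1})\kl(p_{2})=\kl(p_{2})\kl(p_{1})$, and combining $\kr(p_{2})\kl(p_{1})=\kr(p_{2})\kr(p_{1})$ with $\kr(p_{1}\bullet p_{2})=\kr(p_{2})\kr(p_{1})$ produces the remaining relation $\kr(p_{1}\bullet p_{2})=\kr(p_{2})\kl(p_{1})$. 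This accounts for all five equalities defining a bimodule, completing the equivalence; the computation is otherwise routine bookkeeping, so the care required is chiefly in tracking which coefficient yields which axiom and in performing this feedback step.
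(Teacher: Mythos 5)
Your proof is correct. The paper states this proposition without proof (treating it as a routine verification of the semidirect-product construction), and your componentwise expansion supplies exactly that verification: the coefficient identities you extract from the two perm identities are precisely equivalent to the five equalities of Definition \ref{def:perm-mod}, and your feedback step — deriving $\kl(p_{1})\kl(p_{2})=\kl(p_{2})\kl(p_{1})$ from $\kl(p_{1}\bullet p_{2})=\kl(p_{2}\bullet p_{1})$ together with $\kl(p_{1}\bullet p_{2})=\kl(p_{1})\kl(p_{2})$, and conversely — correctly handles the one point where the two lists of identities do not match term by term.
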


Considering the bimodule structure on dual spaces, by straightforward verification,
we have the following lemma.

\begin{lem}\label{lem:pdual-mod}
Let $(P, \bullet)$ be a perm algebra, $(M, \kl, \kr)$ be a bimodule over it.
Then $(M^{\ast}, \kl^{\ast}, \kl^{\ast}-\kr^{\ast})$ is also a bimodule over $(P, \bullet)$,
which is called the {\rm dual bimodule} of $(M, \kl, \kr)$.
In particular, $(P^{\ast}, \tilde{\fl}_{P}^{\ast}, \tilde{\fl}_{P}^{\ast}
-\tilde{\fr}_{P}^{\ast})$ is a bimodule over $(P, \bullet)$.
\end{lem}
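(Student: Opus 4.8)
The plan is to verify directly the five identities of Definition~\ref{def:perm-mod} for the triple $(M^{\ast}, \kl^{\ast}, \kl^{\ast}-\kr^{\ast})$. Writing $\mathcal{L}:=\kl^{\ast}$ and $\mathcal{R}:=\kl^{\ast}-\kr^{\ast}$ as maps $P\to\End_{\Bbbk}(M^{\ast})$, the only structural input needed is the transposition rule $(ST)^{\ast}=T^{\ast}S^{\ast}$ for a composite of operators, together with the convention $\psi^{\ast}(p)=\psi(p)^{\ast}$ recalled in Section~\ref{sec:alg}. First I would dualize the defining relations of $(M,\kl,\kr)$. From $\kl(p_{1}\bullet p_{2})=\kl(p_{1})\kl(p_{2})=\kl(p_{2})\kl(p_{1})$ one gets at once $\mathcal{L}(p_{1}\bullet p_{2})=\mathcal{L}(p_{1})\mathcal{L}(p_{2})=\mathcal{L}(p_{2})\mathcal{L}(p_{1})$, so the two identities for the left operator $\mathcal{L}$ hold immediately.

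For the right operator I would expand $\mathcal{R}(p_{2})\mathcal{R}(p_{1})$, $\mathcal{R}(p_{2})\mathcal{L}(p_{1})$ and $\mathcal{L}(p_{1})\mathcal{R}(p_{2})$ in terms of $\kl^{\ast}$ and $\kr^{\ast}$, and compare each with $\mathcal{R}(p_{1}\bullet p_{2})=\kl(p_{1}\bullet p_{2})^{\ast}-\kr(p_{1}\bullet p_{2})^{\ast}$. Dualizing the second group of axioms at $p_{1}\bullet p_{2}$ matches $\kr(p_{2})^{\ast}\kl(p_{1})^{\ast}$ and $\kl(p_{1})^{\ast}\kr(p_{2})^{\ast}$ with $\kr(p_{1}\bullet p_{2})^{\ast}$, so the identities $\mathcal{R}(p_{1}\bullet p_{2})=\mathcal{R}(p_{2})\mathcal{L}(p_{1})$ and $\mathcal{R}(p_{1}\bullet p_{2})=\mathcal{L}(p_{1})\mathcal{R}(p_{2})$ fall out directly from these two substitutions.

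The step I expect to require the most care is the remaining identity $\mathcal{R}(p_{1}\bullet p_{2})=\mathcal{R}(p_{2})\mathcal{R}(p_{1})$: expanding the product produces the two cross terms $\kl(p_{2})^{\ast}\kr(p_{1})^{\ast}$ and $\kr(p_{2})^{\ast}\kr(p_{1})^{\ast}$, neither of which is matched by any axiom evaluated at $p_{1}\bullet p_{2}$. The key observation is to use the second group of axioms with the roles of $p_{1}$ and $p_{2}$ interchanged, namely $\kr(p_{2}\bullet p_{1})=\kr(p_{1})\kl(p_{2})=\kr(p_{1})\kr(p_{2})$; dualizing these shows $\kl(p_{2})^{\ast}\kr(p_{1})^{\ast}=\kr(p_{2})^{\ast}\kr(p_{1})^{\ast}=\kr(p_{2}\bullet p_{1})^{\ast}$, so the two cross terms coincide and cancel in the difference, leaving exactly $\kl(p_{1}\bullet p_{2})^{\ast}-\kr(p_{1}\bullet p_{2})^{\ast}$. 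This completes the verification that $(M^{\ast},\kl^{\ast},\kl^{\ast}-\kr^{\ast})$ is a bimodule, and the ``in particular'' assertion then follows simply by specializing $(M,\kl,\kr)$ to the regular bimodule $(P,\tilde{\fl}_{P},\tilde{\fr}_{P})$ introduced just above the lemma.
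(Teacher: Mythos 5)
Your proof is correct, and it is exactly the ``straightforward verification'' that the paper invokes without writing out: the paper gives no explicit proof of this lemma, so your direct dualization of the five identities, with transposition reversing compositions, is the intended argument. In particular, your handling of the delicate identity $\mathcal{R}(p_{1}\bullet p_{2})=\mathcal{R}(p_{2})\mathcal{R}(p_{1})$ --- recognizing that the cross terms $\kl(p_{2})^{\ast}\kr(p_{1})^{\ast}$ and $\kr(p_{2})^{\ast}\kr(p_{1})^{\ast}$ both equal $\kr(p_{2}\bullet p_{1})^{\ast}$ by the axioms at $(p_{2},p_{1})$ and hence cancel --- is precisely the point that makes the verification go through.
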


A commutative averaging algebra induces a perm algebra. For a module over
a commutative averaging algebra, we also have the corresponding conclusion.

\begin{pro}\label{pro:assda2}
Let $(A, \alpha)$ be a commutative averaging algebra, and $(A, \bullet)$ be the
induced perm algebra. For any module $(M, \mu, \beta)$ over $(A, \alpha)$, we define
\begin{align*}
\kl: A\rightarrow\End_{\Bbbk}(M),\qquad \kl(a)(m)=\mu(\alpha(a))(m),\\
\kr: A\rightarrow\End_{\Bbbk}(M),\qquad \kr(a)(m)=\mu(a)(\beta(m)),
\end{align*}
for any $a\in A$ and $m\in M$. Then, $(M, \kl, \kr)$ is a bimodule over $(A, \bullet)$.
\end{pro}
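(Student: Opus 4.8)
The plan is to verify directly the two families of identities in Definition \ref{def:perm-mod}, reading everything off the defining relations of a module over a commutative averaging algebra. In operator notation the definitions read $\kl(a)=\mu(\alpha(a))$ and $\kr(a)=\mu(a)\beta$, and we recall $p_{1}\bullet p_{2}=\alpha(p_{1})p_{2}$. First I would isolate four ingredients to be used repeatedly: (R1) the associative module law $\mu(a_{1}a_{2})=\mu(a_{1})\mu(a_{2})$; (R2) its commutative consequence $\mu(a_{1})\mu(a_{2})=\mu(a_{2})\mu(a_{1})$, valid since $A$ is commutative; (R3) the averaging operator identity $\alpha(\alpha(a_{1})a_{2})=\alpha(a_{1})\alpha(a_{2})$; and (R4) the averaging module conditions \eqref{adm1}--\eqref{adm2}, which here (since $\kl=\kr=\mu$) collapse to $\mu(\alpha(a))\beta=\beta\mu(\alpha(a))=\beta\mu(a)\beta$.

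For the left identities I would compute $\kl(a_{1}\bullet a_{2})=\mu(\alpha(\alpha(a_{1})a_{2}))$, rewrite the inner argument by (R3) as $\alpha(a_{1})\alpha(a_{2})$, and then split by (R1) to obtain $\mu(\alpha(a_{1}))\mu(\alpha(a_{2}))=\kl(a_{1})\kl(a_{2})$; commutativity (R2) immediately supplies the symmetric form $\kl(a_{2})\kl(a_{1})$. This settles the first line of Definition \ref{def:perm-mod}.

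For the right identities the strategy is to show that all four operators $\kr(a_{1}\bullet a_{2})$, $\kr(a_{2})\kr(a_{1})$, $\kr(a_{2})\kl(a_{1})$ and $\kl(a_{1})\kr(a_{2})$ coincide with the single operator $\mu(\alpha(a_{1}))\mu(a_{2})\beta$. Directly, $\kr(a_{1}\bullet a_{2})=\mu(\alpha(a_{1})a_{2})\beta=\mu(\alpha(a_{1}))\mu(a_{2})\beta$ by (R1), and $\kl(a_{1})\kr(a_{2})=\mu(\alpha(a_{1}))\mu(a_{2})\beta$ by definition, so these two agree on the nose. For $\kr(a_{2})\kl(a_{1})=\mu(a_{2})\beta\mu(\alpha(a_{1}))$ I would push $\beta$ past $\mu(\alpha(a_{1}))$ using $\beta\mu(\alpha(a))=\mu(\alpha(a))\beta$ from (R4), then reorder the two $\mu$-factors by (R2). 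For $\kr(a_{2})\kr(a_{1})=\mu(a_{2})\beta\mu(a_{1})\beta$ I would collapse the middle block $\beta\mu(a_{1})\beta$ to $\mu(\alpha(a_{1}))\beta$ using the outer equality $\beta\mu(a)\beta=\mu(\alpha(a))\beta$ of (R4), then again reorder by (R2). In each case the result is $\mu(\alpha(a_{1}))\mu(a_{2})\beta$, completing the verification.

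The computation is routine once these ingredients are isolated; the only genuine care is in applying (R4), since it is the sole place where the interaction among $\alpha$, $\beta$ and $\mu$ enters, and it is precisely what forces the three a priori distinct right-module expressions to coincide. The main (if minor) obstacle I anticipate is the bookkeeping in that step: keeping track of which equality in (R4) is invoked, and making sure $\beta$ is moved to the outside \emph{before} the two $\mu$-factors are commuted by (R2), so that the reordering is legitimate.
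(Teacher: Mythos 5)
Your proof is correct and follows essentially the same route as the paper: a direct verification of the perm-bimodule axioms of Definition \ref{def:perm-mod}, using the averaging identity $\alpha(\alpha(a_{1})a_{2})=\alpha(a_{1})\alpha(a_{2})$ together with the module laws and the collapsed conditions \eqref{adm1}--\eqref{adm2} (your (R4)). The only difference is that the paper checks the $\kl$-identities explicitly and dismisses the $\kr$-identities with ``similarly,'' whereas you carry out that second half in full by reducing all four operators to $\mu(\alpha(a_{1}))\mu(a_{2})\beta$ --- which is exactly the computation the paper's ``similarly'' stands for.
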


\begin{proof}
Since for any $a\in A$ and $m\in M$, $\mu(\alpha(a))(\beta(m))=
\beta(\mu(\alpha(a))(m))=\beta(\mu(a)(\beta(m)))$, we get
\begin{align*}
&\kl(a_{1}\bullet a_{2})(m)=\mu(\alpha(a_{1}\bullet a_{2}))(m)
=\mu(\alpha(\alpha(a_{1})a_{2})(m))=\mu(\alpha(a_{1})\alpha(a_{2}))(m),\\
&\qquad\; \kl(a_{1})(\kl(a_{2})(m))=\mu(\alpha(a_{1}))(\mu(\alpha(a_{2}))(m))
=\mu(\alpha(a_{1})\alpha(a_{2}))(m),\\
&\qquad\; \kl(a_{2})(\kl(a_{1})(m))=\mu(\alpha(a_{2}))(\mu(\alpha(a_{2}))(m))
=\mu(\alpha(a_{1})\alpha(a_{2}))(m).
\end{align*}
for any $a_{1}, a_{2}\in A$ and $m\in M$,
That is, $\kl(a_{1}\bullet a_{2})=\kl(a_{1})\kl(a_{2})=\kl(a_{2})\kl(a_{1})$.
Similarly, we also have $\kr(a_{1}\bullet a_{2})=\kr(a_{2})\kr(a_{1})=
\kr(a_{2})\kl(a_{1})=\kl(a_{1})\kr(a_{2})$. Thus, $(M, \kl, \kr)$ is a
bimodule over $(A, \bullet)$.
\end{proof}

The bimodule $(M, \kl, \kr)$ in the proposition above is called the {\it induced bimodule}
by module $(M, \mu, \beta)$. Let $(A, \alpha)$ be a commutative averaging algebra
and $(A, \bullet)$ be the induced perm algebra of $(A, \alpha)$.
The bimodule $(A, \kl, \kr)$ over $(A, \bullet)$ induced by the regular module
$(A, \fl_{A}, \alpha)$ is exactly the regular bimodule $(A, \tilde{\fl}_{A},
\tilde{\fr}_{A})$ over $(A, \bullet)$.

\begin{pro}\label{pro:assdualrep}
Let $(A, \alpha)$ be a commutative averaging algebra and $(A, \bullet)$ be the
induced perm algebra. Suppose that $(M, \mu, \beta)$ is a module over $(A, \alpha)$,
and $(M, \kl, \kr)$ is the bimodule over $(A, \bullet)$ induced by $(M, \mu, \beta)$.
Then, the dual bimodule $(M^{\ast}, \kl^{\ast}, \kl^{\ast}-\kr^{\ast})$ of $(M, \kl, \kr)$
is the induced bimodule over the perm algebra $(A, \bullet)$
by $(M^{\ast}, \mu^{\ast}, \beta^{\ast})$ if and only if
$$
\beta(\mu(a)(m))=\mu(\alpha(a))(m)-\mu(a)(\beta(m)),
$$
for any $a\in A$ and $m\in M$.

In particular, when taking $(M, \mu, \beta)=(A, \fl_{A}, \beta)$, we get that
$(A^{\ast}, \tilde{\fl}_{A}^{\ast}, \tilde{\fl}_{A}^{\ast}-\tilde{\fr}_{A}^{\ast})$
is induced by $(A^{\ast}, \fl_{A}^{\ast}, \beta^{\ast})$ if and only if
$\beta(a_{1}a_{2})=\alpha(a_{1})a_{2}-a_{1}\beta(a_{2})$, for any $a_{1}, a_{2}\in A$.
\end{pro}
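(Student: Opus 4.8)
The plan is to compare, on the common underlying space $M^{\ast}$, the two bimodule structures over $(A,\bullet)$ whose coincidence is asserted: on one side the dual bimodule $(M^{\ast},\kl^{\ast},\kl^{\ast}-\kr^{\ast})$ furnished by Lemma \ref{lem:pdual-mod}, and on the other the bimodule induced by Proposition \ref{pro:assda2} from the module $(M^{\ast},\mu^{\ast},\beta^{\ast})$ over $(A,\alpha)$, whose left and right actions are $a\mapsto\mu^{\ast}(\alpha(a))$ and $a\mapsto\mu^{\ast}(a)\beta^{\ast}$. Since both structures sit on $M^{\ast}$, establishing the equivalence amounts to matching the two left actions and the two right actions, and I will show the left actions always agree, so that all the content is carried by the right actions.

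First I would transpose the defining formulas $\kl(a)=\mu(\alpha(a))$ and $\kr(a)=\mu(a)\beta$ of Proposition \ref{pro:assda2}. For any $a\in A$, $m\in M$, $\xi\in M^{\ast}$, the pairing yields $\langle\kl^{\ast}(a)(\xi),\,m\rangle=\langle\xi,\,\mu(\alpha(a))(m)\rangle$, hence $\kl^{\ast}(a)=\mu^{\ast}(\alpha(a))$, which is exactly the left action of the induced bimodule; thus the left actions coincide with no hypothesis needed. Likewise $\langle\kr^{\ast}(a)(\xi),\,m\rangle=\langle\xi,\,\mu(a)(\beta(m))\rangle$, so at the operator level $\kr^{\ast}(a)=\beta^{\ast}\mu^{\ast}(a)$.

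The equality of the two bimodules is therefore equivalent to the single operator identity $(\kl^{\ast}-\kr^{\ast})(a)=\mu^{\ast}(a)\beta^{\ast}$ for all $a\in A$. I would test it by pairing $(\kl^{\ast}-\kr^{\ast})(a)(\xi)$ and $\mu^{\ast}(a)(\beta^{\ast}(\xi))$ against an arbitrary $m\in M$ and moving each transpose back onto the $M$-argument, turning the three terms into $\langle\xi,\,\mu(\alpha(a))(m)\rangle$, $\langle\xi,\,\mu(a)(\beta(m))\rangle$ and $\langle\xi,\,\beta(\mu(a)(m))\rangle$; since $\xi$ and $m$ are arbitrary, nondegeneracy of the pairing converts the identity into $\beta(\mu(a)(m))=\mu(\alpha(a))(m)-\mu(a)(\beta(m))$, the stated condition. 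The particular case then follows by substituting $(M,\mu,\beta)=(A,\fl_A,\beta)$, so that $\mu(a)(m)=\fl_A(a)(m)=am$ and the condition reads $\beta(a_1a_2)=\alpha(a_1)a_2-a_1\beta(a_2)$. There is no deep obstacle here; the only point demanding care is the order of composition, since the induced right action is $\mu^{\ast}(a)\beta^{\ast}$ (with $\beta^{\ast}$ applied first) while $\kr^{\ast}$ gives $\beta^{\ast}\mu^{\ast}(a)$ (with $\beta^{\ast}$ applied last), and it is exactly this discrepancy, together with the $\alpha$ inside the $\kl^{\ast}$ term, that produces the nontrivial three-term relation rather than a tautology.
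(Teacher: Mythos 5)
Your proposal is correct and follows essentially the same route as the paper's proof: dualize the defining formulas of the induced actions, observe that the left actions $\kl^{\ast}(a)=\mu^{\ast}(\alpha(a))$ agree automatically, and reduce the bimodule equality to comparing the right actions $(\kl^{\ast}-\kr^{\ast})(a)$ and $\mu^{\ast}(a)\beta^{\ast}$ through the pairing, which by nondegeneracy yields exactly the three-term condition $\beta(\mu(a)(m))=\mu(\alpha(a))(m)-\mu(a)(\beta(m))$. The paper likewise proves only this first part (via the same three pairing computations) and treats the particular case as immediate by the substitution $(M,\mu,\beta)=(A,\fl_{A},\beta)$, just as you do.
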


\begin{proof}
We only prove the first part. Denote by $(M^{\ast}, \kl', \kr')$ the bimodule
over $(A, \bullet)$ induced by $(M^{\ast}, \mu^{\ast}, \beta^{\ast})$.
Since $(M, \kl, \kr)$ is induced by $(M, \mu, \beta)$, for any $a\in A$, $m\in M$,
$\xi\in M^{\ast}$, we have
\begin{align*}
\langle\kl'(a)(\xi),\; m\rangle&=\langle\mu^{\ast}(\alpha(a))(\xi),\; m\rangle
=\langle\xi,\; \mu(\alpha(a))(m)\rangle=\langle\xi,\; \kl(a)(m)\rangle
=\langle\kl^{\ast}(a)(\xi),\; m\rangle,\\
\langle\kr'(a)(\xi),\; m\rangle
&=\langle\mu^{\ast}(a)(\beta^{\ast}(\xi)),\; m\rangle
=\langle\xi,\; \beta(\mu(a)(m))\rangle\\
\langle(\kl^{\ast}-\kr^{\ast})(a)(\xi),\; m\rangle
&=\langle\xi,\; \kl(a)(m)\rangle-\langle\xi,\; \kr(a)(m)\rangle
=\langle\xi,\; \mu(\alpha(a))(m)\rangle-\langle\xi,\; \mu(a)(\beta(m))\rangle,
\end{align*}
Thus, $(M^{\ast}, \kl', \kr')=(M^{\ast}, \kl^{\ast}, \kl^{\ast}-\kr^{\ast})$ if and
only if $\beta(\mu(a)(m))=\mu(\alpha(a))(m)-\mu(a)(\beta(m))$,
for any $a\in A$ and $m\in M$.
\end{proof}

Now we consider the relationship between the matched pair of commutative averaging algebras
and the matched pair of induced perm algebras.

\begin{defi}\label{def:matperm}
A {\rm matched pair of perm algebras} consists of perm algebras $(A, \bullet)$
and $(B, \bullet)$, together with linear maps $\kl_{A}, \kr_{A}:
A\rightarrow\End_{\Bbbk}(B)$ and $\kl_{B}, \kr_{B}: B\rightarrow\End_{\Bbbk}(A)$ such
that $A\oplus B$ is a perm algebra, where the multiplication is defined by
$$
(a_{1}, b_{1})(a_{2}, b_{2}):=\big(a_{1}\bullet a_{2}+\kl_{B}(b_{1})(a_{2})
+\kr_{B}(b_{2})(a_{1}),\quad b_{1}\bullet b_{2}+\kl_{A}(a_{1})(b_{2})
+\kr_{A}(a_{2})(b_{1})\big),
$$
for any $a_{1}, a_{2}\in A$ and $b_{1}, b_{2}\in B$. The matched pair of perm
algebras is denoted by $((A, \bullet), (B, \bullet)$, $\kl_{A}, \kr_{A}, \kl_{B}, \kr_{B})$
and the resulting perm algebra structure on $A\oplus B$ is denoted by
$(A, \bullet)\bowtie(B, \bullet)$.
\end{defi}

In a matched pair of perm algebras $((A, \bullet), (B, \bullet), \kl_{A}, \kr_{A},
\kl_{B}, \kr_{B})$, $(A, \kl_{B}, \kr_{B})$ is a bimodule over perm algebra
$(B, \bullet)$ and $(B, \kl_{A}, \kr_{A})$ is a bimodule over perm algebra
$(A, \bullet)$. In particular, if the multiplication of $B$ is trivial,
resulting perm algebra is just the semidirect product of $(A, \bullet)$ by
bimodule $(B, \kl_{A}, \kr_{A})$ defined in Proposition \ref{pro:assda2}.

\begin{pro}\label{pro:assmat}
Let $((A, \alpha), (B, \beta), \mu_{A}, \mu_{B})$ be a matched pair of
commutative averaging algebras, $(A, \bullet)$ and $(B, \bullet)$ be the
induced perm algebras of $(A, \alpha)$ and $(B, \beta)$ respectively.
Then $((A, \bullet), (B, \bullet)$, $\kl_{A}, \kr_{A}, \kl_{B}, \kr_{B})$ is a
matched pair of perm algebras, called {\rm the induced matched pair of perm algebras}
by $((A, \alpha), (B, \beta), \mu_{A}, \mu_{B})$, where $(B, \kl_{A}, \kr_{A})$
is the induced bimodule over $(A, \bullet)$ by $(B, \mu_{A}, \beta)$ and
$(A, \kl_{B}, \kr_{B})$ is the induced bimodule over $(B, \bullet)$ by $(A, \mu_B, \alpha)$.

Moreover, the perm algebra $(A, \bullet)\bowtie(B, \bullet)$ obtained by the matched
pair $((A, \bullet), (B, \bullet), \kl_{A}, \kr_{A},$ $ \kl_{B}, \kr_{B})$
is exactly the perm algebra induced by the commutative averaging algebra
$(A\bowtie B,\; \alpha\oplus\beta)$ obtained by the matched pair
$((A, \alpha), (B, \beta), \mu_{A}, \mu_{B})$.
\end{pro}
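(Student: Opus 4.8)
The plan is to deduce both assertions from a single computation that identifies two products on $A\oplus B$, and then to invoke Proposition \ref{pro:perm}. First I would record the structural input: since $A$ and $B$ are commutative and the matched pair is built from the \emph{modules} $(B,\mu_A,\beta)$ and $(A,\mu_B,\alpha)$, the underlying matched pair of associative algebras has $A$ acting on $B$ through the single map $\mu_A$ (left and right actions coinciding) and $B$ acting on $A$ through $\mu_B$. Consequently the product $\ast$ on $A\bowtie B$ is symmetric in its two arguments, so $A\bowtie B$ is a commutative associative algebra. By Proposition \ref{pro:matda}, $\alpha\oplus\beta$ is an averaging operator on $A\bowtie B$, hence $(A\bowtie B,\alpha\oplus\beta)$ is a commutative averaging algebra, and by Proposition \ref{pro:perm} its induced product $(x_{1},x_{2})\mapsto(\alpha\oplus\beta)(x_{1})\ast x_{2}$ makes $A\oplus B$ a perm algebra.

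The key step is then to compute this induced product explicitly. Writing $(a_{1},b_{1})\bullet(a_{2},b_{2})=(\alpha(a_{1}),\beta(b_{1}))\ast(a_{2},b_{2})$ and expanding the definition of $\ast$, the $A$-component equals $\alpha(a_{1})a_{2}+\mu_{B}(\beta(b_{1}))(a_{2})+\mu_{B}(b_{2})(\alpha(a_{1}))$ and the $B$-component equals $\beta(b_{1})b_{2}+\mu_{A}(\alpha(a_{1}))(b_{2})+\mu_{A}(a_{2})(\beta(b_{1}))$. Using the induced perm products $a_{1}\bullet a_{2}=\alpha(a_{1})a_{2}$ and $b_{1}\bullet b_{2}=\beta(b_{1})b_{2}$ together with the induced bimodule actions from Proposition \ref{pro:assda2}, namely $\kl_{A}(a)(b)=\mu_{A}(\alpha(a))(b)$, $\kr_{A}(a)(b)=\mu_{A}(a)(\beta(b))$, $\kl_{B}(b)(a)=\mu_{B}(\beta(b))(a)$ and $\kr_{B}(b)(a)=\mu_{B}(b)(\alpha(a))$, these two components become exactly
\begin{align*}
a_{1}\bullet a_{2}+\kl_{B}(b_{1})(a_{2})+\kr_{B}(b_{2})(a_{1}),
\qquad
b_{1}\bullet b_{2}+\kl_{A}(a_{1})(b_{2})+\kr_{A}(a_{2})(b_{1}),
\end{align*}
which is precisely the matched-pair product of Definition \ref{def:matperm}.

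With this identification both conclusions follow at once. For the first assertion, $(A,\bullet)$ and $(B,\bullet)$ are perm algebras by Proposition \ref{pro:perm}, the induced structures $(B,\kl_{A},\kr_{A})$ over $(A,\bullet)$ and $(A,\kl_{B},\kr_{B})$ over $(B,\bullet)$ are bimodules by Proposition \ref{pro:assda2}, and the computation shows that the matched-pair product on $A\oplus B$ coincides with the induced perm product of the commutative averaging algebra $(A\bowtie B,\alpha\oplus\beta)$, which is a perm algebra; hence $((A,\bullet),(B,\bullet),\kl_{A},\kr_{A},\kl_{B},\kr_{B})$ is a matched pair of perm algebras. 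For the second assertion, $(A,\bullet)\bowtie(B,\bullet)$ is by definition $A\oplus B$ equipped with this matched-pair product, which the computation identifies with the perm algebra induced by $(A\bowtie B,\alpha\oplus\beta)$.

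I expect the only care required to be two bookkeeping points rather than any genuine difficulty: verifying that $A\bowtie B$ is commutative, so that Proposition \ref{pro:perm} legitimately applies and there is a \emph{single} induced product rather than two; and tracking correctly which of $\alpha,\beta$ is applied to the algebra argument versus the module argument in each of the four induced actions. The asymmetry of these actions — the left actions carrying $\alpha$ (resp.\ $\beta$) on the algebra element and the right actions carrying $\beta$ (resp.\ $\alpha$) on the module element — is exactly what makes the $\ast$-product, after precomposition with $\alpha\oplus\beta$, reassemble into the perm matched-pair formula, so the heart of the argument is this careful matching.
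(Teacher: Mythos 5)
Your proposal is correct and follows essentially the same route as the paper: form the commutative averaging algebra $(A\bowtie B,\alpha\oplus\beta)$, expand its induced perm product $(a_{1},b_{1})\diamond(a_{2},b_{2})=(\alpha(a_{1}),\beta(b_{1}))\ast(a_{2},b_{2})$ componentwise, and identify the result term-by-term with the perm matched-pair product built from the induced actions of Proposition \ref{pro:assda2}, so that both assertions follow from this single computation. The only difference is cosmetic: you spell out the commutativity of $A\bowtie B$ and the appeal to Proposition \ref{pro:matda}, which the paper leaves implicit.
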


\begin{proof}
Let $(A\bowtie B,\; \alpha\oplus\beta)$ be the commutative averaging algebra obtained
by the matched pair $((A, \alpha), (B, \beta), \mu_{A}, \mu_{B})$, and
$(A\oplus B, \diamond)$ be the perm algebra induced by $(A\bowtie B,\; \alpha\oplus\beta)$.
Note that, for any $a_{1}, a_{2}\in A$ and $b_{1}, b_{2}\in B$,
\begin{align*}
&\;(a_{1}, b_{1})\diamond(a_{2}, b_{2})\\
=&\;(\alpha(a_{1}), \beta(b_{1}))(a_{2}, b_{2})\\
=&\;\Big(\alpha(a_{1})a_{2}+\mu_{B}(\beta(b_{1}))(a_{2})+\mu_{B}(b_{2})(\alpha(a_{1})),\ \
\beta(b_{1})b_{2}+\mu_{A}(\alpha(a_{1}))(b_{2})+\mu_{A}(a_{2})(\beta(b_{1}))\Big)\\
=&\;\Big(a_{1}\bullet a_{2}+\kl_{B}(b_{1})(a_{2})+\kr_{B}(b_{2})(a_{1}),\ \
b_{1}\bullet b_{2}+\kl_{A}(a_{1})(b_{2})+\kr_{A}(a_{2})(b_{1})\Big),
\end{align*}
we get $((A, \bullet), (B, \bullet), \kl_{A}, \kr_{A}, \kl_{B}, \kr_{B})$ is a
matched pair. And we have already shown that the perm algebra structure on
$A\oplus B$ obtained from this induced matched pair of perm algebras is exactly
the induced perm algebra by $(A\bowtie B\; \alpha\oplus\beta)$.
\end{proof}

In particular, if the multiplication of $B$ is trivial, we have the following corollary.

\begin{cor}\label{cor:asssemi}
Let $(A, \alpha)$ be a commutative averaging algebra and $(A, \bullet)$ be the
perm algebra induced by $(A, \alpha)$, and $(M, \mu, \beta)$ be a bimodule over
$(A, \alpha)$. Suppose that $(A\ltimes M, \alpha\oplus\beta)$ is the
semidirect product (commutative) averaging algebra of $(A, \alpha)$ by $(M, \mu, \beta)$.
Then the semidirect product perm algebra of $(A, \bullet)$ by $(M, \kl, \kr)$,
where $(M, \kl, \kr)$ is the induced bimodule by $(M, \mu, \beta)$,
is exactly the perm algebra induced by $(A\ltimes M, \alpha\oplus\beta)$.
\end{cor}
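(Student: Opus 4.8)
The plan is to obtain this statement as the degenerate case of Proposition \ref{pro:assmat} in which the second averaging algebra is taken to be $M$ with the zero multiplication. Concretely, I would regard $(M,\mu,\beta)$ as a commutative averaging algebra $(M,\cdot_M,\beta)$ with $\cdot_M=0$, and form the matched pair of commutative averaging algebras $((A,\alpha),(M,\beta),\mu_A,\mu_B)$ in which $\mu_A=\mu$ is the given action of $A$ on $M$ while $\mu_B=0$ is the (forced) action of $M$ on $A$. Because $(M,\mu,\beta)$ is assumed to be a module over $(A,\alpha)$ and $M$ carries the zero product, every bimodule and compatibility axiom in the definition of a matched pair of averaging algebras reduces either to this module hypothesis or to a triviality, so the associated averaging algebra $A\bowtie M$ is exactly the semidirect product $(A\ltimes M,\,\alpha\oplus\beta)$, which is commutative since $A$ is.

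Next I would identify the trivial factor and its induced data. Since $m_1\bullet m_2=\beta(m_1)\cdot_M m_2=0$, the induced perm algebra of $(M,\beta)$ again has zero multiplication, and the induced bimodule $(M,\kl_A,\kr_A)$ produced by Proposition \ref{pro:assmat} is, by its defining formulas $\kl_A(a)(m)=\mu(\alpha(a))(m)$ and $\kr_A(a)(m)=\mu(a)(\beta(m))$, literally the induced bimodule $(M,\kl,\kr)$ of Proposition \ref{pro:assda2}. As $\mu_B=0$ and the perm multiplication on $M$ vanishes, the matched-pair perm algebra $(A,\bullet)\bowtie(M,\bullet)$ degenerates precisely to the semidirect product perm algebra $(A,\bullet)\ltimes M$ built from $(M,\kl,\kr)$. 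Proposition \ref{pro:assmat} then says this perm algebra coincides with the perm algebra induced by $A\bowtie M=A\ltimes M$, which is the desired conclusion.

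As a cross-check---and this is essentially the entire content, so one may prefer to present the proof directly---I would compute both perm products side by side. The product induced on $A\ltimes M$ is
$$(a_1,m_1)\diamond(a_2,m_2)=(\alpha\oplus\beta)(a_1,m_1)\cdot(a_2,m_2)=\big(\alpha(a_1)a_2,\;\mu(\alpha(a_1))(m_2)+\mu(a_2)(\beta(m_1))\big),$$
while the semidirect product perm multiplication is
$$(a_1,m_1)(a_2,m_2)=\big(a_1\bullet a_2,\;\kl(a_1)(m_2)+\kr(a_2)(m_1)\big)=\big(\alpha(a_1)a_2,\;\mu(\alpha(a_1))(m_2)+\mu(a_2)(\beta(m_1))\big),$$
and the two expressions agree verbatim. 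I do not anticipate a genuine obstacle; the only point requiring care is the bookkeeping showing that the degenerate matched pair really reproduces the semidirect product (equivalently, that the two recipes for the induced bimodule coincide and that no cross-terms survive from $\mu_B$), after which the general Proposition \ref{pro:assmat} applies verbatim.
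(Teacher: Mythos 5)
Your proposal is correct and takes essentially the same route as the paper: the paper states this corollary as the specialization of Proposition~\ref{pro:assmat} to the case where the second averaging algebra has trivial multiplication, which is exactly your degenerate matched pair with $B=M$, $\mu_A=\mu$, $\mu_B=0$. Your side-by-side computation of the two perm products is a sound (and correct) verification of the bookkeeping the paper leaves implicit.
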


Moreover, by Propositions \ref{pro:assmat} and \ref{pro:assdualrep}, we have

\begin{cor}\label{cor:adjassmat}
Let $(A, \alpha)$ be a commutative averaging algebra. Suppose that there is a
linear map $\beta: A\rightarrow A$ such that $(A^{\ast}, \beta^{\ast})$ is a commutative
averaging algebra and $((A, \alpha),\; (A^{\ast},\; \beta^{\ast}),\; \fl_{A}^{\ast}, $ $
\fl_{A^{\ast}}^{\ast})$ is a matched pair of commutative averaging algebras.
Denote by $(A, \bullet)$ and $(A^{\ast}, \bullet)$ the perm algebras
induced by $(A, \alpha)$ and $(A^{\ast}, \beta^{\ast})$ respectively.
Then, $((A, \bullet)$, $(A^{\ast}, \bullet),\; \tilde{\fl}_{A}^{\ast},\;
\tilde{\fl}_{A}^{\ast}-\tilde{\fr}_{A}^{\ast},\; \tilde{\fl}_{A^{\ast}}^{\ast},\;
\tilde{\fl}_{A^{\ast}}^{\ast}-\tilde{\fr}_{A^{\ast}}^{\ast})$ is a matched pair of
perm algebras such that it is the induced matched pair by $((A, \alpha),\; (A^{\ast},
\beta^{\ast}),\; \fl_{A}^{\ast},\; \fl_{A^{\ast}}^{\ast})$ if and only if for any
$a_{1}, a_{2}\in A$, $\xi_{1}, \xi_{2}\in A^{\ast}$,
$$
\beta(a_{1}a_{2})=\alpha(a_{1})a_{2}-a_{1}\beta(a_{2})\qquad \mbox{and}\qquad
\alpha^{\ast}(\xi_{1}\cdot_{A^{\ast}}\xi_{2})=\beta^{\ast}(\xi_{1})\cdot_{A^{\ast}}
\xi_{2}-\xi_{1}\cdot_{A^{\ast}}\alpha^{\ast}(\xi_{2}).
$$
\end{cor}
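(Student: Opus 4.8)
The plan is to obtain the corollary purely by combining the two preceding propositions, reading the two displayed equations as precisely the conditions under which the \emph{induced} bimodules supplied by Proposition \ref{pro:assmat} coincide with the \emph{dual} bimodules that appear in the statement. Accordingly, the whole argument splits into the two sides of the matched pair, and on each side I reduce the coincidence of bimodules to the relevant special case of Proposition \ref{pro:assdualrep}.

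First I would feed the given matched pair of commutative averaging algebras $((A,\alpha),(A^{\ast},\beta^{\ast}),\fl_A^{\ast},\fl_{A^{\ast}}^{\ast})$ into Proposition \ref{pro:assmat}. This yields a matched pair of the induced perm algebras $(A,\bullet)$ and $(A^{\ast},\bullet)$ in which the action of $(A,\bullet)$ on $A^{\ast}$ is the bimodule induced by the module $(A^{\ast},\fl_A^{\ast},\beta^{\ast})$ over $(A,\alpha)$, while the action of $(A^{\ast},\bullet)$ on $A$ is the bimodule induced by the module $(A,\fl_{A^{\ast}}^{\ast},\alpha)$ over $(A^{\ast},\beta^{\ast})$. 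On the other hand, by Lemma \ref{lem:pdual-mod} the operators written in the statement, namely $(\tilde{\fl}_A^{\ast},\tilde{\fl}_A^{\ast}-\tilde{\fr}_A^{\ast})$ and $(\tilde{\fl}_{A^{\ast}}^{\ast},\tilde{\fl}_{A^{\ast}}^{\ast}-\tilde{\fr}_{A^{\ast}}^{\ast})$, are exactly the dual bimodules of the two regular perm bimodules. Hence the assertion that this data is a matched pair of perm algebras equal to the induced one is equivalent to each dual bimodule agreeing with the corresponding induced bimodule; and whenever the agreement holds, the matched-pair property is inherited directly from the induced matched pair of Proposition \ref{pro:assmat}.

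Next I would invoke the ``in particular'' clause of Proposition \ref{pro:assdualrep} twice. Applied to $(A,\alpha)$ with the regular module $(A,\fl_A,\beta)$, it gives that $(A^{\ast},\tilde{\fl}_A^{\ast},\tilde{\fl}_A^{\ast}-\tilde{\fr}_A^{\ast})$ is induced by $(A^{\ast},\fl_A^{\ast},\beta^{\ast})$ if and only if $\beta(a_1a_2)=\alpha(a_1)a_2-a_1\beta(a_2)$, which settles the action of $(A,\bullet)$ on $A^{\ast}$. For the action of $(A^{\ast},\bullet)$ on $A$, I would apply the same special case to the commutative averaging algebra $(A^{\ast},\beta^{\ast})$, now with module operator $\alpha^{\ast}$ playing the role of ``$\beta$''. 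Using the finite-dimensional identifications $(A^{\ast})^{\ast}\cong A$ and $(\alpha^{\ast})^{\ast}=\alpha$, this shows that $(A,\tilde{\fl}_{A^{\ast}}^{\ast},\tilde{\fl}_{A^{\ast}}^{\ast}-\tilde{\fr}_{A^{\ast}}^{\ast})$ is induced by $(A,\fl_{A^{\ast}}^{\ast},\alpha)$ if and only if $\alpha^{\ast}(\xi_1\cdot_{A^{\ast}}\xi_2)=\beta^{\ast}(\xi_1)\cdot_{A^{\ast}}\xi_2-\xi_1\cdot_{A^{\ast}}\alpha^{\ast}(\xi_2)$. Conjoining the two equivalences gives the corollary.

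The step I expect to be the main obstacle is the dualization bookkeeping on the second side: one must check that inserting $(A^{\ast},\beta^{\ast})$ and the operator $\alpha^{\ast}$ into Proposition \ref{pro:assdualrep} genuinely reproduces the module $(A,\fl_{A^{\ast}}^{\ast},\alpha)$ over $(A^{\ast},\beta^{\ast})$ that Proposition \ref{pro:assmat} attaches to this side, and that the compatibility condition produced is literally the second displayed equation, expressed through the dual multiplication $\cdot_{A^{\ast}}$ rather than the product on $A$. Once this identification is confirmed, the remainder is a direct citation of Propositions \ref{pro:assmat} and \ref{pro:assdualrep} together with Lemma \ref{lem:pdual-mod}.
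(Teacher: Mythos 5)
Your proposal is correct and follows exactly the paper's route: the paper derives this corollary precisely by combining Proposition \ref{pro:assmat} (the induced matched pair of perm algebras) with the ``in particular'' clause of Proposition \ref{pro:assdualrep} applied once to $(A,\alpha)$ and once, dually, to $(A^{\ast},\beta^{\ast})$ with $\alpha^{\ast}$ in the role of the operator. Your additional use of Lemma \ref{lem:pdual-mod} to recognize the stated actions as dual regular bimodules, and the dualization bookkeeping via $(A^{\ast})^{\ast}\cong A$, $(\alpha^{\ast})^{\ast}=\alpha$, is exactly the implicit content of the paper's citation.
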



Next, we consider the Manin triples of perm algebras induced by the double construction
of averaging Frobenius algebras. Recall that a bilinear form $\tilde{\mathfrak{B}}(-,-)$
on a perm algebra $(P, \bullet)$ is called {\it invariant} if
$$
\tilde{\mathfrak{B}}(p_{1}\bullet p_{2},\; p_{3})
=\tilde{\mathfrak{B}}(p_{1},\; p_{2}\bullet p_{3}-p_{3}\bullet p_{2}),
$$
for any $p_{1}, p_{2}, p_{3}\in P$.

\begin{defi}[\cite{LZB,Hou}]\label{def:perm-main}
A {\rm Manin triple of perm algebras} is a triple $((P, \diamond,
\tilde{\mathfrak{B}}),\; (P^{+}, \bullet),\; (P^{-}, \bullet))$, where $(P, \diamond)$
is a perm algebra and $\tilde{\mathfrak{B}}(-,-)$ is a nondegenerate antisymmetric
invariant bilinear form on $(P, \diamond)$ such that:
\begin{enumerate}\itemsep=0pt
\item[$(i)$] $(P^{+}, \bullet)$ and $(P^{-}, \bullet)$ are perm subalgebras of
     $(P, \diamond)$;
\item[$(ii)$] $P=P^{+}\oplus P^{-}$ as vector spaces;
\item[$(iii)$] $P^{+}$ and $P^{-}$ are isotropic with respect to $\tilde{\mathfrak{B}}(-,-)$.
\end{enumerate}
\end{defi}

\begin{pro}\label{pro:assFro}
Let $(A, \alpha)$ be a commutative averaging algebra and $(A, \bullet)$ be the induced
perm algebra by $(A, \alpha)$. Suppose that there is a linear map $\beta: A\rightarrow A$
such that $(A^{\ast}, \beta^{\ast})$ is a commutative averaging algebra, and $(A^{\ast},
\bullet)$ is the induced perm algebra by $(A^{\ast}, \beta^{\ast})$. Then there is a Manin
triple $((A\oplus A^{\ast}, \diamond, \tilde{\mathfrak{B}}_{d}),\; (A, \bullet),\;
(A^{\ast}, \bullet))$ of perm algebras such that the perm algebra $(A\oplus A^{\ast},
\diamond)$ is induced by $(A\bowtie A^{\ast}, \alpha\oplus\beta^{\ast})$, if and only if
for any $a_{1}, a_{2}\in A$,
\begin{align}
\beta(a_{1}a_{2})&=\alpha(a_{1})a_{2}-a_{1}\alpha(a_{2}),   \label{compa1}\\
\Delta\alpha&=(\beta\otimes\id)\Delta-(\id\otimes\beta)\Delta,  \label{compa2}
\end{align}
where the bilinear form $\tilde{\mathfrak{B}}_{d}(-,-)$ on $A\oplus A^{\ast}$ is
defined by $\tilde{\mathfrak{B}}_{d}((a_{1}, \xi_{1}),\; (a_{2}, \xi_{2})):=
\langle\xi_{2},\; a_{1}\rangle-\langle\xi_{1},\; a_{2}\rangle$, for any
$a_{1}, a_{2}\in A$ and $\xi_{1}, \xi_{2}\in A^{\ast}$.
\end{pro}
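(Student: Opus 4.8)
The plan is to show that all but one of the Manin-triple axioms hold automatically, and that the single remaining one is exactly (compa1)--(compa2). Under the standing hypotheses $(A\bowtie A^\ast,\alpha\oplus\beta^\ast)$ is a \emph{commutative} averaging algebra: commutativity of $A$ together with cocommutativity of $\Delta$ (equivalently commutativity of $(A^\ast,\cdot_{A^\ast})$) makes the matched-pair product symmetric, so the perm-induction of Proposition \ref{pro:perm} applies and produces $(A\oplus A^\ast,\diamond)$ with $x\diamond y=(\alpha\oplus\beta^\ast)(x)\cdot y$. By Proposition \ref{pro:assmat} this $\diamond$ is the matched-pair perm product $(A,\bullet)\bowtie(A^\ast,\bullet)$, so $(A,\bullet)$ and $(A^\ast,\bullet)$ are perm subalgebras and $A\oplus A^\ast$ is their vector-space direct sum; this gives conditions (i),(ii) of Definition \ref{def:perm-main} for free. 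The form $\tilde{\mathfrak{B}}_d$ is visibly nondegenerate and antisymmetric, and $A$, $A^\ast$ are isotropic because $\tilde{\mathfrak{B}}_d$ pairs $A$ only against $A^\ast$. Hence the entire statement collapses to: \emph{$\tilde{\mathfrak{B}}_d$ is invariant for $(A\oplus A^\ast,\diamond)$ iff (compa1) and (compa2) hold.}

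To treat invariance I would first turn it into a clean operator identity on $C:=A\bowtie A^\ast$. Write $\theta:=\alpha\oplus\beta^\ast$ and $J:=(-\id_A)\oplus\id_{A^\ast}$; a direct check gives $\tilde{\mathfrak{B}}_d(u,v)=\mathfrak{B}_d(u,Jv)$. Since $(C,\mathfrak{B}_d)$ is a symmetric Frobenius algebra, $\mathfrak{B}_d$ is invariant for the associative product, and by Lemma \ref{lem:douadm}(i) the adjoint of $\theta$ with respect to $\mathfrak{B}_d$ is $\hat\theta=\beta\oplus\alpha^\ast$. Using $x\diamond y=\theta(x)\cdot y$, the left-hand side of the perm-invariance identity transforms as
\begin{align*}
\tilde{\mathfrak{B}}_d(x\diamond y,z)=\mathfrak{B}_d\big(\theta(x)\cdot y,\;Jz\big)
=\mathfrak{B}_d\big(\theta(x),\;y\cdot Jz\big)=\mathfrak{B}_d\big(x,\;\hat\theta(y\cdot Jz)\big),
\end{align*}
while its right-hand side equals $\mathfrak{B}_d\big(x,\;J(\theta(y)\cdot z-\theta(z)\cdot y)\big)$. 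By nondegeneracy of $\mathfrak{B}_d$, invariance is therefore \emph{equivalent} to
\begin{equation*}
\hat\theta(y\cdot Jz)=J\big(\theta(y)\cdot z-\theta(z)\cdot y\big),\qquad y,z\in A\oplus A^\ast. \tag{$\star$}
\end{equation*}

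The final step is to expand $(\star)$ in coordinates $y=(a,\xi)$, $z=(b,\eta)$ with the explicit matched-pair product, and to split the resulting multilinear identity by how many arguments lie in $A$ versus $A^\ast$, matching the $A$-valued and $A^\ast$-valued parts separately. The two pure components give precisely $\beta(ab)=\alpha(a)b-a\alpha(b)$, i.e.\ (compa1), and $\alpha^\ast(\xi\cdot_{A^\ast}\eta)=\beta^\ast(\xi)\cdot_{A^\ast}\eta-\xi\cdot_{A^\ast}\beta^\ast(\eta)$, which is (compa2) after dualizing through the identification of $\Delta$ with the transpose of $\cdot_{A^\ast}$. The point requiring care, and the main obstacle, is the two \emph{mixed} components, which a priori look like independent constraints coupling the $A$-action on $A^\ast$, the $A^\ast$-action on $A$, and the operators $\alpha,\beta$. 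I would dispose of them by pairing against a test element and rewriting the actions via $\langle a\cdot\xi,\,b\rangle=\langle\xi,\,ab\rangle$ and $\langle\xi\cdot b,\,\eta\rangle=\langle\xi\otimes\eta,\,\Delta(b)\rangle$: one mixed component then collapses exactly to (compa1) and the other exactly to (compa2), so no new conditions survive. Thus $(\star)$ is equivalent to the conjunction of (compa1) and (compa2); combined with the automatic axioms from the first paragraph this yields the asserted equivalence, with the ``only if'' direction immediate since every reduction above is reversible.
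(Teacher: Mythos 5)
Your proposal is correct, and while its skeleton matches the paper's (dispose of the automatic Manin-triple axioms first, then prove that invariance of $\tilde{\mathfrak{B}}_{d}$ is equivalent to the two equations), the mechanism you use for the invariance step is genuinely different. The paper argues by brute force: it expands both sides of $\tilde{\mathfrak{B}}_{d}((a_{1},\xi_{1})\diamond(a_{2},\xi_{2}),\,(a_{3},\xi_{3}))=\tilde{\mathfrak{B}}_{d}((a_{1},\xi_{1}),\,(a_{2},\xi_{2})\diamond(a_{3},\xi_{3})-(a_{3},\xi_{3})\diamond(a_{2},\xi_{2}))$ into explicit pairings (six terms against twelve) and reads off that equality for all arguments amounts to \eqref{compa1} together with $\alpha^{\ast}(\xi_{1}\cdot_{A^{\ast}}\xi_{2})=\beta^{\ast}(\xi_{1})\cdot_{A^{\ast}}\xi_{2}-\xi_{1}\cdot_{A^{\ast}}\beta^{\ast}(\xi_{2})$, the dual of \eqref{compa2}; it never forms your operator identity and never uses the Frobenius property or the adjoint. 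Your reduction --- $\tilde{\mathfrak{B}}_{d}(u,v)=\mathfrak{B}_{d}(u,Jv)$ with $J=(-\id_{A})\oplus\id_{A^{\ast}}$, associative invariance of $\mathfrak{B}_{d}$, the adjoint $\hat\theta=\beta\oplus\alpha^{\ast}$ from Lemma \ref{lem:douadm}$(i)$, then cancellation of the test vector by nondegeneracy --- trades the trilinear comparison for the bilinear identity $(\star)$, whose four components (two pure, two mixed) are much shorter to analyze. Your treatment of the mixed components is also right, and it is indeed the one point that needed care: the $A$-valued mixed component, $\beta(\fl_{A^{\ast}}^{\ast}(\eta)(a))=\fl_{A^{\ast}}^{\ast}(\beta^{\ast}(\eta))(a)-\fl_{A^{\ast}}^{\ast}(\eta)(\alpha(a))$, dualizes precisely to \eqref{compa2}, and the $A^{\ast}$-valued one, $\alpha^{\ast}(\fl_{A}^{\ast}(a)(\eta))=\fl_{A}^{\ast}(\alpha(a))(\eta)-\fl_{A}^{\ast}(a)(\beta^{\ast}(\eta))$, dualizes precisely to \eqref{compa1}, so no new constraints survive. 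What your route buys is transparency: the fact that the mixed terms impose nothing new is visible, rather than buried in the paper's twelve-term expansion. What it costs is reliance on the double-construction machinery (Frobenius invariance of $\mathfrak{B}_{d}$ and Lemma \ref{lem:douadm}); this is legitimate here, since the hypothesis that $\diamond$ is induced by $(A\bowtie A^{\ast},\alpha\oplus\beta^{\ast})$ already presupposes that structure, but it makes your argument less self-contained than the paper's direct computation from the definitions of $\diamond$ and $\tilde{\mathfrak{B}}_{d}$.
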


\begin{proof}
Clearly, the bilinear from $\tilde{\mathfrak{B}}_{d}(-,-)$ is nondegenerate antisymmetric.
Since the perm algebra $(A\oplus A^{\ast}, \diamond)$ is induced by $(A\bowtie A^{\ast},
\alpha\oplus\beta^{\ast})$, it is easy to see that $(A, \bullet)$ and $(A^{\ast}, \bullet)$
are perm subalgebras of $(A\oplus A^{\ast}, \diamond)$, $A$ and $A^{\ast}$ are isotropic
with respect to $\tilde{\mathfrak{B}}_{d}(-,-)$, and for any $a_{1}, a_{2}, a_{3}\in A$
and $\xi_{1}, \xi_{2}, \xi_{3}\in A^{\ast}$,
\begin{align*}
&\;\tilde{\mathfrak{B}}_{d}\big((a_{1}, \xi_{1})\diamond(a_{2}, \xi_{2}),\ \
(a_{3}, \xi_{3})\big)\\
=&\;\langle\xi_{3},\; \alpha(a_{1})a_{2}\rangle
+\langle\beta^{\ast}(\xi_{1})\cdot_{A^{\ast}}\xi_{3},\; a_{2}\rangle
+\langle\alpha^{\ast}(\xi_{2}\cdot_{A^{\ast}}\xi_{3}),\;a_{1}\rangle\\[-1mm]
&\qquad-\langle\beta^{\ast}(\xi_{1})\cdot_{A^{\ast}}\xi_{2},\; a_{3}\rangle
-\langle\xi_{2},\; \alpha(a_{1})a_{3}\rangle
-\langle\xi_{1},\; \beta(a_{2}a_{3})\rangle,\\
&\;\tilde{\mathfrak{B}}_{d}\big((a_{1}, \xi_{1}),\ \ (a_{2}, \xi_{2})\diamond(a_{3}, \xi_{3})
-(a_{3}, \xi_{3})\diamond(a_{2}, \xi_{2})\big)\\
=&\;\langle\beta^{\ast}(\xi_{2})\cdot_{A^{\ast}}\xi_{3},\; a_{1}\rangle
+\langle\xi_{3},\;a_{1}\alpha(a_{2})\rangle
+\langle\xi_{2},\; \beta(a_{1}a_{3})\rangle
-\langle\xi_{1},\; \alpha(a_{2})a_{3}\rangle\\[-1mm]
&\qquad-\langle\xi_{1}\cdot_{A^{\ast}}\beta^{\ast}(\xi_{2}),\; a_{3}\rangle
-\langle\alpha^{\ast}(\xi_{1}\cdot_{A^{\ast}}\xi_{3}),\; a_{2}\rangle
-\langle\xi_{2}\cdot_{A^{\ast}}\beta^{\ast}(\xi_{3}),\; a_{1}\rangle
-\langle\xi_{2},\; a_{1}\alpha(a_{3})\rangle\\[-1mm]
&\qquad\quad-\langle\xi_{3},\; \beta(a_{1}a_{2})\rangle
+\langle\xi_{1},\; a_{2}\alpha(a_{3})\rangle
+\langle\xi_{1}\cdot_{A^{\ast}}\beta^{\ast}(\xi_{3}),\; a_{2}\rangle
+\langle\alpha^{\ast}(\xi_{1}\cdot_{A^{\ast}}\xi_{2}),\; a_{3}\rangle,
\end{align*}
Thus, $\tilde{\mathfrak{B}}_{d}(-,-)$ is invariant on the perm algebra $(P, \diamond)$
if and only if Eq. \eqref{compa1} holds and $\alpha^{\ast}(\xi_{1}\cdot_{A^{\ast}}\xi_{2})
=\beta^{\ast}(\xi_{1})\cdot_{A^{\ast}}\xi_{2}-\xi_{1}\cdot_{A^{\ast}}\beta^{\ast}(\xi_{2})$
for any $\xi_{1}, \xi_{2}\in A^{\ast}$. Note that the equation above is just the dual
of Eq. \eqref{compa2}, we get the proof.
\end{proof}
\subsection{Induced perm bialgebras and solutions of Yang-Baxter equation}
\label{subsec:perm-bia}
Recall that a pair $(P, \bar{\Delta})$ is called a {\it perm coalgebra}, where $P$ is a
vector space and $\bar{\Delta}: P\rightarrow P\otimes P$ is a linear map such that
$$
(\bar{\Delta}\otimes\id)\bar{\Delta}=(\id\otimes\bar{\Delta})\bar{\Delta}
=(\tau\otimes\id)(\bar{\Delta}\otimes\id)\bar{\Delta}.
$$
The notion of a perm coalgebra is the dualization of the notion of a perm algebra, that is,
$(P, \bar{\Delta})$ is a finite-dimensional perm coalgebra if and only if $(P^{\ast},
\bar{\Delta}^{\ast})$ is a perm algebra.

\begin{lem} \label{lem:permcoa}
Let $(A, \Delta, \beta)$ be an averaging cocommutative coalgebra.
Then $(A, \bar{\Delta})$ is perm coalgebra, called the {\rm perm coalgebra induced}
by $(A, \Delta, \beta)$, where $\bar{\Delta}$ is defined by
$$
\bar{\Delta}=(\beta\otimes\id)\Delta.
$$
Moreover, $(A^{\ast}, \bar{\Delta}^{\ast})$ is exactly the perm algebra induced by
the averaging commutative algebra $(A^{\ast}, \Delta^{\ast}, \beta^{\ast})$.
\end{lem}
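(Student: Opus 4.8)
The plan is to deduce the whole lemma from the commutative-averaging-algebra case, Proposition \ref{pro:perm}, by dualization, so that essentially no new coalgebra identity has to be verified by hand. First I would recall from the discussion following Definition \ref{def:coderivation} that, since $(A,\Delta,\beta)$ is an averaging cocommutative coalgebra, its dual $(A^{\ast},\Delta^{\ast},\beta^{\ast})$ is a commutative averaging algebra; here the standing finite-dimensionality assumption is what lets me pass freely between a coalgebra and the algebra structure on its dual. Proposition \ref{pro:perm} then produces a perm algebra $(A^{\ast},\bullet)$ whose product is $\xi_{1}\bullet\xi_{2}=\beta^{\ast}(\xi_{1})\cdot_{A^{\ast}}\xi_{2}=\Delta^{\ast}(\beta^{\ast}(\xi_{1})\otimes\xi_{2})$, and the bulk of the proof is to identify this product with the dual of $\bar{\Delta}$.

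The computational heart is therefore the identification $\bar{\Delta}^{\ast}=\bullet$. Writing $\Delta(a)=\sum_{(a)}a_{(1)}\otimes a_{(2)}$, I have $\bar{\Delta}(a)=(\beta\otimes\id)\Delta(a)=\sum_{(a)}\beta(a_{(1)})\otimes a_{(2)}$, so that for all $\xi_{1},\xi_{2}\in A^{\ast}$ and $a\in A$,
$$
\langle\bar{\Delta}^{\ast}(\xi_{1}\otimes\xi_{2}),\;a\rangle
=\sum_{(a)}\langle\xi_{1},\;\beta(a_{(1)})\rangle\langle\xi_{2},\;a_{(2)}\rangle
=\sum_{(a)}\langle\beta^{\ast}(\xi_{1}),\;a_{(1)}\rangle\langle\xi_{2},\;a_{(2)}\rangle
=\langle\beta^{\ast}(\xi_{1})\cdot_{A^{\ast}}\xi_{2},\;a\rangle .
$$
Hence $\bar{\Delta}^{\ast}(\xi_{1}\otimes\xi_{2})=\beta^{\ast}(\xi_{1})\cdot_{A^{\ast}}\xi_{2}=\xi_{1}\bullet\xi_{2}$, which is exactly the perm multiplication on $A^{\ast}$ induced by $(A^{\ast},\Delta^{\ast},\beta^{\ast})$. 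This proves the ``moreover'' assertion and, at the same time, exhibits $(A^{\ast},\bar{\Delta}^{\ast})$ as a perm algebra.

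Finally I would invoke the duality recorded just before the statement: in finite dimensions $(A,\bar{\Delta})$ is a perm coalgebra if and only if $(A^{\ast},\bar{\Delta}^{\ast})$ is a perm algebra. Since the latter holds by the previous paragraph, $(A,\bar{\Delta})$ is a perm coalgebra, which completes the argument. Nothing here is deep once the dualization framework is used; the only point that needs genuine attention, rather than bookkeeping, is that the averaging condition on $\beta$ for the coalgebra $(A,\Delta)$ must dualize to the associative-algebra averaging condition on $\beta^{\ast}$ for $(A^{\ast},\Delta^{\ast})$ — this is precisely the equivalence after Definition \ref{def:coderivation} — and that cocommutativity of $\Delta$ is what feeds the commutativity hypothesis of Proposition \ref{pro:perm}. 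Once these conventions are matched, the verification of the perm-coalgebra axioms is just the transpose of identities already established, so I would not carry out a direct check of the three coassociativity-type relations by hand.
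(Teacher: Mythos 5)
Your proposal is correct and follows essentially the same route as the paper's proof: dualize to get the commutative averaging algebra $(A^{\ast},\Delta^{\ast},\beta^{\ast})$, apply Proposition \ref{pro:perm} to obtain the induced perm algebra, identify its product with $\bar{\Delta}^{\ast}$, and conclude by the algebra--coalgebra duality. The only difference is that you write out explicitly the pairing computation showing $\bar{\Delta}^{\ast}(\xi_{1}\otimes\xi_{2})=\beta^{\ast}(\xi_{1})\cdot_{A^{\ast}}\xi_{2}$, which the paper dismisses as ``straightforward.''
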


\begin{proof}
Since $(A, \Delta, \beta)$ is a cocommutative averaging coalgebra, we get a
commutative averaging algebra $(A^{\ast}, \Delta^{\ast}, \beta^{\ast})$.
Let $(A^{\ast}, \bullet)$ be the perm algebra induced by $(A^{\ast}, \Delta^{\ast},
\beta^{\ast})$, that is, for any $\xi_{1}, \xi_{2}\in A^{\ast}$,
$\xi_{1}\bullet\xi_{2}:=\Delta^{\ast}(\beta^{\ast}(\xi_{1})\otimes\xi_{2})$.
It is straightforward that $\bullet$ is just $\bar{\Delta}^{\ast}$,
the linear dual of $\bar{\Delta}$.
Thus, $(A^{\ast}, \bar{\Delta}^{\ast})$ is the perm algebra induced by $(A^{\ast},
\Delta^{\ast},\beta^{\ast})$, and so that, $(A, \bar{\Delta})$ is a perm coalgebra.
\end{proof}

\begin{defi}[\cite{LZB,Hou}]\label{def:perm-bia}
A {\rm perm bialgebra} is a triple $(P, \bullet, \bar{\Delta})$, where $(P, \bullet)$ is
a perm algebra and $(P, \bar{\Delta})$ is a perm coalgebra, such that for any
$p_{1}, p_{2}\in P$,
\begin{align}
\bar{\Delta}(p_{1}\bullet p_{2})&=((\tilde{\fl}-\tilde{\fr})(p_{1})\otimes\id)
\bar{\Delta}(p_{2})+(\id\otimes\,\tilde{\fr}(p_{2}))\bar{\Delta}(p_{1}),   \label{pbi1}\\
\tau(\tilde{\fr}(p_{2})\otimes\id)\bar{\Delta}(p_{1})
&=(\tilde{\fr}(p_{1})\otimes\id)\bar{\Delta}(p_{2}),             \label{pbi2}\\
\bar{\Delta}(p_{1}\bullet p_{2})&=(\id\otimes\,\tilde{\fl}(p_{1}))\bar{\Delta}(p_{2})
+((\tilde{\fl}-\tilde{\fr})(p_{2})\otimes\id)(\bar{\Delta}(p_{1})
-\tau\bar{\Delta}(p_{1})).                                       \label{pbi3}
\end{align}
\end{defi}

\begin{pro}\label{pro:asspoibi}
Let $(A, \alpha)$ be a commutative averaging algebra, $(A, \Delta, \beta)$ be a
cocommutative averaging coalgebra, $(A, \bullet)$ be the perm algebra induced by
$(A, \alpha)$ and $(A, \bar{\Delta})$ be the perm coalgebra induced by $(A, \Delta, \beta)$.
Suppose that $(A, \Delta, \alpha, \beta)$ is a commutative and cocommutative
averaging ASI bialgebra. Then $(A, \bullet, \bar{\Delta})$ is a perm bialgebra if and
only if, for any $a_{1}, a_{2}\in A$,
\begin{align}
&(\beta\fl_{A}(a_{2})\otimes\id)\Delta(\alpha(a_{1}))
-(\beta\fl_{A}(\alpha(a_{2}))\otimes\id)\Delta(a_{1})
+(\fl_{A}(a_{1})\alpha\beta\otimes\id)\Delta(a_{2})=0,     \label{indbi1}\\
&\qquad\qquad\quad(\id\otimes\fl_{A}(a_{2})\alpha\beta)\Delta(a_{1})
-(\fl_{A}(a_{1})\alpha\beta\otimes\id)\Delta(a_{2})=0,     \label{indbi2}\\
&(\beta\fl_{A}(a_{2})\otimes\id)\Delta(\alpha(a_{1}))
-(\fl_{A}(\alpha(a_{2}))\beta\otimes\id)\Delta(a_{1})
+(\fl_{A}(a_{2})\alpha\beta\otimes\id)\Delta(a_{1})         \label{indbi3}\\[-1mm]
&\qquad\qquad\qquad\qquad\qquad\;+(\fl_{A}(\alpha(a_{2}))\otimes\beta)\Delta(a_{1})
-(\fl_{A}(a_{2})\alpha\otimes\beta)\Delta(a_{1})=0.         \nonumber
\end{align}
\end{pro}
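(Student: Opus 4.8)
The plan is to exploit that the two underlying structures are already in place. By Proposition~\ref{pro:perm} the product $a_{1}\bullet a_{2}=\alpha(a_{1})a_{2}$ makes $(A,\bullet)$ a perm algebra, and by Lemma~\ref{lem:permcoa} the comultiplication $\bar{\Delta}=(\beta\otimes\id)\Delta$ makes $(A,\bar{\Delta})$ a perm coalgebra (here cocommutativity of $(A,\Delta,\beta)$ is essential). Hence $(A,\bullet,\bar{\Delta})$ is a perm bialgebra exactly when the three compatibility conditions \eqref{pbi1}--\eqref{pbi3} of Definition~\ref{def:perm-bia} hold, so the entire task is to show that, under the standing hypothesis that $(A,\Delta,\alpha,\beta)$ is a commutative and cocommutative averaging ASI bialgebra, these three conditions are jointly equivalent to \eqref{indbi1}--\eqref{indbi3}. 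First I would fix the dictionary between the induced and the original structure maps: since $A$ is commutative, $\fl_{A}=\fr_{A}$, and a direct check gives $\tilde{\fl}(a)=\fl_{A}(\alpha(a))$, $\tilde{\fr}(a)=\fl_{A}(a)\alpha$, whence $(\tilde{\fl}-\tilde{\fr})(a)=\fl_{A}(\alpha(a))-\fl_{A}(a)\alpha$, together with $\bar{\Delta}=(\beta\otimes\id)\Delta$.

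With this dictionary I would substitute into each condition and simplify. The cleanest is \eqref{pbi2}: substitution turns its right-hand side into $(\fl_{A}(a_{1})\alpha\beta\otimes\id)\Delta(a_{2})$ and its left-hand side into $\tau(\fl_{A}(a_{2})\alpha\beta\otimes\id)\Delta(a_{1})$, and applying cocommutativity $\tau\Delta=\Delta$ to move the flip onto the second tensor leg rewrites the latter as $(\id\otimes\fl_{A}(a_{2})\alpha\beta)\Delta(a_{1})$; thus \eqref{pbi2} becomes exactly \eqref{indbi2}. For \eqref{pbi1} and \eqref{pbi3} the common left-hand side is $\bar{\Delta}(a_{1}\bullet a_{2})=(\beta\otimes\id)\Delta(\alpha(a_{1})a_{2})$, which I would expand using the ASI compatibility \eqref{bialg1}; the right-hand sides expand through the dictionary into sums of terms carrying $\beta$, $\alpha$ and a multiplication operator on one or the other tensor factor. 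I would then collapse the resulting operator strings, such as $\beta\fl_{A}(\alpha(a))$, $\fl_{A}(\alpha(a))\beta$ and $\fl_{A}(a)\alpha\beta$, by means of the averaging-bimodule identities \eqref{adm1} and \eqref{adm2} for the regular bimodule $(A,\fl_{A},\fr_{A},\beta)$ over $(A,\alpha)$, reorienting legs with cocommutativity, and recover \eqref{indbi1} from \eqref{pbi1} and \eqref{indbi3} from \eqref{pbi3}.

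The hard part will be the ``cross'' terms appearing in \eqref{pbi1} and \eqref{pbi3}, i.e. those in which $\bar{\Delta}(a_{1})$ or $\bar{\Delta}(a_{2})$ is acted on by a multiplication operator on the tensor leg opposite to the one carrying $\beta$: these do not match \eqref{indbi1} and \eqref{indbi3} termwise and must first be transported to the correct leg. To do this I would invoke the reduced form of the antisymmetry relation \eqref{bialg2}, which in the commutative and cocommutative setting collapses (via $\fl_{A}=\fr_{A}$ and $\tau\Delta=\Delta$) to $(\fl_{A}(a_{1})\otimes\id-\id\otimes\fl_{A}(a_{1}))\Delta(a_{2})=(\fl_{A}(a_{2})\otimes\id-\id\otimes\fl_{A}(a_{2}))\Delta(a_{1})$, together with the leg-exchange relations $(\beta\otimes\alpha)\Delta=(\id\otimes\alpha)\Delta\alpha$ and $(\alpha\otimes\beta)\Delta=(\alpha\otimes\id)\Delta\alpha$ furnished by Lemma~\ref{lem:dual-mod} for the bimodule $(A^{\ast},\fl_{A^{\ast}},\fr_{A^{\ast}},\alpha^{\ast})$ over $(A^{\ast},\Delta^{\ast},\beta^{\ast})$. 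The delicate point is purely combinatorial: one must choose the order in which these identities are applied so that the cross terms cancel rather than proliferate.

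Finally, since every manipulation above is an equality of tensors produced by substituting relations that hold by hypothesis, each reduction step is a reversible equivalence. Consequently the system \eqref{pbi1}--\eqref{pbi3} is equivalent, as a whole, to the system \eqref{indbi1}--\eqref{indbi3}, which establishes both implications of the proposition at once. I expect no conceptually new ingredient beyond careful bookkeeping with the averaging and cocommutativity identities.
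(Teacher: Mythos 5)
Your plan follows the paper's own proof: both reduce the statement to checking \eqref{pbi1}--\eqref{pbi3} against \eqref{indbi1}--\eqref{indbi3} via the dictionary $\tilde{\fl}(a)=\fl_{A}(\alpha(a))$, $\tilde{\fr}(a)=\fl_{A}(a)\alpha$, $\bar{\Delta}=(\beta\otimes\id)\Delta$, expand $\bar{\Delta}(a_{1}\bullet a_{2})$ using \eqref{bialg1}, handle \eqref{pbi2} by a flip through cocommutativity, and close the remaining cases with the symmetrized relation $\big(\fl_{A}(a_{1})\otimes\id-\id\otimes\fl_{A}(a_{1})\big)\Delta(a_{2})=\big(\fl_{A}(a_{2})\otimes\id-\id\otimes\fl_{A}(a_{2})\big)\Delta(a_{1})$ together with the averaging bimodule identities \eqref{adm1}--\eqref{adm2}. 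The only cosmetic difference is that you derive that key relation from \eqref{bialg2} plus cocommutativity while the paper derives it from \eqref{bialg1} plus commutativity (the two coincide in the commutative, cocommutative setting), so your argument is essentially the paper's.
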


\begin{proof}
For any $a_{1}, a_{2}\in A$, by Eq. \eqref{bialg1}, we have
$$
\bar{\Delta}(a_{1}\bullet a_{2})=\bar{\Delta}(\alpha(a_{1})a_{2})
=(\beta\otimes\id)\Delta(\alpha(a_{1})a_{2})
=(\beta\fl_{A}(a_{2})\otimes\id)\Delta(\alpha(a_{1}))
+(\beta\otimes \fl_{A}(\alpha(a_{1})))\Delta(a_{2}).
$$
Moreover, since $A$ is commutative and cocommutative, we get
\begin{align*}
&\;((\tilde{\fl}-\tilde{\fr})(a_{1})\otimes\id)
\bar{\Delta}(a_{2})+(\id\otimes\tilde{\fr}(a_{2}))\bar{\Delta}(a_{1})\\
=&\;((\tilde{\fl}-\tilde{\fr})(a_{1})\otimes\id)(\beta\otimes\id)\Delta(a_{2})
+(\id\otimes\tilde{\fr}(a_{2}))(\beta\otimes\id)\Delta(a_{1})\\
=&\;(\beta\fl_{A}(\alpha(a_{1}))\otimes\id)\Delta(a_{2})
-(\fl_{A}(a_{1})\alpha\beta\otimes\id)\Delta(a_{2})
+(\beta\otimes\fl_{A}(a_{2})\alpha)\Delta(a_{1}).
\end{align*}
Note that the Eq. \eqref{bialg1} means $\big(\fl_{A}(a_{1})\otimes\id-\id\otimes
\,\fl_{A}(a_{1})\big)\Delta(a_{2})=\big(\fl_{A}(a_{2})\otimes\id-\id\otimes
\,\fl_{A}(a_{2})\big)\Delta(a_{1})$, we get Eq. \eqref{pbi1} holds if and only if
Eq. \eqref{indbi1} holds. Similarly, we can get Eq. \eqref{pbi2} holds if and only if
Eq. \eqref{indbi2} holds and Eq. \eqref{pbi3} holds if and only if Eq. \eqref{indbi3} holds.
The proof is finished.
\end{proof}

In Proposition \ref{pro:asspoibi}, if Eqs. \eqref{compa1} and \eqref{compa2} hold,
then we obtain $(\alpha\otimes\beta)\Delta=0$, $a_{1}\alpha(\beta(a_{2}))=0$
for any $a_{1}, a_{2}\in A$, and one can check that Eqs. \eqref{pbi1}-\eqref{pbi3} hold.
That is to say, in this case, $(A, \bullet, \bar{\Delta})$ is a perm bialgebra, which
is called {\it the induced perm bialgebra} by a commutative and cocommutative
averaging ASI bialgebra $(A, \Delta, \alpha, \beta)$.

\begin{ex}\label{ex:perm-bialg}
Consider the averaging ASI bialgebra $(A, \Delta, \alpha, \beta)$ given in Example
\ref{ex:YBE-bialg1}. That is, $A=\Bbbk\{e_{1}, e_{2}, e_{3}\}$, the non-zero product is
given by $e_{1}e_{1}=e_{1}$, $e_{1}e_{2}=e_{2}=e_{2}e_{1}$, the comultiplication is given
by $\Delta(e_{1})=-e_{2}\otimes e_{3}-e_{3}\otimes e_{2}$, $\Delta(e_{2})=\Delta(e_{3})=0$,
$\alpha(e_{1})=e_{3}$, $\alpha(e_{3})=\alpha(e_{2})=0$ and $\beta=0$.
Then one can check that the induced perm bialgebra $(A, \bullet, \bar{\Delta})$
by $(A, \Delta, \alpha, \beta)$ is trivial, i.e., the multiplication $\bullet$
and the comultiplication $\bar{\Delta}$ are zero.
\end{ex}

For perm bialgebras, we have the following Theorem.

\begin{thm}[\cite{LZB,Hou}]\label{thm:poibieq}
Let $(P, \bullet)$ be a perm algebra. Suppose that there is a perm algebra structure
$(P^{\ast}, \bullet)$ on $A^{\ast}$, and $\Delta: P\rightarrow P\otimes P$ is
the linear dual of $\bullet$ in $P^{\ast}$. Then the following conditions are equivalent:
\begin{enumerate}\itemsep=0pt
\item[$(i)$] $(P, \bullet, \Delta)$ is a perm bialgebra;
\item[$(ii)$] $((P, \bullet),\; (P^*, \bullet),\; \tilde{\fl}_{A}^{\ast},\;
      \tilde{\fl}_{A}^{\ast}-\tilde{\fr}_{A}^{\ast},\; \tilde{\fl}_{A^{\ast}}^{\ast},\;
      \tilde{\fl}_{A^{\ast}}^{\ast}-\tilde{\fr}_{A^{\ast}}^{\ast})$ is a matched pair
      of perm algebras;
\item[$(iii)$] There is a Manin triple of perm algebras $((A\bowtie A^{\ast}, \diamond,
     \tilde{\mathfrak{B}}_{d}),\; A,\; A^{\ast})$ associated to $(A, \bullet)$ and
     $(A^{\ast}, \bullet)$, where $\tilde{\mathfrak{B}}_d$ is defined in Proposition
     \ref{pro:assFro}.
\end{enumerate}
\end{thm}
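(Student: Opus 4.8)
The plan is to establish the two equivalences $(ii)\Leftrightarrow(iii)$ and $(i)\Leftrightarrow(ii)$ separately, in direct parallel with the proof of Theorem \ref{thm:equ} for averaging ASI bialgebras, using the double construction to handle the Manin triple and a dualization argument to handle the bialgebra axioms.

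First I would treat $(ii)\Leftrightarrow(iii)$ through the double construction. Given the matched pair $((A,\bullet),(A^{\ast},\bullet),\tilde{\fl}_{A}^{\ast},\tilde{\fl}_{A}^{\ast}-\tilde{\fr}_{A}^{\ast},\tilde{\fl}_{A^{\ast}}^{\ast},\tilde{\fl}_{A^{\ast}}^{\ast}-\tilde{\fr}_{A^{\ast}}^{\ast})$, I would form the perm algebra $(A\oplus A^{\ast},\diamond)$ via Definition \ref{def:matperm}, so that $(A,\bullet)$ and $(A^{\ast},\bullet)$ sit inside as perm subalgebras. The bilinear form $\tilde{\mathfrak{B}}_{d}$ of Proposition \ref{pro:assFro} is manifestly nondegenerate and antisymmetric, and $A$, $A^{\ast}$ are isotropic by its very definition, so the only content of this direction is the invariance identity $\tilde{\mathfrak{B}}_{d}(x\diamond y,z)=\tilde{\mathfrak{B}}_{d}(x,y\diamond z-z\diamond y)$. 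I would verify this by expanding both sides on mixed arguments using the cross products of Definition \ref{def:matperm} together with the dual actions $\tilde{\fl}_{A}^{\ast}$ and $\tilde{\fl}_{A}^{\ast}-\tilde{\fr}_{A}^{\ast}$ supplied by Lemma \ref{lem:pdual-mod}; the cross terms cancel precisely because the bimodule actions are the coadjoint ones. Conversely, starting from a Manin triple, isotropy of $A$ and $A^{\ast}$ forces $\tilde{\mathfrak{B}}_{d}$ to identify $A^{\ast}$ with the full dual of $A$, and then invariance pins the two bimodule actions down to be exactly $\tilde{\fl}_{A}^{\ast}$, $\tilde{\fl}_{A}^{\ast}-\tilde{\fr}_{A}^{\ast}$ and their $A^{\ast}$-analogues, recovering the matched pair.

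Next I would establish $(i)\Leftrightarrow(ii)$. Dualizing $\Delta$ yields the perm multiplication $\bullet$ on $A^{\ast}$, and the regular bimodule $(A,\tilde{\fl}_{A},\tilde{\fr}_{A})$ together with its dual $(A^{\ast},\tilde{\fl}_{A}^{\ast},\tilde{\fl}_{A}^{\ast}-\tilde{\fr}_{A}^{\ast})$ from Lemma \ref{lem:pdual-mod} provides the candidate actions of the matched pair. I would then show that the three perm bialgebra compatibility conditions \eqref{pbi1}--\eqref{pbi3}, after pairing against an arbitrary element of $A^{\ast}$ (respectively $A$), are \emph{exactly} the statements that the mixed products $(a,0)\diamond(0,\xi)$ and $(0,\xi)\diamond(a,0)$ satisfy the three perm algebra axioms, i.e.\ that $(A\oplus A^{\ast},\diamond)$ is a perm algebra. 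Each of \eqref{pbi1}, \eqref{pbi2}, \eqref{pbi3} dualizes to one family of matched-pair/bimodule compatibilities, and conversely the perm axioms on $A\oplus A^{\ast}$ restricted to mixed elements return the three conditions.

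The main obstacle I expect is the bookkeeping in $(i)\Leftrightarrow(ii)$: the perm axioms are left--right asymmetric, there are three of them, and the flip $\tau$ together with the combination $\tilde{\fl}-\tilde{\fr}$ appears in \eqref{pbi2} and \eqref{pbi3}, which is reflected in the unusual shape $(\kl^{\ast},\kl^{\ast}-\kr^{\ast})$ of the dual bimodule. Matching each compatibility condition to the correct perm axiom on $A\oplus A^{\ast}$, while transposing every action and tracking the $\tau$ factors without sign errors, is the delicate computational core; once this dictionary is fixed the remaining verifications are routine dualizations, and the result already appears in \cite{LZB,Hou}.
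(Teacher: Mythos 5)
The paper does not actually prove this statement: Theorem \ref{thm:poibieq} is recalled verbatim from \cite{LZB,Hou} as a known result, so there is no in-paper argument to compare yours against. Your outline follows the standard route — $(ii)\Leftrightarrow(iii)$ by forming the perm algebra $(A\oplus A^{\ast},\diamond)$ from the matched pair and checking invariance of $\tilde{\mathfrak{B}}_{d}$, and $(i)\Leftrightarrow(ii)$ by dualizing \eqref{pbi1}--\eqref{pbi3} against the coadjoint actions of Lemma \ref{lem:pdual-mod} — which is precisely the strategy of the cited references and the exact analogue of how this paper handles the averaging case (Theorem \ref{thm:equ} via Propositions \ref{pro:dou-mat} and \ref{pro:bi-mat}), so the approach is sound and consistent, with the caveat that your sketch defers the computational dictionary between the three bialgebra identities and the perm axioms on mixed triples, which you correctly identify as the substantive part.
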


Thus, for the induced matched pair of perm algebras, induced Manin triple of perm algebras
and induced perm bialgebras, 
we have
{\footnotesize$$
\xymatrix@C=1.12cm{
\txt{$((A, \alpha), (A^{\ast}, \alpha^{\ast}), \fl_{A}^{\ast}, \fl_{A^{\ast}}^{\ast})$ \\
{\tiny a matched pair of} \\ {\tiny commutative averaging algebras}}
\ar[d]_-{{\rm Cor.} \ref{cor:adjassmat}} \ar@{<->}[r]^-{{\rm Thm.}~\ref{thm:equ}}
& \txt{$(A, \Delta, \alpha, \alpha)$ \\ {\tiny a commutative and cocommutative} \\
{\tiny averaging ASI bialgebra}} \ar@{<->}[r]^-{{\rm Thm.}~\ref{thm:equ}}
\ar[d]_-{{\rm Pro.}~\ref{pro:asspoibi}}
& \txt{$(A\bowtie A^{\ast}, \alpha\oplus\alpha^{\ast},\mathfrak{B}_{d})$ \\
{\tiny a double construction of} \\ {\tiny commutative averaging Frobenius algebra}}
\ar[d]_-{{\rm Pro.}~\ref{pro:assFro}} \\
\txt{$(A, A^{\ast}, \tilde{\fl}_{A}^{\ast}, \tilde{\fl}_{A}^{\ast}-\tilde{\fr}_{A}^{\ast},
\tilde{\fl}_{A^{\ast}}^{\ast}, \tilde{\fl}_{A^{\ast}}^{\ast}
-\tilde{\fr}_{A^{\ast}}^{\ast})$ \\ {\tiny the induced matched pair} \\
{\tiny of perm algebras}} \ar@{<->}[r]^-{{\rm Thm.}~\ref{thm:poibieq}}
& \txt{$(A, \bullet, \bar{\Delta})$ \\ {\tiny the induced} \\
{\tiny perm bialgebra}} \ar@{<->}[r]^-{{\rm Thm.}~\ref{thm:poibieq}} &
\txt{$((A\oplus A^{\ast}, \bullet, \tilde{\mathfrak{B}}_{d}),\; A,\; A^{\ast})$ \\
{\tiny the induced Manin triple} \\ {\tiny of perm algebras}}
}
$$}

Next, we consider solutions of Yang-Baxter equation in perm algebras.
Let $(P, \bullet)$ be a perm algebra and $r\in P\otimes P$. Then equation
$$
r_{12}\bullet r_{23}-r_{13}\bullet r_{23}+r_{12}\bullet r_{13}-r_{13}\bullet r_{12}=0
$$
is called the {\it Yang-Baxter equation in perm algebra} $(P, \bullet)$.
The solutions of Yang-Baxter equation in perm algebras are closely related to
the perm bialgebras \cite{LZB}.

\begin{pro}\label{pro:aybe-pybe}
Let $(A, \alpha)$ be a commutative averaging algebra, $(A, \bullet)$ be the
perm algebra induced by $(A, \alpha)$, and $(M, \mu, \beta)$ be a bimodule over
$(A, \alpha)$. If Eq. \eqref{compa1} holds, then each solution of $\beta$-YBE in
$(A, \alpha)$ is a solution of YBE in the induced perm algebra $(A, \bullet)$.
\end{pro}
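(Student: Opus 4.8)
The plan is to write $r=\sum_i x_i\otimes y_i$ and compare, term by term, the Yang--Baxter expression in the induced perm algebra $(A,\bullet)$ against the associative Yang--Baxter expression \eqref{ybe1} in $(A,\alpha)$, using throughout that $a_1\bullet a_2=\alpha(a_1)a_2$. First I would extract the only two ways the hypothesis \eqref{compa1} will be used. Interchanging $a_1,a_2$ in \eqref{compa1} and invoking commutativity of $A$ gives $\alpha(a_1)a_2-a_1\alpha(a_2)=\alpha(a_2)a_1-a_2\alpha(a_1)$; since $\mathrm{char}\,\Bbbk=0$ this forces $\alpha(a_1)a_2=\alpha(a_2)a_1$ for all $a_1,a_2$ (that is, $\bullet$ is commutative), and then $\beta(a_1a_2)=\alpha(a_1)a_2-a_1\alpha(a_2)=0$. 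I would isolate these two identities at the start, as everything else is formal tensor bookkeeping.

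Next I would expand the perm Yang--Baxter element as $A-B+C-D$, where
\begin{align*}
A&=r_{12}\bullet r_{23}=\sum_{i,j}x_i\otimes\alpha(y_i)x_j\otimes y_j, &
B&=r_{13}\bullet r_{23}=\sum_{i,j}x_i\otimes x_j\otimes\alpha(y_i)y_j,\\
C&=r_{12}\bullet r_{13}=\sum_{i,j}\alpha(x_i)x_j\otimes y_i\otimes y_j, &
D&=r_{13}\bullet r_{12}=\sum_{i,j}\alpha(x_i)x_j\otimes y_j\otimes y_i,
\end{align*}
and prove separately that $A=B$ and $C=D$. The equality $C=D$ is immediate: applying $\alpha(x_i)x_j=\alpha(x_j)x_i$ inside $C$ and relabelling $i\leftrightarrow j$ converts $C$ into $D$, so only commutativity of $\bullet$ is needed.

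The substantive step is $A=B$, and this is where I expect the main obstacle: the $\alpha$-factor sits in different tensor slots in $A$ and in $B$, so commutativity alone cannot match them and the associative equation \eqref{ybe1} must genuinely be invoked. The plan is to first rewrite both terms with a $\beta$ on the first leg by means of \eqref{ybe3}, i.e. $\sum_i x_i\otimes\alpha(y_i)=\sum_i\beta(x_i)\otimes y_i$: tensoring this identity with the second copy of $r$ and multiplying into the appropriate slot yields $A=\sum_{i,j}\beta(x_i)\otimes y_ix_j\otimes y_j$ and $B=\sum_{i,j}\beta(x_i)\otimes x_j\otimes y_iy_j$. Then I would apply $\beta\otimes\id\otimes\id$ to \eqref{ybe1}; its first term vanishes because $\beta(x_ix_j)=0$, leaving $\sum_{i,j}\beta(x_i)\otimes x_j\otimes y_iy_j=\sum_{i,j}\beta(x_j)\otimes x_iy_j\otimes y_i$. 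The left-hand side is exactly $B$, while the right-hand side, after relabelling $i\leftrightarrow j$ and using $x_jy_i=y_ix_j$, is exactly $A$; hence $A=B$. Combining the two reductions gives $A-B+C-D=0$, which is precisely the Yang--Baxter equation in $(A,\bullet)$. I would close by remarking that \eqref{ybe2} may be used symmetrically in place of \eqref{ybe3}, and that the entire verification takes place inside $A$ and uses only the maps $\alpha,\beta\colon A\to A$, the bimodule $(M,\mu,\beta)$ merely fixing the ambient setting in which $\beta$ arises.
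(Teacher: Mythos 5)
Your proof is correct, and its computational engine coincides with the paper's: both arguments use \eqref{ybe3} to trade the interior $\alpha$ for a $\beta$ on the first tensor factor, then apply $\beta\otimes\id\otimes\id$ to \eqref{ybe1}, and finally invoke \eqref{compa1} together with commutativity to handle the term $\sum_{i,j}\beta(x_ix_j)\otimes y_i\otimes y_j$. The organization, however, is genuinely different. The paper never isolates your preliminary lemma: it keeps $\beta(x_ix_j)$, substitutes \eqref{compa1} in the swapped form $\beta(x_ix_j)=\beta(x_jx_i)=\alpha(x_j)x_i-\alpha(x_i)x_j$, and cancels the resulting terms against $r_{13}\bullet r_{12}$ after relabelling, thereby proving the single four-term identity $r_{13}\bullet r_{23}+r_{13}\bullet r_{12}=r_{12}\bullet r_{23}+r_{12}\bullet r_{13}$ (your $B+D=A+C$) in one chain of equalities. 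You instead symmetrize \eqref{compa1} in $a_1,a_2$ at the outset, deducing that $\bullet$ is commutative and that $\beta$ vanishes on all products, which splits the verification into the two independent halves $A=B$ and $C=D$. Your route is cleaner and makes explicit a degeneracy the paper never records (under the hypotheses, $\beta(A\cdot A)=0$ and the induced perm algebra is commutative); as a by-product it shows that the paper's Example \ref{ex:YBE-perm} cannot satisfy \eqref{compa1} for all pairs as claimed, since there $\beta(e_2e_1)=\beta(e_3)=e_3\neq 0$ while the lemma forces it to vanish (indeed \eqref{compa1} fails there for $(a_1,a_2)=(e_2,e_1)$). What the paper's route buys in exchange is that it never divides by $2$, so it is valid in any characteristic, whereas your symmetrization needs $2\neq 0$; under the standing characteristic-zero assumption this is immaterial.
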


\begin{proof}
Suppose that $r=\sum_{i}x_{i}\otimes y_{i}\in A\otimes A$ is a solution of $\beta$-YBE
in $(A, \alpha)$. That is,
\begin{align*}
&\qquad\sum_{i,j}x_{i}x_{j}\otimes y_{i}\otimes y_{j}
+\sum_{i,j}x_{i}\otimes x_{j}\otimes y_{i}y_{j}
-\sum_{i,j}x_{j}\otimes x_{i}y_{j}\otimes y_{i}=0,\\[-1mm]
&\sum_{i}\alpha(x_{i})\otimes y_{i}=\sum_{i}x_{i}\otimes\beta(y_{i}),\qquad\qquad
\sum_{i}\beta(x_{i})\otimes y_{i}=\sum_{i}x_{i}\otimes\alpha(y_{i}).
\end{align*}
Since $\beta(a_{1}a_{2})=\alpha(a_{1})a_{2}-a_{1}\alpha(a_{2})$ for any $a_{1}, a_{2}\in A$,
we have
\begin{align*}
&\;r_{13}\bullet r_{23}+r_{13}\bullet r_{12}\\
=&\;\sum_{i,j}x_{i}\otimes x_{j}\otimes\alpha(y_{i})y_{j}
+\sum_{i,j}\alpha(x_{i})x_{j}\otimes y_{j}\otimes y_{i}\\[-1mm]
=&\;\sum_{i,j}\beta(x_{j})\otimes x_{i}y_{j}\otimes y_{i}
-\sum_{i,j}\beta(x_{i}x_{j})\otimes y_{i}\otimes y_{j}
+\sum_{i,j}\alpha(x_{i})x_{j}\otimes y_{j}\otimes y_{i}\\[-1mm]
=&\;\sum_{i,j}\beta(x_{j})\otimes x_{i}y_{j}\otimes y_{i}
-\sum_{i,j}\alpha(x_{j})x_{i}\otimes y_{i}\otimes y_{j}
+\sum_{i,j}\alpha(x_{i})x_{j}\otimes y_{i}\otimes y_{j}
+\sum_{i,j}\alpha(x_{i})x_{j}\otimes y_{j}\otimes y_{i}\\[-1mm]
=&\;\sum_{i,j}\beta(x_{j})\otimes x_{i}y_{j}\otimes y_{i}
+\sum_{i,j}\alpha(x_{i})x_{j}\otimes y_{i}\otimes y_{j}\\[-1mm]
=&\;\sum_{i,j}x_{i}\otimes\alpha(y_{i})x_{j}\otimes y_{j}
+\sum_{i,j}\alpha(x_{i})x_{j}\otimes y_{i}\otimes y_{j}\\
=&\;r_{12}\bullet r_{23}+r_{12}\bullet r_{13}.
\end{align*}
This means that $r$ is a solution of YBE in the induced perm algebra $(A, \bullet)$.
\end{proof}

\begin{ex}\label{ex:YBE-perm}
Let $(A, \alpha)$ be the 3-dimensional commutative averaging algebra, which is given
by $A=\Bbbk\{e_{1}, e_{2}, e_{3}\}$ with non-zero product $e_{1}e_{1}=e_{2}$,
$e_{1}e_{2}=e_{3}=e_{2}e_{1}$ and $\alpha(e_{1})=e_{2}+e_{3}$, $\alpha(e_{2})=-e_{2}$,
$\alpha(e_{3})=e_{3}$. Define linear map $\beta: A\rightarrow A$ by $\beta(e_{1})=-e_{1}$,
$\beta(e_{2})=0$, $\beta(e_{3})=e_{3}$. Then $(A^{\ast}, \fr_{A}^{\ast}, \fl_{A}^{\ast},
\beta^{\ast})$ is a bimodule over $(A, \alpha)$, Eq. \eqref{compa1} holds and the induced
perm algebra $(A, \bullet)$ is given by $e_{1}\bullet e_{1}=e_{3}=e_{2}\bullet e_{1}$.
Let $r=e_{3}\otimes e_{3}$. Then one can check that $r$ is a symmetric solution
of the $\beta$-YBE in $(A, \alpha)$ and satisfies Eq. \eqref{coba1}. It is easy to
see that $r$ is also a solution of the YBE in perm algebra $(A, \bullet)$.
\end{ex}

Recently, Lin, Zhou and Bai constructed Lie biagebra by using a perm algebra and a pre-Lie
algebra \cite{LZB}. Recall that a {\it pre-Lie algebra} $(Q, \circ)$ is a vector space
$Q$ with a binary operation $\circ: Q\otimes Q\rightarrow Q$ such that
$$
(q_{1}\circ q_{2})\circ q_{3}-q_{1}\circ(q_{2}\circ q_{3})
=(q_{2}\circ q_{1})\circ q_{3}-q_{2}\circ(q_{1}\circ q_{3}),
$$
for any $q_{1}, q_{2}, q_{3}\in Q$. Let $(Q, \circ)$ be a pre-Lie algebra.
A bilinear from $\omega(-, -)$ is called {\it invariant}, if
$\omega(q_{1}\circ q_{2},\; q_{3})=-\omega(q_{2},\; q_{1}\circ q_{3}-q_{3}\circ q_{1})$.
A pre-Lie algebra $(Q, \circ)$ with an antisymmetric nondegenerate invariant bilinear
form $\omega(-,-)$ is called a {\it quadratic pre-Lie algebra}.
Let $(P, \bullet)$ be a perm algebra and $(Q, \circ)$ be a pre-Lie algebra. Define
a binary operation $[-,-]: (P\otimes Q)\otimes(P\otimes Q)\rightarrow(P\otimes Q)$ by
$$
[p_{1}\otimes q_{1},\; p_{2}\otimes q_{2}]=(p_{1}\bullet p_{2})\otimes(q_{1}\circ q_{2})
-(p_{2}\bullet p_{1})\otimes(q_{2}\circ q_{1}),
$$
for any $p_{1}, p_{2}\in P$ and $q_{1}, q_{2}\in Q$. Then one can check that
$(P\otimes Q,\; [-,-])$ is a Lie algebra, which is called {\it the induced Lie algebra}
from $(P, \bullet)$ and $(Q, \circ)$. In \cite{LZB}, the authors have extended this
conclusion to the Lie bialgebra, and constructed some solutions of the classical Yang-Baxter
equation in Lie algebra from solutions of YBE in perm algebra. Let $(L, [-,-])$ be
a Lie algebra and $r\in L\otimes L$. The classical Yang-Baxter equation
in Lie algebra $(L, [-,-])$ is given by
$$
[r_{12}, r_{13}]+[r_{12}, r_{23}]+[r_{13}, r_{23}]=0.
$$

\begin{pro}[\cite{LZB}]\label{pro:pr-lieybe}
Let $(P, \bullet)$ be a perm algebra, $(Q, \circ, \omega)$ be a quadratic pre-Lie
algebra, and $(P\otimes Q,\; [-,-])$ be the induced Lie algebra from $(P, \bullet)$
and $(Q, \circ)$. Let $\{e_{1}, e_{2},\cdots, e_{n}\}$ be a basis of $Q$, and $\{f_{1},
f_{2},\cdots, f_{n}\}$ be the dual basis with respect to $\omega(-,-)$. If $r=\sum_{i}
x_{i}\otimes y_{i}\in P\otimes P$ is a solution of the perm algebra $(P, \bullet)$, then
$$
\tilde{r}=\sum_{i, j}(x_{i}\otimes e_{j})\otimes(y_{i}\otimes f_{j})
$$
is a solution of the classical Yang-Baxter equation in $(P\otimes Q,\; [-,-])$.
\end{pro}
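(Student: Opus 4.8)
The plan is to verify the classical Yang--Baxter equation for $\tilde r$ by a direct expansion of the three brackets followed by a systematic contraction of the $Q$-indices against the invariant form $\omega$. Throughout I write $r=\sum_a x_a\otimes y_a$ and set $C=\sum_i e_i\otimes f_i\in Q\otimes Q$ for the Casimir element determined by the dual bases, so that $\tilde r=\sum_{a,i}(x_a\otimes e_i)\otimes(y_a\otimes f_i)$. First I would record the two structural facts about $(Q,\circ,\omega)$ that drive the whole computation. Since $\omega$ is antisymmetric and nondegenerate, the Casimir is antisymmetric, $\tau(C)=-C$; and for any linear map $T\colon Q\to Q$ with $\omega$-adjoint $\hat T$ (defined by $\omega(T q_1,\,q_2)=\omega(q_1,\,\hat T q_2)$) one has the Casimir-invariance identity $(T\otimes\id)C=(\id\otimes\hat T)C$. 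Finally, invariance of $\omega$ for the pre-Lie product $\circ$ translates into the operator relation $\hat L_q=R_q-L_q$, where $L_q(\cdot)=q\circ\cdot$ and $R_q(\cdot)=\cdot\circ q$; this is the single input that lets me transport a composition $\circ$ from one tensor slot to another.

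Next I would expand $[\tilde r_{12},\tilde r_{13}]+[\tilde r_{12},\tilde r_{23}]+[\tilde r_{13},\tilde r_{23}]$ using the bracket of the induced Lie algebra. Each of the three terms produces, after using $[p_1\otimes q_1,\,p_2\otimes q_2]=(p_1\bullet p_2)\otimes(q_1\circ q_2)-(p_2\bullet p_1)\otimes(q_2\circ q_1)$, a sum over $a,b$ and over the Casimir indices of a $P^{\otimes 3}$-monomial in the $x$'s and $y$'s tensored with a $Q^{\otimes 3}$-monomial in the $e$'s, $f$'s carrying a single composition $\circ$. The key step is then to apply the Casimir-invariance identity together with $\hat L_q=R_q-L_q$ (and $\tau(C)=-C$ to match the two halves of each bracket) so as to move the $\circ$ out of the ``bracket slot'' and rewrite every $Q^{\otimes 3}$-coefficient in one common normal form. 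Under this rewriting the accompanying $P$-products get relabelled, and the resulting $\bullet$-monomials organize precisely into the perm Yang--Baxter combination $r_{12}\bullet r_{23}-r_{13}\bullet r_{23}+r_{12}\bullet r_{13}-r_{13}\bullet r_{12}$, each attached to the same $Q^{\otimes 3}$-factor. Because $r$ solves the Yang--Baxter equation in $(P,\bullet)$ this combination vanishes, and hence so does the classical Yang--Baxter expression for $\tilde r$.

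The main obstacle will be exactly this bookkeeping in the middle step: each of the three brackets contributes two signed $\bullet$-terms whose $Q$-parts carry the composition in different slots, and one must check that after applying invariance the signs and slot-permutations line up so that all four perm Yang--Baxter monomials appear with a common $Q^{\otimes 3}$-factor. Here the antisymmetry $\tau(C)=-C$ is essential, since it is what reconciles the antisymmetry of the Lie bracket (the $p_1\bullet p_2-p_2\bullet p_1$ pattern) with the perm products, while the pre-Lie axiom guarantees that $[-,-]$ is genuinely a Lie bracket so that the classical Yang--Baxter equation is the correct target. I would organize the verification slot by slot to keep the permutation of tensor factors under control, treating the three brackets in parallel and only at the end summing them to expose the perm Yang--Baxter combination.
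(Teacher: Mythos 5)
Since the paper itself gives no proof of this proposition (it is quoted from \cite{LZB} and used as a black box), your argument has to stand on its own, and its central step fails. Your preliminary identities are correct and are the right tools: $\tau(C)=-C$, $(T\otimes\id)C=(\id\otimes\hat{T})C$, $\hat{L}_q=R_q-L_q$, and consequently $\hat{R}_q=R_q$. But when the rewriting is actually carried out, the six $Q^{\otimes 3}$-coefficients produced by the three brackets do \emph{not} reduce to one common normal form; they reduce only to the span of the two tensors
$$
A_3=\sum_{i,j}e_i\otimes(f_i\circ e_j)\otimes f_j,\qquad
A_4=\sum_{i,j}e_i\otimes(e_j\circ f_i)\otimes f_j,
$$
via $\sum_{i,j}(e_i\circ e_j)\otimes f_i\otimes f_j=A_3$, $\sum_{i,j}(e_j\circ e_i)\otimes f_i\otimes f_j=A_3-A_4$, $\sum_{i,j}e_i\otimes e_j\otimes(f_i\circ f_j)=A_4-A_3$ and $\sum_{i,j}e_i\otimes e_j\otimes(f_j\circ f_i)=A_4$. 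Consequently, under the identification $(P\otimes Q)^{\otimes 3}\cong P^{\otimes 3}\otimes Q^{\otimes 3}$, the expression $[\tilde{r}_{12},\tilde{r}_{13}]+[\tilde{r}_{12},\tilde{r}_{23}]+[\tilde{r}_{13},\tilde{r}_{23}]$ equals
$$
\big(r_{12}\bullet r_{23}-r_{13}\bullet r_{23}+r_{12}\bullet r_{13}-r_{13}\bullet r_{12}\big)\otimes A_3
\;+\;\big(r_{13}\bullet r_{12}-r_{23}\bullet r_{12}+r_{13}\bullet r_{23}-r_{23}\bullet r_{13}\big)\otimes A_4.
$$
The first $P$-factor is the perm Yang-Baxter expression and vanishes by hypothesis, but the second is a genuinely different combination: it involves $r_{23}\bullet r_{12}=\sum_{a,b}x_b\otimes(x_a\bullet y_b)\otimes y_a$ and $r_{23}\bullet r_{13}=\sum_{a,b}x_b\otimes x_a\otimes(y_a\bullet y_b)$, which do not occur in the perm Yang-Baxter equation, and it is not implied by it. Since $A_3$ and $A_4$ are in general linearly independent (they are for the algebra $Q$ of Example \ref{ex:YBE-Lie}), your claim that everything attaches to ``the same $Q^{\otimes 3}$-factor'' is exactly where the proof breaks.

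This gap is not repairable by more careful bookkeeping, because without a further hypothesis on $r$ the statement is false. Take $P=\Bbbk\{a,b,c\}$ with $a\bullet a=b\bullet a=c$ and all other products zero (all triple products vanish, so this is a perm algebra), and $r=a\otimes d$ with $d=a-b+c$. Then $d\bullet a=d\bullet d=0$ and $a\bullet d=c$, so $r_{12}\bullet r_{23}=r_{13}\bullet r_{23}=0$ and $r_{12}\bullet r_{13}=r_{13}\bullet r_{12}=c\otimes d\otimes d$; hence $r$ solves the perm Yang-Baxter equation. But the second combination above equals $c\otimes d\otimes d-a\otimes c\otimes d\neq 0$, and with $Q$ as in Example \ref{ex:YBE-Lie} one has $A_4=q_1\otimes q_1\otimes q_2-q_1\otimes q_2\otimes q_1\neq 0$, so $\tilde{r}$ does not satisfy the classical Yang-Baxter equation (for instance, the coefficient of $(c\otimes q_1)\otimes(a\otimes q_1)\otimes(a\otimes q_2)$ in the Yang-Baxter expression is $1$). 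The missing ingredient is the assumption that $r$ is \emph{symmetric}, which is what the paper's own Examples \ref{ex:YBE-perm} and \ref{ex:YBE-Lie} use and is presumably the hypothesis in \cite{LZB}: if $\tau(r)=r$, then applying the slot permutation $p_1\otimes p_2\otimes p_3\mapsto p_2\otimes p_3\otimes p_1$ to the perm Yang-Baxter equation and using $\tau(r)=r$ to exchange $x_a\leftrightarrow y_a$ in each summation index transforms its four terms precisely into $r_{13}\bullet r_{12}-r_{23}\bullet r_{12}+r_{13}\bullet r_{23}-r_{23}\bullet r_{13}$, so the second coefficient also vanishes. With that hypothesis added, your approach does go through.
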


Thus, by Propositions \ref{pro:aybe-pybe} and \ref{pro:pr-lieybe}, we have

\begin{pro}\label{pro:av-lieybe}
Let $(A, \alpha)$ be a commutative averaging algebra, $(M, \mu, \beta)$ be a bimodule
over it, and $(Q, \circ, \omega)$ be a quadratic pre-Lie algebra.
\begin{enumerate}\itemsep=0pt
\item[$(i)$] If define a $[-,-]: (A\otimes Q)\otimes(A\otimes Q)\rightarrow(A\otimes Q)$ by
  $$
  [a_{1}\otimes q_{1},\; a_{2}\otimes q_{2}]=(\alpha(a_{1})a_{2})\otimes(q_{1}\circ q_{2})
  -(\alpha(a_{2})a_{1})\otimes(q_{2}\circ q_{1}),
  $$
  for any $a_{1}, a_{2}\in A$ and $q_{1}, q_{2}\in Q$, then $(A\otimes Q,\; [-,-])$ is
  a Lie algebra.
\item[$(ii)$] Let $\{e_{1}, e_{2},\cdots, e_{n}\}$ be a basis of $Q$, and $\{f_{1},
f_{2},\cdots, f_{n}\}$ be the dual basis with respect to $\omega(-,-)$. If Eq.
\eqref{compa1} holds, each solution $r=\sum_{i}x_{i}\otimes y_{i}$ of $\beta$-YBE
in $(A, \alpha)$ gives a solution
$$
\tilde{r}=\sum_{i, j}(x_{i}\otimes e_{j})\otimes(y_{i}\otimes f_{j})
$$
of the classical Yang-Baxter equation in $(A\otimes Q,\; [-,-])$.
\end{enumerate}
\end{pro}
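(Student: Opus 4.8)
The plan is to deduce both parts by specializing the two cited propositions to the perm algebra $(A, \bullet)$ induced by the commutative averaging algebra $(A, \alpha)$, where $a_1\bullet a_2=\alpha(a_1)a_2$ as in Proposition \ref{pro:perm}.

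For part $(i)$, I would first invoke Proposition \ref{pro:perm} to note that $(A, \bullet)$ is a perm algebra. Feeding $(P, \bullet)=(A, \bullet)$ into the induced Lie algebra construction stated just before Proposition \ref{pro:pr-lieybe}, the resulting bracket on $A\otimes Q$ reads
$$
[a_1\otimes q_1,\; a_2\otimes q_2]=(a_1\bullet a_2)\otimes(q_1\circ q_2)-(a_2\bullet a_1)\otimes(q_2\circ q_1),
$$
and substituting $a_1\bullet a_2=\alpha(a_1)a_2$ reproduces verbatim the bracket displayed in the statement. Hence $(A\otimes Q,\; [-,-])$ is precisely the induced Lie algebra of $(A, \bullet)$ and $(Q, \circ)$, and is therefore a Lie algebra.

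For part $(ii)$, since Eq. \eqref{compa1} is assumed to hold, Proposition \ref{pro:aybe-pybe} guarantees that any solution $r=\sum_i x_i\otimes y_i$ of the $\beta$-YBE in $(A, \alpha)$ is a solution of the Yang-Baxter equation in the induced perm algebra $(A, \bullet)$. I would then apply Proposition \ref{pro:pr-lieybe} directly to this perm Yang-Baxter solution: with the chosen basis $\{e_1, e_2, \cdots, e_n\}$ of $Q$ and its $\omega$-dual basis $\{f_1, f_2, \cdots, f_n\}$, the element $\tilde{r}=\sum_{i,j}(x_i\otimes e_j)\otimes(y_i\otimes f_j)$ is a solution of the classical Yang-Baxter equation in $(A\otimes Q,\; [-,-])$.

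Since the whole argument is merely a composition of the two established results, there is no substantive obstacle: the Lie algebra axioms and the classical Yang-Baxter identity are already carried by Propositions \ref{pro:pr-lieybe} and \ref{pro:aybe-pybe}. The only point demanding care is the bookkeeping identification in part $(i)$ — namely that, once the perm product $\bullet$ is unfolded through $\alpha$, the abstract induced Lie bracket coincides exactly with the bracket in the statement — together with checking that the hypothesis on the bimodule $(M, \mu, \beta)$ is exactly what Proposition \ref{pro:aybe-pybe} requires to make Eq. \eqref{compa1} the operative compatibility condition.
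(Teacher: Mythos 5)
Your proposal is correct and is exactly the paper's argument: the paper derives this proposition by the one-line composition ``Thus, by Propositions \ref{pro:aybe-pybe} and \ref{pro:pr-lieybe}, we have,'' i.e., part $(i)$ is the induced Lie algebra of the perm algebra $(A,\bullet)$ with $(Q,\circ)$, and part $(ii)$ chains the $\beta$-YBE-to-perm-YBE transfer with the perm-YBE-to-classical-YBE construction, just as you did.
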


\begin{ex}\label{ex:YBE-Lie}
Let $(A=\Bbbk\{e_{1}, e_{2}, e_{3}\}, \alpha)$ be the 3-dimensional commutative
averaging algebra given in Example \ref{ex:YBE-perm}. Then we get a perm algebra
$(A, \bullet)$ with nonzero product $e_{1}\bullet e_{1}=e_{3}=e_{2}\bullet e_{1}$.
Considering quadratic pre-Lie algebra $(Q=\Bbbk\{q_{1}, q_{2}\}, \omega)$, where the
nonzero product is given by $q_{1}q_{2}=q_{1}$, $q_{2}q_{2}=q_{2}$, and $\omega(q_{1},
q_{2})=-\omega(q_{2}, q_{1})=1$, $\omega(q_{1}, q_{1})=\omega(q_{2}, q_{2})=0$,
we obtain a Lie algebra $(A\otimes Q,\; [-,-])$, where the nonzero product is given
by $[e_{1}\otimes q_{1},\; e_{1}\otimes q_{2}]=e_{2}\otimes q_{1}$,
$[e_{2}\otimes q_{1},\; e_{1}\otimes q_{2}]=e_{3}\otimes q_{1}$,
$[e_{2}\otimes q_{2},\; e_{1}\otimes q_{2}]=e_{3}\otimes q_{2}$. Moreover,
by the symmetric solution $r=e_{3}\otimes e_{3}$ of the $\beta$-YBE in $(A, \alpha)$,
we get an antisymmetric solution $\tilde{r}=(e_{3}\otimes q_{1})\otimes(e_{3}\otimes q_{2})-
(e_{3}\otimes q_{2})\otimes(e_{3}\otimes q_{1})$ of the classical Yang-Baxter
equation in $(A\otimes Q,\; [-,-])$.
\end{ex}

\bigskip
\noindent
{\bf Acknowledgements. } This work was financially supported by National
Natural Science Foundation of China (No.11771122).

%

 \end{document}